\def\HSK{hyperbolic-type satellite knot} 
\def\HSKs{hyperbolic-type satellite knots}
\def\HT{hyperbolic-type}
\def\SCS{unoriented characterising slope}
\def\SCSs{unoriented characterising slopes}
\def\SetSCS{unoriented characterising set of slopes}
\def\SC{unoriented-characterising}
\def\Z{\mathbb{Z}}
\def\abs#1{\left |#1 \right |}
\def\SK{\mathcal{N}} 
\def\EM{Eudave-Mu\~noz }
\def\pisi{\overline{\psi}}     
\def\[#1{[\![#1]\!]}
\def\O{\mathcal{O}}
\def\slope{\text{slp}}
\def\Qinf{\mathbb{Q}\cup \{\infty\}}
\def\Q{\mathbb{Q}}
\def\slp{\text{slp}^\ast}
\def\cpl#1{S^3\setminus \mathring{N}(#1)}
\def\ttt{\stackrel{\cong}{\longrightarrow}}
\def\quotient#1#2{%
    \raise1ex\hbox{$#1$}\Big/\lower1ex\hbox{$#2$}%
}
\def\quo#1#2{%
    \raise0.6ex\hbox{$#1$}\big/\lower0.6ex\hbox{$#2$}%
}
\def\Hom{\mathrm{Hom}}
\def\F{f^\dagger}
\def\limi{\varprojlim}
\def\Zx{\widehat{\Z}^{\times}}
\def\SL{\mathrm{SL}}
\def\Jn{\sigma_2^{-1}\sigma_1(\sigma_1\sigma_2)^{3n}}
\def\tensor{\widehat{\mathbb{Z}}\otimes_{\mathbb{Z}}}
\newcommand\ttimes{\mathbin{\ThisStyle{\ensurestackMath{%
  \stackengine{-1\LMpt}{\SavedStyle\times}
  {\SavedStyle_{\hstretch{.9}{\mkern1mu\sim}}}{O}{c}{F}{T}{S}}}}}
\tikzset{%
  symbol/.style={
    draw=none,
    every to/.append style={
      edge node={node [sloped, allow upside down, auto=false]{$#1$}}
    },
  },
}
\newtheorem*{THMB}{\autoref{THMB}}
\newtheorem*{CORC}{\autoref{CORC}}
\newtheorem*{THMA}{\autoref{inthm: Dehn filling}}
\date{\today}
\title[Profinite rigidity and Dehn filling of cusped hyperbolic 3-manifolds]{Profinite rigidity witnessed by Dehn fillings of cusped hyperbolic 3-manifolds}
\author{Xiaoyu Xu}
\address{Beijing International Center for Mathematical Research\\
Peking University\\
 Beijing 100871, P.R. China }
\email{xuxiaoyu@stu.pku.edu.cn}
\begin{document}
\begin{sloppypar}
\maketitle

\begin{abstract}
Any profinite isomorphism between two cusped finite-volume hyperbolic 3-manifolds carries profinite isomorphisms between their  Dehn fillings. With this observation, we prove that some cusped finite-volume hyperbolic 3-manifolds are profinitely rigid among all compact, orientable 3-manifolds, through detecting their exceptional Dehn fillings. In addition, we improved a criteria for profinite rigidity of a hyperbolic knot complement or a {\HSK} complement among compact, orientable 3-manifolds, through examining its characterising slopes. 

We obtain infinitely many profinitely rigid examples, including:  the complement of the Whitehead link, Whitehead sister link, $\frac{3}{10}$ two-bridge link; specific surgeries on one component of these links; the complement of (full) twist knots $\mathcal{K}_n$,  Eudave-Mu\~noz knots $K(3,1,n,0)$, Pretzel knots $P(-3,3,2n+1)$,  $5_2$ knot; the Berge manifold, and many more.
\end{abstract}

\setcounter{tocdepth}{1}
\tableofcontents 

\section{Introduction}\label{SEC: Intro}

The collection of all finite quotient groups of a finitely generated group $G$, denoted by $\mathcal{C}(G)$, reflects information of the original group $G$. When $G=\pi_1M^3$ is a 3-manifold group, $\mathcal{C}(G)$ corresponds to the lattice of finite regular coverings of $M$, which is in fact the collection of all finite deck transformation groups. 

It is known that among finitely generated groups, the profinite completion encodes full data of finite quotients.

\begin{definition}\label{DEF: Profinite completion}
Let $G$ be a group, the \textit{profinite completion} of $G$ is defined as $$\widehat{G}=\limi\limits_{N\lhd_f G} \quo{G}{N}\;,$$ where $N$ ranges over all finite-index normal subgroups of $G$ (the notation $\lhd_f$ denotes finite-index normal subgroup).
\end{definition}
\begin{fact}[{\cite{DFPR82}}]\label{Finite quotient}
    For two finitely generated groups $G_1$ and $G_2$, $\mathcal{C}(G_1)=\mathcal{C}(G_2)$ if and only if $\widehat{G_1}\cong \widehat{G_2}$.
\end{fact}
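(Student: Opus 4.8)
The statement splits into two implications of very different character, and the plan is to treat them separately. The forward implication $\widehat{G_1}\cong\widehat{G_2}\Rightarrow \mathcal C(G_1)=\mathcal C(G_2)$ is soft; the reverse implication is where the work lies, and the plan is to reconstruct the whole profinite completion, as a topological group, from the bare set of finite quotients by passing to a canonical cofinal system of characteristic quotients.

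For the forward direction I would first observe that for any finitely generated $G$ the finite quotients of $G$ coincide with the continuous finite quotients of $\widehat{G}$: every homomorphism from $G$ onto a finite group $Q$ extends, by the universal property, to a continuous surjection from $\widehat{G}$ onto $Q$, and conversely such a continuous surjection restricts to a surjection onto $Q$ on the dense subgroup $G$. Hence $\mathcal C(G)$ is an invariant of the topological group $\widehat{G}$, and any isomorphism $\widehat{G_1}\cong\widehat{G_2}$ transports it across. (If one only assumes an abstract group isomorphism, the same conclusion holds, since a finitely generated profinite group has all its finite-index subgroups open by Nikolov--Segal, so abstract and continuous finite quotients agree.)

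The reverse direction is the main point. For each $n\ge 1$ set $G_n=\bigcap\{N\lhd_f G:[G:N]\le n\}$. Since $G$ is finitely generated it has only finitely many subgroups of each finite index, so $G_n$ is a finite intersection of finite-index subgroups, hence itself of finite index, and it is moreover characteristic. The quotients $Q_n:=G/G_n$ form a cofinal system, so $\widehat{G}=\varprojlim_n Q_n$. The key lemma I would prove is that both the isomorphism type of each $Q_n$ and the transition maps of this system are determined by $\mathcal C(G)$ alone. For the objects: say a finite group $Q$ satisfies property $(P_n)$ if the intersection of its normal subgroups of index $\le n$ is trivial, a condition intrinsic to the isomorphism type; a short computation, using that every index-$\le n$ normal subgroup of $G$ already contains $G_n$, shows that $Q_n$ satisfies $(P_n)$ and that every member of $\mathcal C(G)$ satisfying $(P_n)$ is a quotient of $Q_n$, so $Q_n$ is the maximum, under the relation ``is a quotient of'', among members of $\mathcal C(G)$ satisfying $(P_n)$; as this relation depends only on isomorphism types, the type of $Q_n$ is recovered from $\mathcal C(G)$. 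For the maps: performing the same construction inside $Q_n$ gives $(Q_n)_k=G_k/G_n$ for $k\le n$, whence $Q_k=Q_n/(Q_n)_k$ and the transition map $Q_n\to Q_k$ is the canonical quotient by a characteristic subgroup, hence intrinsic to $Q_n$.

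With the lemma in hand I would assemble the isomorphism by a compactness argument. Assuming $\mathcal C(G_1)=\mathcal C(G_2)$, we get $Q_n(G_1)\cong Q_n(G_2)$ for every $n$; let $I_n$ be the finite, nonempty set of isomorphisms $Q_n(G_1)\to Q_n(G_2)$, with transition maps $I_{n+1}\to I_n$ sending an isomorphism to the one it induces on the canonical quotient $Q_n=Q_{n+1}/(Q_{n+1})_n$ (an isomorphism carries the characteristic subgroup $(Q_{n+1})_n$ to its counterpart, hence descends). Since an inverse limit of nonempty finite sets indexed by $\mathbb N$ is nonempty, $\varprojlim_n I_n\ne\emptyset$, and an element is a coherent family of isomorphisms commuting with all transition maps, that is, an isomorphism of the two inverse systems, inducing a topological isomorphism $\widehat{G_1}\cong\widehat{G_2}$. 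The main obstacle I anticipate is exactly the middle step: $\mathcal C(G)$ records only which finite groups occur and forgets the maps of the defining inverse system, so a priori it cannot see $\widehat{G}$; the crux is the observation that one may choose a cofinal subsystem whose objects \emph{and} whose transition maps are all canonically reconstructible from isomorphism types, after which only the routine nonemptiness of the inverse limit is needed.
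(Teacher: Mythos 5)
Your proof is correct. The paper states this as a known \textbf{Fact} and gives no proof of its own (it is the classical result of Dixon--Formanek--Poland--Ribes, cf.\ \cite[Corollary 3.2.8]{RZ10}), and your argument --- passing to the cofinal characteristic system $G_n=\bigcap\{N\lhd_f G:[G:N]\le n\}$, recovering each $Q_n=G/G_n$ as the maximum element of $\mathcal{C}(G)$ with property $(P_n)$ together with its canonical transition maps, and concluding by nonemptiness of an inverse limit of nonempty finite sets --- is precisely the standard proof of that result, with all the needed verifications (finiteness of the index-$\le n$ normal subgroups, characteristicity, cofinality, and the descent of isomorphisms along the characteristic quotients) carried out correctly.
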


Since 3-manifolds are largely determined by their fundamental groups, it is natural to ask whether $\mathcal{C}(\pi_1M)$ (or equivalently, $\widehat{\pi_1M}$) determines the homeomorphism type of $M$.

\begin{definition}\label{indef: almost rigidity}
Let $\mathscr{M}$ be a class of compact  3-manifolds. We say that a manifold $M\in\mathscr{M}$ is  \textit{profinitely rigid} in $\mathscr{M}$, if for any $N\in \mathscr{M}$, $\widehat{\pi_1N}\cong \widehat{\pi_1M}$ implies $N\cong M$.
\end{definition}

\begin{convention}
To avoid the trivial ambiguities, 
in our context, we always assume that a compact 3-manifold has no  boundary spheres. Indeed, the boundary spheres can be capped off by 3-balls while preserving the fundamental group.
\end{convention}

The profinite completion of a 3-manifold group does reflect substantial information of the 3-manifold itself. In \cite{Xu}, the author showed that among compact orientable 3-manifolds with empty or toral boundary, the profinite completion of fundamental group determines the homeomorphism type of a 3-manifold up to finitely many possibilities.

In fact, profinite rigidity in 3-manifolds is closely related with geometrization. In a series of works, Wilton-Zalesskii \cite{WZ17,WZ17b,WZ19} showed  that among compact, orientable, irreducible 3-manifolds with empty or incompressible toral boundary, the profinite completion determines whether the 3-manifold is geometric; it further determines the geometry type within the geometric case, and determines the JSJ-decomposition in the non-geometric case. In both cases, it is crucial to study the profinite rigidity of the geometric pieces, either with or without boundary. 

The questions of profinite rigidity in seven of the eight geometries proposed by Thurston have been profoundly understood. For instance, $Sol$-manifolds are profinitely ``almost'' rigid \cite{GPS80}, but not profinitely rigid by \cite{Stebe,Fun13}; in fact, the profinite rigidity in $Sol$-manifolds can be reduced to a pure number-theoretic problem. \cite{Wil17}, together with \cite[Corollary 8.3]{Xu} for the bounded case, provides a complete profinite classification for orientable Seifert fibered spaces. Thus, the most challenging part in profinite rigidity of 3-manifolds is the hyperbolic case.

Liu \cite{Liu23} first showed that the profinite completion of the fundamental group of a finite-volume hyperbolic 3-manifold (including both closed and cusped hyperbolic manifolds) determines its homeomorphism type up to finitely many possibilities. Yet, whether profinite rigidity holds generally in the hyperbolic case is still unknown. 
There are a few examples of profinitely rigid closed hyperbolic 3-manifolds, for instance the Weeks manifold  \cite{BMRS20}, the $\mathrm{Vol(3)}$ manifold \cite{BR22}, and $0$-surgery of the knots ${6_2}$ and ${6_3}$ in the Rolfsen table by \cite{Cw22}. \revised{The results for these manifolds are even stronger.  In fact, the fundamental groups of these manifolds are   profinitely rigid among all finitely generated residually finite groups.} 
\revised{As for cusped hyperbolic 3-manifolds, the figure-eight knot complement is proven to be profinitely rigid among all compact 3-manifolds by Bridson-Reid \cite{BR20}.}

\subsection{Profinite rigidity from Dehn filling}
In this paper, we mainly focus on cusped hyperbolic 3-manifolds. 
The main advantage of the cusped case is that we can utilize the key technique, so called peripheral $\Zx$-regularity, introduced in \cite{Xu}. Based on this technique, we prove that any two profinitely isomorphic  cusped hyperbolic 3-manifolds have profinitely isomorphic Dehn fillings. 

Let $M$ be a compact, orientable 3-manifold  with boundary components consists of tori $\partial_1M,\cdots, \partial_n M$. For each  $1\le i \le n$, suppose $c_i$ is a slope on $\partial_iM\cong T^2$, and $c_i$ is allowed to be an ``empty'' slope. We denote by $M_{(c_i)}=M_{c_1,\cdots,c_n}$ the corresponding {\em Dehn filling} along these slopes, where $\partial_iM$ is skipped over if $c_i$ is an empty slope.

\begin{mainthm}\label{inthm: Dehn filling}
Suppose $M$ and $N$ are orientable cusped finite-volume hyperbolic 3-manifolds, and $\widehat{\pi_1M}\cong \widehat{\pi_1N}$. Then, there exists a homeomorphism $\Psi: \partial M\to \partial N$ such that for any boundary slopes $(c_i)$ on $\partial M$ (allowing empty slopes),  $\widehat{\pi_1M_{(c_i)}}\cong \widehat{\pi_1N_{(\Psi(c_i))}}$; moreover,  this   isomorphism respects the peripheral structure (see \autoref{DEF: Peripehral structure}).
\end{mainthm}

Combining \autoref{inthm: Dehn filling} with the aforementioned results on profinite detection of geometrization, we obtain the following corollary. 

\begin{corollary}\label{incor: exceptional}
 In the context of \autoref{inthm: Dehn filling}, the boundary homeomorphism $\Psi$ sends exceptional slopes bijectively to exceptional slopes. 
\end{corollary}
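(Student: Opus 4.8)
The plan is to reduce \autoref{incor: exceptional} to the principle that hyperbolicity is a profinite invariant among compact orientable 3-manifolds, which is by now a consequence of geometrization together with the detection results of Wilton--Zalesskii. Recall that a slope $s$ on a single boundary torus $\partial_i M$ is \emph{exceptional} precisely when the Dehn filling obtained by filling $\partial_i M$ along $s$ and leaving the remaining cusps unfilled fails to admit a complete finite-volume hyperbolic structure on its interior. Thus it suffices to show that, for every such single-cusp slope, $M_{(s)}$ is hyperbolic if and only if $N_{(\Psi(s))}$ is hyperbolic.

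First I would apply \autoref{inthm: Dehn filling} to the tuple of slopes in which $c_i=s$ and all other $c_j$ are empty. This produces a peripheral-structure-respecting isomorphism $\widehat{\pi_1M_{(s)}}\cong\widehat{\pi_1N_{(\Psi(s))}}$. Both $M_{(s)}$ and $N_{(\Psi(s))}$ are then compact orientable 3-manifolds whose boundary, if nonempty, consists of the unfilled cusps, and the isomorphism matches these boundary tori under $\Psi$.

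Next I would invoke profinite detection of hyperbolicity, splitting into cases. If $\pi_1M_{(s)}$ is finite then $\widehat{\pi_1M_{(s)}}$ is finite, hence so is $\widehat{\pi_1N_{(\Psi(s))}}$, forcing $\pi_1N_{(\Psi(s))}$, which is residually finite, to be finite as well; here both manifolds are non-hyperbolic and there is nothing to prove. If instead $M_{(s)}$ is hyperbolic, then it is irreducible with incompressible toral boundary, and the peripheral-structure-respecting isomorphism, together with the profinite detection of irreducibility and of boundary incompressibility, places $N_{(\Psi(s))}$ in the same class; the geometry-detection theorem of Wilton--Zalesskii \cite{WZ17,WZ19} then upgrades this to the conclusion that $N_{(\Psi(s))}$ is itself hyperbolic. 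Running the same argument with $\Psi^{-1}$ in the role of $\Psi$ gives the converse implication, so $s$ is exceptional if and only if $\Psi(s)$ is, which is precisely the assertion.

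The main obstacle is the bookkeeping for the non-hyperbolic (exceptional) fillings: along an exceptional slope the filled manifold may become reducible, Seifert fibered, or acquire a compressible boundary torus, and one must make sure the Wilton--Zalesskii framework still applies. The step that requires genuine care is verifying that reducibility and boundary compressibility are themselves profinite invariants in this bounded setting, so that the class of irreducible manifolds with incompressible toral boundary is preserved under the isomorphism \emph{before} the geometry-detection theorem is applied; this is exactly where I would lean on the bounded-case refinements recorded in \cite{Xu}.
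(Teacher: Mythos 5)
Your proposal is correct and follows essentially the same route as the paper: apply \autoref{inthm: Dehn filling} to the single slope $s$ (with all other slopes empty) and then invoke the profinite detection of finite-volume hyperbolicity, which is exactly how the paper proves this in \autoref{PROP: Exceptional}~(\ref{4.3-1}) via \autoref{PROP: Detect hyperbolic}. The only difference is that your case analysis and your concern about separately detecting irreducibility and boundary incompressibility are unnecessary, since \autoref{PROP: Detect hyperbolic} is already stated for an arbitrary compact orientable 3-manifold profinitely isomorphic to a finite-volume hyperbolic one, and applying it in both directions immediately gives the desired equivalence.
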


We remark that \autoref{inthm: Dehn filling} can also be generalized to mixed manifolds with hyperbolic pieces carrying boundary components, see \autoref{PROP: Mixed Dehn filling}.

As an application of \autoref{incor: exceptional}, we found infinitely many examples of profinitely rigid cusped hyperbolic 3-manifolds, including non-fibered ones, through  detecting their exceptional Dehn fillings. In fact, the following results are based on a series of profound studies on exceptional Dehn fillings, including \cite{Ber91,Gor98,GW,Lee06,Lee07}, which characterise certain cusped hyperbolic 3-manifolds by their extreme patterns of exceptional Dehn fillings. This enables us to narrow down the manifolds profinitely isomorphic to the specific ones to some limited candidates that can be easily distinguished from each other.

\begin{mainthm}\label{MAIN}
The following cusped finite-volume hyperbolic 3-manifolds are profinitely rigid among all compact, orientable 3-manifolds.
\begin{enumerate}[label=(\arabic*), leftmargin=23.5pt]
\item\label{m.W} The Whitehead link complement $\cpl{\mathcal{W}}$ (\autoref{Fig: W}).
\item\label{m.WS} The Whitehead sister link complement $\cpl{\mathcal{WS}}$ (\autoref{Fig: WS}).
\item\label{m.L} The complement of the two-bridge link with Schubert normal form $\frac{3}{10}$, denoted by $\cpl{\mathcal{L}}$ (\autoref{Fig: L}).
\item\label{m.Kn} The complement of the (full) twist knots $\mathcal{K}_n$, $n\in \Z\setminus\{0,1\}$ (\autoref{Fig: Kn}).
\item\label{m.Jn} The complement of the braid knots $\mathcal{J}_n$ presented by $\Jn$, where $n\in \Z\setminus\{-1,0,1\}$ (\autoref{Fig: Jn}).
\item\label{m.EM} The complement of the \EM knots $K(3,1,n,0)$, $n\in \Z\setminus\{ 0\}$ (\autoref{Fig: EM}); in particular, $K(3,1,1,0)$ is the Pretzel knot $P(-2,3,7)$ (\autoref{Fig: Pr}).
\item\label{m.LA} The Berge manifold, ie the complement of the link $L_A$ in $S^3$ (\autoref{Fig: LA}). 
\item\label{m.52} $5,-1,-2,-4$, and $\frac{5}{2}$-Dehn surgery on one component of the Whitehead link shown in \autoref{Fig: W}.
\item\label{m.Lss} $n-\frac{1}{2}$-Dehn surgery on one component of the $\frac{3}{10}$-two bridge link $\mathcal{L}$ shown in \autoref{Fig: L}, where $n\in \Z$. 
\item\label{m.LB} $0$-surgery on the $L_B^{(1)}$ component of the link $L_B$ shown in \autoref{Fig: LB}.
\item\label{m.LBr} $0$-surgery on the $L_B^{(1)}$ component and $r$-surgery on the $L_B^{(2)}$ component of the link $L_B$ (\autoref{Fig: LB}), where $r\in \mathbb{Q}\setminus\{0,4\}$. 
\item\label{m.M14} The manifold $M_{14}$ introduced in \cite{GW}, which is the double branched cover of $S^2\times I$ along the tangle $\mathcal{Q}$ shown in \autoref{Fig: M14}.
\end{enumerate}
\end{mainthm}

\begin{figure}[h]
\centering
\subfigure[The Whitehead link $\mathcal{W}$]{
\includegraphics[width=4.1cm]{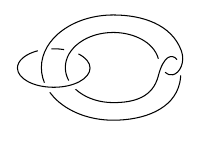}
\label{Fig: W}
}
\subfigure[The Whitehead sister link $\mathcal{WS}$]{
\includegraphics[width=4.2cm]{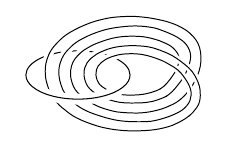}
\label{Fig: WS}
}
\subfigure[The $\frac{3}{10}$-two bridge link $\mathcal{L}$]{
\includegraphics[width=4.2cm]{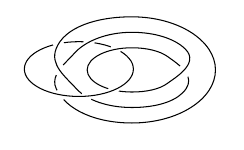}
\label{Fig: L}
}
\caption{Link diagrams for $\mathcal{W}$, $\mathcal{WS}$, $\mathcal{L}$}
\end{figure}

\begin{figure}[h]
\centering
\subfigure[The twist knot $\mathcal{K}_n$ ($n\ge 0$)]{
\includegraphics[width=4cm]{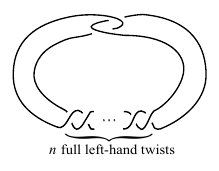}
}
\hspace{4mm}
\subfigure[The twist knot $\mathcal{K}_n$ ($n<0$)]{
\includegraphics[width=4cm]{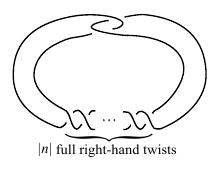}
}
\caption{Knot diagrams for $\mathcal{K}_n$}
\label{Fig: Kn}
\end{figure}

\begin{figure}[h]
\centering
\subfigure[The braid knot $\mathcal{J}_n$]{
\includegraphics[width=3cm]{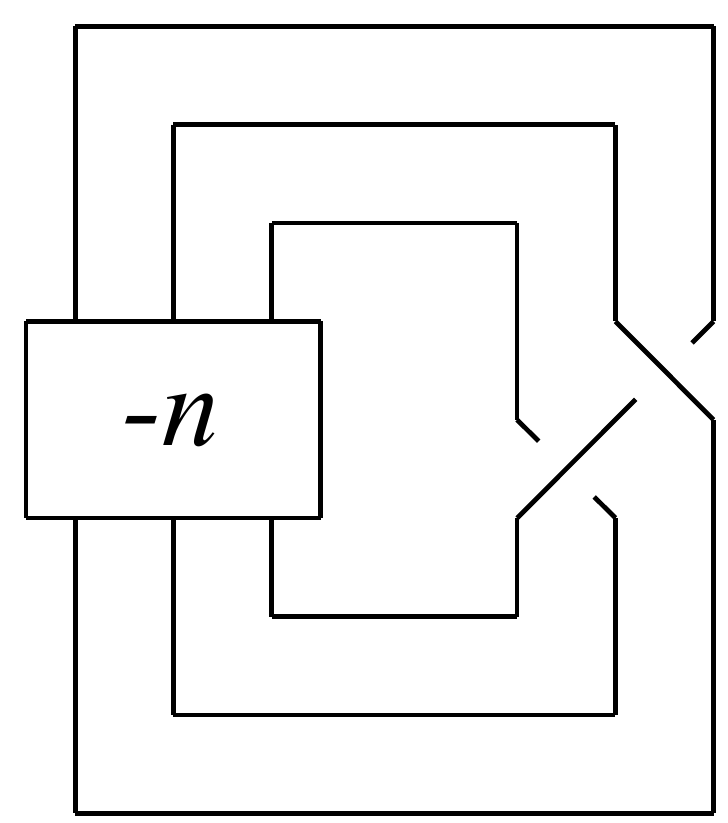}
}
\hspace{4mm}
\subfigure[Notation for full twists]{
\includegraphics[width=5.75cm]{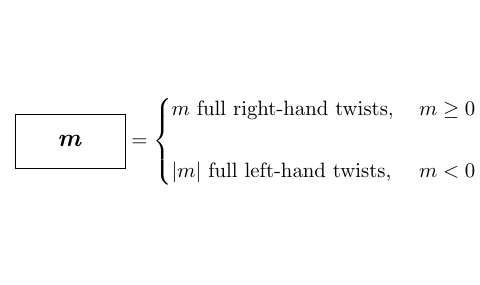}
}
\caption{Knot diagram for $\mathcal{J}_n$}
\label{Fig: Jn}
\end{figure}

\begin{figure}[h]
\centering
\subfigure[The \EM knot $K(3,1,n,0)$]{
\includegraphics[width=5.65cm]{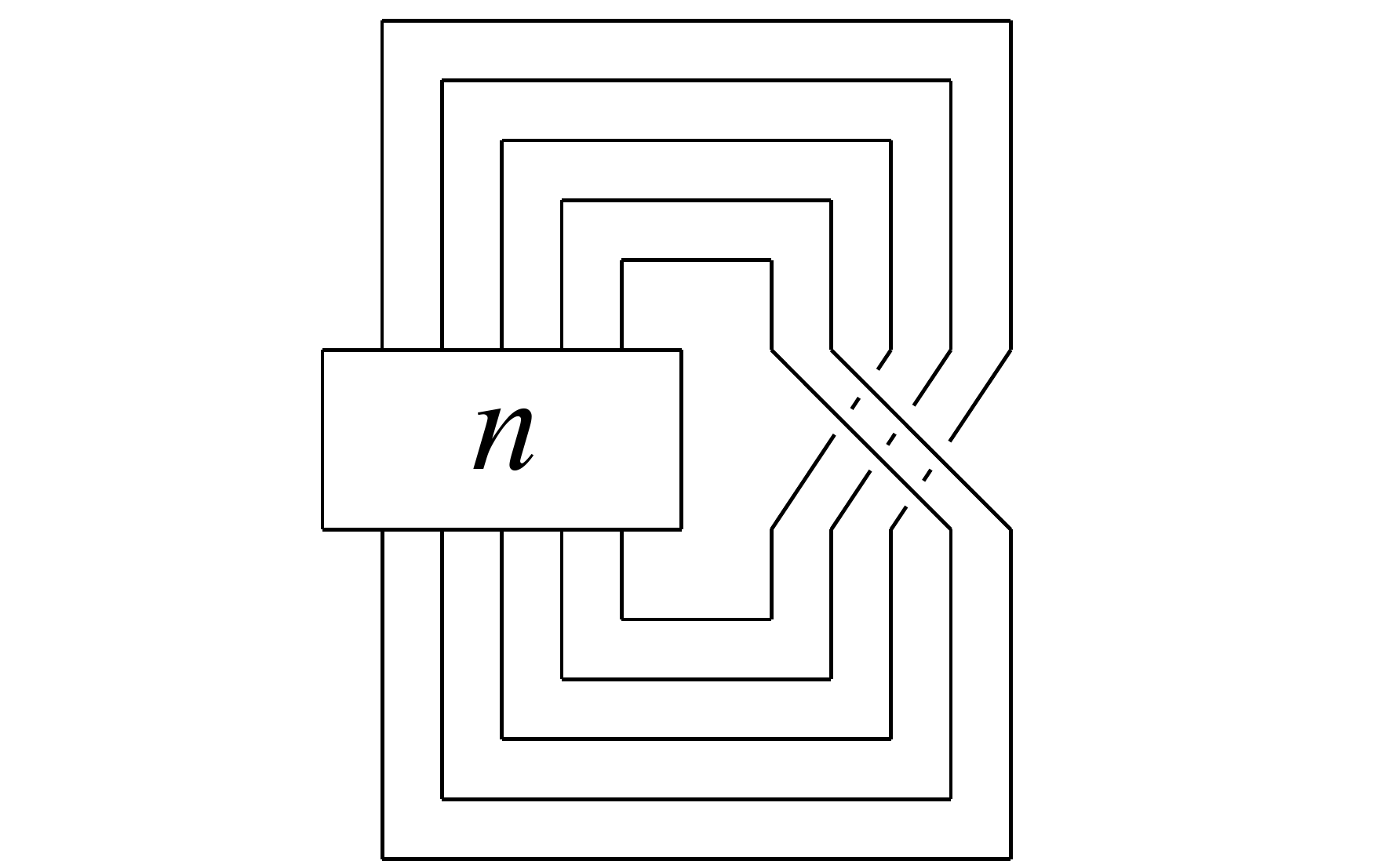}
\label{Fig: EM}
}
\hspace{3mm}
\subfigure[$K(3,1,1,0)$ is the $(-2,3,7)$-Pretzel knot]{
\includegraphics[width=5.45cm]{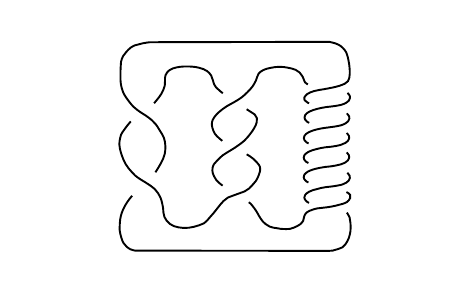}
\label{Fig: Pr}
}
\caption{Knot diagram for $K(3,1,n,0)$}
\end{figure}

\begin{figure}[h]
\centering
\subfigure[The Berge manifold $\cpl{L_A}$]{
\includegraphics[width=4.6cm]{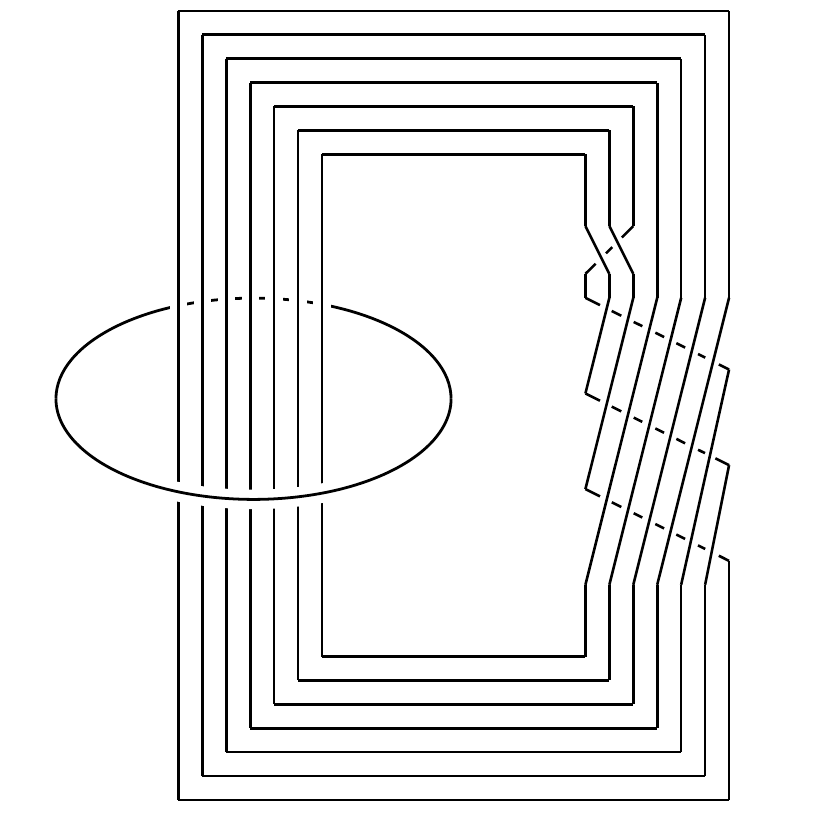}
\label{Fig: LA}
}
\hspace{15mm}
\subfigure[The link $L_B$]{
\includegraphics[width=3.9cm]{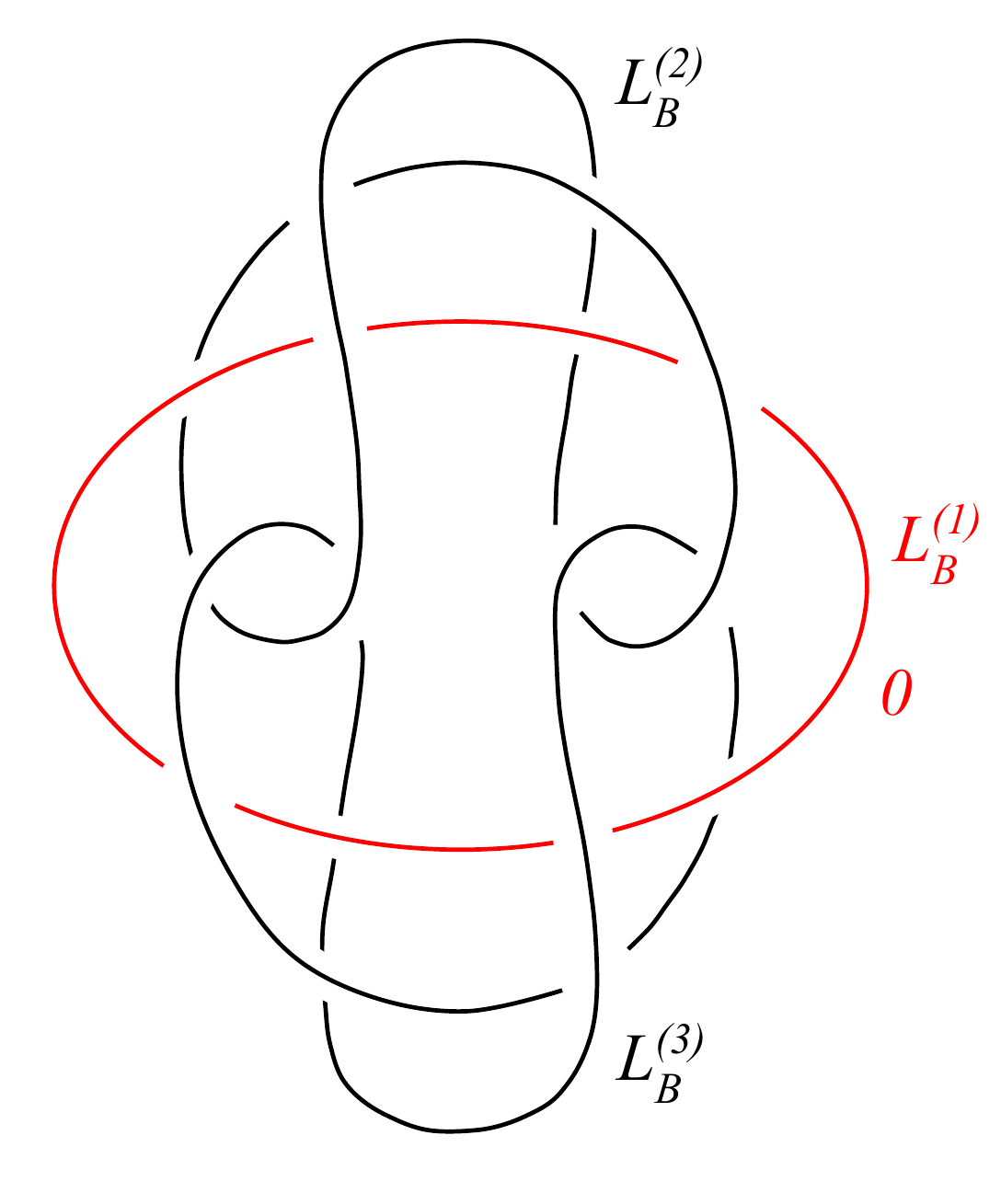}
\label{Fig: LB}
}
\caption{Link diagrams for $L_A$ and $L_B$}
\end{figure}

\begin{figure}[h]
\centering
\includegraphics[width=4cm]{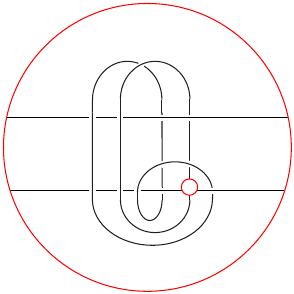}
\caption{The tangle $\mathcal{Q}$ in $S^2\times I$, \cite[Figure 22.14]{GW}}
\label{Fig: M14}
\end{figure}

\begin{remark}
It is conjectured that the profinite completion of  the fundamental group of a finite-volume hyperbolic 3-manifold determines its hyperbolic volume. The examples listed in \autoref{MAIN} include some cusped hyperbolic 3-manifolds with small volumes. 
\revised{
For instance, the figure-eight knot complement (which is the knot $\mathcal{K}_{-1}$ in  \ref{m.Kn} and also the $-1$-surgery on the Whitehead link in \ref{m.52}) and the figure-eight sibling manifold (which is the $5$-surgery on the Whitehead link in \ref{m.52}) are exactly the two minimal volume one-cusped orientable hyperbolic 3-manifold \cite{CM01}. The Whitehead link complement in \ref{m.W} and the Whitehead sister link complement in \ref{m.WS} are exactly the minimal volume two-cusped orientable hyperbolic 3-manifolds \cite{Ago10}. The $\frac{5}{2}$-surgery on the Whitehead link is one of the second smallest volume one-cusped orientable hyperbolic 3-manifold, and the $-2$-surgery on the Whitehead link is one of the third smallest volume one-cusped orientable hyperbolic 3-manifold \cite{GMM09}. 
}
\end{remark}

\subsection{Application to knot complements}

Knot or link complements provide easily accessible examples for 3-manifolds, and the profinite properties of knot complements have long been a popular topic. 
\revised{Boileau-Friedl \cite[Theorem 1.5]{BF19} showed that the complement of any torus knot is profinitely rigid among all knot complements in $S^3$. In fact, its fundamental group is profinitely rigid among the fundamental groups of all compact orientable 3-manifolds by \cite[Corollary 8.3]{Xu}. Similar results were proved for graph knots by Wilkes \cite{Wilkes2019} that the fundamental group of the complement of any graph knot is profinitely rigid among the fundamental groups of all compact orientable 3-manifolds, which implies the profinite rigidity of  prime graph knot complements among all knot complements. 
Some knot invariants are also proven to be detected by the profinite completion of the knot group, including fiberedness and genus by Boileau-Friedl \cite[Theorem 1.2]{BF19}, and the Alexander polynomial by Ueki \cite{Ueki}. 
}

Profinite properties of knot complements are more computable, and they provide examples that lucidly illustrates our techniques. 
As an application of \autoref{inthm: Dehn filling}, we are able to detect hyperbolic knot complements and {\HSK} complements through the finite quotient  groups, and further to match up the Dehn surgeries of a profinitely isomorphic pair of such knots.

\begin{definition}
A satellite knot $K\subseteq S^3$ is  {\em{\HT}} if $\partial N(K)$ belongs to a hyperbolic piece in the JSJ-decomposition of the knot complement $\cpl{K}$.
\end{definition}

For example, any satellite knot with a hyperbolic pattern is a {\HSK}.  

\begin{mainthm}
\label{inthm: Knot}
Let $M=\cpl{K}$ be the complement of a hyperbolic knot, resp. a {\HSK}. Suppose $N$ is a compact, orientable 3-manifold such that $\widehat{\pi_1M}\cong \widehat{\pi_1N}$. Then, $N\cong \cpl{K'}$ is also the complement of a hyperbolic knot, resp. a {\HSK}. In addition, there exists $\sigma\in \{1,-1\}$ such that for any $r\in \Qinf$, the $r$-surgery of $K$ is profinitely isomorphic to the $\sigma r$-surgery of $K'$.
\end{mainthm}


\revised{
Cheetham-West \cite{Cw23} found a sufficient condition for a hyperbolic knot complement to be profinitely rigid among all knot complements in $S^3$, through examining the characterising property and the  profinite rigidity  of its $0$-surgery. 
Based on \autoref{inthm: Knot}, we can improve this criteria. 
}

\begin{definition}\label{DEF: characterizing slope}
Let $K\subseteq S^3$ be a knot.  
A slope $\alpha \in \Q$ on $\partial N(K)$ is an {\em{\SCS}} for $K$, if for any knot $J\subseteq S^3$, 
the Dehn surgery $J(\alpha)$ being homeomorphic to $K(\alpha)$ implies that $J$ is isotopic to $K$ or its mirror image.
\end{definition}

There is a slight difference between the definition of an {\SCS} and the usual definition of a characterising slope. 
Generally, 
when referring to a characterising slope, it is required that the homeomorphism $J(\alpha)\cong K(\alpha)$ is orientation-preserving, see \autoref{DEF: SCSCS} for details.  
However, we remove the orientation-preserving restriction here, and add a ``unoriented'' prefix  for distinction. 
Regardless of this distinction, if $0$ is a characerising slope of $K$, then it is also an {\SCS} of $K$, see \autoref{LEM: Characterising}.

\begin{maincor}\label{THMB}
Let $K\subseteq S^3$ be either a hyperbolic knot or a {\HSK}. Suppose
\begin{enumerate}[label=(\arabic*), leftmargin=*]
\item there is an {\SCS} $\alpha\in\mathbb{Q}$ of $K$, and
\item the Dehn surgery $K(\alpha)$ is profinitely rigid among all closed, orientable 3-manifolds.
\end{enumerate}
Then the knot complement $\cpl{K}$ is profinitely rigid among all compact, orientable 3-manifolds.
\end{maincor}

As an application of \autoref{THMB}, we show the profinite rigidity of some knot complements in $S^3$, based on the examples listed in \cite{BS22,BS24} where $0$ is a characterising slope.

\begin{maincor}\label{CORC}
Let $K\subseteq S^3$ be one of the following knots: 
\begin{enumerate}[label=(\arabic*), leftmargin=*]
\item the $5_2$ knot in the Rolfsen table \cite{Rolfsen} (\autoref{Fig: 52});
\item the $15n_{43522}$ knot in the Hoste-Thistlethwaite-Weeks table \cite{HTW} (\autoref{Fig: 15n43522});
\item the Pretzel knots $P(-3, 3, 2n + 1)$, where $n\in \Z$ (\autoref{Fig: Pretzel});
\item the satellite knots $\mathcal{W}^+(T_{2,3}, 2)$ and $\mathcal{W}^-(T_{2,3}, 2)$ (\autoref{Fig: Wh}).
\end{enumerate}
Then $\cpl{K}$ is profinitely rigid among all compact, orientable 3-manifolds.
\end{maincor}

\begin{figure}[h]
\subfigure[The $5_2$ knot]{
\includegraphics[width=3cm]{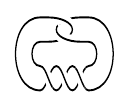}
\label{Fig: 52}
}
\hspace{2mm}
\subfigure[The $15n_{43522}$ knot]{
\includegraphics[width=3cm]{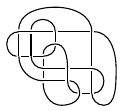}
\label{Fig: 15n43522}
}
\hspace{4mm}
\subfigure[The Pretzel knot $P(-3,3,2n+1)$]{
\includegraphics[width=4.45cm]{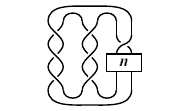}
\label{Fig: Pretzel}
}
\caption{The hyperbolic knots in \autoref{CORC}}
\end{figure}

\begin{figure}[h]
\subfigure[$\mathcal{W}^+(T_{2,3},2)$]{
\includegraphics[width=3cm]{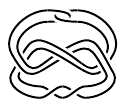}
\label{Fig: Wh+}
}
\hspace{5mm}
\subfigure[$\mathcal{W}^-(T_{2,3},2)$]{
\includegraphics[width=3cm]{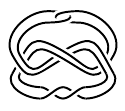}
\label{Fig: Wh-}
}
\caption{The satellite knots in \autoref{CORC}}
\label{Fig: Wh}
\end{figure}

\begin{remark}
Cheetham-West \cite[Theorem 1.1]{Cw23} showed that the complement of the $5_2$ knot, $15n_{43522}$ knot, and   Pretzel knots $P(-3, 3, 2n + 1)$ are profinitely rigid among all knot  complements in $S^3$. We enlarge this scope to all compact, orientable 3-manifolds. In fact, this improvement can be  deduced from \autoref{inthm: Knot}. 
\end{remark}

In fact, \autoref{inthm: Dehn filling}, ie the profinite detection of Dehn filling, shows that the profinite completion of fundamental group seems to impose strong restrictions on a cusped hyperbolic 3-manifold. This may provide more confidence in the following conjecture.
\begin{conjecture}[Part of {\cite[Question 9]{Reid13}}]
Are all cusped hyperbolic 3-manifolds profinitely rigid among compact, orientable 3-manifolds?
\end{conjecture}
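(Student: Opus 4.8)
The plan is to reduce the conjecture, via the finiteness and geometric-detection results already available, to a comparison of two cusped hyperbolic manifolds, and then to attempt to recover the hyperbolic geometry from the lattice of Dehn fillings. By Liu's theorem the profinite completion $\widehat{\pi_1M}$ of a cusped finite-volume hyperbolic 3-manifold $M$ admits only finitely many compact orientable 3-manifolds $N$ with $\widehat{\pi_1N}\cong\widehat{\pi_1M}$; by the Wilton--Zalesskii detection of geometry every such $N$ is again cusped finite-volume hyperbolic with the same number of cusps. Thus it suffices to prove: if $M$ and $N$ are cusped hyperbolic and $\widehat{\pi_1M}\cong\widehat{\pi_1N}$, then $M\cong N$. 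Here \autoref{inthm: Dehn filling} and \autoref{incor: exceptional} already supply a boundary homeomorphism $\Psi\colon\partial M\to\partial N$ matching slopes so that every Dehn filling $M_{(c_i)}$ is profinitely isomorphic to $N_{(\Psi(c_i))}$ and exceptional slopes correspond to exceptional slopes.

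The strategy is then to use the entire \emph{profinite Dehn surgery space} of $M$ --- the assignment sending each slope tuple to the profinite completion, with its peripheral structure, of the filled manifold --- as a reconstruction datum for $M$. The first step exploits the exceptional fillings: applying known (or conjectural) profinite rigidity of closed 3-manifolds to the finitely many exceptional fillings would pin down $N_{(\Psi(c_i))}\cong M_{(c_i)}$ exactly at each exceptional slope, constraining $N$ to share the precise topology of an extreme exceptional-surgery pattern. This is exactly the mechanism that \autoref{MAIN} exploits case by case; the full conjecture would require making it uniform, i.e. showing that for \emph{every} cusped hyperbolic $M$ the totality of its exceptional fillings already determines $M$ among cusped hyperbolic manifolds.

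The second step must handle the generic (hyperbolic) fillings. By Thurston's hyperbolic Dehn surgery, all but finitely many fillings $M_{(c_i)}$ are hyperbolic, and their geometry --- volume, core geodesic length, cusp shape --- varies injectively and is controlled by Neumann--Zagier asymptotics. If the profinite completion were known to determine hyperbolic volume (the profinite volume conjecture alluded to in the remark), then the slope-matching $\Psi$ together with these asymptotics would force the cusp shapes of $M$ and $N$ to agree, and Mostow--Prasad rigidity combined with the J{\o}rgensen--Thurston finiteness of manifolds of bounded volume would yield $M\cong N$. In effect one would be promoting Liu's finitely-many-candidates statement to a rigidity statement by squeezing the candidate set to a single point using a complete geometric invariant extracted from $\widehat{\pi_1M}$.

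The decisive obstacle is precisely the extraction of such a geometric invariant. The profinite completion a priori records only congruence-type data --- the system of finite quotients --- and there is at present no known bridge from this to the coarse geometric quantities (volume, trace field, cusp shape) that Mostow rigidity requires. Every existing profinite rigidity proof in the hyperbolic world sidesteps this gap through very special structure: fiberedness together with congruence omnipotence of the monodromy, as in \cite{BRW17,Cw24}, or a finite census argument for low-complexity closed examples, as in \cite{BMRS20,BR22}. None of these is available for a general cusped $M$. Thus, even granting the full Dehn-filling machinery of this paper, the genuinely hard part --- the profinite volume conjecture, or some equally strong geometric detection --- remains, and it is on this step that a complete resolution of the conjecture must ultimately turn.
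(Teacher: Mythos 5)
The statement you are addressing is a conjecture, not a theorem: the paper offers no proof of it, and indeed explicitly frames \autoref{inthm: Dehn filling} only as evidence that ``may provide more confidence'' in Reid's question. Your text, to its credit, is honest about this --- it is a conditional roadmap, not a proof --- but as a proof attempt it has two concrete gaps, each of which is an open problem rather than a missing technical step. First, your exceptional-filling step is not merely non-uniform; it fails outright for generic $M$. The paper's \autoref{MAIN} works only because the specific manifolds listed satisfy rare \emph{extremal characterization} theorems (Gordon, Gordon--Wu, Lee, Berge): e.g.\ $\mathcal{W}(\frac{5}{2})$ is the \emph{unique} one-cusped hyperbolic manifold whose toroidal slopes realize distance $7$. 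A general cusped hyperbolic $M$ may have very few exceptional slopes or none at all (the number is uniformly bounded, and vanishes once the cusp is large enough), and even when exceptional slopes exist their pattern does not determine $M$; moreover, invoking ``profinite rigidity of closed 3-manifolds'' for the filled manifolds assumes a statement that is itself open outside a handful of examples. Second, your generic-filling step rests on the profinite volume conjecture, which is open, and even granting it the argument is underspecified: volumes of hyperbolic fillings need not vary injectively in the slope (symmetries of $M$ produce distinct slopes with isometric fillings), and passing from matched volumes of fillings through Neumann--Zagier asymptotics to equality of cusp shapes, and thence via Mostow--Prasad rigidity to $M\cong N$, is a sketch of a hoped-for mechanism, not an argument.

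In short, after the legitimate reductions you cite (Liu's finiteness, the Wilton--Zalesskii geometry detection, and the paper's \autoref{inthm: Dehn filling} with \autoref{incor: exceptional}), everything that remains is precisely the hard content of the conjecture, and your two proposed bridges both route through unproven statements. There is nothing in the paper to compare your argument against, because the paper proves only the special cases of \autoref{MAIN}, \autoref{THMB} and \autoref{CORC}; your final paragraph correctly diagnoses why the general case resists: no known mechanism extracts coarse geometric invariants (volume, cusp shape, trace field) from the system of finite quotients.
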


\subsection*{Structure of the paper}
\autoref{SEC: Preliminary} includes preliminary materials for profinite groups, especially the profinite completion of 3-manifold groups. 

In \autoref{SEC: Dehn filling}, we establish the main technique \autoref{inthm: Dehn filling}, which enables us to profinitely match up the Dehn fillings between a profinitely isomorphic pair of cusped hyperbolic 3-manifolds. 

In \autoref{subsec: knot}, we apply this technique to knot complements in $S^3$ and prove \autoref{inthm: Knot} as a combination of \autoref{PROP: Detect knot complement}, \autoref{THM: Detect satellite knot} and \autoref{LEM: Knot complement}. This important observation  leads us to the proof of \autoref{THMB} and \autoref{CORC} in \autoref{subsec: BC}.

\autoref{SEC: Exceptional} serves as a preparation for the proof of \autoref{MAIN}. We show a more precise version of \autoref{incor: exceptional}, that the profinite completion distinguishes specific types of exceptional Dehn fillings, including non-aspherical fillings, toroidal fillings, and aspherical fillings containing embedded Klein bottles.  

Finally, in \autoref{SEC: Main}, we prove \autoref{MAIN} through detecting the extremely special patterns of exceptional slopes on the listed hyperbolic manifolds. \ref{m.W}, \ref{m.WS}, \ref{m.L} and \ref{m.M14} are proved as \autoref{THM: link complement}; \ref{m.Kn}, \ref{m.Jn} and \ref{m.EM} are proved as \autoref{Mthm: knot complement}; \ref{m.LA} is proved in \autoref{THM: Berge}; \ref{m.52} is proved in \autoref{THM: W(-5/2)}; \ref{m.LB} is proved in \autoref{THM: LB0}; \ref{m.Lss} and \ref{m.LBr} are proved in \autoref{THM: LBr}.

\subsection*{Acknowledgement}
The author sincerely thanks Yu Huang for highly inspiring discussions in the surrounding topics, and also his advisor Yi Liu for useful comments. He would also like to thank the referee for careful proofreading and valuable suggestions. 

\section{Preliminaries}\label{SEC: Preliminary}

\subsection{Profinite groups}
\revised{A {\em profinite group} $\Pi$ is the projective limit of an inverse system of finite groups $\{P_i\}_{i\in I}$ indexed over a directed partially ordered set $I$, namely $\Pi=\limi_{i\in I}P_i$. The profinite group $\Pi$ is equipped with the subspace topology inherited from the space $\prod_{i\in I}P_i$ assigned  with  the product topology of the discrete finite groups. With this topology, $\Pi$ becomes a topological group, and we always take this as the topology on $\Pi$. } 

Recall in \autoref{DEF: Profinite completion}, we have defined the profinite completion of an abstract group $G$ as a profinite group $\widehat{G}=\limi_{N\lhd_f G} G/N$. 
There is a canonical homomorphism $G\to \widehat{G}$, which sends $g\in G$ to $(gN)_{N\lhd_fG}\in \widehat{G}$. In addition, the profinite completion is functorial, ie any homomorphism $f: G_1\to G_2$, induces a continuous homomorphism $\widehat{f}:\widehat{G_1}\to \widehat{G_2}$ so that the following diagram commutes.
\begin{equation*}
\begin{tikzcd}
G_1 \arrow[r, "f"] \arrow[d] & G_2 \arrow[d] \\
\widehat{G_1} \arrow[r, "\widehat{f}"]   & \widehat{G_2}           
\end{tikzcd}
\end{equation*}

\revised{
For any two finitely generated groups $G_1$ and $G_2$, the theorem of Nikolov-Segal \cite{NS03} implies that any homomorphism from $\widehat{G_1}$ to $\widehat{G_2}$ as abstract groups is continuous. 
Thus, when we say that the profinite completions of two finitely generated groups are isomorphic, in addition to a group-theoretical isomorphism, the isomorphism is automatically a homeomorphism with respect to their topologies defined in the beginning of this subsection. 
}

\revised{
\begin{lemma}[{\cite[Proposition 3.2]{Reid13}}]\label{lem: abelianize}
Let $G_1$ and $G_2$ be finitely generated groups such that $\widehat{G_1}\cong \widehat{G_2}$. Then, their abelianizations $G_1^{\mathrm{ab}}$ and $G_2^{\mathrm{ab}}$ are isomorphic. 
\end{lemma}
}
\subsection{Powers in a profinite group}\label{subsec: power}
\revised{
The profinite completion of $\Z$, denoted by $\widehat{\Z}$, is the ring of profinite integers, and it can be identified with the direct product of all the rings of $p$-adic integers $\Z_p$, where $p$ ranges through all prime numbers. We use the symbol $\Zx$ to denote the group of invertibles in $\widehat{\Z}$. In fact, $\Zx=\limi(\Z/n\Z)^\times=\prod \Z_p^{\times}$. 

Let $\Pi$ be a profinite group, and let $x$ be an element in $\Pi$. Then, there is a unique continuous homomorphism $\varphi_x^\Pi: \widehat{\Z}\to \Pi$ such that $\varphi_x^\Pi(1)=x$. The image $\varphi_x^\Pi(\widehat{\Z})$ is indeed the closed subgroup $\overline{\langle x\rangle }$ generated by $x$, where the overline denotes the topological closure in $\Pi$. For any $\lambda\in \widehat{\Z}$, we use the symbol $x^\lambda$ to denote $\varphi_x^\Pi(\lambda)$, which is referred to as the {\em $\lambda$-power} of $x$ in $\Pi$. When $\lambda\in \Zx$, it is clear that $\overline{\langle x \rangle}=\overline{\langle x^\lambda \rangle }$. 

Suppose $\Pi'\subseteq \Pi$ is a closed subgroup such that $x\in \Pi'$. Then, $\Pi'$ is also a profinite group by \cite[Proposition 2.2.1]{RZ10}, and $\varphi_x^\Pi$ is the composition of $\varphi_x^{\Pi'}$ with the inclusion map $\Pi'\hookrightarrow \Pi$. Thus, for any $\lambda\in \widehat{\Z}$, the $\lambda$-power of $x$ in $\Pi'$ is the same as the $\lambda$-power of $x$ in $\Pi$, and we are justified to use the notation $x^\lambda$ without specifying the ambient subgroup. 
We refer the readers to \cite[Chapter 4]{RZ10} for more details. 
}

\subsection{Residual finiteness}
An abstract group $G$ is called {\em residually finite} if the intersection of all its finite-index subgroups is trivial. It is easy to see that the canonical homomorphism $G\to \widehat{G}$ is injective if and only if $G$ is residually finite. \revised{Thus, when $G$ is residually finite, we always view $G$ as a subgroup of $\widehat{G}$ via the canonical homomorphism, and any element of $G$ is also regarded as an element of $\widehat{G}$. }

Hempel's theorem for residual finiteness \cite{Hem87}, together with the virtual Haken theorem \cite{Ago13}, yields the following result.
\begin{proposition}[{\cite{Ago13,Hem87}}]\label{PROP: RF}
The fundamental group of any compact 3-manifold is residually finite.
\end{proposition}


\subsection{Profinite detection of hyperbolicity}
In our context, a {\em finite-volume hyperbolic 3-manifold} refers to a compact, orientable 3-manifold with empty or incompressible toral boundary, whose interior admits a complete Riemannian metric of constant sectional curvature $-1$.  


In a series of works, Wilton-Zalesskii showed that the profinite completion detects whether a compact, orientable 3-manifold is finite-volume hyperbolic. The closed case was proven in \cite{WZ17}, and the cusped case was proven in \cite{WZ17b}, see also \cite[Theorem 4.18 and 4.20]{Reid:2018}.

Moreover, when $M$ is a cusped finite-volume hyperbolic 3-manifold, \cite[Proposition 3.1]{WZ19} showed that the conjugacy classes of peripheral subgroups in $\widehat{\pi_1M}$ are exactly the conjugacy classes of the maximal closed subgroups isomorphic to $\widehat{\Z}^2$. Therefore, the number of cusps is exactly the number of such conjugacy classes in $\widehat{\pi_1M}$. We conclude these properties as the following proposition. 

\begin{proposition}[{\cite{WZ17,WZ17b,WZ19}}]\label{PROP: Detect hyperbolic}
Let $M$ be an orientable finite-volume hyperbolic 3-manifold. Suppose $N$ is a compact, orientable 3-manifold so that $\widehat{\pi_1M}\cong \widehat{\pi_1N}$, then $N$ is also a finite-volume hyperbolic 3-manifold, and $N$ has the same number of cusps as $M$. In particular, $M$ is closed if and only if $N$ is closed. 
\end{proposition}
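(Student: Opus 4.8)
The plan is to combine the three cited results of Wilton--Zalesskii, deducing successively that $N$ is finite-volume hyperbolic, that it has the same number of cusps as $M$, and hence that $N$ is closed exactly when $M$ is. No new geometry needs to be done; the statement is an assembly of the profinite detection theorems, and the work is in reading off the cusp count correctly.

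First I would apply the profinite detection of hyperbolicity. Whether a compact orientable 3-manifold (with our standing convention of no boundary spheres) admits a complete finite-volume hyperbolic structure is characterised intrinsically by its profinite completion: the closed case is settled in \cite{WZ17} and the cusped case in \cite{WZ17b}. Each is an ``if and only if'' statement phrased purely in terms of $\widehat{\pi_1}$, so the property is invariant under any isomorphism of profinite completions, and it holds among \emph{all} compact orientable 3-manifolds, thereby ruling out a priori that $N$ is reducible, Seifert fibered, or mixed. Moreover the two characterisations are mutually exclusive, so they also record whether the manifold is closed or cusped. Since $M$ is finite-volume hyperbolic and $\widehat{\pi_1 N}\cong \widehat{\pi_1 M}$, it follows at once that $N$ is finite-volume hyperbolic and of the same type---closed or cusped---as $M$.

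It remains to match the number of cusps in the cusped case (in the closed case both counts are zero). Here the input is \cite[Proposition 3.1]{WZ19}: for a cusped finite-volume hyperbolic 3-manifold the conjugacy classes of peripheral subgroups in $\widehat{\pi_1}$ coincide exactly with the conjugacy classes of maximal closed subgroups isomorphic to $\widehat{\Z}^2$. This description is purely group-theoretic and hence profinitely invariant, so an isomorphism $\widehat{\pi_1 M}\cong \widehat{\pi_1 N}$ carries these conjugacy classes bijectively onto one another. As the number of cusps of a cusped hyperbolic manifold equals the number of such conjugacy classes, $M$ and $N$ have the same number of cusps; in particular $M$ is closed (zero cusps) if and only if $N$ is.

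The genuine difficulty is confined to the first step: the detection of hyperbolicity rests on the deep machinery of \cite{WZ17,WZ17b}, which in turn draws on geometrization and on Agol's virtual Haken theorem (used already in \autoref{PROP: RF}). Once those inputs are granted, the present proposition is a formal consequence, the only care needed being to keep the closed and cusped characterisations separate and to read the cusp count off the profinitely invariant description of peripheral subgroups supplied by \cite{WZ19}.
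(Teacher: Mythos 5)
Your proposal is correct and matches the paper's treatment exactly: the paper justifies this proposition in the preceding paragraph by citing \cite{WZ17} for the closed case, \cite{WZ17b} for the cusped case, and \cite[Proposition 3.1]{WZ19} for reading off the cusp count from the conjugacy classes of maximal closed subgroups isomorphic to $\widehat{\Z}^2$. No further comment is needed.
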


\subsection{Peripheral structure}
Let $M$ be a compact, orientable 3-manifold with boundary. We use the notation $\partial_iM$ to denote a boundary component of $M$. Up to a choice of basepoint, the inclusion induces a homomorphism $\iota_{\ast}:\pi_1(\partial_iM)\to \pi_1(M)$, and $\partial_iM$ is {\em incompressible} if and only if $\iota_{\ast}$ is injective. The image of $\iota_{\ast}$ is a conjugacy representative of the {\em peripheral subgroup} corresponding to $\partial_iM$. When $\partial_iM$ is incompressible, by a slight abuse of notation, we sometimes omit the notation $\iota_\ast$ and simply denote $\pi_1\partial_iM$ as a peripheral subgroup in $\pi_1M$. 

Let $G$ be a group and $H$ be a subgroup of $G$. We denote by $\overline{H}$ the closure of the image of $H$ in $\widehat{G}$ through the canonical homomorphism $G\to \widehat{G}$.

\begin{proposition}\label{COR: Peripheral subgroup}
Suppose $M$ is a compact orientable 3-manifold, and $\partial_iM\cong T^2$ is an incompressible toral boundary component. Then, via the inclusion map, there is an isomorphism $\overline{\pi_1\partial_iM}\cong \widehat{\pi_1\partial_iM}\cong \widehat{\Z}^2$.
\end{proposition}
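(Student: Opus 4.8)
The statement splits into two independent isomorphisms, which I would treat separately. Write $G=\pi_1M$ and $H=\pi_1\partial_iM$; since $\partial_iM\cong T^2$ is incompressible, $\iota_\ast$ is injective and $H\cong\Z^2$. The right-hand isomorphism $\widehat{\pi_1\partial_iM}\cong\tensor\pi_1\partial_iM$ is purely algebraic and formal: profinite completion commutes with finite direct products, so $\widehat{\Z^2}\cong\widehat{\Z}\times\widehat{\Z}$, while $\widehat{\Z}\otimes_{\Z}\Z^2\cong\widehat{\Z}\oplus\widehat{\Z}$ because tensoring commutes with finite direct sums; both sides are canonically $\widehat{\Z}^2$, and the identification is natural in $H$. (Equivalently, for any finitely generated abelian group $A$ one has $\widehat{A}\cong\widehat{\Z}\otimes_{\Z}A$, using flatness of $\widehat{\Z}$ over $\Z$.)

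The content lies in the left-hand isomorphism $\overline{\pi_1\partial_iM}\cong\widehat{\pi_1\partial_iM}$. By definition $\overline{H}=\limi_{N\lhd_f G}H/(H\cap N)$, and since $[H:H\cap N]\le[G:N]<\infty$ each $H\cap N$ is a finite-index subgroup of $H$. Thus $\overline{H}$ is the completion of $H$ with respect to the topology induced from the profinite topology of $G$, a topology no finer than the full profinite topology of $H$. The universal property of $\widehat{H}$ therefore yields a canonical continuous surjection $\widehat{H}\twoheadrightarrow\overline{H}$, and this map is an isomorphism precisely when the two topologies coincide, that is, when $G$ induces the full profinite topology on $H$: concretely, when every finite-index subgroup $U\le H$ contains $H\cap N$ for some $N\lhd_f G$.

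The remaining step, and the genuine obstacle, is to verify that $\pi_1M$ induces the full profinite topology on the incompressible peripheral subgroup $\pi_1\partial_iM$. This is not formal: it is exactly the separability/efficiency statement for peripheral subgroups of compact $3$-manifolds with incompressible boundary, which I would invoke from \cite[Theorem 6.19]{Wil19}, ultimately resting on Agol's virtual specialness and the resulting LERF property \cite{Ago13}. Here the hypotheses apply directly because $\partial_iM$ is incompressible and toral, so no reduction to the compressible case is needed. Granting this, the surjection $\widehat{H}\to\overline{H}$ becomes an isomorphism, and composing with the algebraic identification of the first paragraph gives the desired chain of canonical isomorphisms.
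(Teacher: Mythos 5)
Your decomposition into an algebraic identification $\widehat{\pi_1\partial_iM}\cong\tensor\pi_1\partial_iM$ plus the statement that $\pi_1M$ induces the full profinite topology on $\pi_1\partial_iM$ is exactly the paper's structure, and your reduction of the latter to the condition that every finite-index subgroup $U\le H$ contains $H\cap N$ for some $N\lhd_f G$ is the right formulation. The weak point is the justification of that condition. You invoke \cite[Theorem 6.19]{Wil19} and assert that ``the hypotheses apply directly because $\partial_iM$ is incompressible and toral, so no reduction to the compressible case is needed.'' But that theorem is stated for compact, orientable, irreducible 3-manifolds \emph{all} of whose boundary components are incompressible, whereas the proposition only assumes that the single component $\partial_iM$ is incompressible: $M$ may be reducible and may have other compressible (or higher-genus) boundary components. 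So the citation does not apply as stated, and a reduction would in fact be needed --- e.g.\ attaching compression bodies to the compressible boundary components to obtain a boundary-incompressible $M^+$ with $\pi_1M\cong\pi_1M^+$ before quoting Wilkes.

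The paper sidesteps this entirely by citing Hamilton \cite{Ham01}: every abelian subgroup of the fundamental group of a compact 3-manifold is separable. Applied to all finite-index subgroups of $\pi_1\partial_iM\cong\Z^2$ (each of which is again an abelian subgroup of $\pi_1M$), this yields exactly the full induced profinite topology, with no hypotheses on irreducibility or on the other boundary components; then \cite[Lemma 3.2.6]{RZ10} gives injectivity of $\widehat{\pi_1\partial_iM}\to\widehat{\pi_1M}$ with image $\overline{\pi_1\partial_iM}$, and \cite[Lemma 2.9]{Xu} supplies the algebraic identification you proved by hand. Your algebraic half and your statement of what must be verified are fine; only the source of the topological input needs to be replaced or supplemented.
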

\begin{proof}
According to \cite{Ham01}, any abelian subgroup of $\pi_1M$ is separable. As a result, 
$\pi_1M$ induces the full profinite topology on $\pi_1\partial_iM\cong \Z^2$. The standard theories of profinite groups \cite[Lemma 3.2.6]{RZ10} implies that $\widehat{\pi_1\partial_iM}\to \widehat{\pi_1M}$ is injective. Note that the image of this homomorphism is exactly $\overline{\pi_1\partial_iM}$. Thus, $\overline{\pi_1\partial_iM}\cong \widehat{\pi_1\partial_iM}\cong \widehat{\Z}^2$. 
\end{proof}

The peripheral structure of a compact 3-manifold is given by the conjugacy classes of peripheral subgroups in the fundamental group of the 3-manifold. We extend this concept into the profinite settings.

Let $M$ and $N$ be two compact orientable 3-manifolds. For each boundary component of $M$, up to a choice of basepoints, we fix the homomorphism induced by inclusion $\iota_{i_{\ast}}: \pi_1(\partial_iM)\to \pi_1(M)$. Similarly, fix $\rho_{j_{\ast}}: \pi_1(\partial_jN)\to \pi_1(N)$.
\begin{definition}\label{DEF: Peripehral structure}
Let $M$ and $N$ be two compact orientable 3-manifolds. Suppose $f:\widehat{\pi_1M}\to \widehat{\pi_1N}$ is an isomorphism. We say that $f$ {\em respects the peripheral structure} if there is a one-to-one correspondence between the boundary components of $M$ and $N$, denoted by $\partial_iM\longleftrightarrow \partial_{\sigma(i)}N$, such that for each $i$, $f(\overline{\iota_{i_\ast}(\pi_1(\partial_iM))})$ is a conjugate of $\overline{\rho_{\sigma(i)_\ast}(\pi_1(\partial_{\sigma(i)}N))}$ in $\widehat{\pi_1N}$.
\end{definition}

We remark that \autoref{DEF: Peripehral structure} also includes compressible boundary components.  
In addition, when $M$ and $N$ have empty boundary, any isomorphism $f:\widehat{\pi_1M}\ttt\widehat{\pi_1N}$ respects the peripheral structure by our definition. 

Analogous to \autoref{Finite quotient}, the profinite completion of fundamental group together with the peripheral structure encodes the same data as the ``peripheral pair quotients'' of the fundamental group. As is pointed out in \cite[Theorem 2.5]{Xu}, $\widehat{\pi_1M}\cong \widehat{\pi_1N}$ by an isomorphism respecting the peripheral structure if and only if $\mathcal{C}(\pi_1(M);\iota_{1_\ast}(\pi_1(\partial_1M)),\cdots,\iota_{n_\ast}(\pi_1(\partial_nM)))=\mathcal{C}(\pi_1(N);\rho_{1_\ast}(\pi_1(\partial_1N)),\cdots,\rho_{n_\ast}(\pi_1(\partial_nN)))$.

\section{Peripheral $\Zx$-regularity and correspondence of Dehn fillings}\label{SEC: Dehn filling}

The key concept introduced in \cite{Xu} is the so-called peripheral $\Zx$-regularity. In this section, we will show that peripheral $\Zx$-regularity eventually implies that a  profinite
isomorphism  between two cusped finite-volume hyperbolic 3-manifolds carries profinite isomorphisms between their Dehn fillings. 

\subsection{Peripheral $\Zx$-regularity}

\revised{Recall that for a compact orientable 3-manifold $M$ with toral boundary, we have $\pi_1M$ sitting inside $\widehat{\pi_1M}$ as a subgroup via the canonical homomorphism according to  \autoref{PROP: RF}, and so any peripheral subgroup $\pi_1\partial_iM$ is also naturally viewed as a subgroup of $\widehat{\pi_1M}$.}

\begin{definition}[Peripheral $\Zx$-regularity]\label{DEF: peripheral Zx-regularity}
Let $M$ and $N$ be compact, orientable 3-manifolds with incompressible toral boundary. Suppose $f:\widehat{\pi_1M}\ttt\widehat{\pi_1N}$ is an isomorphism.  Let $\partial_i M$ be a boundary component of $M$, and let $\pi_1\partial_iM$ be a conjugacy representative of the corresponding peripheral subgroup. We say that $f$ is \textit{peripheral $\Zx$-regular} at $\partial_iM$ if the following holds.
\begin{enumerate}[label=(\arabic*), leftmargin=*]
\item\label{AAAAA1} There exist a boundary component $\partial_j N$ of $N$, a conjugacy representative of the peripheral subgroup $\pi_1\partial_jN$, and an element $g\in \widehat{\pi_1N}$ such that $f(\overline{\pi_1\partial_iM})=g^{-1} \cdot \overline{\pi_1\partial_jN}\cdot g$.
\item 
There exist an element $\lambda\in \Zx$ and an isomorphism $\psi:\pi_1\partial_iM\ttt \pi_1\partial_jN $  such that $f(\gamma)=g^{-1}\cdot \psi(\gamma)^\lambda \cdot g$ for any $\gamma \in \pi_1\partial_iM$. 
\end{enumerate}
\end{definition}

One essential result in \cite{Xu} is that any profinite isomorphism  between cusped finite-volume hyperbolic 3-manifolds is always peripheral $\Zx$-regular.

\revised{
\begin{theorem}[{\cite[Theorem 7.2]{Xu}}]\label{hypMfd}
Let $M$ and $N$ be orientable cusped finite-volume hyperbolic 3-manifolds, and suppose $f:\widehat{\pi_1M}\ttt\widehat{\pi_1N}$ is an isomorphism. Then, $f$ respects the peripheral structure and $f$ is peripheral $\Zx$-regular at all boundary components of $M$. 
\end{theorem}
Note that in the setting of \autoref{hypMfd}, given a boundary component $\partial_i M$, the corresponding boundary component $\partial_j N$ appearing in \ref{AAAAA1} of \autoref{DEF: peripheral Zx-regularity} is unique by \cite[Lemma 4.5]{WZ17}, which gives a one-to-one correspondence between the boundary components of $M$ and $N$.} 
The proof of this theorem was actually based on Liu's observation of homological $\Zx$-regularity \cite{Liu23}.

More generally, peripheral $\Zx$-regularity also appears in profinite isomorphisms between mixed 3-manifolds, based on the profinite detection of JSJ-decomposition \cite{WZ19}. 

In the following context, a mixed 3-manifold refers to a compact, orientable, irreducible 3-manifold with empty or incompressible toral boundary, which admits non-trivial JSJ-decomposition and contains at least one hyperbolic JSJ-piece.

\begin{proposition}\label{THM: Mixed Peripheral regular}
Let $M$ be a mixed 3-manifold, and let $\partial_1^{hyp}M,\cdots, \partial_n^{hyp}M$ be the boundary components of $M$ belonging to hyperbolic JSJ-pieces. Suppose $N$ is a compact, orientable 3-manifold, and $f:\widehat{\pi_1M}\ttt \widehat{\pi_1N}$ is an isomorphism.
\begin{enumerate}[label=(\arabic*), leftmargin=*]
\item\label{3.3-1} $N$ is also a mixed 3-manifold; in particular, $N$ is irreducible and boundary-incompressible.
\item\label{3.3-2} There are exactly $n$ boundary components of $N$ which belong to  hyperbolic JSJ-pieces, and we denote these boundary components as $\partial_1^{hyp}N,\cdots,\partial _n^{hyp} N$.
\item\label{3.3-3} Up to a reordering, $f(\overline{\pi_1\partial_i^{hyp}M})$ is a conjugate of $\overline{\pi_1\partial_i^{hyp}N}$ in $\widehat{\pi_1N}$.
\item\label{3.3-4} $f$ is peripheral $\Zx$-regular at each $\partial_i^{hyp}M$ ($1\le i\le n$).
\end{enumerate}
\end{proposition}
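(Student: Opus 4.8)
The plan is to reduce everything to the cusped hyperbolic case \autoref{hypMfd} by passing to the individual JSJ pieces, using the profinite detection of the JSJ decomposition of Wilton--Zalesskii \cite{WZ19} as the bridge between the global isomorphism $f$ and the pieces. Since $M$ is mixed, \cite{WZ19} endows $\widehat{\pi_1 M}$ with the structure of an efficient profinite graph of groups refining the abstract JSJ graph of groups of $\pi_1 M$, whose edge groups are the peripheral $\widehat{\Z}^2$'s along the JSJ tori; the isomorphism $f$ transports this structure to an analogous one on $\widehat{\pi_1 N}$. Combined with the profinite detection of the geometry type of each vertex (hyperbolic versus Seifert) from \cite{WZ17,WZ17b}, this forces $N$ to be irreducible and boundary-incompressible with a non-trivial JSJ decomposition containing a hyperbolic piece, i.e. $N$ is mixed, and it matches the hyperbolic vertex groups of $M$ bijectively with those of $N$ while preserving the incidence of edges. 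This establishes claim~(1).

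Next I would separate internal JSJ tori from the boundary tori of the ambient manifold. By \cite[Proposition 3.1]{WZ19} applied to a hyperbolic piece $M_v$, its cusps are exactly the conjugacy classes of maximal $\widehat{\Z}^2$ subgroups of $\widehat{\pi_1 M_v}$; among these, the internal JSJ tori are precisely those carrying an incident edge of the graph, while the remaining free cusps are exactly the boundary components $\partial_i^{hyp}M$ of $M$ lying in $M_v$. Because $f$ respects the graph-of-groups structure (edges to edges, preserving incidence), it sends internal tori to internal tori and hence free cusps to free cusps. Summing over the hyperbolic vertices yields that the number of boundary tori of $N$ lying in hyperbolic pieces equals $n$ (claim~(2)), and that $f$ sends each $\overline{\pi_1\partial_i^{hyp}M}$ to a conjugate of some $\overline{\pi_1\partial_j^{hyp}N}$ (claim~(3)); this last statement is exactly condition~(1) of \autoref{DEF: peripheral Zx-regularity}.

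For claim~(4) the key point is to transfer the local statement to the global map $f$. By the efficiency of the profinite JSJ decomposition \cite{WZ19}, each vertex group injects into $\widehat{\pi_1 M}$ as a closed subgroup carrying the full profinite topology, so after composing $f$ with an inner automorphism $C_h$ of $\widehat{\pi_1 N}$ it restricts to an isomorphism of profinite groups $C_h\circ f:\widehat{\pi_1 M_v}\ttt \widehat{\pi_1 N_w}$ between the matched hyperbolic pieces. Moreover the closure $\overline{\pi_1\partial_i^{hyp}M}$ computed in $\widehat{\pi_1 M}$ coincides with the one computed in $\widehat{\pi_1 M_v}$, both canonically $\tensor \pi_1\partial_i^{hyp}M$ by \autoref{COR: Peripheral subgroup}. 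Since $M_v$ and $N_w$ are cusped finite-volume hyperbolic, I would apply \autoref{hypMfd} to $C_h\circ f$ to obtain peripheral $\Zx$-regularity at every cusp of $M_v$, in particular at the free cusp $\partial_i^{hyp}M$; because the cusp correspondence induced by $f$ is intrinsic (it is the matching of maximal $\widehat{\Z}^2$-conjugacy classes), it agrees with the free-to-free matching of claim~(3), delivering the $\lambda_i\otimes\psi_i$ form in condition~(2) of \autoref{DEF: peripheral Zx-regularity}. The main obstacle I anticipate is precisely this reconciliation of topologies and closures: verifying that $\widehat{\pi_1 M_v}$ embeds in $\widehat{\pi_1 M}$ with the full profinite topology and that the peripheral closures computed in the piece and in the ambient completion genuinely coincide, so that the cusped-hyperbolic theorem \autoref{hypMfd} may legitimately be applied to the restriction of $f$, with all the bookkeeping of internal versus boundary tori carried out compatibly with the single isomorphism $f$.
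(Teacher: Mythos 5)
Your proposal is correct and follows essentially the same route as the paper: establish that $N$ is mixed, use the Wilton--Zalesskii profinite JSJ correspondence to match hyperbolic vertex groups and to separate edge (internal JSJ) tori from free boundary tori, and then apply \autoref{hypMfd} to the conjugated restriction $C_h\circ f$ on each hyperbolic piece, transferring peripheral $\Zx$-regularity back to $f$ via the commuting square. The only point to make explicit is that before invoking the JSJ correspondence for $N$ one must first know $N$ is irreducible and boundary-incompressible with toral boundary, which the paper extracts separately from the profinite detection of the prime decomposition (\cite[Theorem 6.22]{Wil19}) rather than from the transported graph-of-groups structure itself.
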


The first three items are well-known for experts, and the fourth item is a direct corollary of \autoref{hypMfd}. For completeness, we include a proof of \autoref{THM: Mixed Peripheral regular} in \autoref{APP}.

\subsection{Dehn filling}
Let us first recall the definition of Dehn filling. Let $T^2$ denote a torus, the collection of \textit{slopes} on $T^2$ is denoted by $$\slope(T^2)=\left.\left\{\alpha \in \pi_1(T^2)\left|\,\begin{gathered} \alpha \text{ is a non-zero primitive element in}\\ \text{the free abelian group }\pi_1(T^2)\cong \Z^2\end{gathered}\right.\right\}\right /\alpha\sim -\alpha.$$
For a non-zero primitive element $\alpha \in \pi_1(T^2)$, we denote by $[\alpha]$ its image in $\slope(T^2)$. Usually, we simply denote an element in $\slope(T^2)$ by a lower-case letter, for example $c\in \slope(T^2)$. 
  
We denote $\slp(T^2)=\slope(T^2)\cup \left\{\varnothing\right\}$, where $\varnothing$ is just a symbol used to denote ``no Dehn filling''. 
\begin{definition}[Dehn filling]
Let $M$ be a compact, orientable 3-manifold with toral boundary components $\partial_1M,\cdots, \partial_n M$, and let  $c_i\in \slp(\partial_iM)$ be a choice of slopes. For each $1\le i\le n$, if $c_i\in \slope(\partial_iM)$, we glue a solid torus $D^2\times S^1$ to $M$ along the boundary torus $\partial_iM$, so that the meridian of the solid torus is attached to $c_i$; and if $c_i=\varnothing$, we do nothing and skip this boundary component. The resulting  compact orientable 3-manifold is called the \textit{Dehn filling} of $M$ along $c_1,\cdots, c_n$, and is denoted by $M_{c_1,\cdots, c_n}$, or $M_{(c_i)}$ for short.
%
%
\end{definition}

\begin{lemma}\label{LEM: Van-Kampen}
Let $M$ be a compact, orientable 3-manifold with toral boundary components $\partial_1M,\cdots, \partial_n M$, and let  $c_i\in \slp(\partial_iM)$ be a choice of slopes. Up to a choice of basepoints, which may result in certain conjugations, we fix the maps $\varphi_i:\pi_1(\partial_iM)\to \pi_1(M)$. For $1\le i \le n$, let 
$\gamma_i$ be the image of $c_i$ in $\pi_1(M)$ through $\varphi_i$ 
if $c_i\in \slope(\partial_iM)$; and let $\gamma_i$ be the identity element if $c_i=\varnothing$.
\begin{enumerate}[label=(\arabic*), leftmargin=*]
\item\label{2.5-1} $\pi_1M_{(c_i)}\cong \pi_1M/\langle\!\langle \gamma_1,\cdots, \gamma_n\rangle\!\rangle$, where $\langle\!\langle \gamma_1,\cdots, \gamma_n\rangle\!\rangle$ denotes the normal subgroup generated by $\gamma_1,\cdots, \gamma_n$.
\item\label{2.5-2} $\widehat{\pi_1M_{(c_i)}}\cong \widehat{\pi_1M}/\overline{\langle\!\langle \gamma_1,\cdots, \gamma_n\rangle\!\rangle}$, where $\overline{\langle\!\langle \gamma_1,\cdots, \gamma_n\rangle\!\rangle}$ denotes the closed normal subgroup generated by $\gamma_1,\cdots, \gamma_n$.
\end{enumerate}
\end{lemma}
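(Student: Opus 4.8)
The plan is to treat the two parts separately, deriving the group-level statement (1) from the Seifert--van Kampen theorem and then obtaining the profinite statement (2) by the right-exactness of profinite completion.

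For (1), I would fill the boundary components one at a time and induct on the number of non-empty slopes. Filling a single component $\partial_iM$ amounts to gluing a solid torus $V_i=D^2\times S^1$ to $M$ along $\partial_iM$ so that the meridian $m_i=\partial D^2\times\{*\}$ is sent to a curve representing $c_i$. Choosing a collar neighbourhood on each side and applying van Kampen presents the result as an amalgamated free product $\pi_1M *_{\pi_1(\partial_iM)}\pi_1V_i$, where $\pi_1V_i\cong\Z$ is generated by the core and the amalgamating maps send $m_i\mapsto\gamma_i$ on the $M$-side and $m_i\mapsto 1$ on the $V_i$-side, while a dual longitude $\ell_i$ maps to the generator $t_i$ of $\pi_1V_i$. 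Hence the amalgam has presentation $\langle\pi_1M,\,t_i\mid \gamma_i=1,\ \ell_i=t_i\rangle$; the relation $\ell_i=t_i$ lets me eliminate the free generator $t_i$ by a Tietze transformation, leaving exactly $\pi_1M/\langle\langle\gamma_i\rangle\rangle$. Iterating over all non-empty slopes, and noting that an empty slope contributes $\gamma_i=e$ and so does not enlarge the normal closure, yields $\pi_1M_{(c_i)}\cong\pi_1M/\langle\langle\gamma_1,\dots,\gamma_n\rangle\rangle$. The only bookkeeping subtlety --- that different components require connecting paths to a common basepoint --- is precisely the conjugation ambiguity already flagged in the statement, so it does not affect the isomorphism type of the quotient.

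For (2), set $G=\pi_1M$ and $R=\langle\langle\gamma_1,\dots,\gamma_n\rangle\rangle$, so that by (1) we have $\pi_1M_{(c_i)}\cong G/R$. I would then invoke the general fact that profinite completion is right exact: for any group $G$ and normal subgroup $R\lhd G$ there is a canonical isomorphism $\widehat{G/R}\cong\widehat{G}/\overline{R}$, where $\overline{R}$ is the closure of the image of $R$ in $\widehat{G}$. The cleanest way to establish this is via the universal property --- the surjection $\widehat{G}\to\widehat{G/R}$ kills $\overline{R}$ and so descends to a map $\widehat{G}/\overline{R}\to\widehat{G/R}$, while the composite $G/R\to\widehat{G}/\overline{R}$ into the profinite group $\widehat{G}/\overline{R}$ extends to an inverse $\widehat{G/R}\to\widehat{G}/\overline{R}$; the two maps agree on the dense image of $G/R$, hence are mutually inverse. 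Finally I would observe that $\overline{R}=\overline{\langle\langle\gamma_1,\dots,\gamma_n\rangle\rangle}$ is exactly the smallest closed normal subgroup of $\widehat{G}$ containing $\gamma_1,\dots,\gamma_n$, since the closure of a normal subgroup is again closed and normal and any closed normal subgroup containing the $\gamma_i$ already contains the abstract normal closure. This delivers $\widehat{\pi_1M_{(c_i)}}\cong\widehat{\pi_1M}/\overline{\langle\langle\gamma_1,\dots,\gamma_n\rangle\rangle}$ with the closure interpreted as in the statement.

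Neither step presents a serious obstacle; the argument is essentially a standard van Kampen computation combined with a standard profinite fact, and finite generation of $\pi_1M$ (automatic since $M$ is compact) is not even required for the completion step. The point that most deserves care is the right-exactness isomorphism in (2): one must verify that profinite completion genuinely commutes with this particular quotient --- right-exactness does hold, even though completion famously fails to be left exact --- and that the closure $\overline{R}$ is correctly identified with the closed normal subgroup generated by the filling slopes, rather than with some a priori larger closed subgroup.
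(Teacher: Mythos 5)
Your proposal is correct and follows essentially the same route as the paper: part (1) is the standard van Kampen computation for gluing solid tori (the paper phrases it via attaching 2-handles along the $c_i$ and capping with 3-handles, which is the same calculation), and part (2) is the right-exactness of the profinite completion functor applied to $1\to\langle\langle\gamma_1,\dots,\gamma_n\rangle\rangle\to\pi_1M\to\pi_1M_{(c_i)}\to 1$, which the paper cites from Ribes--Zalesskii and you prove directly via the universal property. Your extra care in identifying $\overline{R}$ with the smallest closed normal subgroup containing the $\gamma_i$ is a worthwhile detail the paper leaves implicit, but it does not change the argument.
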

\begin{proof}
In fact, a Dehn filling can be viewed as attaching 2-handles to the simple closed curves $c_i$, and then capping off the boundary spheres by 3-handles. Therefore, \ref{2.5-1} is directly obtained from van Kampen's theorem.

Indeed, we have a short exact sequence:
\begin{equation*}
\begin{tikzcd}
1 \arrow[r] & {H=\langle\!\langle \gamma_1,\cdots, \gamma_n\rangle\!\rangle} \arrow[r, "\iota"] & \pi_1M \arrow[r] & \pi_1M_{(c_i)} \arrow[r] & 1.
\end{tikzcd}
\end{equation*}  
The right-exactness of the completion functor \cite[Proposition 3.2.5]{RZ10} then  implies the following exact sequence:  
\begin{equation*}
\begin{tikzcd}
\widehat{H} \arrow[r, "\widehat{\iota}"] & \widehat{\pi_1M} \arrow[r] & \widehat{\pi_1M_{(c_i)}} \arrow[r] & 1.
\end{tikzcd}
\end{equation*}
Note that the image of $\widehat{\iota}:\widehat{H}\to \widehat{\pi_1M}$ is exactly the closure of $H$ in $\widehat{\pi_1M}$, which is the closed normal subgroup generated by $\gamma_1,\cdots,\gamma_n$, so this implies \ref{2.5-2}.
\end{proof}

As an application of \autoref{hypMfd}, we obtain the key technique \autoref{inthm: Dehn filling} which is rephrased as follows. 
\begin{THMA}\label{PROP: Dehn filling}
Suppose that $M$ and $N$ are orientable cusped finite-volume hyperbolic 3-manifolds and that  $\widehat{\pi_1M}\cong \widehat{\pi_1N}$. Then, there is a one-to-one correspondence between the cusps of $M$ and $N$ denoted by $\partial_iM\leftrightarrow \partial_iN$ ($1\le i \le n$) up to a reordering, and there exist isomorphisms $\psi_i: \pi_1\partial_iM\ttt \pi_1\partial_iN$ between the conjugacy representatives of the peripheral subgroups such that for any slopes $c_i\in \slp(\partial_iM)$ ($1\le i \le n$), 
$$
\widehat{\pi_1M_{(c_i)}}\cong \widehat{\pi_1N_{(\pisi_{i}(c_i))}}
$$
by an isomorphism respecting the peripheral structure.
Here, $\pisi_{i}$ denotes the bijection $\slp(\partial_iM)\to \slp(\partial_iN)$  induced by the group isomorphism $\psi_i: \pi_1\partial_iM\to \pi_1\partial_iN$, and sends $\varnothing$ to $\varnothing$.
\end{THMA}
\begin{proof}
We  choose conjugacy representatives of the peripheral subgroups $\pi_1\partial_iM\subseteq \pi_1M$, $\pi_1\partial_j N\subseteq \pi_1N$, and  fix a profinite isomorphism $f:\widehat{\pi_1M}\ttt\widehat{\pi_1N}$. 
\revised{
According to \autoref{hypMfd}, up to a reordering, $f(\overline{\pi_1\partial_iM})$ is a conjugate of $\overline{\pi_1\partial_i N}$ for each $i$, and there exist coefficients $\lambda_i\in \Zx$ and isomorphisms $\psi_i:\pi_1\partial_iM\ttt\pi_1\partial_iN$ such that for any $\gamma_i\in \pi_1\partial_iM$, $f(\gamma_i)$ is a conjugate of $\psi_i(\gamma_i)^{\lambda_i}$ in $\widehat{\pi_1N}$. 
We show that the isomorphisms $\psi_i$ satisfy the requirement of the theorem. 
}

\revised{
Let $\gamma_i\in \pi_1\partial_iM$ denote a representative element of $c_i$, and $\gamma_i=1$ if $c_i=\varnothing$. 
%
Since each $\lambda_i\in \Zx$, by the reasoning in \autoref{subsec: power}, we have  $f(\overline{\langle \! \langle \gamma_1,\cdots, \gamma_n \rangle \! \rangle })=\overline{\langle\!\langle f(\gamma_1),\cdots,f(\gamma_n)\rangle\!\rangle}=\overline{\langle\!\langle \psi_1(\gamma_1)^{\lambda_1},\cdots, \psi_n(\gamma_n)^{\lambda_n}\rangle\!\rangle }=\overline{\langle\!\langle \psi_1(\gamma_1),\cdots, \psi_n(\gamma_n)\rangle\!\rangle }$. }Moreover, according to \autoref{LEM: Van-Kampen}, $\widehat{\pi_1M_{(c_i)}}\cong{\widehat{\pi_1M}}/{\overline{\langle\!\langle \gamma_1,\cdots,\gamma_n\rangle\!\rangle}}$ and $\widehat{\pi_1N_{(\pisi_{i}(c_i))}}\cong \widehat{\pi_1N}/ \overline{\langle\!\langle \psi_1(\gamma_1),\cdots, \psi_n(\gamma_n)\rangle\!\rangle }$. 
Therefore, $f$ descends to an isomorphism
\begin{equation*}
\begin{tikzcd}[column sep=tiny,row sep=tiny]
f_{\star }:\; \widehat{\pi_1M_{(c_i)}}\arrow[r, symbol=\cong] & {\quo{\widehat{\pi_1M}}{\overline{\langle\!\langle \gamma_1,\cdots,\gamma_n\rangle\!\rangle}}} \arrow[rrr, "\cong"] & & & {\quo{\widehat{\pi_1N}}{f(\overline{\langle \! \langle \gamma_1,\cdots, \gamma_n \rangle \! \rangle })}} \arrow[d, symbol=\mathop{=}] \\
 & & & & {\quo{\widehat{\pi_1N}}{\overline{\langle\!\langle \psi_1(\gamma_1),\cdots, \psi_n(\gamma_n)\rangle\!\rangle }}}\arrow[r, symbol=\cong] &{\widehat{\pi_1N_{(\pisi_{i}(c_i))}}}.
\end{tikzcd}
\end{equation*}

Now we prove that $f_{\star}$ respects the peripheral structure. Indeed, $\partial M_{(c_i)}$ and $\partial N_{(\pisi(c_i))}$ consist of the boundary components  $\partial_j M$ and $\partial_j N$, where $c_j=\varnothing$. The corresponding peripheral subgroups are the images of $\iota_{j_{\star}}:\pi_1\partial_jM\hookrightarrow \pi_1M\to \pi_1M/\langle\!\langle \gamma_1,\cdots, \gamma_n\rangle\!\rangle\cong\pi_1M_{(c_i)}$ and $\rho_{j_{\ast}}:\pi_1\partial_jN\hookrightarrow \pi_1N\to \pi_1N/\langle\!\langle \psi_1(\gamma_1),\cdots, \psi_n(\gamma_n)\rangle\!\rangle\cong\pi_1N_{(\pisi(c_i))}$ respectively. Recall that $f$ respects the peripheral structure, ie $f(\overline{\pi_1\partial_jM})=g_j\cdot\overline{\pi_1\partial_jN}\cdot g_j^{-1}$ for some $g_j\in \widehat{\pi_1N}$. 
Desending to $f_{\star}:\widehat{\pi_1M_{(c_i)}}\cong \widehat{\pi_1N_{(\pisi_{i}(c_i))}}$, we have $f_{\star}(\overline{\iota_{j_\star}(\pi_1\partial_jM)})=\phi(g_j)\cdot \overline{\rho_{j_\star}(\pi_1\partial_jN)}\cdot \phi(g_j)^{-1}$, where $\phi: \widehat{\pi_1N}\to \widehat{\pi_1N}/\overline{\langle\!\langle \psi_1(\gamma_1),\cdots, \psi_n(\gamma_n)\rangle\!\rangle }\cong \widehat{\pi_1N_{(\pisi(c_i))}}$ denotes the quotient homomorphism. Thus, $f_\star$ also respects the peripheral structure.
\end{proof}

\begin{remark}
\hyperref[PROP: Dehn filling]{Theorem A} also applies to non-primitive boundary slopes which yield orbifold Dehn fillings.
\end{remark}

By \autoref{THM: Mixed Peripheral regular}, we can derive an analogous version of \hyperref[PROP: Dehn filling]{Theorem A} for mixed 3-manifolds.

\begin{theorem}\label{PROP: Mixed Dehn filling}
Let $M$ and $N$ be mixed 3-manifolds whose hyperbolic pieces carry some boundary components. Suppose that  $\widehat{\pi_1M}\cong \widehat{\pi_1N}$. Then, there is a one-to-one correspondence between the boundary components of $M$ and $N$ belonging hyperbolic JSJ-pieces, denoted by $\partial_i^{hyp}M\leftrightarrow \partial_i^{hyp}N$ ($1\le i \le n$), and there exist isomorphisms $\psi_i: \pi_1\partial_i^{hyp}M\ttt \pi_1\partial_i^{hyp}N$ between the conjugacy representatives of the peripheral subgroups such that for any slopes $c_i\in \slp(\partial_i^{hyp}M)$ ($1\le i \le n$), 
$$
\widehat{\pi_1M_{(c_i)}}\cong \widehat{\pi_1N_{(\pisi_{i}(c_i))}}.
$$
\end{theorem}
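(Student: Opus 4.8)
The plan is to mimic the proof of Theorem A (\hyperref[PROP: Dehn filling]{Theorem A}) almost verbatim, replacing the single input \autoref{hypMfd} with its mixed-manifold analogue \autoref{THM: Mixed Peripheral regular}. Concretely, I would first fix a profinite isomorphism $f:\widehat{\pi_1M}\ttt\widehat{\pi_1N}$ and choose conjugacy representatives of the peripheral subgroups $\pi_1\partial_i^{hyp}M$ and $\pi_1\partial_i^{hyp}N$. By \autoref{THM: Mixed Peripheral regular}(\ref{3.3-1}), $N$ is automatically a mixed $3$-manifold that is irreducible and boundary-incompressible, so the peripheral subgroups make sense and \autoref{COR: Peripheral subgroup} supplies the canonical identifications $\overline{\pi_1\partial_i^{hyp}M}\cong\tensor\pi_1\partial_i^{hyp}M$ (and likewise for $N$) needed to make sense of the coefficients $\lambda_i$ and the isomorphisms $\psi_i$.

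Next, items (\ref{3.3-2}) and (\ref{3.3-3}) of \autoref{THM: Mixed Peripheral regular} give, after reordering, exactly $n$ boundary components $\partial_i^{hyp}N$ belonging to hyperbolic pieces together with the matching $f(\overline{\pi_1\partial_i^{hyp}M})=g_i^{-1}\overline{\pi_1\partial_i^{hyp}N}\,g_i$; and item (\ref{3.3-4}) furnishes the peripheral $\Zx$-regularity, i.e. $C_{g_i}\circ f|_{\overline{\pi_1\partial_i^{hyp}M}}=\lambda_i\otimes\psi_i$ for some $\lambda_i\in\Zx$ and isomorphisms $\psi_i:\pi_1\partial_i^{hyp}M\ttt\pi_1\partial_i^{hyp}N$. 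I would then run the identical computation as in Theorem A: let $\gamma_i\in\pi_1\partial_i^{hyp}M$ represent $c_i$ (and $\gamma_i=1$ if $c_i=\varnothing$); by \autoref{LEM: Van-Kampen}(\ref{2.5-2}) the two Dehn fillings have completions $\widehat{\pi_1M}/\overline{\langle\langle\gamma_1,\cdots,\gamma_n\rangle\rangle}$ and $\widehat{\pi_1N}/\overline{\langle\langle\psi_1(\gamma_1),\cdots,\psi_n(\gamma_n)\rangle\rangle}$. Since $f(\gamma_i)$ is conjugate to $\psi_i(\gamma_i)^{\lambda_i}$ and $\lambda_i\in\Zx$ is a unit, the element $\psi_i(\gamma_i)^{\lambda_i}$ generates the same closed normal subgroup as $\psi_i(\gamma_i)$, so $f$ descends to the desired isomorphism $f_\star:\widehat{\pi_1M_{(c_i)}}\ttt\widehat{\pi_1N_{(\pisi_i(c_i))}}$.

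The one genuine point of care — and the step I expect to need the most attention — is that in the mixed case $M$ and $N$ carry additional boundary components coming from hyperbolic pieces \emph{that are not being filled} is not the issue; rather it is that a mixed manifold may also possess boundary components lying on \emph{non-hyperbolic} (Seifert-fibered) JSJ-pieces, and the Dehn fillings here only involve the hyperbolic boundary slopes $c_i\in\slp(\partial_i^{hyp}M)$. So unlike the purely hyperbolic Theorem A, the filled manifolds $M_{(c_i)}$ and $N_{(\pisi_i(c_i))}$ retain leftover boundary, and one must check the regularity machinery only applies where it is claimed. This is handled cleanly because \autoref{THM: Mixed Peripheral regular} asserts peripheral $\Zx$-regularity precisely at the hyperbolic boundary components $\partial_i^{hyp}M$, which are the only ones we fill, so the quotient-descent argument goes through unchanged. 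I would omit the peripheral-structure bookkeeping for $f_\star$ present in Theorem A, since the statement of \autoref{PROP: Mixed Dehn filling} does not assert that $f_\star$ respects the peripheral structure (the remaining boundary may lie on Seifert pieces where the needed regularity is unavailable), and I would remark only that $f_\star$ is the induced isomorphism on completions of the filled fundamental groups.
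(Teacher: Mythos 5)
Your proposal is correct and matches the paper's proof, which states verbatim that the argument is "exactly the same as the first half of the proof of Theorem A, with Theorem 3.2 replaced by Proposition 3.3." Your observation that the peripheral-structure bookkeeping (the second half of the proof of Theorem A) should be omitted because the mixed statement does not assert it is precisely why the paper restricts to the "first half."
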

\begin{proof}
The proof of this theorem is exactly the same as the first half of the proof of \hyperref[PROP: Dehn filling]{Theorem A}, with \autoref{hypMfd} replaced by \autoref{THM: Mixed Peripheral regular}.
\end{proof}

\section{Knot and link complements}\label{subsec: knot}

\subsection{Detection of knot and link complements}

As a special application of \autoref{inthm: Dehn filling}, let us consider a typical example of cusped finite-volume hyperbolic 3-manifold, namely the complement of a hyperbolic link in $S^3$. In the following context, all links are tame, and we also include knots  when we refer to  links.

\begin{theorem}\label{PROP: Detect knot complement}
Let $M=\cpl{L}$ be the complement of a hyperbolic link $L$ in $S^3$ (ie the link complement is hyperbolic). Suppose $N$ is a compact orientable 3-manifold such that $\widehat{\pi_1M}\cong \widehat{\pi_1N}$, then $N$ is homeomorphic to the complement of a hyperbolic link $L'\subseteq S^3$, where $L$ and $L'$ have the same number of components.
\end{theorem}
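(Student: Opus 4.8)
The plan is to characterize link complements in $S^3$ profinitely via their homology together with the Dehn-filling information from \hyperref[PROP: Dehn filling]{Theorem A}. The starting point is \autoref{PROP: Detect hyperbolic}: since $M$ is cusped hyperbolic with, say, $n$ cusps, any $N$ with $\widehat{\pi_1N}\cong\widehat{\pi_1M}$ is automatically a cusped finite-volume hyperbolic $3$-manifold with the same number $n$ of cusps. So $N$ is the interior of a compact orientable $3$-manifold with $n$ incompressible toral boundary components, and the only remaining task is to show that $N$ embeds in $S^3$ as a link complement with $n$ components. The key topological characterization I would invoke is that a compact orientable hyperbolic $3$-manifold with $n$ toral boundary components is the complement of an $n$-component link in $S^3$ if and only if there is a choice of one slope $c_i$ on each boundary torus such that the Dehn filling $N_{(c_1,\dots,c_n)}$ is homeomorphic to $S^3$; moreover these filling slopes are precisely the meridians of the link components, and $H_1$ constraints force the correct peripheral behaviour.

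First I would apply \hyperref[PROP: Dehn filling]{Theorem A} to the pair $M,N$: it provides a cusp correspondence $\partial_iM\leftrightarrow\partial_iN$ and peripheral isomorphisms $\psi_i:\pi_1\partial_iM\ttt\pi_1\partial_iN$ so that for every tuple of slopes $(c_i)$ on $\partial M$ one has $\widehat{\pi_1M_{(c_i)}}\cong\widehat{\pi_1N_{(\pisi_i(c_i))}}$ respecting the peripheral structure. Now take the meridian slopes $\mu_i$ on $M=\cpl{L}$: filling all of them recovers $M_{(\mu_i)}\cong S^3$, so $\widehat{\pi_1M_{(\mu_i)}}$ is trivial. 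Hence $\widehat{\pi_1N_{(\pisi_i(\mu_i))}}$ is trivial as well. Because $\pi_1$ of any compact $3$-manifold is residually finite (\autoref{PROP: RF}), a trivial profinite completion forces $\pi_1N_{(\pisi_i(\mu_i))}$ itself to be trivial, so $N_{(\pisi_i(\mu_i))}$ is a simply connected closed orientable $3$-manifold, i.e.\ homeomorphic to $S^3$ by the Poincar\'e conjecture. Setting $\mu_i':=\pisi_i(\mu_i)$, this exhibits $N$ as the complement of the cores of the filling solid tori, i.e.\ $N\cong\cpl{L'}$ for an $n$-component link $L'\subseteq S^3$, with $\mu_i'$ the meridian of the $i$-th component.

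It remains to check that the slopes $\mu_i'$ are genuinely meridians, that $L'$ has exactly $n$ components (rather than fewer, with some cores unknotted-away or some tori filled trivially), and that $L'$ is again hyperbolic. Hyperbolicity of $L'$ is immediate since $N=\cpl{L'}$ is hyperbolic by \autoref{PROP: Detect hyperbolic}. That the cores give distinct link components indexed by the $n$ cusps follows because the cusp count is preserved and each filling solid torus has a core curve; no two cores can coincide since the filled manifold is $S^3$ and the boundary tori were distinct. To confirm the meridian property and component count cleanly, I would run a homological check: in $\cpl{L}$ the meridians $\mu_i$ generate $H_1(M)\cong\Z^n$, and under a profinite isomorphism respecting the peripheral structure the induced map on $H_1(-;\widehat\Z)$ matches up the peripheral images; combined with the fact that $N_{(\mu_i')}\cong S^3$ has trivial homology, this forces each $\mu_i'$ to be a primitive class killing exactly the $i$-th $\Z$-summand, which is the defining property of a meridian of a link component. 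This pins down that $L'$ is an honest $n$-component link and that $N\cong S^3\setminus\mathring N(L')$.

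The step I expect to be the main obstacle is the passage from ``$\widehat{\pi_1N_{(\mu_i')}}$ trivial'' to ``$N_{(\mu_i')}\cong S^3$''. Triviality of the profinite completion plus residual finiteness gives triviality of $\pi_1$, but one must be careful that the filled manifold is genuinely closed and orientable with no $S^2$ boundary (handled by the standing convention capping off spheres) so that the Poincar\'e conjecture applies; in particular I must verify that filling along $\mu_i'$ does not produce a reducible or boundaried manifold, which is where the peripheral-structure-respecting clause of \hyperref[PROP: Dehn filling]{Theorem A} does the real work by guaranteeing that \emph{all} $n$ cusps are filled and none is left as leftover boundary. A secondary subtlety is ensuring the cusp correspondence and the slopes $\mu_i'$ can be realized simultaneously by an actual homeomorphism of $S^3$-complements rather than only abstractly; this is resolved by noting that once $N_{(\mu_i')}\cong S^3$ is established, $N$ is \emph{by construction} the exterior of the embedded cores, so no further realization argument is needed.
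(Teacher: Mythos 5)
Your proposal is correct and follows essentially the same route as the paper: apply \hyperref[PROP: Dehn filling]{Theorem A} to the meridian slopes, deduce triviality of $\pi_1$ of the filled manifold from residual finiteness, invoke the Poincar\'e conjecture, and take $L'$ to be the cores of the filling solid tori. The extra homological check you propose for the ``meridian property'' is unnecessary (the cores of $n$ distinct filling solid tori in $S^3$ automatically form an $n$-component link whose exterior is $N$, with the filling slopes as meridians by construction), but it does no harm.
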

\begin{proof}
Suppose $L$ has $n$ components. 
Since $M$ is finite-volume hyperbolic, according to \autoref{PROP: Detect hyperbolic}, $N$ is also finite-volume hyperbolic with $n$ cusps.  According to \hyperref[PROP: Dehn filling]{Theorem A}, up to reordering the boundary components of $N$, there exist bijections $\pisi_i: \slope(\partial_i M)\to\slope(\partial_i N)$ for $1\le i\le n$, such that  $\widehat{\pi_1 M_{(c_i)}}\cong \widehat{\pi_1N_{(\pisi_i(c_i))}}$ for any $(c_1,\cdots, c_n)\in \slope(\partial_1 M)\times\cdots \times \slope(\partial _ n M)$. 

Let $c_i\in \slope(\partial_i M)$ be the meridian of the $i$-th component of $L$, which is the trivial Dehn filling slope. Then $M_{(c_i)}\cong S^3$, and $\widehat{\pi_1 M_{(c_i)}}\cong \widehat{\pi_1N_{(\pisi_i(c_i))}}$ is the trivial group. Note that $\pi_1N_{(\pisi_i(c_i))}$ is residually finite by \autoref{PROP: RF}, so $\pi_1N_{(\pisi_i(c_i))}$ is also the trivial group. Since $N_{(\pisi_i(c_i))}$ is closed, the validity of the Poincar\'e conjecture implies that $N_{(\pisi_i(c_i))}\cong S^3$. 

Let $L'$ denote the link consisting of the core curves of the Dehn-filled solid tori in $N_{(\pisi_i(c_i))}=S^3$. Then $N\cong \cpl{L'}$ is a link complement, where $L'$ has $n$ components, and $L'$ is hyperbolic since $N\cong \cpl{L'}$ is finite-volume hyperbolic.
\end{proof}

\begin{remark}
In particular, when $L$ has exactly one component, ie $L$ is a knot, we derive that any compact orientable 3-manifold profinitely isomorphic to a hyperbolic knot complement is also a hyperbolic knot complement.
\end{remark}

The same procedure also holds for a link complement which is a mixed manifold, whose boundary components belong to hyperbolic pieces. For brevity of notation, we only describe this statement for {\HSKs}.

\begin{theorem}\label{THM: Detect satellite knot}
Let $K\subseteq S^3$ be a {\HSK}. Denote $M=\cpl{K}$, and suppose $N$ is a compact orientable 3-manifold such that $\widehat{\pi_1M}\cong \widehat{\pi_1N}$. Then, $N$ is also homeomorphic to a knot complement $\cpl{K'}$, where $K'$ is also a {\HSK}.
\end{theorem}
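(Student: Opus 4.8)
The plan is to mimic the proof of \autoref{PROP: Detect knot complement}, substituting the mixed-manifold analogues of its hyperbolic ingredients. Since $K$ is a \HSK, its complement $M=\cpl{K}$ is a mixed $3$-manifold whose unique boundary torus $\partial M=\partial N(K)$ lies on a hyperbolic JSJ-piece, so $n=1$ in the notation of \autoref{THM: Mixed Peripheral regular}. First I would apply that proposition to a profinite isomorphism $f:\widehat{\pi_1M}\ttt\widehat{\pi_1N}$ to learn that $N$ is also a mixed $3$-manifold --- in particular irreducible and boundary-incompressible --- and that $N$ has exactly one boundary component, call it $\partial^{hyp}N$, lying on a hyperbolic JSJ-piece.

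Before matching up Dehn fillings I would need to know that $\partial^{hyp}N$ is the \emph{only} boundary component of $N$, so that an appropriate filling of $N$ becomes closed; \autoref{THM: Mixed Peripheral regular} controls only the boundary tori on hyperbolic pieces, so a priori $N$ could carry further boundary tori on its Seifert pieces. To rule this out I would use homology: the isomorphism $f$ yields $\widehat{H_1(M)}\cong\widehat{H_1(N)}$, and for finitely generated abelian groups this forces $H_1(N)\cong H_1(M)\cong\Z$, whence $\rank H_1(N;\Q)=1$. By the half-lives-half-dies principle, if $N$ had $b$ boundary tori then the image of $H_1(\partial N;\Q)\to H_1(N;\Q)$ would have dimension $b\le\rank H_1(N;\Q)=1$; combined with the existence of $\partial^{hyp}N$ this gives $b=1$, so $\partial^{hyp}N$ is the sole boundary torus.

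With $N$ now known to have a single boundary torus, I would apply \autoref{PROP: Mixed Dehn filling} to $f$, obtaining an isomorphism $\psi:\pi_1\partial^{hyp}M\ttt\pi_1\partial^{hyp}N$ with $\widehat{\pi_1 M_{(c)}}\cong\widehat{\pi_1 N_{(\pisi(c))}}$ for every slope $c\in\slp(\partial^{hyp}M)$. Taking $c=\mu$ the meridian of $K$ gives $M_{(\mu)}\cong S^3$, so $\widehat{\pi_1 N_{(\pisi(\mu))}}$ is trivial; since $\partial^{hyp}N$ is the only boundary component, $N_{(\pisi(\mu))}$ is closed, its fundamental group is residually finite by \autoref{PROP: RF} hence trivial, and the Poincar\'e conjecture yields $N_{(\pisi(\mu))}\cong S^3$. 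Letting $K'$ be the core of the filling solid torus identifies $N\cong\cpl{K'}$; moreover $K'$ is a satellite knot because $N$ is mixed (so $\cpl{K'}$ has nontrivial JSJ-decomposition) and of hyperbolic type because $\partial N(K')=\partial^{hyp}N$ lies on a hyperbolic piece. The step I expect to be the main obstacle is the homological argument of the second paragraph: \autoref{THM: Mixed Peripheral regular} pins down only the hyperbolic boundary tori, and controlling the \emph{total} number of boundary components --- which is exactly what makes the meridian filling closed --- is the one point not handed to us directly by the mixed peripheral-regularity machinery.
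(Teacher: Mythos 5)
Your proposal is correct and follows essentially the same route as the paper: invoke the mixed peripheral $\Zx$-regularity and the Dehn-filling correspondence of \autoref{PROP: Mixed Dehn filling}, fill the meridian, conclude $N_{\pisi(\mu)}\cong S^3$ via residual finiteness and the Poincar\'e conjecture, and take $K'$ to be the core of the filling solid torus. The only (cosmetic) difference is where the single-boundary-component issue is settled: you dispose of it beforehand via $H_1(N;\Z)\cong\Z$ and half-lives-half-dies, whereas the paper observes \emph{after} the filling that triviality of $\pi_1N_{\pisi(\mu)}$ already forces $N_{\pisi(\mu)}$ to be closed (a compact orientable 3-manifold with non-empty non-spherical boundary has positive first Betti number), so $\partial_0N$ was the unique boundary component all along --- both arguments rest on the same duality fact, and yours is equally valid.
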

\begin{proof}
By assumption, $M$ is a mixed 3-manifold with exactly one boundary component, which belongs to a hyperbolic piece. Since $\widehat{\pi_1M}\cong \widehat{\pi_1N}$, \autoref{THM: Mixed Peripheral regular}~\ref{3.3-1} implies that $N$ is also a mixed 3-manifold.  According to \autoref{PROP: Mixed Dehn filling}, there exists a boundary component $\partial_0N$ of $N$ belonging to a hyperbolic piece, and an isomorphism $\psi:\pi_1\partial M\to \pi_1\partial_0N$ such that $\widehat{\pi_1M_{c}}\cong \widehat{\pi_1N_{\pisi(c)}}$ for all $c\in \slope(\partial M)$. 

Let $c$ be the meridian of $K$, so that $M_{c}\cong S^3$. Then $\widehat{\pi_1M_{c}}\cong \widehat{\pi_1N_{\pisi(c)}}$ is the trivial group, and the residual finiteness of $\pi_1N_{\pisi(c)}$ (\autoref{PROP: RF}) implies that $\pi_1N_{\pisi(c)}$ is also the trivial group. Therefore, $N_{\pisi(c)}$ is closed, since any compact orientable 3-manifold with non-empty non-spherical boundary has positive first Betti number. In particular, $\partial_0N$ is the unique boundary component of $N$. Again, the validity of the Poincar\'e conjecture   implies that $N_{\pisi(c)}\cong S^3$. 

Let $K'$ be the core curve of the Dehn-filled solid torus in $N_{\pisi(c)}\cong S^3$. Then $N\cong \cpl{K'}$. Since $N$ is a mixed manifold, $K'$ must be a satellite knot. In addition, $\partial_0 N=\partial (N({K'}))$ belongs to a hyperbolic piece, so $K'$ is a {\HSK}. 
\end{proof}

\subsection{Identifying Dehn surgery coefficients}

Indeed, for the above types of  knot complements, we can obtain a more precise version of \autoref{inthm: Dehn filling} by explicitly matching up the Dehn-surgery coefficients, as shown in \autoref{LEM: Knot complement} below. 

\begin{lemma}\label{LEM: null-homologous slope}
\begin{enumerate}[label=(\arabic*), leftmargin=*]
\item\label{5.22-1} Let $M$ be a compact, orientable 3-manifold such that $\partial M$ is a torus. Then, there exists a unique slope $c\in \slope(\partial M)$ so that $c$ represents the null homology class in $H_1(M;\mathbb{Q})$; equivalently, $c$ belongs to the kernel of the homomorphism $\iota_{\ast}:H_1(\partial M;\Z)\to H_1(M;\Z)_{\mathrm{free}}$ induced by inclusion. This boundary slope is called the ``rational null-homologous boundary slope''  of $M$.
\item\label{5.22-2} Suppose $M$ and $N$ are compact, orientable 3-manifolds such that $\partial M\cong \partial N\cong T^2$. Let $c_0\in \slope(\partial M)$ and $c_0'\in \slope(\partial N)$ be the rational null-homologous boundary slopes of $M$ and $N$ respectively. Suppose $\pisi: \slope(\partial M) \to \slope(\partial N)$ is a bijection such that $\widehat{\pi_1M_c}\cong \widehat{\pi_1N_{\pisi(c)}}$ for any $c\in \slope(\partial M)$. Then $\pisi(c_0)=c_0'$.
\end{enumerate}
\end{lemma}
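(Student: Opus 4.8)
The plan is to treat the two parts separately: part (\ref{5.22-1}) is a purely homological existence-and-uniqueness statement, and part (\ref{5.22-2}) is the genuinely profinite assertion, for which the strategy is to characterise the null-homologous slope intrinsically by a profinite invariant of the Dehn filling.

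For part (\ref{5.22-1}), I would invoke the \emph{half lives, half dies} principle (a standard consequence of Poincar\'e--Lefschetz duality): for a compact orientable $3$-manifold $M$, the kernel of $\iota_\ast: H_1(\partial M;\Q)\to H_1(M;\Q)$ has dimension exactly $\tfrac12\dim_{\Q}H_1(\partial M;\Q)$. Since $\partial M\cong T^2$ gives $\dim_{\Q}H_1(\partial M;\Q)=2$, this kernel $V$ is a $1$-dimensional $\Q$-subspace of $H_1(\partial M;\Q)\cong \Q^2$. As $V$ is the kernel of the rational extension of the integral homomorphism $H_1(\partial M;\Z)\to H_1(M;\Z)_{\mathrm{free}}$, it is defined over $\Q$, so $V\cap H_1(\partial M;\Z)$ is a rank-$1$ saturated subgroup of the lattice $\Z^2$, with a generator that is primitive and unique up to sign. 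This generator gives the sought slope $c_0$; any other primitive class lies outside $V$ and hence has nonzero torsion-free image, which yields uniqueness. The equivalence of ``null in $H_1(M;\Q)$'' with ``in the kernel of $\iota_\ast$ to $H_1(M;\Z)_{\mathrm{free}}$'' is immediate, since a class is torsion in $H_1(M;\Z)$ precisely when it vanishes rationally.

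For part (\ref{5.22-2}), the key computation is the behaviour of the first Betti number under filling. Abelianising the presentation $\pi_1 M_c\cong \pi_1 M/\langle\langle\gamma_c\rangle\rangle$ from \autoref{LEM: Van-Kampen} and tensoring with $\Q$ gives $H_1(M_c;\Q)\cong H_1(M;\Q)/\langle \iota_\ast(\gamma_c)\rangle$, where $\gamma_c$ is a representative of $c$. Hence $b_1(M_c)=b_1(M)-1$ whenever $\iota_\ast(\gamma_c)\neq 0$ in $H_1(M;\Q)$, i.e. for every slope $c\neq c_0$, while $b_1(M_{c_0})=b_1(M)$; note $b_1(M)\ge 1$ by the half-lives-half-dies count, so all these fillings are genuine. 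Consequently $c_0$ is the \emph{unique} slope maximising the function $c\mapsto b_1(M_c)$ on $\slope(\partial M)$, and likewise $c_0'$ is the unique maximiser of $c'\mapsto b_1(N_{c'})$ on $\slope(\partial N)$.

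Finally, I would use that $b_1$ is a profinite invariant: the isomorphism $\widehat{\pi_1 M_c}\cong \widehat{\pi_1 N_{\pisi(c)}}$ induces an isomorphism of profinite abelianisations $\widehat{\Z}^{\,b_1(M_c)}\oplus(\mathrm{torsion})\cong \widehat{\Z}^{\,b_1(N_{\pisi(c)})}\oplus(\mathrm{torsion})$, which forces $b_1(M_c)=b_1(N_{\pisi(c)})$. Since $\pisi$ is a bijection intertwining the two Betti-number functions, it must carry the unique maximiser $c_0$ of $b_1(M_\bullet)$ to the unique maximiser of $b_1(N_\bullet)$, namely $c_0'$; therefore $\pisi(c_0)=c_0'$. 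The only point demanding genuine care is this profinite-invariance step, i.e. checking that the free $\widehat{\Z}$-rank of the abelianisation is well defined and recovered from $\widehat{\pi_1}$; this is standard, for instance by extracting $\dim_{\mathbb{Q}_p}\!\big(\mathbb{Q}_p\otimes_{\mathbb{Z}_p}H_p\big)$ from the pro-$p$ part $H_p$ of the abelianisation for any prime $p$. Everything else reduces to the classical $3$-manifold topology of the preceding two paragraphs.
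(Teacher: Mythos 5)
Your proposal is correct and follows essentially the same route as the paper: part (1) via Poincar\'e--Lefschetz duality (the paper cites \cite[Lemma 8.15]{Lic97} for exactly the half-lives-half-dies count you spell out), and part (2) by observing that $b_1(M_{c_0})=b_1(M)$ while $b_1(M_c)=b_1(M)-1$ for $c\neq c_0$, so the profinitely-invariant first Betti number singles out $c_0$ as the unique maximiser and forces $\pisi(c_0)=c_0'$. The extra detail you supply on why $b_1$ is recovered from the profinite completion is sound and merely makes explicit what the paper leaves implicit.
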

\begin{proof}
\ref{5.22-1} follows from the so-called ``half lives half dies'' theorem, see \cite[Lemma 8.15]{Lic97}. 

For \ref{5.22-2}, by \autoref{lem: abelianize}, $\widehat{\pi_1M_c}\cong \widehat{\pi_1N_{\pisi(c)}}$ implies that $H_1(M_c;\Z)\cong H_1(N_{\pisi(c)};\Z)$, so $b_1(M_c)=b_1(N_{\pisi(c)})$. Note that $b_1(M_{c_0})=b_1(M)$, and  $b_1(M_c)=b_1(M)-1$ if $c\neq c_0$. Likewise, $b_1(N_{c_0'})=b_1(N)$, and $b_1(N_{c'})=b_1(N)-1$ if $c'\neq c_0'$. Comparing the first Betti numbers, we derive that $b_1(M)=b_1(N)$ and $\pisi(c_0)=c_0'$.
\end{proof}

Before completing the final part of \autoref{inthm: Knot}, 
let us first review the concept of Dehn surgeries along a knot in $S^3$.

Let $K$ be a knot in $S^3$, and let $X_K=\cpl{K}$ denote its complement. Dehn fillings of $X_K$ can be parametrized by $\Qinf$, once we fix an orientation of $S^3$. To be explicit, let $[m]$ be a slope on $\partial N(K)=\partial X_K$, so that the simple closed curve $[m]$ bounds a disk in $N(K)$, which is called the {\em meridian} of $K$. By Alexander duality, $\partial X_K$ contains a unique null-homologous slope, which we denote as $[l]$, or the preferred {\em longitude} of $K$. We further choose orientations on $m$ and $l$, so that the ordered basis $(m,l)$ yields the boundary orientation of $\partial N(K)$, where $N(K)$ inherits its orientation from $S^3$ as a submanifold. 
 Then, the boundary slopes of $X_K$ can be expressed by $[pm+ql]$, where $(p,q)$ is a pair of coprime integers. The Dehn filling of $X_K$ along the slope $[pm+ql]$ is also called a {\em $\frac{p}{q}$-Dehn surgery} of $K$, where $\frac{p}{q}\in \Qinf$, and is denoted by $K(\frac{p}{q})$. The well-definedness follows from the fact that $[pm+ql]$ and $[-pm-ql]$ represent  the same boundary slope.

\begin{theorem}\label{LEM: Knot complement}
Let $K$ and $K'$ be  hyperbolic knots or {\HSKs} in $S^3$. Suppose that $\widehat{\pi_1X_K}\cong \widehat{\pi_1X_{K'}}$. Then, there exists $\sigma \in \{ 1,-1\}$, such that for any $r\in \Qinf$, $\widehat{\pi_1K({r})}\cong \widehat{\pi_1K'({\sigma r})}$.
\end{theorem}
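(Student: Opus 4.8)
The plan is to invoke the Dehn-filling correspondence established above, pin down the images of the meridian and longitude, and then read off the surgery coefficients from the resulting $\GL_2(\Z)$-matrix of the peripheral isomorphism.

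First I would produce the slope bijection. Since $X_K=\cpl{K}$ and $X_{K'}=\cpl{K'}$ are either both cusped finite-volume hyperbolic (when $K,K'$ are hyperbolic knots) or both mixed manifolds whose single boundary torus lies on a hyperbolic JSJ-piece (when $K,K'$ are {\HSKs}), the hypothesis $\widehat{\pi_1X_K}\cong\widehat{\pi_1X_{K'}}$ lets me apply \hyperref[PROP: Dehn filling]{Theorem A} in the first case and \autoref{PROP: Mixed Dehn filling} in the second. Either way I obtain a group isomorphism $\psi:\pi_1\partial X_K\ttt\pi_1\partial X_{K'}$ whose induced slope bijection $\pisi:\slope(\partial X_K)\to\slope(\partial X_{K'})$ satisfies $\widehat{\pi_1 (X_K)_c}\cong\widehat{\pi_1 (X_{K'})_{\pisi(c)}}$ for every slope $c$.

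Next I would identify the images of the two distinguished slopes $[m],[l]$ of $K$. For the longitudes: the preferred longitude $[l]$ is precisely the rational null-homologous boundary slope of $X_K$, and likewise $[l']$ for $X_{K'}$; hence part (\ref{5.22-2}) of \autoref{LEM: null-homologous slope} gives $\pisi([l])=[l']$. For the meridians: filling $X_K$ along $[m]$ yields $S^3$, so $\widehat{\pi_1 (X_{K'})_{\pisi([m])}}$ is trivial, whence by residual finiteness (\autoref{PROP: RF}) and the Poincar\'e conjecture $(X_{K'})_{\pisi([m])}\cong S^3$. Because $K'$ is a nontrivial knot (both hyperbolic knots and satellite knots are nontrivial), the Gordon--Luecke theorem guarantees that the meridian $[m']$ is the \emph{unique} slope on $\partial X_{K'}$ with trivial filling, so $\pisi([m])=[m']$. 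With both basis slopes matched, writing $\psi$ in the ordered bases $(m,l)$ and $(m',l')$ forces $\psi(m)=\epsilon_1 m'$ and $\psi(l)=\epsilon_2 l'$ with $\epsilon_1,\epsilon_2\in\{1,-1\}$, i.e. $\psi=\mathrm{diag}(\epsilon_1,\epsilon_2)$. Consequently the surgery slope $[pm+ql]$ of $K(p/q)$ maps to $\psi(pm+ql)=p\epsilon_1 m'+q\epsilon_2 l'$, which is the surgery slope of $K'(\sigma r)$ with $r=p/q$ and $\sigma=\epsilon_1\epsilon_2\in\{1,-1\}$ (using $\epsilon_i^{-1}=\epsilon_i$). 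Setting $\sigma:=\epsilon_1\epsilon_2$, which does not depend on $r$, and combining with the filling correspondence of the first step yields $\widehat{\pi_1K(r)}\cong\widehat{\pi_1K'(\sigma r)}$ for all $r\in\Qinf$; the endpoint cases $r=\infty$ and $r=0$ are automatic since $\sigma\cdot\infty=\infty$ and $\sigma\cdot 0=0$, matching $\pisi([m])=[m']$ and $\pisi([l])=[l']$.

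The main obstacle is the meridian-matching step, as it is the one place where genuinely topological (rather than purely profinite) input is needed: the longitude alone does not determine the meridian, since $[m]$ and $[m+kl]$ have the same image in $H_1(X_K)$, so a homological argument cannot pin down $\pisi([m])$. It is exactly the Gordon--Luecke uniqueness of the trivial-filling slope on the $K'$ side that rigidifies $\psi$ to a diagonal $\pm1$ matrix; once that is in hand, the remaining coefficient bookkeeping is formal.
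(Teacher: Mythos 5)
Your proposal is correct and follows essentially the same route as the paper: the slope bijection from \hyperref[PROP: Dehn filling]{Theorem A} (resp.\ \autoref{PROP: Mixed Dehn filling}), the longitude pinned down by \autoref{LEM: null-homologous slope}, the meridian pinned down via residual finiteness, the Poincar\'e conjecture and Gordon--Luecke, and the same $\pm$-sign bookkeeping giving $\sigma=\epsilon_1\epsilon_2$. The only point you gloss over is that $K$ and $K'$ are of the same type (both hyperbolic or both satellite), which the paper dispatches with \autoref{PROP: Detect hyperbolic} before choosing which filling theorem to apply.
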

\begin{proof}
For brevity of notation, we denote $M=X_K$ and $M'=X_{K'}$. According to \autoref{PROP: Detect hyperbolic}, $K$ and $K'$ are either both hyperbolic knots, or both hyperbolic-type satellite knots. 
By \hyperref[PROP: Dehn filling]{Theorem A} for hyperbolic knots and \autoref{PROP: Mixed Dehn filling} for {\HSKs}, there exists an isomorphism $\psi: \pi_1\partial M \ttt\pi_1\partial M'$ such that for any $c\in \slope(\partial M)$, 
\begin{equation}\label{3.9equ0}
\widehat{\pi_1 M_c}\cong \widehat{\pi_1M'_{\pisi(c)}}.
\end{equation}

 Let $(m,l)$ and $(m',l')$ be the oriented meridian-longitude pair of $K$ and $K'$ respectively. Note that the longitude is the rational null-homologous boundary slope of a knot complement. Thus,  
according to \autoref{LEM: null-homologous slope}~\ref{5.22-2}, $\pisi([l])=[l']$, ie $\psi(l)=\pm l'$.

Now we show that $\psi(m)=\pm m'$. Indeed, $M_{[m]}\cong S^3$, and $\widehat{\pi_1M'_{\pisi[m]}}\cong \widehat{\pi_1M_{[m]}}$ is the trivial group. The same argument as in the proof of \autoref{PROP: Detect knot complement}, ie the residual finiteness of the fundamental   group together with the validity of Poincar\'e conjecture, implies that $M'_{\pisi[m]}$ is homeomorphic to $S^3$. Note that $K'$ is a non-trivial knot; according to \cite[Theorem 2]{GL89}, the only Dehn-surgery of $K'$ yielding $S^3$ is the trivial one, ie the $\infty$-surgery. Therefore, $\pisi[m]$ represents the same boundary slope as $[m']$, ie $\psi(m)=\pm m'$.

As a consequence, there exist $\sigma,\delta\in \{1,-1\}$, such that $\psi(am+bl)=\delta(\sigma am'+bl')$ for any $a,b\in \Z$.  

For any coprime pair of integers $(p,q)$, $M_{[pm+ql]}=K(\frac{p}{q})$, and  
$M'_{\pisi[pm+ql]}=M'_{[\delta(\sigma pm'+ql')]}=M'_{[\sigma pm'+ql']}=K'(\sigma\frac{p}{q})$.  
Thus, $\widehat{\pi_1K(\frac{p}{q})}\cong\widehat{\pi_1K'(\sigma\frac{p}{q})}$ by (\ref{3.9equ0}), which finishes the proof of this proposition.
\end{proof}

\subsection{Criteria for profinite rigidity of a knot complement}\label{subsec: BC}

In \autoref{DEF: characterizing slope} we have introduced the {\SCSs} on a knot $K\subseteq S^3$. Let us first clarify its difference with the usual definition of characterising slopes.
\begin{definition}\label{DEF: SCSCS}
Let $K$ be a knot in $S^3$, and let $\alpha\in \mathbb{Q}$ be a slope on $\partial N(K)$. 
\begin{enumerate}[label=(\arabic*), leftmargin=*]
\item\label{SCSCS1} $\alpha$ is an {\em\SCS} of $K$ if for any knot $J\subseteq S^3$, $J(\alpha)\cong K(\alpha)$ implies that $J$ is isotopic to $K$ or its mirror image.
\item\label{SCSCS2} $\alpha$ is a {\em characterising slope} of $K$ if for any knot $J\subseteq S^3$, $J(\alpha)\cong K(\alpha)$ by an orientation-preserving homeomorphism implies that $J$ is isotopic to $K$.
\end{enumerate}
\end{definition}

Generally, for $\alpha\neq 0$, neither one of {\SC} and characterising directly implies the other one. However, for the $0$ slope, we have the following relation.
\begin{lemma}\label{LEM: Characterising}
Let $K$ be a knot in $S^3$. If $0$ is a characterising slope of $K$, then $0$ is also an {\SCS} of $K$.
\end{lemma}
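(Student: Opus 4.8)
The plan is to reduce the unoriented statement to the oriented one by splitting into the orientation-preserving and orientation-reversing cases, and handling the latter through mirror symmetry. Suppose $0$ is a characterising slope of $K$ in the sense of \autoref{DEF: SCSCS}(\ref{SCSCS2}), and let $J\subseteq S^3$ be any knot admitting an (a priori unoriented) homeomorphism $J(0)\cong K(0)$. Fixing orientations on the closed, connected, orientable $3$-manifolds $J(0)$ and $K(0)$, this homeomorphism is either orientation-preserving or orientation-reversing, and I treat the two cases separately.

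In the orientation-preserving case, the defining property of a characterising slope immediately gives that $J$ is isotopic to $K$, which is one of the two alternatives allowed by \autoref{DEF: SCSCS}(\ref{SCSCS1}). The orientation-reversing case is where the mirror image enters. Here $J(0)\cong -K(0)$ by an orientation-preserving homeomorphism, where $-K(0)$ denotes $K(0)$ with reversed orientation. The key classical fact I will invoke is that $0$-surgery intertwines mirroring and orientation reversal, namely $\overline{K}(0)\cong -K(0)$ by an orientation-preserving homeomorphism (and likewise for $J$); this comes from applying an orientation-reversing self-homeomorphism of $S^3$ carrying $K$ to its mirror image $\overline{K}$ and tracking that it preserves the $0$-slope, the latter being forced since the longitude is the unique rational null-homologous slope (\autoref{LEM: null-homologous slope}). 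Consequently $J(0)\cong \overline{K}(0)$ by an orientation-preserving homeomorphism.

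It remains to conclude that $J$ is isotopic to $\overline{K}$, for which I first observe that $0$ is also a characterising slope of $\overline{K}$. Indeed, given any knot $J'$ with $J'(0)\cong \overline{K}(0)$ orientation-preservingly, reversing orientation throughout and combining $\overline{J'}(0)\cong -J'(0)$ with $-\overline{K}(0)\cong K(0)$ yields $\overline{J'}(0)\cong K(0)$ by an orientation-preserving homeomorphism; the characterising property of $K$ then gives $\overline{J'}$ isotopic to $K$, hence $J'$ isotopic to $\overline{K}$. Applying this with $J'=J$ shows that $J$ is isotopic to $\overline{K}$, i.e. to the mirror image of $K$. In both cases $J$ is isotopic to $K$ or its mirror image, so $0$ is an {\SCS} of $K$.

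The only genuinely delicate point is the bookkeeping of orientations, in particular verifying the identity $\overline{K}(0)\cong -K(0)$ and that the $0$-slope is preserved under mirroring; everything else is a purely formal manipulation of the two cases. I expect this orientation tracking to be the main (and essentially only) obstacle, and it becomes routine once the longitude is identified as the unique rational null-homologous boundary slope.
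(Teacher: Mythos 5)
Your proof is correct and follows essentially the same route as the paper: split on whether the homeomorphism $J(0)\cong K(0)$ preserves orientation, and in the reversing case use that mirroring a knot reverses the orientation of its $0$-surgery (the longitude being preserved as the null-homologous slope). The only cosmetic difference is that you mirror $K$ and detour through the claim that $0$ is characterising for $\overline{K}$, whereas the paper mirrors $J$ directly and composes the two orientation-reversing homeomorphisms at once.
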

\begin{proof}
Suppose $J$ is a knot in $S^3$, and there is a homeomorphism $F: J(0)\to K(0)$. 
There are two possibilities. First, if $F$ is orientation-preserving, then \autoref{DEF: SCSCS}~\ref{SCSCS2} implies that  $J$ is isotopic to $K$. 
Second, if $F$ is orientation-reversing, let $\overline{J}$ denote the mirror image of $J$. Then $\overline{J}(0)\cong J(0)$ by an orientation-reversing homeomorphism. Thus,  there exists an orientation-preserving homeomorphism $\overline{J}(0)\cong K(0)$. Then, \autoref{DEF: SCSCS}~\ref{SCSCS2} implies that $\overline{J}$ is isotopic to $K$, ie $J$ is the mirror image of $K$.
\end{proof}

\autoref{DEF: SCSCS}~\ref{SCSCS1} can also be generalized to a  set of slopes.

\begin{definition}
Let $K\subseteq S^3$ be a knot. A non-empty subset $\Lambda\subseteq \mathbb{Q}$, finite or infinite, is called an  {\em{\SetSCS}} of $K$ if for any knot $J\subseteq S^3$, $J(\alpha)\cong K(\alpha)$ for all $\alpha\in \Lambda$ implies that $J$ is isotopic to $K$ or its mirror image.
\end{definition}

\begin{theorem}[Generalized version of \autoref{THMB}]\label{THM: B+}
Let $K\subseteq S^3$ be either a hyperbolic knot or a {\HSK}. Suppose
\begin{enumerate}[label=(\arabic*), leftmargin=*]
\item\label{B1} there is an {\SetSCS} $\Lambda$ for $K$, and
\item\label{B2} for each $\alpha\in \Lambda$, the Dehn surgery $K(\alpha)$ is profinitely rigid among all closed, orientable 3-manifolds.
\end{enumerate}
Then, the knot complement $X_K=\cpl{K}$ is profinitely rigid among all compact, orientable 3-manifolds.
\end{theorem}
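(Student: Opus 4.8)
The plan is to combine the Dehn-surgery matching from \autoref{LEM: Knot complement} with the characterising property of the slope set $\Lambda$, using residual finiteness and profinite rigidity of the fillings as the bridge. Suppose $N$ is a compact, orientable 3-manifold with $\widehat{\pi_1X_K}\cong \widehat{\pi_1N}$; the goal is to show $N\cong X_K$. First I would invoke \autoref{PROP: Detect knot complement} (in the hyperbolic case) or \autoref{THM: Detect satellite knot} (in the {\HSK} case) to conclude that $N$ is itself a knot complement $X_{K'}=\cpl{K'}$, where $K'$ is a hyperbolic knot or a {\HSK} of the same type as $K$. This reduces the problem to comparing two knots of the same flavour whose complements are profinitely isomorphic.

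Next I would apply \autoref{LEM: Knot complement} to the pair $K,K'$: there exists $\sigma\in\{1,-1\}$ such that $\widehat{\pi_1K(r)}\cong\widehat{\pi_1K'(\sigma r)}$ for every $r\in\Qinf$. The strategy now is to feed the characterising set $\Lambda$ through this correspondence. For each $\alpha\in\Lambda\subseteq\mathbb{Q}$, we have $\widehat{\pi_1K(\alpha)}\cong\widehat{\pi_1K'(\sigma\alpha)}$. By hypothesis (\ref{B2}), $K(\alpha)$ is profinitely rigid among all closed, orientable 3-manifolds; since $K'(\sigma\alpha)$ is a closed orientable 3-manifold, profinite rigidity forces a homeomorphism $K(\alpha)\cong K'(\sigma\alpha)$. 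The difficulty is that the slopes differ by the sign $\sigma$: I do not directly get $J(\alpha)\cong K(\alpha)$ for a fixed knot $J$. The fix is to pass to mirror images. Writing $\overline{K'}$ for the mirror of $K'$, one has the standard identity $\overline{K'}(-\beta)\cong -\,K'(\beta)$, i.e. $\overline{K'}(-\beta)$ is $K'(\beta)$ with reversed orientation, and in particular they are homeomorphic as unoriented manifolds. Thus, setting $J:=K'$ when $\sigma=1$ and $J:=\overline{K'}$ when $\sigma=-1$, we obtain $J(\alpha)\cong K(\alpha)$ as (unoriented) closed 3-manifolds for every $\alpha\in\Lambda$, where in the $\sigma=-1$ case I use $\overline{K'}(\alpha)\cong K'(-\alpha)=K'(\sigma\alpha)\cong K(\alpha)$.

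Having produced a single knot $J$ with $J(\alpha)\cong K(\alpha)$ for all $\alpha\in\Lambda$, I would then apply the defining property of the {\SetSCS} $\Lambda$. Because the homeomorphisms $J(\alpha)\cong K(\alpha)$ obtained from profinite rigidity need not respect orientation, it is essential that $\Lambda$ is an \emph{\SetSCS} in the unoriented sense of \autoref{DEF: SCSCS}~(\ref{SCSCS1}): the hypothesis yields that $J$ is isotopic to $K$ or to its mirror image $\overline{K}$. In either case the complement $X_J\cong X_{K'}$ (up to the mirroring bookkeeping) is homeomorphic to $X_K$, since a knot and its mirror image have homeomorphic complements. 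Tracing this back through $N\cong X_{K'}$, we conclude $N\cong X_K$, which is the desired profinite rigidity of $X_K$ among all compact, orientable 3-manifolds.

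The main obstacle I anticipate is the careful orientation bookkeeping around the sign $\sigma$ and the mirror images: one must verify that the chain of homeomorphisms $K(\alpha)\cong K'(\sigma\alpha)$, combined with the mirror identity for Dehn surgery, genuinely lands a \emph{fixed} knot $J$ in the unoriented characterising hypothesis, and that the final passage from ``$J$ isotopic to $K$ or $\overline{K}$'' back to ``$N\cong X_K$'' correctly accounts for the two possible values of $\sigma$. The \SC{} (as opposed to orientation-preserving characterising) formulation of \autoref{DEF: SCSCS} is precisely what makes this bookkeeping close up, and \autoref{LEM: Characterising} guarantees that the $0$-characterising examples supplied by \cite{BS22,BS24} feed into this machinery; this is the reason the theorem is stated with the unoriented notion rather than the classical one.
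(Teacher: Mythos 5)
Your proposal is correct and follows essentially the same route as the paper's proof: detect that $N$ is a knot complement via \autoref{PROP: Detect knot complement} or \autoref{THM: Detect satellite knot}, match surgeries up to a sign $\sigma$ via \autoref{LEM: Knot complement}, absorb $\sigma$ by replacing $K'$ with its mirror image, and then apply profinite rigidity of the fillings followed by the unoriented characterising property of $\Lambda$. The paper compresses your orientation bookkeeping into the single sentence ``by possibly replacing $J$ with its mirror image, we may assume $\sigma=1$,'' but the argument is the same.
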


\begin{proof}
Let $N$ be a compact, orientable 3-manifold such that $\widehat{\pi_1X_K}\cong \widehat{\pi_1N}$. According to \autoref{PROP: Detect knot complement} for the hyperbolic case and \autoref{THM: Detect satellite knot} for the satellite case, $N\cong X_J$ is also the  complement of a hyperbolic knot or a {\HSK} $J$. In addition, by \autoref{LEM: Knot complement}, there exists $\sigma \in \{\pm1\}$ such that $\widehat{\pi_1K(r)}\cong \widehat{\pi_1J(\sigma r)}$ for all $r\in \Qinf$. By possibly replacing $J$ with its mirror image, we may assume $\sigma=1$, so $\widehat{\pi_1K(\alpha)}\cong \widehat{\pi_1J(\alpha)}$ for all $\alpha\in \Lambda$. Then,  
condition  \ref{B2}  implies that $K(\alpha)\cong J(\alpha)$ for all $\alpha\in \Lambda$, and   condition  \ref{B1}  thus implies that $J$ is either equivalent or mirror image to $K$. Hence, $N\cong X_J\cong X_K$.
\end{proof}

Note that any {\SCS} of $K$ is an {\SetSCS} with one element. Therefore, \autoref{THM: B+} does imply \autoref{THMB}. 

Now we can apply  \autoref{THMB} as a technique for the proof of \autoref{CORC}.

\begin{CORC}\label{pfCORC}
Let $K\subseteq S^3$ be one of the following knots:
\begin{enumerate}[label=(\arabic*), leftmargin=*]
\item\label{CC1} the $5_2$ knot (\autoref{Fig: 52});
\item\label{CC2} the $15n_{43522}$ knot  (\autoref{Fig: 15n43522});
\item\label{CC3} the Pretzel knots $P(-3, 3, 2n + 1)$, where $n\in \Z$ (\autoref{Fig: Pretzel});
\item\label{CC4} the satellite knots $\mathcal{W}^+(T_{2,3}, 2)$ and $\mathcal{W}^-(T_{2,3}, 2)$ (\autoref{Fig: Wh}).
\end{enumerate}
Then $X_K=\cpl{K}$ is profinitely rigid among all compact, orientable 3-manifolds.
\end{CORC}

\begin{proof}
 The knots in \ref{CC1}, \ref{CC2} and \ref{CC3} are hyperbolic knots, while the knots in \ref{CC4} are {\HSKs}. In fact, the $5_2$ knot is the $k3_2$ hyperbolic knot  as tabulated in the census \cite{CDW}, and the $15n_{43522}$ knot is the $k8_{128}$ hyperbolic knot  in the census \cite{CKM}. The hyperbolicity of  $P(-3, 3, 2n + 1)$ follows from \cite{LM99}. The  satellite knots $\mathcal{W}^+(T_{2,3}, 2)$ and $\mathcal{W}^-(T_{2,3}, 2)$  are Whitehead-doubles of non-trivial knots, and so they are {\HSKs}.  

Therefore, we can apply \autoref{THMB} to prove the profinite rigidity of these knot complements. Indeed, \cite[Theorem 1.1]{BS24} shows that $0$ is a characterising slope for $5_2$, and \cite[Theorem 1.1]{BS22} shows that $0$ is a characterising slope for $15n_{43522}$, $P(-3,3,2n+1)$ or $\mathcal{W}^{\pm}(T_{2,3},2)$. Thus, by \autoref{LEM: Characterising}, $0$ is an {\SCS} for any listed knot $K$. Hence, according to \autoref{THMB}, it suffices to show that $K(0)$ is profinitely rigid among all closed, orientable 3-manifolds.

Indeed, when $K$ is one of the hyperbolic knots listed in \ref{CC1}, \ref{CC2} and \ref{CC3}, Cheetham-West showed in the proof of \cite[Theorem 1.1]{Cw23} that $K(0)$ is a graph manifold with JSJ-graph a single vertex and one cycle edge. Therefore, $K(0)$ is profinitely rigid in closed, orientable 3-manifolds according to the profinite classification of closed graph manifolds by Wilkes  \cite[Theorem A]{Wil18}.

We claim that when $K= \mathcal{W}^{\pm}(T_{2,3},2)$, $K(0)$ is a graph manifold with JSJ-graph shown in \autoref{Fig: JSJK(0)}. Let $J$ denote the companion knot of $K$, which is a right-handed trefoil. The complement $X_J$ is a Seifert fibered space $(D^2;(2,1),(3,1))$. Let $M_1$ denote the complement of the Whitehead link, then $X_K=M_1\cup_{T^2}X_J$. Note that each boundary component of $M_1$ already contains a null-homologous slope, corresponding to the preferred longitude of the Whitehead link. Thus, the $0$-surgery on $K$ is in fact the Dehn filling corresponding to the longitude of the pattern Whitehead link. Let $\mathcal{W}(0)$ denote $0$-surgery on one component of the Whitehead link. Then, $K(0)=\mathcal{W}(0)\cup_{T^2} X_J$. In fact, according to \cite{MP02}, $\mathcal{W}(0)$ is a graph manifold obtained from a single Seifert fibered piece $P\times S^1$ by gluing together two of its boundary components, where $P=S^2\setminus \{3\text{ disks}\}$ denotes a pair of pants. In addition, on $\partial \mathcal{W}(0)$, the slope of a regular fiber in the Seifert fibration is exactly the meridian of the unfilled Whitehead link component. Since $K=\mathcal{W}^{\pm}(T_{2,3},2)$ is a Whitehead double of twisting number $2$, the meridian of the pattern Whitehead link corresponds to the slope $2$ on $\partial N(J)$. On the other hand, $J=T_{2,3}$, so  the slope of a regular fiber of the Seifert fibered space $X_J$ on $\partial N(J)$ is $6$ in the $\Qinf$-parametrization. Therefore, the gluing between $\mathcal{W}(0)$ and $X_J$ along $T^2=\partial N(J)$ is not fiber-parallel. Thus, $\partial N(J)$ is a JSJ-torus of $X_K$ according to \cite[Proposition 1.9]{AFW15}, and the JSJ-graph of $X_K$ is obtained by connecting the JSJ-graphs of $\mathcal{W}(0)$ and $X_J$ along a single edge corresponding to $\partial N(J)$, which is exactly shown as \autoref{Fig: JSJK(0)}.

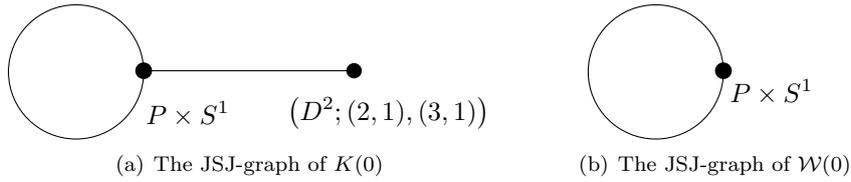
\begin{figure}[h]
\subfigure[The JSJ-graph of $K(0)$]{
\begin{tikzpicture}[x=0.75pt,y=0.75pt,yscale=-1,xscale=1]

\draw  [draw opacity=0] (93.5,56.75) .. controls (93.5,75.39) and (78.39,90.5) .. (59.75,90.5) .. controls (41.11,90.5) and (26,75.39) .. (26,56.75) .. controls (26,38.11) and (41.11,23) .. (59.75,23) .. controls (78.19,23) and (93.18,37.79) .. (93.49,56.16) -- (59.75,56.75) -- cycle ; \draw    (93.5,56.75) .. controls (93.5,75.39) and (78.39,90.5) .. (59.75,90.5) .. controls (41.11,90.5) and (26,75.39) .. (26,56.75) .. controls (26,38.11) and (41.11,23) .. (59.75,23) .. controls (78.19,23) and (93.18,37.79) .. (93.49,56.16) ; \draw [shift={(93.49,56.16)}, rotate = 78.5] [color={rgb, 255:red, 0; green, 0; blue, 0 }  ][fill={rgb, 255:red, 0; green, 0; blue, 0 }  ][line width=0.75]      (0, 0) circle [x radius= 3.69, y radius= 3.69]   ; 
\draw    (93.49,56.16) -- (198.5,56.16) ;
\draw [shift={(198.5,56.16)}, rotate = 0] [color={rgb, 255:red, 0; green, 0; blue, 0 }  ][fill={rgb, 255:red, 0; green, 0; blue, 0 }  ][line width=0.75]      (0, 0) circle [x radius= 3.35, y radius= 3.35]   ;

\draw (93.49,70.16) node [anchor=north west][inner sep=0.75pt]   [align=left] {$\displaystyle P\times S^{1}$};
\draw (164,68) node [anchor=north west][inner sep=0.75pt]   [align=left] {$\displaystyle \left( D^{2} ;( 2,1) ,( 3,1)\right)$};

\end{tikzpicture}
\label{Fig: JSJK(0)}
}
\hspace{8mm}
\subfigure[The JSJ-graph of $\mathcal{W}(0)$]{
\begin{tikzpicture}[x=0.75pt,y=0.75pt,yscale=-1,xscale=1]

\draw  [draw opacity=0] (93.5,56.75) .. controls (93.5,75.39) and (78.39,90.5) .. (59.75,90.5) .. controls (41.11,90.5) and (26,75.39) .. (26,56.75) .. controls (26,38.11) and (41.11,23) .. (59.75,23) .. controls (78.19,23) and (93.18,37.79) .. (93.49,56.16) -- (59.75,56.75) -- cycle ; \draw    (93.5,56.75) .. controls (93.5,75.39) and (78.39,90.5) .. (59.75,90.5) .. controls (41.11,90.5) and (26,75.39) .. (26,56.75) .. controls (26,38.11) and (41.11,23) .. (59.75,23) .. controls (78.19,23) and (93.18,37.79) .. (93.49,56.16) ; \draw [shift={(93.49,56.16)}, rotate = 78.5] [color={rgb, 255:red, 0; green, 0; blue, 0 }  ][fill={rgb, 255:red, 0; green, 0; blue, 0 }  ][line width=0.75]      (0, 0) circle [x radius= 3.69, y radius= 3.69]   ; 

\draw (95.49,59.16) node [anchor=north west][inner sep=0.75pt]   [align=left] {$\displaystyle P\times S^{1}$\text{ }\text{ }\text{ }};

\end{tikzpicture}
}
\caption{The JSJ-decomposition in the proof of \hyperref[pfCORC]{Theorem E}}
\end{figure}

In particular, $K(0)$ is a graph manifold with a non-bipartite JSJ-graph, and it follows from \cite[Theorem A]{Wil18} that $K(0)$ is profinitely rigid among closed orientable 3-manifolds. The proof is complete by adopting \autoref{THMB}.
\end{proof}

\section{Profinite detection of exceptional Dehn fillings}\label{SEC: Exceptional}
In this section, we focus on general cusped hyperbolic 3-manifolds. 
Let $M$ be a finite-volume hyperbolic 3-manifold with cusps. 
By Thurston's theorem \cite[Theorem 5.8.2]{Thurston}, most Dehn fillings of $M$ are also hyperbolic. 
A Dehn filling $M_{(c_i)}$ is called {\em exceptional} if $M_{(c_i)}$ is not a finite-volume hyperbolic 3-manifold. 

In practice, we usually consider Dehn fillings on one particular boundary component $\partial_{i_0}M$ of $M$, 
and we denote 
$$
\mathcal{E}(M,\partial_{i_0}M)=\left\{ c\in \slope(\partial_{i_0}M)\mid M_c \text{ is an exceptional Dehn filling}\right\}
$$
as the collection of all exceptional slopes on $\partial_{i_0}M$. 
Indeed, 
 \cite[Theorem 5.8.2]{Thurston} implies that only finitely many slopes on $\partial_{i_0}M$ yield exceptional Dehn fillings, ie $\# \mathcal{E}(M,\partial_{i_0}M)<+\infty$.

Note that any finite-volume hyperbolic 3-manifold is aspherical and atoroidal, and  it does not contain an embedded Klein bottle. Therefore, we consider three particular subsets of $\mathcal{E}(M,\partial_{i_0}M)$:
\begin{equation*}
\begin{gathered}
\mathcal{E}_s(M,\partial_{i_0}M)=\left\{c\in \slope(\partial_{i_0}M)\mid M_c \text{ is not aspherical} \right\};\\
\mathcal{E}_t(M,\partial_{i_0}M)=\left\{ c\in \slope(\partial_{i_0}M)\mid M_c \text{ is toroidal}\right\}\quad \text{(see \autoref{DEF: toroidal})};\\
\mathcal{E}_k(M,\partial_{i_0}M)=\left\{ c\in \slope(\partial_{i_0}M)\left |\,\begin{gathered} M_c\text{ is aspherical, and }M_c\\ \text{ contains an embedded Klein bottle}\end{gathered}\right.\right\}.
\end{gathered}
\end{equation*}

Let us first prove some general results detecting these properties. These results might be familiar to experts, but we still include detailed  proofs for completeness.

\subsection{Detecting prime decomposition and asphericity}

\begin{lemma}\label{LEM: Prime}
Let $M$ and $N$ be compact, orientable 3-manifolds with empty or toral boundary. The prime decomposition of $M$ and $N$ yields the following connected sums
\begin{equation}\label{Prime decomposition}
\begin{gathered}
M=(D^2\times S^1)^{\# r_1}\# (S^2\times S^1)^{\# r_2} \# M_1\#\cdots \#M_s,\\
N=(D^2\times S^1)^{\# r_1'}\# (S^2\times S^1)^{\# r_2'} \# N_1\#\cdots \#N_{s'},
\end{gathered}
\end{equation}
where the $D^2\times S^1$ factors correspond to the compressible boundary components, and the $M_i$ or $N_j$ factors are irreducible and boundary-incompressible. 

Suppose $f:\widehat{\pi_1M}\ttt\widehat{\pi_1N}$ is an isomorphism. Then $r_1+r_2=r_1'+r_2'$, $s=s'$, and up to a reordering, $f$ induces an isomorphism $f_i:\widehat{\pi_1M_i}\ttt\widehat{\pi_1N_i}$. In addition, if $f$ respects the peripheral structure, then each $f_i$ also respects the peripheral structure.
\end{lemma}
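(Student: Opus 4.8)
The plan is to reduce the statement to the profinite invariance of the Kneser--Milnor decomposition and then to extract the peripheral refinement from the conjugacy part of the profinite Kurosh theorem. First I would pass from the topology to the group level. Since $\pi_1(D^2\times S^1)\cong\pi_1(S^2\times S^1)\cong\Z$, van Kampen's theorem applied to \eqref{Prime decomposition} gives free product decompositions
\[
\pi_1M\cong F_{r_1+r_2}\ast \pi_1M_1\ast\cdots\ast\pi_1M_s,\qquad
\pi_1N\cong F_{r_1'+r_2'}\ast \pi_1N_1\ast\cdots\ast\pi_1N_{s'},
\]
where $F_k$ is the free group of rank $k$. Each $\pi_1M_i$ is non-trivial, not infinite cyclic, and freely indecomposable: by the sphere theorem a non-trivial free splitting of the fundamental group of an orientable irreducible $3$-manifold forces either an essential sphere (contradicting irreducibility) or a boundary compression (contradicting boundary-incompressibility), while an irreducible boundary-incompressible piece with $\pi_1\cong\Z$ would be a solid torus, which has compressible boundary. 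The same applies to the $\pi_1N_j$. Thus the free factors $F_{r_1+r_2}$ and $F_{r_1'+r_2'}$ account for exactly the $\Z$-summands contributed by the $D^2\times S^1$ and $S^2\times S^1$ pieces.

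Next I would profinitize. Because $M$ and $N$ are $3$-manifolds, the abstract free factors are separable and carry the full profinite topology, so the natural map realizes $\widehat{\pi_1M}$ as the free profinite product
\[
\widehat{\pi_1M}\cong \widehat{F_{r_1+r_2}}\amalg \widehat{\pi_1M_1}\amalg\cdots\amalg\widehat{\pi_1M_s},
\]
and likewise for $N$ \cite{RZ10}. By the work of Wilton--Zalesskii on profinite Bass--Serre theory \cite{WZ17,WZ19}, the completion of a freely indecomposable $3$-manifold group is again freely indecomposable as a profinite group, and none of the $\widehat{\pi_1M_i}$ is a free profinite group, since a non-trivial free profinite group is either procyclic or freely decomposable. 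Hence $f$ presents the single profinite group $\widehat{\pi_1M}\cong\widehat{\pi_1N}$, in two ways, as a free profinite product of a free profinite part together with finitely many freely indecomposable, non-free factors.

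The decisive step is then the profinite uniqueness of this decomposition: the profinite analogue of the Kneser--Milnor/Grushko theorem for $3$-manifold groups established by Wilton--Zalesskii yields $s=s'$ together with, after reindexing, an element $g\in\widehat{\pi_1N}$ such that conjugation by $g$ composed with $f$ carries $\widehat{\pi_1M_i}$ isomorphically onto $\widehat{\pi_1N_i}$; these restrictions are the desired $f_i$. The leftover free profinite factors $\widehat{F_{r_1+r_2}}$ and $\widehat{F_{r_1'+r_2'}}$ are then forced to be isomorphic, and since free profinite groups of distinct ranks have non-isomorphic abelianizations $\widehat{\Z}^{\,k}$, this gives $r_1+r_2=r_1'+r_2'$. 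For the peripheral statement I would use that the incompressible toral boundary components of $M$ are precisely those carried by the irreducible factors $M_i$ (the compressible tori sit on the $D^2\times S^1$ summands inside the free part), so every incompressible peripheral $\widehat{\Z}^2$ of $M$ lies, up to conjugacy, in a unique factor $\widehat{\pi_1M_i}$. Since $f$ respects the peripheral structure and, after the conjugation above, restricts to $f_i$ on $\widehat{\pi_1M_i}$, the peripheral subgroups of $M_i$ are sent to conjugates, within $\widehat{\pi_1N_i}$, of the peripheral subgroups of $N_i$; hence each $f_i$ respects the peripheral structure.

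The main obstacle is the uniqueness invoked in the third paragraph. In the profinite category a free product decomposition into freely indecomposable factors need not be unique in general, and what makes the argument work is precisely the profinite detection of the Kneser--Milnor decomposition for $3$-manifold groups, which simultaneously supplies the pairing of factors up to isomorphism and the conjugacy-uniqueness of the factor subgroups inside $\widehat{\pi_1N}$. It is this conjugacy-uniqueness, rather than a bare isomorphism of factors, that allows the global peripheral-respecting property of $f$ to descend to each $f_i$.
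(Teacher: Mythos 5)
Your proposal is correct and follows essentially the same route as the paper: both reduce the statement to the profinite invariance of the Kneser--Milnor decomposition for boundary-incompressible $3$-manifold groups --- Wilkes' \cite[Theorem 6.22]{Wil19} --- and then descend the peripheral structure to the factors using the fact that conjugates of distinct free profinite factors intersect trivially. The only cosmetic difference is that the paper Dehn-fills the compressible boundary components so as to literally meet the hypotheses of the cited theorem (and spells out, via the procyclic versus $\widehat{\Z}^2$ dichotomy, why the peripheral correspondence must pair incompressible boundary components with incompressible ones), whereas you identify the free parts directly at the group level and leave that last observation implicit.
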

\begin{proof}
The proof mainly follows from \cite[Theorem 6.22]{Wil19}, which further requires that $M$ and $N$ are boundary-incompressible. However, we can make some adjustments to transform the boundary-compressible case to the boundary-incompressible case.

We Dehn fill the compressible boundary components of $M$ along their null-homotopic slopes, and obtain a boundary-incompressible 3-manifold 
$$M^+=(S^2\times S^1)^{\# (r_1+r_2)}\# M_1\#\cdots\# M_s.$$ 

In particular, the inclusion $M\hookrightarrow M^+$ induces an isomorphism between the fundamental groups $\pi_1M\ttt\pi_1M^+$.

Similarly, let $$N^+=(S^2\times S^1)^{\# (r_1'+r_2')}\#N_1\#\cdots \#N_{s'}$$ be the Dehn-filled manifold so that $\pi_1N\ttt\pi N^+$. Then, $f$ induces an isomorphism $f^+:\widehat{\pi_1M^+}\cong \widehat{\pi_1M}\ttt\widehat{\pi_1N}\cong \widehat{\pi_1N^+}$.

\cite[Theorem 6.22]{Wil19} shows that the profinite completion of fundamental group detects the prime decomposition of a compact, orientable, boundary-incompressible 3-manifold. Thus,   $r_1+r_2=r_1'+r_2'$, $s=s'$, and up to a reordering, $\widehat{\pi_1M_i}\cong \widehat{\pi_1N_i}$ for each $1\le i \le s$. 

To be explicit, up to a choice of conjugacy representatives, we have $\pi_1M^+=\pi_1M_1\ast \cdots \ast \pi_1M_s\ast F_{r_1+r_2}$ and $\widehat{\pi_1M^+}=\widehat{\pi_1M_1}\amalg \cdots \amalg \widehat{\pi_1M_s}\amalg \widehat{F_{r_1+r_2}}$, and likewise for $\widehat{\pi_1N^+}$. This can be checked by examining the universal property of  free profinite products.  
With this decomposition, \cite[Theorem 6.22]{Wil19} actually shows that $f^+$ sends each $\widehat{\pi_1M_i}$ to a conjugate of $\widehat{\pi_1N_i}$ in $\widehat{\pi_1N^+}$, and we denote $f_i:\widehat{\pi_1M_i}\ttt\widehat{\pi_1N_i}$ as $f^+$ composing with a conjugation and then restricted on $\widehat{\pi_1M_i}$.

\revised{
When $f$ respects the peripheral structure, we first claim that $f^+$ respects the peripheral structure. Indeed, the closure of a peripheral subgroup corresponding to a compressible boundary component is isomorphic to $\widehat{\Z}$, while the closure of a peripheral subgroup corresponding to an incompressible boundary component is isomorphic to $\widehat{\Z}^2$. By \autoref{lem: abelianize}, $\widehat{\Z}$ and $\widehat{\Z}^2$ are not isomorphic, so the bijection between the boundary components of $M$ and $N$ carried by $f$ distinguishes between the compressible ones and the incompressible ones. Note that the peripheral subgroups of $M^+$ and $N^+$ are exactly the images in $\pi_1M^+$ and $\pi_1N^+$ of the peripheral subgroups corresponding to the incompressible boundary components in $\pi_1M$ and $\pi_1N$. Then, by the same reasoning as the last paragraph in the proof of  \autoref{inthm: Dehn filling}, we derive that $f^+$ respects the peripheral structures. 

Hence, it suffices to show that when $f^+$ respects the peripheral structure, each $f_i$ also respects the peripheral structure. Let $g_i\in \pi_1N^+$ such that $f|_{\widehat{\pi_1M_i}}= C_{g_i}\circ f_i$, where $C_{g_i}(x)=g_ixg_i^{-1}$ is the conjugation by $g_i$. Then, $f(\widehat{\pi_1M_i})=C_{g_i}(\widehat{\pi_1N_i})$. Suppose $\partial_kM_i$ is a boundary component of $M_i$, and $f^+(\overline{\pi_1\partial_kM_i})=C_{h}(\overline{\pi_1\partial_lN_j})$ for some $h\in \widehat{\pi_1N^+}$, where $\pi_1\partial_lN_j$ is a peripheral subgroup in $\pi_1N_j$. Then, $C_{g_i}(\widehat{\pi_1N_i})$ intersects nontrivially with $C_h(\widehat{\pi_1N_j})$. However, $\widehat{\pi_1N^+}=\widehat{\pi_1N_1}\amalg\cdots \amalg \widehat{\pi_1N_{s'}}\amalg \widehat{F_{r_1'+r_2'}}$, so the family of subgroups $\{ \widehat{\pi_1N_1},\cdots  , \widehat{\pi_1N_{s'}}\}$ is malnormal in $\widehat{\pi_1N^+}$. Thus, $j=i$ and $\alpha_{k,i}:=g_i^{-1}h \in \widehat{\pi_1N_i}$. Consequently, $f_i(\overline{\pi_1\partial _k M_i})= C_{\alpha_{k,i}}(\overline{\pi_1\partial_l N_i})$. This matches up a one-to-one correspondence between the boundary components of $M_i$ and $N_i$, which proves that each $f_i$ respects the peripheral structure.}
\end{proof}

\begin{corollary}\label{Cor: Spherical}
Let $M$ and $N$ be compact, orientable 3-manifolds with empty or toral boundary. Suppose $\widehat{\pi_1M}\cong \widehat{\pi_1N}$. %
Then $M$ is aspherical if and only if $N$ is aspherical, except for the case that one of $M$ and $N$ is $D^2\times S^1$ while the other one is $S^2\times S^1$.
\end{corollary}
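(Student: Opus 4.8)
The plan is to reduce asphericity to a purely combinatorial statement about the prime decomposition and then read it off from \autoref{LEM: Prime}. The first step I would take is to recall the homotopy-theoretic characterisation of asphericity for a compact, orientable 3-manifold $P$ with empty or toral boundary (and no spherical boundary, by our convention): $P$ is aspherical if and only if $P$ is irreducible with infinite fundamental group. Indeed, by the sphere theorem $\pi_2(P)=0$ exactly when $P$ is irreducible, so a reducible $P$ (in particular any nontrivial connected sum, as well as $S^2\times S^1$) fails to be aspherical; conversely an irreducible such $P$ has contractible universal cover precisely when $\pi_1P$ is infinite, the only exceptions being the closed spherical space forms (which have finite $\pi_1$). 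Translated through the decomposition (\ref{Prime decomposition}), this says that $P$ is aspherical if and only if $r_2=0$, $r_1+s=1$, and the unique prime factor has infinite $\pi_1$; equivalently, $P$ is either the solid torus $D^2\times S^1$ (whose group $\Z$ is automatically infinite) or a single irreducible, boundary-incompressible piece $M_1$ with $\pi_1M_1$ infinite.

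Next I would invoke \autoref{LEM: Prime}, which supplies $r_1+r_2=r_1'+r_2'$, $s=s'$, and, after reordering, isomorphisms $\widehat{\pi_1M_i}\cong\widehat{\pi_1N_i}$. I would also record that ``having infinite fundamental group'' is a profinite invariant among compact 3-manifolds: their groups are residually finite by \autoref{PROP: RF}, so a group is finite if and only if its profinite completion is finite; hence $\pi_1M_i$ is infinite if and only if $\pi_1N_i$ is.

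The core of the argument is then a short case analysis proving the sharp claim: if $M$ is aspherical but $N$ is not, then necessarily $M=D^2\times S^1$ and $N=S^2\times S^1$. Since $M$ is aspherical, $r_2=0$ and $r_1+s=1$. If $s=1$ (so $r_1=r_2=0$), then $s'=1$ and $r_1'+r_2'=0$ force $N=N_1$ to be a single irreducible piece with $\widehat{\pi_1N_1}\cong\widehat{\pi_1M_1}$; the profinite detection of infiniteness gives $\pi_1N_1$ infinite, so $N$ is aspherical, contradicting the hypothesis. Hence $r_1=1$, $s=0$, i.e. $M=D^2\times S^1$; then $s'=0$ and $r_1'+r_2'=1$ leave only $N=D^2\times S^1$ or $N=S^2\times S^1$, and the non-asphericity of $N$ selects $N=S^2\times S^1$. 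Swapping the roles of $M$ and $N$ yields the mirror conclusion, so outside the single exceptional pair $\{D^2\times S^1,\ S^2\times S^1\}$ asphericity is preserved in both directions, giving the stated equivalence.

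The main obstacle, and really the only delicate point, is applying the homotopy characterisation of asphericity correctly to pieces with boundary and to the connected-sum structure: I would need to verify that connect-summing with a solid torus already produces an essential sphere (so that such sums are genuinely non-aspherical rather than spuriously collapsing), and that an irreducible piece with finite $\pi_1$ is forced to be closed (so that its non-asphericity is intrinsic and not an artefact of a boundary torus, which would otherwise contribute a $\Z^2$ and make $\pi_1$ infinite). Both facts are standard, but they are precisely what pins down why the solid torus versus $S^2\times S^1$ is the sole ambiguity, matching the indistinguishability of their common profinite completion $\widehat{\Z}$.
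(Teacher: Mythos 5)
Your proof is correct and follows essentially the same route as the paper's: both reduce the question to the prime decomposition via \autoref{LEM: Prime} and detect finiteness of the fundamental groups of the prime factors via residual finiteness (\autoref{PROP: RF}), isolating $D^2\times S^1$ versus $S^2\times S^1$ as the sole ambiguity. The paper phrases the reduction as three conditions characterising non-asphericity rather than your combinatorial condition on $(r_1,r_2,s)$, but the content is identical.
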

\begin{proof}
It is easy to verify that $M$ is non-aspherical if and only if one of the following conditions holds:
\begin{enumerate}[label=(\roman*), leftmargin=*]
\item $M$ has non-trivial prime decomposition;
\item $M\cong S^2\times S^1$;
\item $M$ has finite fundamental group.
\end{enumerate}

In the first two cases, $M$ contains an essential $S^2$; while in the third case, $M$ is covered by $S^3$. Thus, it suffices to examine these three conditions.

When $M$  has non-trivial prime decomposition, according to \autoref{LEM: Prime}, $\widehat{\pi_1M}\cong \widehat{\pi_1N}$ implies that $N$ also has non-trivial prime decomposition, so $N$ is also non-aspherical.

When $M\cong S^2\times S^1$, $\widehat{\pi_1N}\cong \widehat{\pi_1M}\cong\widehat{\Z}$. The residual finiteness of $\pi_1N$ (cf. \autoref{PROP: RF}) implies that $\pi_1N\cong \Z$, so either $N\cong S^2\times S^1$ is non-aspherical, or $N\cong D^2\times S^1$ is the exception.

When $M$ has finite fundamental group, $\widehat{\pi_1N}\cong \widehat{\pi_1M}$ is finite, and the residual finiteness of $\pi_1N$ (\autoref{PROP: RF}) implies that $\pi_1N$ is also finite, so $N$ is also non-aspherical.
\end{proof}

\subsection{Detecting toroidality}

\revised{
In this paper, the term toroidal is taken to mean geometrically toroidal.
\begin{definition}\label{DEF: toroidal}
A compact, orientable 3-manifold $M$ is {\em toroidal} if $M$ contains an embedded incompressible torus which is not parallel to any boundary component. 
\end{definition}
}

\begin{lemma}\label{LEM: Toroidal}
Let $M$ and $N$ be compact, orientable 3-manifolds with empty or toral boundary. Suppose that $f:\widehat{\pi_1M}\ttt \widehat{\pi_1N}$ is an isomorphism respecting the peripheral structure. Then, $M$ is toroidal if and only if $N$ is toroidal.
\end{lemma}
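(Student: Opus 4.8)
The statement is symmetric in $M$ and $N$, so it suffices to prove that if $M$ is toroidal then so is $N$. The plan is to first strip off the connected-sum and boundary-compression summands, reducing to irreducible pieces, and then invoke the profinite detection of the geometric decomposition on those pieces. First I would feed the hypothesis into \autoref{LEM: Prime}: writing the prime decompositions as there, the isomorphism $f$, which respects the peripheral structure, induces isomorphisms $f_i\colon \widehat{\pi_1 M_i}\ttt\widehat{\pi_1 N_i}$ that again respect the peripheral structure, with $s=s'$. I would then observe that $M$ is toroidal if and only if some irreducible summand $M_i$ is toroidal: an essential torus is $\pi_1$-injective, hence cannot be isotoped into an $S^2\times S^1$ or $D^2\times S^1$ summand, whose fundamental groups are procyclic, so by the standard position of incompressible surfaces in a connected sum it can be isotoped into one of the $M_i$; conversely any essential torus in a summand $M_i$ remains essential in $M$. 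The same holds for $N$. Thus it is enough to treat the case where $M$ and $N$ are irreducible and boundary-incompressible, with $f$ respecting the peripheral structure.

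Next I would reduce to the aspherical case. If $\pi_1 M$ is finite then $\widehat{\pi_1 M}\cong\widehat{\pi_1 N}$ is finite, so $\pi_1 N$ is finite by residual finiteness (\autoref{PROP: RF}); both $M$ and $N$ are then spherical space forms and hence atoroidal. Otherwise $M$ is irreducible with infinite $\pi_1$, hence aspherical, and \autoref{Cor: Spherical} forces $N$ to be aspherical as well, the exceptional $D^2\times S^1$ versus $S^2\times S^1$ case being excluded here since neither is an irreducible boundary-incompressible piece. So I may assume from now on that $M$ and $N$ are both irreducible, aspherical, orientable, with empty or incompressible toral boundary.

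In this setting the work of Wilton--Zalesskii \cite{WZ19} shows that being geometric is a profinite invariant, that the geometry type is preserved under a profinite isomorphism respecting the peripheral structure, and that the JSJ decomposition is profinitely detected in the non-geometric case. If $M$ is non-geometric, then its JSJ tori are essential, so $M$ is toroidal; by \cite{WZ19} $N$ is likewise non-geometric and hence toroidal. If $M$ is geometric, then $N$ carries the same geometry: when this geometry is hyperbolic both are atoroidal, consistent with \autoref{PROP: Detect hyperbolic}, and when it is $Sol$ both are toroidal, since a $Sol$ manifold contains the essential fiber torus of its torus (semi-)bundle structure.

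The delicate case, which I expect to be the main obstacle, is when the common geometry is one of the Seifert geometries $\mathbb{E}^3$, $\mathrm{Nil}$, $\widetilde{\mathrm{SL}_2}$, or $\mathbb{H}^2\times\mathbb{R}$, because the geometry type alone does not determine toroidality: for instance $\mathrm{Nil}$ and $\widetilde{\mathrm{SL}_2}$ each contain both toroidal Seifert fibered spaces, over a base orbifold carrying an essential simple closed curve, and atoroidal ones, over a triangle base orbifold. The way around this is that for a Seifert fibered space toroidality is governed entirely by the base orbifold, with the flat Euler-number-zero borderline already separated off as the $\mathbb{E}^3$ geometry, and that the base orbifold --- its genus together with the number and orders of its cone points --- is itself a profinite invariant by the profinite classification of Seifert fibered spaces of Wilkes \cite{Wil17,Wil18}, together with \cite{Xu} for the bounded case. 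Hence $M$ and $N$ have the same base orbifold and are simultaneously toroidal or atoroidal. Assembling all cases shows that $M_i$ is toroidal if and only if $N_i$ is, and therefore, by the first reduction, $M$ is toroidal if and only if $N$ is.
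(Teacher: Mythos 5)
Your proof is correct, and it rests on the same foundational inputs as the paper's: the prime-decomposition reduction via \autoref{LEM: Prime}, the profinite detection of hyperbolicity (\autoref{PROP: Detect hyperbolic}), of Seifert fibred structures, and of base orbifolds (Wilkes \cite{Wil17}, together with \cite{Xu} in the bounded case). The organization of the middle step, however, is genuinely different. The paper sidesteps the eight geometries entirely by quoting the hyperbolization theorem in the form ``$M_i$ is toroidal if and only if it is not finite-volume hyperbolic, not a closed small Seifert fibred space, and not one of five explicit bounded Seifert fibred spaces,'' and then verifies that each of these three conditions is a profinite invariant. You instead split into geometric versus non-geometric via JSJ detection, dispose of the non-geometric, hyperbolic, and $Sol$ cases directly, and reduce the Seifert case to reading toroidality off the base orbifold. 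Your route needs slightly more care at two points, both of which you essentially handle but should make explicit. First, the base orbifold alone does not determine toroidality of a closed Seifert fibred space: the flat torus bundles fibring over $S^2(3,3,3)$, $S^2(2,4,4)$, $S^2(2,3,6)$ are toroidal, while the $Nil$-manifolds over the same orbifolds are atoroidal, so the geometry must be detected first --- your parenthetical about separating off $\mathbb{E}^3$ is exactly the needed caveat, and in fact Wilkes' theorem gives homeomorphism outright in the $\mathbb{E}^3$, $Nil$, $\widetilde{\mathrm{SL}(2,\mathbb{R})}$ cases, which trivializes them. Second, the peripheral-structure hypothesis does real work only in the bounded Seifert case, where \cite[Theorem 6.7]{Wil17} needs it to pin down the base orbifold; this is precisely the point of the paper's remark contrasting $\Sigma_{0,3}\times S^1$ with $\Sigma_{1,1}\times S^1$. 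The paper's formulation buys a shorter case analysis against a finite list of atoroidal models; yours makes the geometric mechanism behind the detection more transparent.
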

\begin{proof}
We denote as in (\ref{Prime decomposition}) the prime decomposition of $M$ and $N$. Then, \autoref{LEM: Prime} implies that $s=s'$, and $f$ induces isomorphisms $f_i:\widehat{\pi_1M_i}\ttt\widehat{\pi_1N_i}$ respecting the peripheral structure.

Note that $M$ is toroidal if and only if one of $M_1,\cdots, M_s$ is toroidal, and $N$ is toroidal if and only if one of $N_1,\cdots, N_{s'}$ is toroidal. Thus, it suffices to show that for each irreducible and $\partial$-irreducible prime factor,   $M_i$ is toroidal if and only if $N_i$ is toroidal. 

Based on  the hyperbolization theorem (see \cite[Theorem 1.7.5]{AFW15}) and standard results on Seifert fibered spaces, $M_i$ is not toroidal if and only if $M_i$ falls into one of the following 3 possibilities.
\begin{enumerate}[label=(\roman*), leftmargin=*]
\item\label{con2} $M_i$ is  a finite-volume hyperbolic manifold;
\item\label{concha} $M_i$ is a closed Seifert fibered space that admits a Seifert fibration over $S^2$ with at most 3 exceptional fibers, but does not admit $\mathbb{E}^3$ geometry; 
\item\label{c?}
$M_i$ is   homeomorphic to one of the following listed Seifert fibered spaces with boundary: 
\begin{itemize}
\item $S^1\times S^1\times I=(S^1\times I;)$;
\item the orientable $I$-bundle over Klein bottle $S^1\ttimes S^1\ttimes I=(D^2;(2,1),(2,1))=(\mathcal{MB};)$, where $\mathcal{MB}$ denotes the M\"obius band;
\item $P\times S^1=(P;)$, where $P=\Sigma_{0,3}$ denotes a pair of pants;
\item $(S^1\times I; (p,q))$, $p>1$, ie a Seifert fibered manifold with base orbifold an annulus with one singular point;
\item $(D^2;(p_1,q_1),(p_2,q_2))$, $p_1,p_2>1$ and $(p_1,p_2)\neq (2,2)$, ie a Seifert fibered manifold with base orbifold a disk with two singular points and having negative Euler characteristic.
\end{itemize}
\end{enumerate}

It suffices to show that these three conditions hold for $M_i$ if and only if they hold for $N_i$.

The detection of condition \ref{con2} follows directly from \autoref{PROP: Detect hyperbolic}.

Moving on to condition \ref{concha}, suppose $M_i$ is a closed Seifert fibered space that admits a Seifert fibration over $S^2$ with at most 3 exceptional fibers, and $M_i$ does not admit $\mathbb{E}^3$ geometry. Then $N_i$ is also closed since $f_i$ respects the peripheral structure, and it follows from \cite[Theorem 8.4]{WZ17} that $N_i$ is also a Seifert fibered space admitting the same geometry of $M_i$. In particular, $N_i$ does not admit $\mathbb{E}^3$ geometry. In addition, $M_i$ and $N_i$ cannot admit $\mathbb{S}^2\times \mathbb{E}^1$ geometry since they are irreducible.  If $M_i$ admits $\mathbb{S}^3$ geometry, then $N_i$ also admits $\mathbb{S}^3$ geometry, and any such manifold admits a Seifert fibration over $S^2$ with at most 3 exceptional fibers. If they admit $Nil$ geometry or $\widetilde{\SL(2,\mathbb{R})}$ geometry, \cite[Theorem 1.2]{Wil17} implies that $M_i$ is homeomorphic with $N_i$, so the condition holds automatically for $N_i$. Finally, if $M_i$ and $N_i$ admit $\mathbb{H}^2\times \mathbb{E}^1$ geometry, then they admit unique Seifert fibrations; and by \cite[Theorem 5.2]{Wil17}, $M_i$ and $N_i$ have the same base orbifold though they are not necessarily homeomorphic. Hence, $N_i$ also admits a Seifert fibration over $S^2$ with at most 3 exceptional fibers if $M_i$ does.

For \ref{c?}, suppose $M_i$ belongs to this list. Since $M_i$ and $N_i$ are irreducible with non-empty incompressible toral boundary, it follows from the results of Wilton-Zalesskii \cite{WZ19,WZ17b}   that $N_i$ is also Seifert fibered. The isomorphism $f_i:\widehat{\pi_1M_i}\ttt\widehat{\pi_1N_i}$   respects the peripheral structure, so \cite[Theorem 5.8]{Wil17}   implies that $M_i$ and $N_i$ have the same base orbifolds. The list of Seifert fibered spaces in \ref{c?} are only characterized by their base orbifolds. Therefore, $N_i$ belongs to the list when $M_i$ does, which finishes the proof.
\end{proof}

\begin{remark}
It is necessary to assume that $f$ respects the peripheral structure. In fact, let $\Sigma_{0,3}$ denote a three-punctured sphere and $\Sigma_{1,1}$ denote a once-punctured torus. Then $\pi_1(\Sigma_{0,3}\times S^1)\cong F_2\times \Z\cong \pi_1(\Sigma_{1,1}\times S^1)$. However, $\Sigma_{0,3}\times S^1$ is atoroidal while $\Sigma_{1,1}\times S^1$ is toroidal. Indeed, these two manifolds have different numbers of boundary components, so any isomorphism $f:\widehat{\pi_1(\Sigma_{0,3}\times S^1)} \to \widehat{\pi_1(\Sigma_{1,1}\times S^1)}$ does not respect the peripheral structure.
\end{remark}

\subsection{Detection of embedded Klein bottles}
For brevity of notation, we denote by $\SK$ the orientable $I$-bundle over the Klein bottle. In fact, $\SK\cong S^1\ttimes S^1\ttimes I$ admits two Seifert fibrations $(D^2;(2,1),(2,1))\cong (\mathcal{MB};)$, where $\mathcal{MB}$ denotes the M\"obius band. 
\begin{lemma}\label{LEM: Klein}
Let $M$ be a compact, orientable, aspherical 3-manifold with empty or toral boundary. Then $M$ contains an embedded Klein bottle if and only if one of the following conditions hold:
\begin{enumerate}[label=(\roman*), leftmargin=*]
\item\label{4.6-1} $M$ is closed, admitting $Sol$ geometry, but does not fiber over $S^1$.
\item\label{4.6-2} $M$ is not a closed $Sol$-manifold, and the JSJ-decomposition (possibly trivial) of $M$ contains a Seifert fibered piece whose (one possible) base orbifold is
\begin{itemize}
\item either non-orientable,
\item or contains two singular points of index $2$.
\end{itemize}
\end{enumerate}
\end{lemma}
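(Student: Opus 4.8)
The plan is to prove both implications by analysing the twisted $I$-bundle neighborhood of an embedded Klein bottle and then invoking the geometric decomposition of $M$.

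First I would treat the forward direction. Given an embedded Klein bottle $K\subseteq M$, it is one-sided because $M$ is orientable, so a regular neighborhood $N(K)$ is the orientable twisted $I$-bundle $\SK\cong S^1\ttimes S^1\ttimes I$, whose boundary $T=\partial N(K)$ is a single torus that is incompressible in $N(K)$. The first key step is to show that $T$ is incompressible in $M$. If not, a compressing disk lies on the complementary side $W=\overline{M\setminus N(K)}$; compressing and using irreducibility of $M$ (which holds since $M$ is aspherical and orientable) forces $W$ to be a solid torus, so that $M=\SK\cup_T (D^2\times S^1)$ is a closed small Seifert fibered space over a base orbifold of the form $S^2(2,2,\nu)$. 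Every such orbifold has positive orbifold Euler characteristic, so $\pi_1M$ would be finite, contradicting asphericity (the degenerate gluings instead violate irreducibility). Hence $T$ is an incompressible torus.

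Next I would locate $T$ using the geometric decomposition (cf. \cite{AFW15}). If $M$ is a closed $Sol$-manifold I handle it separately: since $M$ is orientable, any surface-bundle fiber over $S^1$ is two-sided, hence orientable, so $M$ fibers over $S^1$ if and only if it is a torus bundle, and the non-fibering $Sol$-manifolds are exactly the torus semi-bundles $\SK\cup_T\SK$, which contain the core Klein bottle of a twisted $I$-bundle summand; this yields condition (\ref{4.6-1}). Otherwise $M$ has a genuine JSJ decomposition; the incompressible torus $T$ cannot be isotoped into a hyperbolic piece, since an atoroidal piece has only boundary-parallel incompressible tori, which would make $\SK$ a product collar. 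Therefore $K\subseteq \SK\subseteq V$ for some Seifert fibered JSJ piece $V$ (taking $V=\SK$ when $\SK$ is itself a JSJ piece).

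The crux is the classification of embedded Klein bottles inside an orientable Seifert fibered space $V$: up to isotopy $K$ is either \emph{vertical}, saturated by fibers over an orientation-reversing simple closed curve in the base orbifold, which forces the base to be non-orientable; or it is \emph{horizontal}, which forces the base to contain two cone points of index $2$ (as for $\SK=(D^2;(2,1),(2,1))$ itself). Reading off the base orbifold of $V$ then gives condition (\ref{4.6-2}), with the parenthetical ``one possible base orbifold'' accounting for the two Seifert fibrations on small pieces such as $\SK$. For the reverse direction I would exhibit the Klein bottles explicitly: in case (\ref{4.6-1}) the core of a semi-bundle summand; in case (\ref{4.6-2}) the saturation of an orientation-reversing curve when the base is non-orientable, and the Klein bottle lying in the copy of $\SK$ sitting over an arc joining the two index-$2$ cone points when the base has two such points. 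The main obstacle is precisely this Seifert-fibered classification (vertical versus horizontal) together with its isotopy statement, which is where the genuine $3$-manifold topology lives and which must be combined carefully with the non-uniqueness of Seifert fibrations; a secondary subtlety is the $Sol$ dichotomy, where orientability of $M$ is exactly what rules out Klein-bottle fibers and pins down ``fibers over $S^1$'' as ``is a torus bundle''.
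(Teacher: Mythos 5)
Your forward direction up to the incompressibility of $T=\partial N(K)$ is sound and close to the paper's (the paper rules out the compressible case via the double cover $\mathcal{N}\cup_T D^2\times S^1 \to$ union of two solid tori, rather than via orbifold Euler characteristic, but both work). The genuine problem is at what you yourself call the crux: the claim that an embedded Klein bottle in an orientable Seifert fibered piece $V$ is isotopic to a \emph{vertical} or \emph{horizontal} surface, with the horizontal case producing two index-$2$ cone points. As stated this is false. A horizontal surface is an orbifold covering of the base, and a covering of an orientable $2$-orbifold without reflector lines is orientable; since $D^2(2,2)$ is orientable, a Klein bottle cannot be horizontal over it. The Klein bottle in $\mathcal{N}=(D^2;(2,1),(2,1))$ is in fact \emph{pseudo-vertical}: the closure of the vertical annulus over an arc joining the two cone points, together with the two exceptional fibers --- exactly the surface you build in your converse. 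So the dichotomy you need is ``vertical or pseudo-vertical'' (a statement about one-sided incompressible surfaces in Seifert fibered spaces, in the spirit of Rubinstein/Frigerio), and you neither state it correctly nor prove or cite it. The standard vertical/horizontal theorem is for two-sided essential surfaces and does not directly apply to $K$. There are also two smaller gaps: (a) the reduction ``$K\subseteq\mathcal{N}\subseteq V$ for a Seifert JSJ piece $V$'' needs the observation that every incompressible torus in $\mathcal{N}$ is boundary-parallel, so no JSJ torus of $M$ can lie in $\mathring{\mathcal{N}}$; and (b) in the $Sol$ case your forward direction needs that a $Sol$ torus \emph{bundle} contains no embedded Klein bottle (e.g.\ because its incompressible tori are all isotopic to the fiber, whereas cutting along $T$ leaves the piece $\mathcal{N}\neq T^2\times I$), which you assert only implicitly.

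The paper avoids this classification problem entirely by working on the other side of $T$: it sets $M'=M\setminus\mathring{\mathcal{N}}$, takes the JSJ decomposition of $M'$, and uses the fiber-matching criterion \cite[Proposition 1.9]{AFW15} on the piece $M_0\subseteq M'$ adjacent to $T$. Either $T$ survives as a JSJ torus of $M$ (so $\mathcal{N}$ is itself a Seifert JSJ piece, with base a M\"obius band or $D^2(2,2)$), or the regular fiber of $M_0$ matches one of the two fibrations of $\mathcal{N}$ and $\mathcal{N}\cup_{T}M_0$ becomes a single Seifert piece whose base orbifold contains a M\"obius band or a $D^2(2,2)$. Either way condition (ii) is read off without ever isotoping $K$ inside a Seifert piece. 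If you want to salvage your route, you would need to prove (or find a precise reference for) the vertical/pseudo-vertical classification of one-sided Klein bottles in orientable Seifert fibered spaces; the paper's gluing analysis is the cheaper path.
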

\begin{proof}
The ``if'' direction is clear. Indeed, if $M$ satisfies condition \ref{4.6-1}, then $M$ is a torus semi-bundle. In this case, $M$ contains an embedded submanifold homeomorphic to $\mathcal{N}$, and thus contains an embedded Klein bottle. If $M$ satisfies condition \ref{4.6-2}, then $M$ contains a Seifert fibered submanifold, whose base orbifold is either a M\"obius band, or a disk with two singular points of index $2$. Then, this Seifert fibered submanifold is homeomorphic to $\mathcal{N}$, and thus $M$ contains an embedded Klein bottle.

Now we prove the ``only if'' direction.  When $M$ has compressible boundary,  we have $M\cong D^2\times S^1$ since $M$ is aspherical, and in this case, $M$ does not contain an embedded Klein bottle. Thus, we may assume that $M$ is boundary-incompressible. Suppose that $M$ contains an embedded Klein bottle $K^2$. Then the regular neighbourhood of $K^2$ in $M$ is homeomorphic to $\mathcal{N}$.

 Let $T^2_{\mathcal N}$ denote the boundary torus of $\mathcal{N}$.  
We first claim that $T^2_{\mathcal N}$ is incompressible in $M$. Suppose  by contrary that $T^2_{\mathcal N}$ is compressible; since $M$ is aspherical, $T^2_{\mathcal N}$ bounds a solid torus in $M$ according to the loop theorem. Then $M=\mathcal{N}\cup_{T^2_{\mathcal N}} D^2\times S^1$. Note that $\mathcal{N}$ admits a two-fold cover $\widetilde{\mathcal{N}}\cong S^1\times S^1\times I$, where the covering map restricts to a homeomorphism from each boundary component of $\widetilde{\mathcal{N}}$ to the boundary of $ \mathcal{N}$. Therefore, $M$ admits a two-fold cover $\widetilde{M}\cong D^2\times S^1\cup_{T^2}D^2\times S^1$, which is either $S^2\times S^1$ or a lens space. In either case, $\widetilde{M}$ is non-aspherical, so $M$ is non-aspherical, which contradicts with our assumption.

Let $M'=M\setminus \mathring{\mathcal{N}}$. Then $\partial M'=\partial M \sqcup T^2_{\mathcal  N}$, which is incompressible. 

Let us first consider two special cases. If $M'\cong S^1\times S^1\times I$, then $M\cong \mathcal{N}$ and satisfies condition \ref{4.6-2}. If $M'$ is homeomorphic to $\mathcal{N}$, then $M$ is a torus semi-bundle. 
\revised{
Both $\mathcal{N}$ and $M'$ admit two Seifert fibrations. Up to isotopy, if one of the two possible regular fibers of $\mathcal{N}$ is glued along $T^2_{\mathcal{N}}$ to one of the two  possible regular fibers of $M'$, then the Seifert fibration structures on both sides combine into a Seifert fibration of $M$  whose base orbifold is obtained by gluing the base orbifolds of both sides along their boundary circles. In particular, the base orbifold  belongs to the following three possibilities:
\begin{itemize}
\item $S^2$ with four singular points of index $2$,
\item $\mathbb{RP}^2$ with two singular points of index $2$,
\item a Klein bottle $K^2$,
\end{itemize}
any of which satisfies condition \ref{4.6-2}. Otherwise, the gluing along $T^2_{\mathcal{N}}$ is never fiber-parallel, so by \cite[Proposition 1.6.2]{AFW15}, $T^2_{\mathcal{N}}$ is a JSJ-torus of $M$. In this case, $M$ is a $Sol$-manifold satisfying condition \ref{4.6-1}.
}

Now we assume that $M'$ is neither homeomorphic to $S^1\times S^1\times I$, nor to $\mathcal{N}$, so $M'$ is not  a closed $Sol$-manifold. Consider the JSJ-decomposition of $M'$, whose collection of JSJ-tori is denoted by $\mathcal{T}'$ (possibly empty). Then, there is a unique JSJ-piece in $M'$ which contains the boundary component $T^2_{\mathcal N}$, and we denote this JSJ-piece as $M_0$.

When $M_0$ is a hyperbolic piece, let $\mathcal{T}=\mathcal{T}'\cup \{T^2_{\mathcal{N}}\}$. Then \cite[Proposition 1.6.2]{AFW15} implies that $\mathcal{T}$ is exactly the collection of JSJ-tori in $M$. Therefore, the JSJ-decomposition of $M$ contains $\mathcal{N}$ as a Seifert fibered piece, so it satisfies condition \ref{4.6-2}.

When $M_0$ is a Seifert fibered piece, note that $M'$ is not $S^1\times S^1\times I$ or $\mathcal{N}$, so the base orbifold of $M_0$ has negative Euler characteristic, and $M_0$ admits a unique Seifert fibration. On the other hand, $\mathcal{N}$ admits two Seifert fibrations, one of which has base orbifold a M\"obius band, and the other has base orbifold a disk with two singular points of index $2$.

If the regular fiber of $M_0$ is not parallel to any of the two regular fibers of $\mathcal{N}$ on the common boundary $T^2_{\mathcal{N}}$, then by \cite[Proposition 1.6.2]{AFW15}, $\mathcal{T}=\mathcal{T}'\cup \{T^2_{\mathcal{N}}\}$ is still the JSJ-tori of $M$. Similarly, $M$ contains $\mathcal{N}$ as a JSJ-piece, and satisfies condition \ref{4.6-2}.

If the regular fiber of $M_0$ is parallel to one of the two regular fibers of $\mathcal{N}$. Then, $\mathcal{T}'$ is in fact the JSJ-tori of $M$, and $\mathcal{N}\cup_{T^2_{\mathcal{N}}}M_0$ yields an entire Seifert fibered piece in $M$. The base orbifold of $\mathcal{N}\cup M_0$ is obtained by gluing the base orbifold of $M_0$ with the base orbifold of $\mathcal{N}$ along an $S^1$-boundary, and the choice of the base orbifold of $\mathcal{N}$ depends on which regular fiber of $\mathcal{N}$ is parallel to that of $M_0$ on $T^2_\mathcal{N}$. Thus, the base orbifold of $\mathcal{N}\cup M_0$ either contains a M\"obius band, which is non-orientable, or contains a disk with two singular points of index $2$. This shows that $M$ satisfies condition \ref{4.6-2}.
\end{proof}

\begin{lemma}\label{LEM: Orbifold}
Let $M$ and $N$ be orientable, aspherical Seifert fibered spaces with either empty or incompressible boundary, such that $\widehat{\pi_1M}\cong \widehat{\pi_1N}$. Suppose $M$ admits a Seifert fibration over a base orbifold $\O$, where $\O$ is either non-orientable, or contains two singular points with index $2$. Then $N$ also admits a Seifert fibration over a base orbifold $\O'$, which is either non-orientable, or contains two singular points with index $2$.
\end{lemma}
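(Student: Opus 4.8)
The plan is to show that the two listed features of the base orbifold---non-orientability and the presence of (at least) two cone points of index $2$---are invariants of $\widehat{\pi_1 M}$, so that they pass from $M$ to $N$. Write $P(M)$ for the property ``$M$ admits a Seifert fibration over a base orbifold that is non-orientable or carries two index-$2$ cone points''; I want $P$ to be a profinite invariant within the class of orientable aspherical Seifert fibered spaces with empty or incompressible toral boundary. The difficulty is that the cone points and the (non-)orientability live on the \emph{quotient} $\pi_1^{\mathrm{orb}}(\O)=\pi_1 M/\langle h\rangle$, where $h$ is the regular fibre, whereas $\pi_1 M$ itself is torsion-free; so the first task is to recover the fibre subgroup, and hence $\widehat{\pi_1^{\mathrm{orb}}(\O)}$, intrinsically from $\widehat{\pi_1 M}$.

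First I would fix the geometry. The sign of $\chi^{\mathrm{orb}}(\O)$ is a profinite invariant: $\chi^{\mathrm{orb}}<0$ corresponds to $\widetilde{\SL_2}$ or $\mathbb{H}^2\times\R$ geometry and yields fundamental groups containing non-abelian free subgroups, whereas $\chi^{\mathrm{orb}}=0$ corresponds to Euclidean or $Nil$ geometry with virtually nilpotent $\pi_1$; these are separated under $\widehat{\pi_1M}\cong\widehat{\pi_1N}$ via the closed-case geometry detection of \cite{WZ17} and the bounded-case classification of \cite[Corollary 8.3]{Xu}. The finitely many $\chi^{\mathrm{orb}}=0$ manifolds (the flat and $Nil$ manifolds, together with $S^1\times S^1\times I$ and $\SK$ in the bounded case) are then dispatched by the known profinite classification in those geometries, which preserves the base orbifold and hence $P$.

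Assume now $\chi^{\mathrm{orb}}<0$, so the Seifert fibration of $M$ is unique and $\O$ is well defined. Here I would recover the fibre as the centre: because the extension $1\to\langle h\rangle\to\pi_1M\to\pi_1^{\mathrm{orb}}(\O)\to 1$ induces an exact sequence $1\to\overline{\langle h\rangle}\to\widehat{\pi_1M}\to\widehat{\pi_1^{\mathrm{orb}}(\O)}\to 1$ with $\overline{\langle h\rangle}\cong\widehat\Z$, and because a Fuchsian group $Q=\pi_1^{\mathrm{orb}}(\O)$ of negative Euler characteristic has trivial centre $Z(\widehat Q)=1$, one computes $Z(\widehat{\pi_1M})=\overline{\langle h\rangle}\cong\widehat\Z$ exactly when $\O$ is orientable (so $h$ is central), and $Z(\widehat{\pi_1M})=1$ when $\O$ is non-orientable (conjugation by an orientation-reversing loop inverts $h$, and torsion-freeness of $\widehat\Z$ kills any central element). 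Thus $\widehat{\pi_1M}\cong\widehat{\pi_1N}$ forces $\O$ and $\O_N$ to have the same orientability type. If both are non-orientable, $P(N)$ holds at once; if both are orientable, I recover $\widehat{\pi_1^{\mathrm{orb}}(\O)}\cong\widehat{\pi_1^{\mathrm{orb}}(\O_N)}$ by passing to the quotient by the centre, and read off the index-$2$ cone points as the conjugacy classes of maximal finite subgroups of order $2$: since Fuchsian groups are good and their finite subgroups and fusion are profinitely detected, the number of such classes is a profinite invariant, so $\O$ having two index-$2$ cone points forces the same for $\O_N$, giving $P(N)$.

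The main obstacle is the extraction step---identifying $\overline{\langle h\rangle}$ canonically inside $\widehat{\pi_1M}$ \emph{without} assuming that the isomorphism respects the peripheral structure. This rests on the two facts that the completed sequence stays exact with procyclic kernel and that $Z(\widehat Q)=1$ for negatively curved Fuchsian $Q$, both of which belong to the profinite theory of Seifert fibered spaces and must be invoked with care, and separately for the closed and bounded sub-cases. The secondary technical point is the profinite detection of the order-$2$ torsion conjugacy classes in $\widehat Q$, for which I would lean on the established profinite rigidity of the finite-subgroup structure of Fuchsian groups.
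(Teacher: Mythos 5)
Your proposal is correct in substance and rests on the same two invariants the paper uses: the orientability of the base orbifold, and the number of conjugacy classes of maximal $\Z/2\Z$ subgroups of the orbifold fundamental group. The difference is one of organization and of what is proved versus cited. The paper splits into the closed and bounded cases and outsources the core content to \cite[Theorems 6.1 and 6.2]{Wil17} (homeomorphism for $\mathbb{E}^3$, $Nil$, $\widetilde{\mathrm{SL}(2,\mathbb{R})}$; equality of base orbifolds for $\mathbb{H}^2\times\mathbb{R}$) and to \cite[Proposition 8.2]{Xu} in the bounded case, which hands back the orientability of the base and the isomorphism type of $\pi_1^{\mathrm{orb}}(\O)$ as a \emph{discrete} group, so the cone-point count is then read off discretely. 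You instead split by the sign of $\chi^{\mathrm{orb}}$ and open the black box: detecting orientability via $Z(\widehat{\pi_1M})$ being $\overline{\langle h\rangle}\cong\widehat{\Z}$ or trivial is exactly the mechanism inside the cited results, and it is valid granted the standard inputs (separability of the fibre subgroup, giving $\overline{\langle h\rangle}\cong\widehat{\Z}$ and exactness of the completed sequence, and triviality of $Z(\widehat{Q})$ for Fuchsian $Q$ of negative Euler characteristic). The one place your route is genuinely heavier is the closed hyperbolic-base case: there $Q$ is cocompact Fuchsian, and reading the index-$2$ cone points off $\widehat{Q}$ needs the profinite detection of torsion conjugacy classes in such groups (goodness plus conjugacy distinguishedness of finite subgroups), whereas the paper sidesteps this by quoting Wilkes' result that the base orbifolds coincide; in the bounded case $Q$ is virtually free and the torsion detection is the easy free-product statement. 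Two small slips that do not affect the argument: the $\chi^{\mathrm{orb}}=0$ manifolds are not finitely many ($Nil$ manifolds form an infinite, but profinitely rigid, family), and one should note that for $\chi^{\mathrm{orb}}=0$ a manifold may admit several fibrations, which is harmless only because profinite rigidity there returns the homeomorphism type rather than a single fibration.
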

\begin{proof}
We first consider the case when $M$ is closed. Then  \cite[Corollary 4.12]{Reid:2018} implies that $N$ is also closed. According to \cite[Theorem 1.2]{Wil17}, if $M$ admits $\mathbb{E}^3$, $Nil$ or $\widetilde{\mathrm{SL}(2,\mathbb{R})}$ geometry, then $N$ is homeomorphic to $M$, and the conclusion is trivial. On the other hand, if $M$ admits $\mathbb{H}^2\times \mathbb{R}$ geometry, then $M$ admits a unique Seifert fibration. \cite[Theorem 5.2]{Wil17} implies that $N$ admits the same geometry, and has the same base orbifold as $M$  
even though $M$ and $N$ may not be homeomorphic, which implies our conclusion.

Now let us consider the case when $M$ has non-empty boundary. Then the above reasoning shows that $N$ also has non-empty boundary. If $M$ is homeomorphic to $S^1\times S^1\times I$ or $\mathcal{N}$, then it is easy to verify that $N\cong M$ (cf. \cite[Proposition 8.2]{Xu}), and the conclusion follows directly. Thus, we may assume that $M$ fibers over a hyperbolic orbifold. In this case, $M$ admits a unique Seifert fibration, and we denote the base orbifold as $\O_M$. In addition, $N$ also fibers over a hyperbolic orbifold, and we denote the unique base orbifold as $\O_N$. 
In this case, the base orbifolds $\O_M$ and $\O_N$ may not be identical. 
However, in \cite[Proposition 8.2]{Xu}, it is proven that the isomorphism type of $\widehat{\pi_1M}$ (resp. $\widehat{\pi_1N}$) determines whether the base orbifold $\O_M$ (resp. $\O_N$) is orientable, and determines the isomorphism type of the orbifold fundamental group $\pi_1(\O_M)$ (resp. $\pi_1(\O_N)$). Thus, $\widehat{\pi_1M}\cong \widehat{\pi_1N}$ implies that $\O_M$ is non-orientable if and only if $\O_N$ is non-orientable. In addition, there is a one-to-one correspondence between the conjugacy classes of maximal torsion subgroups in $\pi_1(\O_M)$ (resp. $\pi_1(\O_N)$) with the cone points in $\O_M$ and $\O_N$. To be explicit, $\O_M$ contains two singular points of index $2$ if and only if $\pi_1(\O_M)\cong \pi_1(\O_N)$ contains two conjugacy classes of maximal torsion subgroups isomorphic to $\Z/2\Z$, which implies that $\O_N$ also contains two singular points of index $2$.
\end{proof}

\begin{lemma}\label{LEM: Profinite detect Klein}
Let $M$ and $N$ be compact, orientable 3-manifolds with empty or toral boundary such that $\widehat{\pi_1M}\cong \widehat{\pi_1N}$. Suppose $M$ is aspherical, and $M$ contains an embedded Klein bottle. Then $N$ is also aspherical and contains  an embedded Klein bottle.
\end{lemma}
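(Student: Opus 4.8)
The plan is to reduce to the topological characterisation of \autoref{LEM: Klein} and to show that each of its two alternatives is preserved under a profinite isomorphism, using the prior detection lemmas together with the profinite detection of geometry and of the JSJ-decomposition. First, since $M$ is aspherical and contains an embedded Klein bottle, $M$ is neither $D^2\times S^1$ (which contains no Klein bottle) nor $S^2\times S^1$ (which is not aspherical); hence the pair $\{M,N\}$ cannot be the exceptional pair excluded in \autoref{Cor: Spherical}, and we conclude that $N$ is aspherical. Both being aspherical, $M$ and $N$ are irreducible with incompressible boundary, and closedness is a profinite invariant among them: it is standard that a compact aspherical $3$-manifold $X$ is closed if and only if $H^3_{\mathrm{cts}}(\widehat{\pi_1X};\mathbb{F}_2)\neq 0$, using the goodness of $3$-manifold groups, so $M$ is closed if and only if $N$ is closed. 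Applying \autoref{LEM: Klein} to $M$, there are two cases: either (i) $M$ is a closed $Sol$-manifold that does not fiber over $S^1$, or (ii) $M$ is not a closed $Sol$-manifold and its JSJ-decomposition contains a Seifert fibered piece whose base orbifold is non-orientable or has two singular points of index $2$.

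I would treat case (ii) first, as the more routine one. If the JSJ-decomposition of $M$ is trivial, then $M$ itself is such a Seifert fibered space; by the profinite detection of Seifert geometry (\cite[Theorem 8.4]{WZ17} in the closed case and \cite[Theorem 5.6]{Xu} in the bounded case, exactly as used in \autoref{LEM: Toroidal}), $N$ is also Seifert fibered, and since $Sol$ is not a Seifert geometry $N$ is not a closed $Sol$-manifold. Then \autoref{LEM: Orbifold}, applied to $\widehat{\pi_1M}\cong\widehat{\pi_1N}$, shows that $N$ admits a base orbifold that is non-orientable or has two singular points of index $2$, so $N$ satisfies \autoref{LEM: Klein}(ii) and contains an embedded Klein bottle. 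If the JSJ-decomposition of $M$ is nontrivial, I would invoke the profinite detection of the JSJ-decomposition \cite{WZ19}: $N$ also has nontrivial JSJ-decomposition, with a correspondence of pieces under which Seifert pieces go to Seifert pieces with profinitely isomorphic fundamental groups, so the distinguished Seifert piece $M_0$ of $M$ corresponds to a Seifert piece $N_0$ of $N$ with $\widehat{\pi_1M_0}\cong\widehat{\pi_1N_0}$. Since JSJ-pieces are aspherical with incompressible boundary, \autoref{LEM: Orbifold} applies to this pair and forces $N_0$ to have a non-orientable base orbifold or one with two singular points of index $2$; as $N$ has nontrivial JSJ it is not a closed $Sol$-manifold, so again \autoref{LEM: Klein}(ii) gives that $N$ contains a Klein bottle.

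It remains to handle case (i), which I expect to be the main obstacle, namely profinitely distinguishing $Sol$ torus bundles from $Sol$ torus semibundles. Here $M$ is closed, so $N$ is closed, and the profinite detection of geometry in the closed case \cite{WZ17} forces $N$ to be a closed $Sol$-manifold as well; thus $N$ is either a torus bundle over $S^1$ (fibering, containing no Klein bottle) or a torus semibundle (not fibering, containing a Klein bottle by \autoref{LEM: Klein}(i)). To decide which, I would exploit the internal structure of a $Sol$-lattice $\Gamma=\pi_1X$: the translation sublattice $\mathbb{Z}^2=\Gamma\cap\mathbb{R}^2$ is the unique maximal abelian normal subgroup, with $\Gamma/\mathbb{Z}^2\cong\mathbb{Z}$ in the torus-bundle case and $\Gamma/\mathbb{Z}^2\cong D_\infty$ (infinite dihedral) in the semibundle case. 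Its closure $\overline{\mathbb{Z}^2}\cong\widehat{\mathbb{Z}}^2$ should then be a characteristic closed subgroup of $\widehat{\Gamma}$, and the quotient $\widehat{\Gamma}/\overline{\mathbb{Z}^2}$ is $\widehat{\mathbb{Z}}$ or $\widehat{D_\infty}\cong\widehat{\mathbb{Z}}\rtimes\mathbb{Z}/2$ accordingly; these are profinitely distinguished by $2$-torsion, so a profinite isomorphism carries the semibundle property of $M$ to $N$, whence $N$ does not fiber over $S^1$ and contains a Klein bottle by \autoref{LEM: Klein}(i). Alternatively one may simply cite the profinite classification of $Sol$-manifolds (\cite{GPS80,Wil17}), under which fibering over $S^1$ is preserved. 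Verifying that $\overline{\mathbb{Z}^2}$ is genuinely characteristic in $\widehat{\Gamma}$ (equivalently, that it is the closure of the unique maximal abelian normal subgroup and is respected by every automorphism, so that the quotient is canonically defined) is the one step that requires real care, and is where I expect the substance of the argument to lie.
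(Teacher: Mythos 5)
Your overall architecture coincides with the paper's: establish asphericity of $N$ via \autoref{Cor: Spherical}, reduce to the two alternatives of \autoref{LEM: Klein}, and show each is preserved. Your treatment of case (ii) is essentially the paper's argument (profinite detection of the JSJ-decomposition plus \autoref{LEM: Orbifold}), with one small ordering issue: the JSJ detection theorem (\cite[Theorem B]{WZ19}, stated in the paper as \autoref{PROP: JSJ}) only applies to manifolds that are \emph{not} closed $Sol$-manifolds, so you must first rule out that $N$ is a closed $Sol$-manifold — the paper does this by applying \cite[Theorem 8.4]{WZ17} to the hypothesis that $M$ is not $Sol$ — rather than deducing it afterwards from $N$ having nontrivial JSJ-decomposition.

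The genuine gap is in case (i), and it is exactly the step you flag as requiring ``real care.'' The paper closes this case with a single citation: \cite[Corollary 1.2]{Jz20} (Jaikin-Zapirain), which says that the profinite completion of the fundamental group of a compact orientable 3-manifold determines whether it fibers over $S^1$; combined with the detection of closedness and of $Sol$-geometry, this immediately shows $N$ is a non-fibered closed $Sol$-manifold and hence satisfies \autoref{LEM: Klein}(i). Your proposed substitute — that the closure of the translation lattice $\Z^2\lhd\Gamma$ is a characteristic closed subgroup of $\widehat{\Gamma}$ with quotient $\widehat{\Z}$ versus $\widehat{D_\infty}$ — is not established: the assertion that $\overline{\Z^2}$ is intrinsically recognizable in $\widehat{\Gamma}$ (e.g.\ as the unique maximal closed abelian normal subgroup) is a nontrivial claim about the profinite completion of a $Sol$-lattice, and proving it requires controlling the closure of the monodromy $\langle A\rangle$ inside $\mathrm{GL}_2(\widehat{\Z})$ and the invertibility of $I-\widehat{A}^{\,t}$ for $t\in\widehat{\Z}\setminus\{0\}$; none of this is carried out. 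Your fallback citations do not fill the hole either: \cite{Wil17} concerns Seifert fibered spaces, not $Sol$-manifolds, and \cite{GPS80} gives finiteness of the genus of a polycyclic group, not preservation of the torus-bundle/semibundle dichotomy. So as written the $Sol$ case is incomplete; the missing ingredient is precisely the profinite invariance of fibering over $S^1$.
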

\begin{proof}
Since $M$ contains an embedded Klein bottle, $M$ is not homeomorphic to $D^2\times S^1$. As $M$ is aspherical, it follows from \autoref{Cor: Spherical} that $N$ is also aspherical. In addition, $M$ is boundary-incompressible, and it follows from \autoref{LEM: Prime} that $N$ is also boundary-incompressible.

It suffices to show that if condition \ref{4.6-1} or \ref{4.6-2} in \autoref{LEM: Klein} holds for $M$, then the same condition holds for $N$. 

First, we suppose that  $M$ satisfies condition  \ref{4.6-1} in \autoref{LEM: Klein}, ie $M$ is a closed $Sol$-manifold which is not fibered over $S^1$. Since $\widehat{\pi_1M}\cong \widehat{\pi_1N}$, it follows from \cite[Corollary 4.12]{Reid:2018} that $N$ is also closed, and \cite[Theorem 8.4]{WZ17} shows that $N$ also admits $Sol$-geometry. In addition, \cite[Corollary 1.2]{Jz20} shows that the profinite completion of fundamental group of a compact, orientable 3-manifold determines whether this 3-manifold fibers over $S^1$. Thus, $N$ is also non-fibered, and satisfies condition  \ref{4.6-1} of \autoref{LEM: Klein}.

Next, we suppose that $M$ satisfies condition  \ref{4.6-2} in \autoref{LEM: Klein}. Similarly, \cite[Theorem 8.4]{WZ17} implies that $N$ is not a closed $Sol$-manifold.  \cite[Theorem B]{WZ19} shows that the profinite completion of the fundamental group detects the JSJ-decomposition of a compact, orientable, irreducible 3-manifold with empty or incompressible toral boundary which does not admit $Sol$ geometry. 
In fact, \cite[Theorem B]{WZ19} was originally proven for closed manifolds, but the proof can be easily generalized to the bounded case, see for example \cite[Theorem 5.5]{Xu}. 
In particular, there is a one-to-one correspondence between the Seifert fibered JSJ-pieces of $M$ and $N$, such that the corresponding pieces are profinitely isomorphic. Since one of the Seifert pieces of $M$ has a base orbifold which is either non-orientable or contains two singular points of index $2$, and this Seifert piece is obviously boundary incompressible and aspherical, it follows from \autoref{LEM: Orbifold} that the corresponding Seifert piece of $N$ also has a base orbifold which is either non-orientable or contains two singular points of index $2$. Therefore, $N$ also satisfies condition \ref{4.6-2} in \autoref{LEM: Klein}.
\end{proof}

\begin{remark}
The aspherical condition is important. In fact, there exist two lens spaces with isomorphic fundamental groups, such that one of them contains a Klein bottle while the other one does not; see \cite[Corollary 6.4]{Bre69}.
\end{remark}

\subsection{Profinite detection of exceptional Dehn fillings}

\begin{proposition}\label{PROP: Exceptional}
Suppose $M$ and $N$ are orientable cusped finite-volume hyperbolic 3-manifolds, and $\widehat{\pi_1M}\cong \widehat{\pi_1N}$. Let $\partial_{i_0}M$ be a boundary component of $M$. Then,  there exists a boundary component $\partial_{j_0}N$ of $N$, and an isomorphism $\psi:\pi_1\partial_{i_0}M\ttt\pi_1\partial_{j_0}N$, such that 
\begin{enumerate}[label=(\arabic*), leftmargin=*]
\item\label{4.3-1} 
$\pisi(\mathcal{E}(M,\partial_{i_0}M))=\mathcal{E}(N,\partial_{j_0}N)$, where $\pisi$ is the bijection $\slope(\partial_{i_0}M)\to \slope(\partial_{j_0}N)$;
\item\label{4.3-2} $\pisi(\mathcal{E}_s(M,\partial_{i_0}M))=\mathcal{E}_s(N,\partial_{j_0}N)$;
\item\label{4.3-3} $\pisi(\mathcal{E}_t(M,\partial_{i_0}M))=\mathcal{E}_t(N,\partial_{j_0}N)$; 
\item\label{4.3-4} $\pisi(\mathcal{E}_k(M,\partial_{i_0}M))=\mathcal{E}_k(N,\partial_{j_0}N)$.
\end{enumerate}
\end{proposition}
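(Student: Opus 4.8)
The plan is to reduce everything to the single-cusp fillings governed by \hyperref[PROP: Dehn filling]{Theorem A} and then feed the resulting profinite isomorphisms into the four detection lemmas of this section. First I would apply \hyperref[PROP: Dehn filling]{Theorem A} to the pair $M,N$: it produces the cusp correspondence $\partial_iM\leftrightarrow\partial_iN$ together with isomorphisms $\psi_i:\pi_1\partial_iM\ttt\pi_1\partial_iN$. Take $\partial_{j_0}N$ to be the cusp paired with $\partial_{i_0}M$ and set $\psi=\psi_{i_0}$, so that $\pisi$ is the induced bijection $\slope(\partial_{i_0}M)\to\slope(\partial_{j_0}N)$. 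For a slope $c\in\slope(\partial_{i_0}M)$, choosing the slope $c$ on $\partial_{i_0}M$ and the empty slope $\varnothing$ on every other cusp, \hyperref[PROP: Dehn filling]{Theorem A} yields $\widehat{\pi_1M_c}\cong\widehat{\pi_1N_{\pisi(c)}}$ by an isomorphism \emph{respecting the peripheral structure}. Both $M_c$ and $N_{\pisi(c)}$ are compact orientable $3$-manifolds with empty or toral boundary, and since the isomorphism respects the peripheral structure there is a one-to-one correspondence between their boundary components (\autoref{DEF: Peripehral structure}); in particular one is closed iff the other is. This set-up is the only place where the partial-filling reduction and the peripheral bookkeeping matter.

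With this in hand each item becomes a direct citation. For (\ref{4.3-1}), exceptionality means failing to be finite-volume hyperbolic, so \autoref{PROP: Detect hyperbolic} applied in both directions to $\widehat{\pi_1M_c}\cong\widehat{\pi_1N_{\pisi(c)}}$ shows that $M_c$ is finite-volume hyperbolic iff $N_{\pisi(c)}$ is, giving $\pisi(\mathcal{E}(M,\partial_{i_0}M))=\mathcal{E}(N,\partial_{j_0}N)$. For (\ref{4.3-2}) I would invoke \autoref{Cor: Spherical}: the only obstruction there is the $D^2\times S^1$ versus $S^2\times S^1$ exception, which requires one filling to be closed and the other bounded; the bijection of boundary components established above rules this out, so $M_c$ is aspherical iff $N_{\pisi(c)}$ is, proving $\pisi(\mathcal{E}_s)=\mathcal{E}_s$.

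For (\ref{4.3-3}) and (\ref{4.3-4}) I would use the remaining two lemmas verbatim. \autoref{LEM: Toroidal} applies precisely because the isomorphism respects the peripheral structure, which is its indispensable hypothesis, and it gives that $M_c$ is toroidal iff $N_{\pisi(c)}$ is, hence $\pisi(\mathcal{E}_t)=\mathcal{E}_t$. Finally, \autoref{LEM: Profinite detect Klein} says that an aspherical filling containing an embedded Klein bottle is carried to one of the same kind; applying it in both directions yields $\pisi(\mathcal{E}_k)=\mathcal{E}_k$. I do not expect a genuine obstacle in the synthesis itself, since the real content of the section lives in \autoref{LEM: Prime}, \autoref{Cor: Spherical}, \autoref{LEM: Toroidal} and \autoref{LEM: Profinite detect Klein}. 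The only points demanding care are verifying that the isomorphism furnished by \hyperref[PROP: Dehn filling]{Theorem A} respects the peripheral structure, so that \autoref{LEM: Toroidal} is legitimately applicable, and that the equality of boundary counts eliminates the lone exceptional case of \autoref{Cor: Spherical}; both are immediate from the set-up of the first paragraph.
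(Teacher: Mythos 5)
Your proposal is correct and follows essentially the same route as the paper: invoke \hyperref[PROP: Dehn filling]{Theorem A} to obtain the cusp correspondence and the peripheral-structure-respecting profinite isomorphisms $\widehat{\pi_1M_c}\cong\widehat{\pi_1N_{\pisi(c)}}$, then cite \autoref{PROP: Detect hyperbolic}, \autoref{Cor: Spherical}, \autoref{LEM: Toroidal} and \autoref{LEM: Profinite detect Klein} for the four items. You even flag the same two delicate points the paper relies on, namely that the peripheral structure is needed for \autoref{LEM: Toroidal} and that matching boundary counts rule out the $D^2\times S^1$ versus $S^2\times S^1$ exception in \autoref{Cor: Spherical}.
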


\begin{proof}
According to \hyperref[PROP: Dehn filling]{Theorem A}, there is a boundary component $\partial_{j_0}N$ of $N$, and an isomorphism $\psi:\pi_1\partial_{i_0}M\ttt\pi_1\partial_{j_0}N$, such that for any $c\in \slope(\partial_{i_0}M)$, $\widehat{\pi_1M_c}\cong \widehat{\pi_1N_{\pisi(c)}}$ by an isomorphism which respects the peripheral structure. 

According to \autoref{PROP: Detect hyperbolic}, $M_c$ is finite-volume hyperbolic if and only if $N_{\pisi(c)}$ is finite-volume hyperbolic. Hence, by definition of exceptional Dehn filling, $c\in \mathcal{E}(M,\partial_{i_0}M)$ if and only if $\pisi(c)\in \mathcal{E}(N,\partial_{j_0}N)$, which proves \ref{4.3-1}. 
Since $M_c$ and $N_{\pisi(c)}$ have the same number of boundary components,  \autoref{Cor: Spherical} implies that $M_c$ is aspherical if and only if $N_{\pisi(c)}$ is aspherical, which proves \ref{4.3-2}. 
Similarly, \ref{4.3-3} follows from \autoref{LEM: Toroidal} since the isomorphism $\widehat{\pi_1M_c}\cong \widehat{\pi_1N_{\pisi(c)}}$ respects the peripheral structure, and \ref{4.3-4} follows from \autoref{LEM: Profinite detect Klein}.
\end{proof}

As a typical example, when $M=X_K$ and $N=X_{K'}$ are hyperbolic knot complements, we have established a standard parametrization $\slope(\partial M)=\Qinf$ and $\slope(\partial N)=\Qinf$ in \autoref{subsec: knot}. Thus, $\mathcal{E}(M,\partial M)$ and $\mathcal{E}(N,\partial N)$ can be viewed as subsets of $\Qinf$. For brevity of notation, when $M=X_K$, we simply denote $\mathcal{E}(M,\partial M)$ as $\mathcal{E}(K)$.

Based on \autoref{LEM: Knot complement}, we  obtain a more precise version of \autoref{PROP: Exceptional} for hyperbolic knot complements.

\begin{proposition}\label{PROP: Knot exceptional coefficient}
Suppose $K$ and $K'$ are hyperbolic knots in $S^3$, and $\widehat{\pi_1X_K}\cong \widehat{\pi_1X_{K'}}$. Then there exists $\sigma\in \{1,-1\}$, such that
\begin{enumerate}[label=(\arabic*), leftmargin=*]
\item $\mathcal{E}(K)=\sigma\cdot \mathcal{E}(K')$;
\item $\mathcal{E}_s(K)=\sigma\cdot \mathcal{E}_s(K')$;
\item $\mathcal{E}_t(K)=\sigma\cdot \mathcal{E}_t(K')$;
\item $\mathcal{E}_k(K)=\sigma\cdot \mathcal{E}_k(K')$.
\end{enumerate}
\end{proposition}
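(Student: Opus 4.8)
The plan is to run a single application of \hyperref[PROP: Dehn filling]{Theorem A} and then observe that both the coefficient identification of \autoref{LEM: Knot complement} and the type-by-type matching of \autoref{PROP: Exceptional} are assertions about \emph{one and the same} slope bijection, so that they may simply be superimposed. First I would fix a profinite isomorphism $f:\widehat{\pi_1X_K}\ttt\widehat{\pi_1X_{K'}}$. Since a hyperbolic knot complement has a single cusp, \hyperref[PROP: Dehn filling]{Theorem A} produces one isomorphism $\psi:\pi_1\partial X_K\ttt\pi_1\partial X_{K'}$ of peripheral subgroups, with induced slope bijection $\pisi:\slope(\partial X_K)\to\slope(\partial X_{K'})$ characterised by
\begin{equation*}
\widehat{\pi_1(X_K)_c}\cong\widehat{\pi_1(X_{K'})_{\pisi(c)}}\quad\text{respecting the peripheral structure, for all }c.
\end{equation*}
The crucial point is that both earlier results depend on $\psi$ only through this single displayed property, so I may and will feed one fixed $\psi$ to both arguments.

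Next I would transport the two conclusions to this common $\pisi$. On the one hand, the argument of \autoref{LEM: Knot complement}---the longitude is sent to the longitude by \autoref{LEM: null-homologous slope}\,(\ref{5.22-2}), and the meridian is sent to the meridian because the trivial surgery is the only surgery on a nontrivial knot yielding $S^3$ (\cite[Theorem 2]{GL89}) together with the Poincar\'e conjecture---shows that, in the $\Qinf$-parametrisation $\slope(\partial X_K)=\slope(\partial X_{K'})=\Qinf$, the bijection is $\pisi(r)=\sigma r$ for a fixed $\sigma\in\{1,-1\}$; note that this action fixes $0$ and $\infty$. On the other hand, \autoref{PROP: Exceptional} applied with $M=X_K$ and $N=X_{K'}$ (so that $\mathcal{E}(M,\partial M)=\mathcal{E}(K)$ and likewise for the subscripted variants) shows that this same $\pisi$ satisfies
\begin{equation*}
\pisi(\mathcal{E}(K))=\mathcal{E}(K'),\quad\pisi(\mathcal{E}_s(K))=\mathcal{E}_s(K'),\quad\pisi(\mathcal{E}_t(K))=\mathcal{E}_t(K'),\quad\pisi(\mathcal{E}_k(K))=\mathcal{E}_k(K').
\end{equation*}

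Finally I would combine the two displays: substituting $\pisi(r)=\sigma r$ and using $\sigma^{-1}=\sigma$ yields $\mathcal{E}(K)=\sigma\cdot\mathcal{E}(K')$ together with the three analogous identities for $\mathcal{E}_s,\mathcal{E}_t,\mathcal{E}_k$, which is exactly the claim. I expect the only delicate point to be the observation in the first paragraph, namely that the coefficient-matching and the exceptional-type-matching genuinely refer to the same bijection $\pisi$; once a single $\psi$ is extracted from one application of \hyperref[PROP: Dehn filling]{Theorem A} and both earlier arguments are seen to use nothing about $\psi$ beyond the displayed isomorphism of Dehn fillings, the conclusion is immediate. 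Everything else is a direct citation of the already-established \autoref{LEM: Knot complement} and \autoref{PROP: Exceptional}.
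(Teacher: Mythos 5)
Your proposal is correct and follows essentially the same route as the paper: fix one $\psi$ from \hyperref[PROP: Dehn filling]{Theorem A}, use the proof of \autoref{LEM: Knot complement} to see that the induced $\pisi$ is multiplication by $\sigma\in\{\pm1\}$ in the $\Qinf$-parametrisation, and then apply \autoref{PROP: Exceptional} to that same $\pisi$. Your explicit remark that both earlier arguments use nothing about $\psi$ beyond the displayed isomorphism of Dehn fillings is exactly the (implicit) justification the paper relies on when it superimposes the two conclusions.
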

\begin{proof}
As in the proof of \autoref{PROP: Exceptional}, let $\psi:\pi_1\partial X_K\ttt\pi_1\partial X_{K'}$ denote the isomorphism such that $\widehat{\pi_1 (X_K)_c}\cong \widehat{\pi_1 (X_{K'})_{\pisi(c)}}$ for any $c\in \slp(\partial X_K)$. 
According to \autoref{LEM: Knot complement}, under the parametrization by $\Qinf$, the map $\pisi: \slope(\partial X_K)=\Qinf\to \slope(\partial X_{K'})=\Qinf$ is a multiplication by $\sigma \in \{\pm 1\}$. 
Therefore, the results of this proposition follow directly from \autoref{PROP: Exceptional}.
\end{proof}

\section{Profinite rigidity from exceptional slopes}\label{SEC: Main}

In this section, we shall prove \autoref{MAIN}. 
The key concept we use in this section is the {\em geometric intersection number}. 
For two slopes on a torus $c_1,c_2\in \slope(T^2)$, we denote by $\Delta(c_1,c_2)$ the geometric intersection number of $c_1$ and $c_2$. Note that $\Delta(c_1,c_2)$ is a non-negative integer, and $\Delta(c_1,c_2)=0$ if and only if $c_1=c_2$.

\begin{example}\label{ex}
Let $K$ be a knot in $S^3$, and we parametrize $\slope(\partial X_K)$ by $\Qinf$ as in \autoref{subsec: knot}. For $r_1,r_2\in \Qinf$, we can express them as   reduced fractions $r_1=\frac{p_1}{q_1}$ and $r_2=\frac{p_2}{q_2}$. Then $\Delta(r_1,r_2)=\abs{\det \left(\begin{smallmatrix} p_1 & q_1 \\ p_2 & q_2\end{smallmatrix}\right)}$.
\end{example}

The next lemma is an obvious but useful observation.
\begin{lemma}\label{LEM: preserve}
Let $T_1^2$ and $T_2^2$ denote two tori, and let $\psi:\pi_1(T_1^2)\ttt\pi_1(T_2^2)$ be a group isomorphism. Then, for any slopes $c_1,c_2\in \slope(T_1^2)$, $\Delta(c_1,c_2)=\Delta(\pisi(c_1),\pisi(c_2))$.
\end{lemma}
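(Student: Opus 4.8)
The plan is to reduce the statement entirely to linear algebra over $\Z$. First I would recall the algebraic description of slopes: since $\pi_1(T_i^2)\cong \Z^2$, every slope $c\in \slope(T_i^2)$ is represented by a primitive vector $v\in \Z^2$, which is well-defined up to sign by the relation $\alpha\sim -\alpha$. The key input is the standard determinant formula for the geometric intersection number of two slopes on a torus: if $c_1,c_2\in \slope(T^2)$ are represented by primitive vectors $v_1,v_2\in \Z^2$, then
$$
\Delta(c_1,c_2)=\abs{\det\big(v_1\mid v_2\big)},
$$
where $(v_1\mid v_2)$ denotes the $2\times 2$ integer matrix with columns $v_1,v_2$. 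This is precisely the formula already invoked in \autoref{ex}; the sign ambiguity in the choice of $v_1,v_2$ is harmless, since replacing a representative by its negative only changes the determinant by a sign, which is absorbed by the absolute value.

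Next, after fixing bases of $\pi_1(T_1^2)$ and $\pi_1(T_2^2)$, the group isomorphism $\psi:\pi_1(T_1^2)\ttt\pi_1(T_2^2)$ is given by left multiplication by a matrix $A\in \GL_2(\Z)$, and every such matrix satisfies $\det A=\pm 1$. If $v_1,v_2$ represent $c_1,c_2$, then $Av_1,Av_2$ represent $\pisi(c_1),\pisi(c_2)$; this passes through $A$ without issue because $A$ is linear, so $A(-v)=-Av$ and the class $[Av]$ does not depend on the sign of the chosen representative. Using $(Av_1\mid Av_2)=A\,(v_1\mid v_2)$ together with multiplicativity of the determinant, I then compute
$$
\Delta(\pisi(c_1),\pisi(c_2))=\abs{\det\big(Av_1\mid Av_2\big)}=\abs{\det A}\cdot \abs{\det\big(v_1\mid v_2\big)}=\Delta(c_1,c_2),
$$
since $\abs{\det A}=1$. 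This closes the argument.

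There is essentially no obstacle here; the only point requiring any care is the determinant formula for $\Delta$, which one may either cite as standard or derive directly. If a self-contained derivation is preferred, I would realize $c_1,c_2$ by the images in $\R^2/\Z^2$ of straight lines whose directions are the primitive vectors $v_1,v_2$, and count transverse intersections, which yields exactly $\abs{\det(v_1\mid v_2)}$. Everything else in the proof is formal linear algebra, so the lemma follows immediately.
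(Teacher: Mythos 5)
Your proof is correct, but it takes a different route from the paper's. The paper's argument is a one-line topological reduction: since the torus is aspherical and every automorphism of $\Z^2$ is realized by a homeomorphism of $T^2$, the isomorphism $\psi$ can be viewed as (induced by) a homeomorphism $\Psi:T_1^2\to T_2^2$, and the geometric intersection number is manifestly a homeomorphism invariant, so the conclusion is immediate ``by definition.'' You instead make the same content explicit at the algebraic level: you identify $\psi$ with a matrix $A\in\GL_2(\Z)$, invoke the determinant formula $\Delta(c_1,c_2)=\abs{\det(v_1\mid v_2)}$ for primitive representatives, and conclude from $\abs{\det A}=1$ and multiplicativity of the determinant. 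Your version is more self-contained --- it does not require knowing that automorphisms of $\pi_1(T^2)$ are geometrically realized, nor that $\Delta$ is preserved by homeomorphisms (a fact whose proof ultimately rests on the same determinant computation or a minimal-position argument) --- at the cost of having to justify the determinant formula itself, which you correctly note can either be cited as standard (the paper's \autoref{ex} states the special case of the $\Qinf$-parametrization) or derived by counting transverse intersections of straight-line representatives in $\R^2/\Z^2$. You also correctly handle the only delicate point, namely that the sign ambiguity $\alpha\sim-\alpha$ in the definition of a slope is absorbed by the absolute value and commutes with the linear map $A$. Both arguments are sound; the paper's is shorter and more conceptual, yours is more elementary and verifiable line by line.
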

\begin{proof}
One can view $\psi:\pi_1(T_1^2)\ttt\pi_1(T_2^2)$ as a homeomorphism $\Psi: T_1^2\to T_2^2$, and directly derive this lemma by definition.
\end{proof}


\subsection{Some Dehn surgeries on the Whitehead link}
We start from the easiest example, so as to illustrate how our method works. In fact, the manifolds listed in \autoref{MAIN} can be characterised by their extremely special patterns of exceptional Dehn fillings. 

For $r\in \Qinf$, let $\mathcal{W}(r)$ denote the 3-manifold obtained by $r$-surgery on one component of the Whitehead link (\autoref{Fig: W}).

\begin{proposition}[{\cite[Theorem 1.1]{Gor98}}]\label{52}
Suppose $N$ is a finite-volume hyperbolic 3-manifold with one cusp. 
\begin{enumerate}[label=(\arabic*), leftmargin=*]
\item\label{52.1} $\max\{\Delta(c_1,c_2)\mid c_1,c_2\in \mathcal{E}_t(N,\partial N)\}=6$ if and only if $N$ is homeomorphic to $\mathcal{W}(-2)$. 
\item\label{52.2}  $\max\{\Delta(c_1,c_2)\mid c_1,c_2\in \mathcal{E}_t(N,\partial N)\}=7$ if and only if $N$ is homeomorphic to $\mathcal{W}(\frac{5}{2})$.
\item\label{52.3} $\max\{\Delta(c_1,c_2)\mid c_1,c_2\in \mathcal{E}_t(N,\partial N)\}=8$ if and only if $N$ is homeomorphic to either $\mathcal{W}(-1)$ (the figure-eight knot complement) or $\mathcal{W}(5)$ (the figure-eight sibling manifold).
\end{enumerate}
\end{proposition}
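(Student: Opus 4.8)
The statement is \cite[Theorem 1.1]{Gor98}, established through the combinatorial theory of intersecting punctured tori; I outline the strategy I would follow. The plan is to fix two slopes $c_1,c_2$ on $\partial N$ with $N_{c_1}$ and $N_{c_2}$ both toroidal, and to translate the geometric intersection number $\Delta(c_1,c_2)$ into combinatorial data on a pair of graphs. First I would take essential tori $\hat{T}_1\subset N_{c_1}$ and $\hat{T}_2\subset N_{c_2}$ and isotope each to meet the attached solid torus in a minimal collection of meridian disks, producing essential punctured tori $P_1,P_2\subset N$ with $\partial P_i$ of slope $c_i$. After an isotopy removing the closed curves of $P_1\cap P_2$ (using incompressibility together with the hyperbolicity, hence atoroidality and irreducibility, of $N$), the intersection is a union of arcs that determine labelled graphs $G_1\subset\hat{T}_1$ and $G_2\subset\hat{T}_2$, whose fat vertices are the capped-off punctures and whose edges are these arcs. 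The crucial numerical input is that each vertex of $G_i$ has valence $n_j\,\Delta(c_1,c_2)$, where $\{i,j\}=\{1,2\}$ and $n_i=\abs{\partial P_i}$ counts the meridian disks.

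Next I would bound $\Delta$ by running the standard constraints on these graphs: the parity rule, the absence of trivial loops, the restrictions on the length and structure of Scharlemann cycles forced by the irreducibility and hyperbolicity of $N$, and the Euler characteristic identity $V-E+F=0$ on the torus $\hat{T}_i$. Balancing the large vertex valences $n_j\Delta$ against the scarcity of admissible faces yields $\Delta\le 8$. The heart of the matter is the extremal and near-extremal analysis: when $\Delta\in\{6,7,8\}$ the graphs $G_1$ and $G_2$ become so rigid that the face structure, the distribution of labels around each vertex, and the Scharlemann-cycle data pin down the reduced intersection pattern completely. Reconstructing $N$ from this pattern, by reattaching the two solid tori along the now-determined gluing, identifies it as exactly one of $\mathcal{W}(-2)$ when $\Delta=6$, $\mathcal{W}(\frac{5}{2})$ when $\Delta=7$, and $\mathcal{W}(-1)$ or $\mathcal{W}(5)$ when $\Delta=8$. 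This case analysis is the main obstacle, as it requires the full Gordon--Wu--Luecke apparatus to eliminate every non-extremal configuration one by one.

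Finally, for the converse direction I would verify that each listed manifold actually realizes the asserted maximum. Since each of $\mathcal{W}(-2)$, $\mathcal{W}(\frac{5}{2})$, $\mathcal{W}(-1)$, $\mathcal{W}(5)$ is itself a Dehn filling of the explicitly understood Whitehead link complement, its set of toroidal slopes can be listed directly and one exhibits a concrete pair $c_1,c_2\in\mathcal{E}_t(N,\partial N)$ at distance $6$, $7$, respectively $8$; that no strictly larger value occurs is immediate from the bound $\Delta\le 8$ already established, together with the uniqueness in the extremal analysis. I expect the upper-bound classification, rather than this verification, to be by far the most delicate step.
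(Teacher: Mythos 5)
This proposition is not proved in the paper at all: it is imported verbatim from Gordon \cite[Theorem 1.1]{Gor98}, and the only thing the author adds is the observation (via \cite{MP02}) that the listed fillings $\mathcal{W}(-2)$, $\mathcal{W}(\frac{5}{2})$, $\mathcal{W}(-1)$, $\mathcal{W}(5)$ are indeed one-cusped hyperbolic, together with a remark that the surgery coefficients differ from Gordon's by a sign because of the chirality of the Whitehead link. So there is no ``paper's own proof'' to compare against; the intended justification is the citation.

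Your outline is a fair pr\'ecis of the Gordon--Luecke--Wu intersection-graph machinery that underlies the cited theorem: punctured essential tori, the labelled graphs $G_1,G_2$ with vertex valence $n_j\Delta$, the parity rule, Scharlemann cycles, and the Euler characteristic count on the torus, followed by the extremal analysis at $\Delta\in\{6,7,8\}$ and the realization check from the known exceptional slopes of the Whitehead link. But be clear that this is a strategy sketch, not a proof. The step you yourself flag as ``the heart of the matter'' --- showing that for $\Delta\ge 6$ the graph pair is rigid enough to reconstruct $N$ uniquely --- is precisely the content of Gordon's paper and the long series of Gordon--Luecke and Gordon--Wu papers it rests on; nothing in your two paragraphs actually carries it out. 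Two smaller points: the reconstruction must also distinguish $\mathcal{W}(-1)$ from $\mathcal{W}(5)$ at $\Delta=8$ (both occur, so the conclusion there is a two-element list, not a single manifold), and any explicit identification of the fillings has to fix an orientation/chirality convention for $\mathcal{W}$, which is exactly the sign discrepancy the paper warns about. In the context of this paper the correct ``proof'' is simply to cite \cite[Theorem 1.1]{Gor98}; reproving it is far beyond the scope of a proposition-level argument.
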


\begin{proposition}[{\cite[Corollary 1.5]{Lee07}}]\label{W-4}
Suppose $N$ is a finite-volume hyperbolic 3-manifold with one cusp. Then  $\max\{\Delta(c_1,c_2)\mid c_1,c_2\in \mathcal{E}_k(N,\partial N)\}=5$ if and only if $N$ is homeomorphic to $\mathcal{W}(-4)$. 
\end{proposition}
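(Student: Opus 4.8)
Since the statement is attributed to \cite[Corollary 1.5]{Lee07}, my plan is not to reprove the distance bound from scratch but to reduce the formulation given here to the one in \cite{Lee07}. The only piece of genuine work on our side is to reconcile the definition of $\mathcal{E}_k(N,\partial N)$ — slopes whose fillings are aspherical and contain an \emph{embedded} Klein bottle — with the notion of a Klein-bottle filling used in Lee's combinatorial analysis, which concerns \emph{essential} (incompressible) Klein bottles. First I would invoke the analysis inside \autoref{LEM: Klein}: for $c\in \mathcal{E}_k(N,\partial N)$ the filling $N_c$ is aspherical and contains an embedded Klein bottle, and the argument there shows that the boundary torus $T^2_{\mathcal{N}}$ of a regular neighbourhood $\mathcal{N}$ of such a Klein bottle is necessarily incompressible (otherwise $N_c$ would be a lens space or $S^2\times S^1$, contradicting asphericity). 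Hence the embedded Klein bottle is in fact essential, and $\mathcal{E}_k(N,\partial N)$ coincides with the set of Klein-bottle slopes in Lee's sense.

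For the \emph{if} direction I would exhibit two slopes $c_1,c_2$ on $\partial \mathcal{W}(-4)$ with $\Delta(c_1,c_2)=5$ whose fillings each contain an essential Klein bottle; this is a finite check, and the two extremal fillings together with their distance are recorded explicitly in \cite{Lee07}. For the \emph{only if} direction one appeals to the classification underlying Lee's corollary: if $N$ is one-cusped hyperbolic and admits two $\mathcal{E}_k$-slopes at distance $5$, then $N\cong \mathcal{W}(-4)$, and no larger distance between such slopes occurs on a one-cusped hyperbolic manifold. Combining these two facts gives exactly the equivalence $\max\{\Delta(c_1,c_2)\mid c_1,c_2\in \mathcal{E}_k(N,\partial N)\}=5 \Leftrightarrow N\cong \mathcal{W}(-4)$.

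The genuine obstacle sits entirely inside \cite{Lee07}: bounding the distance between two Klein-bottle slopes and pinning down the extremal manifold requires the intersection-graph technology in the style of Gordon--Luecke (and of \cite{Gor98} used for \autoref{52}), tracking the faces of the graphs cut out by the two Klein-bottle surfaces on the two filling solid tori. Reproducing that argument is well beyond what is needed here. The only care I must take is the definitional one above, namely to ensure that ``aspherical plus embedded Klein bottle'' does not admit any filling outside Lee's essential-Klein-bottle family — and this is precisely what the incompressibility step from \autoref{LEM: Klein} guarantees.
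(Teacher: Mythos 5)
Your proposal is correct and takes essentially the same route as the paper: both directions are deferred to the literature, with the ``only if'' direction resting on \cite[Corollary 1.5]{Lee07} and the ``if'' direction on a finite check of the exceptional fillings of $\mathcal{W}(-4)$ (the paper sources this to \cite[Table A.4]{MP02} rather than to \cite{Lee07}), and your definitional reconciliation of $\mathcal{E}_k$ with Lee's Klein-bottle slopes via the incompressibility argument from \autoref{LEM: Klein} is a reasonable piece of care that the paper leaves implicit. One aside in your write-up is false as stated --- distances larger than $5$ between Klein-bottle slopes do occur on one-cusped hyperbolic manifolds (e.g.\ $8$ for $\mathcal{W}(-1)$ and $\mathcal{W}(5)$, $6$ for $\mathcal{W}(-2)$, as recorded in the proof of \autoref{LEM: Max}) --- but this does not affect the argument, since Lee's classification pins down $\mathcal{W}(-4)$ as the unique manifold realizing the maximum exactly $5$.
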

\begin{proof}
The ``if'' direction follows from \cite[Table A.4]{MP02}, and the ``only if'' direction follows from \cite[Corollary 1.5]{Lee07}. 
\end{proof}

It is well-known that all the exceptional slopes on the Whitehead link (\autoref{Fig: W}) are $\infty,0,1,2,3,4$ \cite{MP02}. Thus, the above listed $\mathcal{W}(-2)$, $\mathcal{W}(\frac{5}{2})$, $\mathcal{W}(-1)$, $\mathcal{W}(5)$ and $\mathcal{W}(-4)$ are all finite-volume hyperbolic 3-manifolds with one cusp. Note that the Dehn surgery coefficients in \cite{Gor98} differ from the ones we use here by a $\pm$-sign,  due to the chirality of the Whitehead link.

\begin{theorem}\label{THM: W(-5/2)}
The manifolds $\mathcal{W}(-2)$, $\mathcal{W}(\frac{5}{2})$, $\mathcal{W}(-4)$, $\mathcal{W}(-1)$ and $\mathcal{W}(5)$ are all profinitely rigid among compact  orientable 3-manifolds.
\end{theorem}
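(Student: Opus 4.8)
The plan is to treat all five manifolds by a single mechanism—transporting the extremal exceptional-filling invariants of \autoref{52} and \autoref{W-4} across the profinite isomorphism—and then to settle the one delicate case (figure-eight versus its sibling) with a homological invariant. Fix $M$ to be one of $\mathcal{W}(-2),\mathcal{W}(\frac{5}{2}),\mathcal{W}(-4),\mathcal{W}(-1),\mathcal{W}(5)$, each of which is a one-cusped finite-volume hyperbolic 3-manifold, and let $N$ be a compact orientable 3-manifold with $\widehat{\pi_1N}\cong\widehat{\pi_1M}$. First I would apply \autoref{PROP: Detect hyperbolic} to conclude that $N$ is also a one-cusped finite-volume hyperbolic 3-manifold.

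Next I would push the intersection-number data through the profinite isomorphism. By \autoref{PROP: Exceptional} there is a boundary component $\partial_{j_0}N$ and a group isomorphism $\psi:\pi_1\partial M\ttt\pi_1\partial_{j_0}N$ whose induced slope bijection $\pisi$ satisfies $\pisi(\mathcal{E}_t(M,\partial M))=\mathcal{E}_t(N,\partial_{j_0}N)$ and $\pisi(\mathcal{E}_k(M,\partial M))=\mathcal{E}_k(N,\partial_{j_0}N)$. Since \autoref{LEM: preserve} shows that $\pisi$ preserves geometric intersection numbers, I obtain
$$\max\{\Delta(c_1,c_2)\mid c_1,c_2\in\mathcal{E}_t(M,\partial M)\}=\max\{\Delta(c_1,c_2)\mid c_1,c_2\in\mathcal{E}_t(N,\partial_{j_0}N)\},$$
together with the analogous equality with $\mathcal{E}_k$ in place of $\mathcal{E}_t$. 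Reading the left-hand maxima off the characterisations and feeding the common value back through them then constrains $N$: a toroidal maximum of $6$ (resp.\ $7$) forces $N\cong\mathcal{W}(-2)$ (resp.\ $N\cong\mathcal{W}(\frac{5}{2})$) via \autoref{52}; a Klein-bottle maximum of $5$ forces $N\cong\mathcal{W}(-4)$ via \autoref{W-4}; and a toroidal maximum of $8$ forces $N\cong\mathcal{W}(-1)$ or $N\cong\mathcal{W}(5)$ via \autoref{52}(\ref{52.3}).

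The main obstacle is precisely this last alternative, since \autoref{52}(\ref{52.3}) does not separate the figure-eight knot complement $\mathcal{W}(-1)$ from its sibling $\mathcal{W}(5)$. I would break the tie with first homology, which a profinite isomorphism preserves: $\widehat{\pi_1M}\cong\widehat{\pi_1N}$ forces $H_1(M;\Z)\cong H_1(N;\Z)$ by \cite[Lemma 2.8]{Xu}. Computing directly from the surgery description (linking number zero makes the filled longitude null-homologous, so $\frac{p}{q}$-surgery yields $H_1\cong\Z\oplus\Z/p\Z$), one finds $H_1(\mathcal{W}(-1);\Z)\cong\Z$ but $H_1(\mathcal{W}(5);\Z)\cong\Z\oplus\Z/5\Z$, so these two candidates have non-isomorphic profinite completions. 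Hence if $M=\mathcal{W}(-1)$ then $H_1(N;\Z)\cong\Z$ excludes $\mathcal{W}(5)$ and forces $N\cong\mathcal{W}(-1)$, and symmetrically $M=\mathcal{W}(5)$ forces $N\cong\mathcal{W}(5)$. This disposes of all five manifolds.
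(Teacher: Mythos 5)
Your proposal is correct and, for the main mechanism, coincides with the paper's proof: detect that $N$ is one-cusped hyperbolic via \autoref{PROP: Detect hyperbolic}, transport $\mathcal{E}_t$ and $\mathcal{E}_k$ through the slope bijection of \autoref{PROP: Exceptional}, use \autoref{LEM: preserve} to see that the maximal geometric intersection number is preserved, and then invoke \autoref{52} and \autoref{W-4}. The only genuine divergence is the tie-break between $\mathcal{W}(-1)$ and $\mathcal{W}(5)$. The paper settles it by observing that $\mathcal{W}(-1)$ is the figure-eight knot complement while the sibling $\mathcal{W}(5)$ is not a knot complement, so \autoref{PROP: Detect knot complement} forbids a profinite isomorphism between them. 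You instead compare first homology: since the two components of $\mathcal{W}$ have linking number zero, the filled longitude is null-homologous and $H_1(\mathcal{W}(\frac{p}{q});\Z)\cong\Z\oplus\Z/p\Z$, giving $\Z$ for $\mathcal{W}(-1)$ and $\Z\oplus\Z/5\Z$ for $\mathcal{W}(5)$; as the profinite completion determines the (finitely generated) abelianization, this separates them. Both arguments are valid. Yours is more elementary at this step — a direct abelianization computation rather than an appeal to the Dehn-filling machinery behind \autoref{PROP: Detect knot complement} — whereas the paper's choice reuses a tool it has already built and generalizes to situations where homology happens to coincide.
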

\begin{proof}
We start from the example of $\mathcal{W}(-2)$. 
Let $N$ be a compact orientable 3-manifold so that $\widehat{\pi_1\mathcal{W}(-2)}\cong \widehat{\pi_1N}$. Then, \autoref{PROP: Detect hyperbolic} implies that $N$ is also finite-volume hyperbolic with exactly one cusp. Therefore, \autoref{PROP: Exceptional} implies that there is an isomorphism $\psi:\pi_1(\partial \mathcal{W}(-2))\to\pi_1(\partial N)$, such that $\pisi(\mathcal{E}_t(\mathcal{W}(-2),\partial \mathcal{W}(-2)))=\mathcal{E}_t(N,\partial N)$.  Note that $\pisi$ is induced by a group isomorphism, so \autoref{LEM: preserve} implies that $\pisi$ preserves the geometric intersection number $\Delta(\cdot,\cdot)$. Thus, $\max\{\Delta(c_1,c_2)\mid c_1,c_2\in \mathcal{E}_t(N,\partial N)\}=\max\{\Delta(c_1',c_2')\mid c_1',c_2'\in \mathcal{E}_t(\mathcal{W}(-2),\partial \mathcal{W}(-2))\}=6$. According to \autoref{52}~\ref{52.1},  the only possibility is $N\cong \mathcal W(-2)$. 

The profinite rigidity of $\mathcal{W}(\frac{5}{2})$ and $\mathcal{W}(-4)$ follow from the same proof as $\mathcal W(-2)$, based on \autoref{52}~\ref{52.2} and \autoref{W-4} respectively.

Moving on to $\mathcal{W}(-1)$ and $\mathcal{W}(5)$, the same proof implies that if a compact orientable 3-manifold $N$ is profinitely isomorphic to $\mathcal{W}(-1)$ or $\mathcal{W}(5)$, then $N$ is homeomorphic to one of $\mathcal{W}(-1)$ and $\mathcal{W}(5)$. Thus, it suffices to show that $\mathcal{W}(-1)$ and $\mathcal{W}(5)$ are not profinitely isomorphic. In fact, $\mathcal{W}(-1)$ is the figure-eight knot complement, while the figure-eight sibling manifold $\mathcal{W}(5)$ is not a knot complement. Therefore, \autoref{PROP: Detect knot complement} guarantees that $\widehat{\pi_1\mathcal{W}(-1)}\not\cong \widehat{\pi_1\mathcal{W}(5)}$, finishing the proof.
\end{proof}

\subsection{Whitehead link, Whitehead sister link, $\frac{3}{10}$ two-bridge link, and $M_{14}$}

Recall that $\mathcal{W}$, $\mathcal{L}$ and $\mathcal{WS}$ denote the Whitehead link, the two-bridge link defined by the rational number $\frac{3}{10}$ in the Schubert normal form, and the Whitehead sister link which is the $(-2,3,8)$-Pretzel link, respectively. 
{$M_{14}$ is the double branched cover along the tangle $\mathcal{Q}$ in $S^2\times I$ (\autoref{Fig: M14}) defined in \cite{GW}.}

\begin{proposition}[{\cite[Corollary 1.3]{GW}}]\label{PROP: GW1}
Suppose $N$ is a finite-volume hyperbolic 3-manifold with two cusps, and let $\partial_0N$ be a boundary component of $N$.
\begin{enumerate}[label=(\arabic*), leftmargin=*]
\item\label{GW.1}  $\max\{\Delta(c_1,c_2)\mid c_1,c_2\in \mathcal{E}_t(N,\partial_0N)\}=4$ if and only if $N$ is homeomorphic to one of $\cpl{\mathcal W}$, $\cpl{\mathcal L}$, and $M_{14}$.
\item\label{GW.2}  $\max\{\Delta(c_1,c_2)\mid c_1,c_2\in \mathcal{E}_t(N,\partial_0N)\}=5$ if and only if  $N$ is homeomorphic to $\cpl{\mathcal{WS}}$.
\end{enumerate}
\end{proposition}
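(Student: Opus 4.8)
The statement is precisely \cite[Corollary 1.3]{GW}, so my plan is to reduce both items to the Gordon--Wu classification of two-cusped hyperbolic $3$-manifolds admitting toroidal Dehn fillings at large distance on a single cusp, and then to supply the finite verifications showing that the named manifolds actually realize the asserted maximal distances. I would treat each item as an ``if'' direction and an ``only if'' direction separately, since these have quite different characters.

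For the ``only if'' direction (the classification itself) the substantive content is exactly Gordon--Wu's theorem, and I would invoke it rather than reprove it. The underlying argument proceeds through the combinatorics of the two essential punctured tori arising from the toroidal fillings $N_{c_1}$ and $N_{c_2}$ for toroidal slopes $c_1,c_2\in\mathcal{E}_t(N,\partial_0N)$ with $\Delta(c_1,c_2)\in\{4,5\}$: one intersects these surfaces, analyses the resulting labelled intersection graphs on $\partial_0N$ in the style of Culler--Gordon--Luecke--Shalen as refined by Gordon--Wu, and thereby bounds $\Delta$; at the extremal values $4$ and $5$ the combinatorics becomes rigid and pins $N$ down to a short explicit list. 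This graph-theoretic analysis is the heart of their paper, and I would not attempt to reconstruct it here.

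For the ``if'' direction I would instead verify directly that each listed manifold attains the stated value, which is a finite computation. For each of $\cpl{\mathcal{W}}$, $\cpl{\mathcal{L}}$, $M_{14}$ in item~\eqref{GW.1} (resp.\ $\cpl{\mathcal{WS}}$ in item~\eqref{GW.2}) one enumerates all exceptional slopes on the distinguished cusp $\partial_0N$, selects the toroidal ones, and computes the pairwise geometric intersection numbers $\Delta(\cdot,\cdot)$, reading off that the largest equals $4$ (resp.\ $5$). The exceptional-filling data for the Whitehead link and the $\frac{3}{10}$ two-bridge link are available in \cite{MP02}, while the data for $M_{14}$ and the Whitehead sister link are tabulated within \cite{GW} itself, so no new surgery computation is required.

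The main obstacle is not any individual calculation but the bookkeeping of conventions needed to match this formulation to Gordon--Wu's enumeration: one must identify the distinguished cusp $\partial_0N$ and the slope parametrization used here with theirs, and check (as with the chirality correction already noted for the surgeries $\mathcal{W}(r)$ in \autoref{THM: W(-5/2)}) that passing to mirror images, or interchanging the roles of the two cusps, does not disturb either the set of toroidal slopes or their mutual distances. Since $\Delta(\cdot,\cdot)$ is manifestly invariant under orientation reversal and under the homeomorphisms realizing these identifications, this reindexing leaves the relevant maxima unchanged, so the translation between the two formulations is faithful and the proposition follows.
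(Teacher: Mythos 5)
The paper offers no proof of this proposition at all: it is imported verbatim as \cite[Corollary 1.3]{GW}, so your plan of quoting Gordon--Wu for the classification (``only if'') and reading the realizations of distance $4$ and $5$ off the tables in \cite{MP02} and \cite{GW} for the ``if'' direction is essentially the same approach, just spelled out more explicitly. Your remarks on matching cusp labels and chirality conventions, and on the invariance of $\Delta(\cdot,\cdot)$ under these identifications, are correct and harmless additions.
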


For simplicity, we denote $M_1=\cpl{\mathcal{W}}$, $M_2=\cpl{\mathcal{L}}$, and $M_3=\cpl{\mathcal{WS}}$ following the notation of \cite{GW}. It is well-known that $M_1$, $M_2$ and $M_3$ are finite-volume hyperbolic 3-manifolds, and \cite[Lemma 23.13]{GW} proved that $M_{14}$ is also finite-volume hyperbolic.

\begin{lemma}\label{LEM: distinguish 1,2,14}
Any two of $\widehat{\pi_1M_1}$, $\widehat{\pi_1M_2}$, $\widehat{\pi_1M_{14}}$ are not isomorphic.
\end{lemma}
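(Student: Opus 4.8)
The plan is to separate the three manifolds by profinite invariants extracted from first homology together with the peripheral structure. All of $M_1=\cpl{\mathcal W}$, $M_2=\cpl{\mathcal L}$ and $M_{14}$ are two-cusped finite-volume hyperbolic $3$-manifolds (for $M_{14}$ this is \cite[Lemma 23.13]{GW}), so by \autoref{PROP: Detect hyperbolic} the cusp count alone cannot distinguish them, and finer data is needed. The key mechanism is that a profinite isomorphism between cusped hyperbolic manifolds respects the peripheral structure (\autoref{hypMfd} (\ref{hyp1})), which upgrades a homological distinction involving the peripheral subgroups into a genuinely profinite one.

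First I would separate $M_1$ from $M_2$. Since both are complements of $2$-component links, $H_1(M_1;\Z)\cong H_1(M_2;\Z)\cong \Z^2$, so the abelianization does not suffice. Instead, for a two-cusped manifold $M$ with $H_1(M;\Q)\cong\Q^2$ I consider, for each cusp $\partial_iM$, the rank $r_i$ of the inclusion-induced map $\iota_{i,\ast}\colon H_1(\partial_iM;\Q)\to H_1(M;\Q)$; equivalently $r_i=1$ precisely when $\partial_iM$ carries a rational null-homologous slope (cf.\ \autoref{LEM: null-homologous slope}). For the Whitehead link the two components have linking number $0$, so both preferred longitudes are null-homologous and the unordered pair of ranks is $\{1,1\}$. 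For the two-bridge link $\mathcal L=\mathfrak b(10,3)$, a direct computation of the linking number, e.g. $\lambda=\sum_{k\ \mathrm{odd},\,1\le k\le 9}(-1)^{\lfloor 3k/10\rfloor}=3\neq 0$, shows $\iota_{i,\ast}$ is injective on each cusp, so the pair of ranks is $\{2,2\}$.

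To see this pair is a profinite invariant, suppose $f\colon\widehat{\pi_1M_1}\ttt\widehat{\pi_1M_2}$ were an isomorphism. By \autoref{hypMfd} (\ref{hyp1}) it respects the peripheral structure, matching $\partial_iM_1$ with some cusp $\partial_{\sigma(i)}M_2$. Passing to abelianizations, $f$ induces an isomorphism $H_1(M_1;\widehat{\Z})\ttt H_1(M_2;\widehat{\Z})$ carrying the closure of $\iota_{i,\ast}(H_1(\partial_iM_1))$ onto the closure of $\iota_{\sigma(i),\ast}(H_1(\partial_{\sigma(i)}M_2))$; since these closures are isomorphic to $\widehat{\Z}^{\,r_i}$ and $\widehat{\Z}^{\,r_{\sigma(i)}}$, their $\widehat{\Z}$-ranks agree. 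Thus $f$ would force $\{1,1\}=\{2,2\}$, a contradiction, so $\widehat{\pi_1M_1}\not\cong\widehat{\pi_1M_2}$. The same rank invariant separates $M_{14}$ from whichever of $M_1,M_2$ has a different rank pair. The cleanest first attempt for $M_{14}$, however, is simply to compute $H_1(M_{14};\Z)$ from the tangle $\mathcal Q$ (\autoref{Fig: M14}): as $M_{14}$ is a double branched cover one expects torsion, so that $H_1(M_{14};\Z)\not\cong\Z^2$ and the abelianization already distinguishes it from both link complements. As a robust fallback (should the first homology coincide) I would use the stronger invariant furnished by \hyperref[PROP: Dehn filling]{Theorem~A}: the assignment sending a slope pair to the isomorphism type of $H_1$ of the corresponding Dehn filling is preserved up to the reparametrisation $\pisi$, which by \autoref{LEM: preserve} preserves geometric intersection numbers; this records the linking form and separates $M_{14}$. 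Yet another option is \autoref{PROP: Detect knot complement}: anything profinitely isomorphic to the hyperbolic link complements $M_1$ or $M_2$ is itself a hyperbolic link complement, so it suffices to observe that $M_{14}$ is not a link complement in $S^3$.

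The main obstacle is the treatment of $M_{14}$: carrying out the homology computation for the branched cover and, in the borderline case, extracting from $\widehat{\pi_1M_{14}}$ enough of the linking form to rule out coincidence with a two-bridge link complement. The conceptual crux throughout is the step in the previous paragraph, where respecting the peripheral structure at the profinite level (\autoref{hypMfd}) forces the ranks of peripheral images in $H_1(\cdot;\widehat{\Z})$ — equivalently the vanishing or non-vanishing of linking numbers — to be matched; this is precisely what converts the elementary homological difference between the Whitehead and $\tfrac{3}{10}$ two-bridge links into a profinite obstruction.
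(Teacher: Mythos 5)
Your argument separating $M_1$ from $M_2$ is essentially the paper's: both use the fact that the linking number of the Whitehead link is $0$ while that of $\mathcal L$ is $\pm3$, so the peripheral subgroups of $M_1$ fail to inject into $H_1(M_1;\Z)$ while those of $M_2$ do inject, and both upgrade this to a profinite obstruction via \autoref{hypMfd}~(\ref{hyp1}) together with the naturality of abelianization (the paper phrases the invariant as injectivity of $1\otimes\iota_\ast$ after tensoring with the flat module $\widehat{\Z}$, you phrase it as the $\widehat{\Z}$-rank of the closure of the peripheral image; these are equivalent here). For $M_{14}$, however, your ``cleanest first attempt'' --- computing $H_1(M_{14};\Z)$ and hoping for torsion --- is not something you can rely on: you never carry out the computation, and the paper does not use it, so you have no guarantee that $H_1(M_{14};\Z)\not\cong\Z^2$. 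Your second fallback (comparing $H_1$ of matched Dehn fillings) is likewise left as a sketch. The argument that actually closes the case is the one you list third: by \autoref{PROP: Detect knot complement}, any compact orientable $3$-manifold profinitely isomorphic to the hyperbolic link complement $M_1$ or $M_2$ is itself a hyperbolic link complement in $S^3$, whereas Gordon--Wu show (in the proof of their Lemma~24.2) that $M_{14}$ is not a link complement. This is precisely the paper's proof for the two comparisons involving $M_{14}$, and it is the option you should promote to the primary argument; with that reordering your proposal is complete and matches the paper's proof in substance.
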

\begin{proof}

We first show that $\widehat{\pi_1M_1}\not \cong \widehat{\pi_1M_{14}}$ and $\widehat{\pi_1M_2}\not \cong \widehat{\pi_1M_{14}}$. In fact, $M_1$ and $M_2$ are both hyperbolic link complements in $S^3$, while in the proof of \cite[Lemma 24.2]{GW}, Gordon-Wu showed that $M_{14}$ is not a link complement. Therefore, \autoref{PROP: Detect knot complement} implies $\widehat{\pi_1M_1}\not \cong \widehat{\pi_1M_{14}}$ and $\widehat{\pi_1M_2}\not \cong \widehat{\pi_1M_{14}}$. 

Next, we show that $\widehat{\pi_1M_1}\not\cong \widehat{\pi_1M_2}$. 
\revised{
Suppose by contrary that $\widehat{\pi_1M_1}\cong \widehat{\pi_1M_2}$. Then, by \autoref{inthm: Dehn filling}, there exist a boundary component $\partial_\ast M_1$, a boundary component $\partial_\star M_2$, and a bijection $\pisi:  \slope(\partial_\ast M_1) \to \slope(\partial_\star M_2)$ such that for any slope $c\in \slope(\partial_\ast M_1)$, $\widehat{\pi_1(M_1)_c}\cong \widehat{\pi_1(M_2)_{\pisi(c)}}$. In particular, $b_1((M_1)_c)= b_1 ((M_2)_{\pisi(c)})$ by \autoref{lem: abelianize}.  

The boundary component $\partial_\ast M_1$ is given by a component $K_\ast$ of the Whitehead link $\mathcal{W}$. Now we let $c\in \slope(\partial_\ast M_1)$ be the longitude of $K_\ast$. Since the linking number of the two components in $\mathcal{W}$ is $0$, the slope $c$ is actually null homologous in $M_1$. Thus, $b_1((M_1)_c)=b_1(M_1)=2$. However,  the linking number of the two components in $\mathcal{L}$ is $\pm3$ which is  non-zero, so both boundary components of $M_2=\cpl{\mathcal{L}}$ are $H_1$-injective. Consequently, $b_1((M_2)_{c'})=1$ for any $c'\in \slope(\partial_\star M_2)$.  In particular, $b_1((M_1)_c)\neq b_1((M_2)_{\pisi(c)})$, which yields a contradiction. 
}
\end{proof}

\begin{theorem}\label{THM: link complement}
$M_1,M_2,M_3,M_{14}$ are profinitely rigid among compact, orientable 3-manifolds.
\end{theorem}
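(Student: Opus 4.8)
The plan is to reproduce the ``extremal intersection number'' argument from the proof of \autoref{THM: W(-5/2)}, now feeding in the two-cusped classification \autoref{PROP: GW1} in place of the one-cusped \autoref{52}. Fix $k\in\{1,2,3,14\}$ and let $N$ be any compact orientable $3$-manifold with $\widehat{\pi_1 N}\cong \widehat{\pi_1 M_k}$. First I would apply \autoref{PROP: Detect hyperbolic} to conclude that $N$ is itself finite-volume hyperbolic with exactly two cusps, since each $M_k$ has two cusps. Next, choosing a boundary component $\partial_{i_0}M_k$ (to be specified below), \autoref{PROP: Exceptional}~(\ref{4.3-3}) furnishes a boundary component $\partial_{j_0}N$ together with an isomorphism $\psi\colon \pi_1\partial_{i_0}M_k\ttt\pi_1\partial_{j_0}N$ satisfying $\pisi(\mathcal{E}_t(M_k,\partial_{i_0}M_k))=\mathcal{E}_t(N,\partial_{j_0}N)$. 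Because $\psi$ is a genuine group isomorphism of peripheral tori, \autoref{LEM: preserve} shows $\pisi$ preserves geometric intersection numbers, so the extremal toroidal value $\max\{\Delta(c_1,c_2)\mid c_1,c_2\in\mathcal{E}_t(\cdot)\}$ agrees for $\partial_{i_0}M_k$ and $\partial_{j_0}N$.

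For $M_3=\cpl{\mathcal{WS}}$ I would take $\partial_{i_0}M_3$ to be a cusp realizing extremal value $5$, which exists by the backward implication of \autoref{PROP: GW1}~(\ref{GW.2}); transporting, $\partial_{j_0}N$ also realizes $5$, and the forward implication of \autoref{PROP: GW1}~(\ref{GW.2}) forces $N\cong\cpl{\mathcal{WS}}=M_3$. For $k\in\{1,2,14\}$ the identical steps, now with extremal value $4$ and \autoref{PROP: GW1}~(\ref{GW.1}), only narrow $N$ down to the three homeomorphism candidates $M_1,M_2,M_{14}$. To conclude I would invoke \autoref{LEM: distinguish 1,2,14}: since $\widehat{\pi_1 N}\cong\widehat{\pi_1 M_k}$ while $\widehat{\pi_1 M_1},\widehat{\pi_1 M_2},\widehat{\pi_1 M_{14}}$ are pairwise non-isomorphic, the unique candidate whose profinite completion matches that of $N$ is $M_k$ itself, whence $N\cong M_k$.

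Essentially all the mathematical content is imported from the cited results, so the theorem is a synthesis rather than a fresh argument; the only point demanding care is that the value $4$ in \autoref{PROP: GW1}~(\ref{GW.1}) does \emph{not} pin down the homeomorphism type, which is precisely why \autoref{LEM: distinguish 1,2,14} (itself resting on the linking-number/$H_1$-injectivity computation and the fact that $M_{14}$ is not a link complement) is indispensable as the final separating step. I would also be attentive to selecting the boundary component $\partial_{i_0}M_k$ on which the relevant maximum is actually attained \emph{before} transporting through \autoref{PROP: Exceptional}, rather than an arbitrary cusp; since the matching there respects each chosen cusp, this choice propagates correctly to $\partial_{j_0}N$. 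I do not anticipate any genuine obstacle beyond assembling these pieces in the right order.
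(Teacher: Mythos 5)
Your proposal is correct and follows essentially the same route as the paper's proof: detect hyperbolicity and the cusp count via \autoref{PROP: Detect hyperbolic}, transport the maximal geometric intersection number of toroidal slopes through \autoref{PROP: Exceptional} and \autoref{LEM: preserve}, apply \autoref{PROP: GW1} to pin down the candidates, and finish with \autoref{LEM: distinguish 1,2,14} in the value-$4$ case. Your closing remarks about choosing the cusp attaining the maximum and about the indispensability of the separating lemma match the paper's argument exactly.
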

\begin{proof}
Recall that $M_1,M_2,M_3,M_{14}$  are finite-volume hyperbolic 3-manifolds with two cusps. 
According to \autoref{PROP: Detect hyperbolic}, any compact orientable 3-manifold profinitely isomorphic to $M_1$, $M_2$, $M_3$ or $M_{14}$ is also a finite-volume hyperbolic 3-manifolds with two cusps. 

We first prove the profinite rigidity of $M_3$. Suppose $N$ is a finite-volume hyperbolic 3-manifold with two cusps such that $\widehat{\pi_1M_3}\cong \widehat{\pi_1N}$. Let $\partial_0M_3$ be a boundary component of $M_3$ such that $\max\{\Delta(c_1,c_2)\mid c_1,c_2\in \mathcal{E}_t(M_3,\partial_0M_3)\}=5$ according to \autoref{PROP: GW1}. Then similar to the proof of \autoref{THM: W(-5/2)}, we can derive from \autoref{PROP: Exceptional} and \autoref{LEM: preserve} that there exists a boundary component $\partial_iN$ of $N$ such that $\max\{\Delta(c_1',c_2')\mid c_1',c_2'\in \mathcal{E}_t(N,\partial_iN)\}=\max\{\Delta(c_1,c_2)\mid c_1,c_2\in \mathcal{E}_t(M_3,\partial_0M_3)\}=5$. \autoref{PROP: GW1}~\ref{GW.2} then implies that $N\cong M_3$.

Next, we show the profinite rigidity of $M_1$, $M_2$ and $M_{14}$. Suppose $j\in \{1,2,14\}$ and $N$ is a finite-volume hyperbolic 3-manifold with two cusps such that $\widehat{\pi_1M_j}\cong \widehat{\pi_1N}$. Similar to the proof for $M_3$, there exists a boundary component $\partial_iN$ of $N$ such that $\max\{\Delta(c_1,c_2)\mid c_1,c_2\in \mathcal{E}_t(N,\partial_iN)\}=4$. \autoref{PROP: GW1}~\ref{GW.1} then implies that $N$ is homeomorphic to one of $M_1$, $M_2$ and $M_{14}$. We have shown in \autoref{LEM: distinguish 1,2,14} that any two of $M_1$, $M_2$ and $M_{14}$ are not profinitely isomorphic. Hence, the only remaining possibility is  $N\cong M_j$.
\end{proof}

\subsection{More hyperbolic knot complements in $S^3$}

In this subsection, we consider three families of hyperbolic knots.
\begin{enumerate}[label=(\arabic*), leftmargin=*]
\item The twist knots $\mathcal{K}_n$ (\autoref{Fig: Kn}) can also be viewed as a $\frac{1}{n}$-Dehn surgery on one component of the Whitehead link (\autoref{Fig: W}). For example, $\mathcal{K}_0$ is the unknot, $\mathcal{K}_1$ is the trefoil, and $\mathcal{K}_{-1}$ is the figure-eight knot. In fact, according to \cite{MP02}, for all $n\in \Z\setminus\{0,1\}$, $\mathcal{K}_n$ is a hyperbolic knot.
\item The braid knots $\mathcal{J}_n$ represented by $\Jn$ (\autoref{Fig: Jn}) can be viewed as a $\frac{1}{n}$-surgery on one component of the two-bridge link $\mathcal{L}$ shown in \autoref{Fig: L}. Again, \cite{MP02} shows that $\mathcal{J}_n$ is hyperbolic if and only if $n\in \Z\setminus \{-1,0,1\}$.
\item The \EM knots $K(3,1,n,0)$ defined in \cite{EM96} (\autoref{Fig: EM}) can be viewed as a $-\frac{1}{n}$-surgery on the unknotted component of the Whitehead sister link $\mathcal{WS}$ shown in \autoref{Fig: WS}. Indeed, \cite[Proposition 2.2]{EM96} shows that $K(3,1,n,0)$ is hyperbolic for all $n\in \Z\setminus \{0\}$, and the mirror-image of $K(3,1,n,0)$ is  $K(2,-1,1-n,0)$ by  \cite[Proposition 1.4]{EM96}.
\end{enumerate}

\begin{proposition}[{\cite[Theorem 24.4]{GW}}]\label{THM: Knot maximal}
Suppose ${K}$ is a hyperbolic knot in $S^3$.
\begin{enumerate}[label=(\arabic*), leftmargin=*]
\item\label{km.3} If  there exist $c_1,c_2\in \mathcal{E}_t(K)$ such that $\Delta(c_1,c_2)=8$, then $K$ is the figure-eight knot $\mathcal{K}_{-1}={4_1}$.
\item\label{km.1}  If there exist $c_1,c_2\in \mathcal{E}_t(K)$ such that $\Delta(c_1,c_2)=4$, then $K$ is equivalent or mirror image to $\mathcal{K}_n$ ($n\in \Z\setminus\{0,1\}$) or $\mathcal{J}_n$ ($n\in \Z\setminus\{-1,0,1\}$).
\item\label{km.2}  If  there exist $c_1,c_2\in \mathcal{E}_t(K)$ such that $\Delta(c_1,c_2)=5$, then $K$ is the \EM knot $K(3,1,n,0)$ or its mirror-image $K(2,-1,1-n,0)$ for some $n\in \Z\setminus\{0\}$.
\end{enumerate}
\end{proposition}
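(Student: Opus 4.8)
The statement is the specialization to knots in $S^3$ of the Gordon--Wu classification \cite{GW} of one-cusped finite-volume hyperbolic $3$-manifolds admitting two toroidal Dehn fillings at a prescribed distance, so the plan is to invoke that enumeration rather than to re-derive the distance bounds, and then to single out from it exactly those manifolds arising as $\cpl{K}$ for a knot $K\subseteq S^3$, matching each with the advertised families. The three values $\Delta=8,5,4$ are governed respectively by the maximal toroidal distance and by the distances realized on the two-cusped ``parents'' recorded in \autoref{PROP: GW1}, namely the Whitehead sister link for $\Delta=5$ and the Whitehead and $\frac{3}{10}$ two-bridge links for $\Delta=4$.

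For part~\ref{km.3} I would argue entirely within the present paper. By \autoref{52}~\ref{52.3}, a one-cusped hyperbolic $3$-manifold carrying two toroidal slopes at distance $8$ is homeomorphic either to $\mathcal{W}(-1)$, the figure-eight knot complement, or to $\mathcal{W}(5)$, the figure-eight sibling. As noted in the proof of \autoref{THM: W(-5/2)}, $\mathcal{W}(5)$ is not a knot complement, so \autoref{PROP: Detect knot complement} forces $\cpl{K}\cong \mathcal{W}(-1)\cong \cpl{4_1}$; by the uniqueness of a knot determined by its complement, $K=\mathcal{K}_{-1}=4_1$.

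For parts~\ref{km.1} and~\ref{km.2} I would invoke \cite[Theorem 24.4]{GW} directly, which already provides, for knots in $S^3$, the list of complements realizing toroidal distance $4$ and $5$ in terms of Gordon--Wu's own surgery descriptions. The genuine work left is the translation: to recognize the distance-$4$ knots as the $\frac{1}{n}$-fillings of one component of $\mathcal{W}$ and of $\mathcal{L}$, i.e. the twist knots $\mathcal{K}_n$ ($n\neq 0,1$) and the braid knots $\mathcal{J}_n$ ($n\neq -1,0,1$); and the distance-$5$ knots as the $-\frac{1}{n}$-fillings of the unknotted component of $\mathcal{WS}$, i.e. the \EM knots $K(3,1,n,0)$ together with their mirror images $K(2,-1,1-n,0)$ \cite{EM96}. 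Here the consistency with \autoref{PROP: GW1} is the organizing principle: the distance-$4$ and distance-$5$ parents are exactly $\cpl{\mathcal{W}},\cpl{\mathcal{L}}$ and $\cpl{\mathcal{WS}}$, and the surgery slopes $\frac{1}{n}$ (resp. $-\frac{1}{n}$) on the remaining cusp produce the stated families, while one must confirm that the non-link-complement parent $M_{14}$ and the fillings that fail to live in $S^3$ contribute no additional knots.

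The main obstacle is precisely this last identification, which is where one genuinely relies on the detailed output of \cite{GW} and on \cite{EM96}: one must match the intrinsic presentations of the manifolds in the Gordon--Wu list with the explicit knot diagrams, pin down the exact ranges of the parameter $n$, and account for mirror images, since $\Delta(\cdot,\cdot)$ and the whole toroidal-filling configuration are insensitive to orientation. This is a finite but delicate bookkeeping task rather than a conceptual difficulty.
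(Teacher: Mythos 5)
Your proposal is correct and matches the paper's treatment: the paper states this proposition purely as a citation of \cite[Theorem 24.4]{GW} and supplies no independent proof, which is exactly the deferral you make for parts (2) and (3), with the remaining work being the (routine) translation of Gordon--Wu's list into the families $\mathcal{K}_n$, $\mathcal{J}_n$, $K(3,1,n,0)$ and the bookkeeping of mirror images and parameter ranges. Your self-contained derivation of part (1) via \autoref{52} and the fact that the figure-eight sibling $\mathcal{W}(5)$ is not a knot complement is also valid (note only that \autoref{PROP: Detect knot complement} is not needed there --- one just uses that $\cpl{K}$ is by hypothesis a knot complement, together with Gordon--Luecke).
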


\begin{proposition}\label{PROP: coefficient}
\begin{enumerate}[label=(\arabic*), leftmargin=*]
\item\label{ek.1} $\mathcal{E}_t(\mathcal{K}_{-1})=\{-4,0,4\}$. In particular, $\Delta(-4,4)=8$. 
\item\label{ek.2} For $n\in \Z\setminus\{-1,0,1\}$, $\mathcal{E}_t(\mathcal{K}_n)=\{0,4\}$, and $\Delta(0,4)=4$.
\item\label{ek.3} For $n\in \Z\setminus\{-1,0,1\}$, $\mathcal{E}_t(\mathcal{J}_n)=\{2-9n,-2-9n\}$, and $\Delta(2-9n,-2-9n)=4$.
\item\label{ek.4} For $n\in \Z\setminus \{0\}$,
 $\{25n-9,25n-\frac{13}{2}\}\subseteq \mathcal{E}_t(K(3,1,n,0))$, and  $\Delta(25n-9,25n-\frac{13}{2})=5$. 
 In addition, 
$25n-\frac{13}{2}$ is the unique half-integral element in $\mathcal{E}_t(K(3,1,n,0))$.
\end{enumerate}
\end{proposition}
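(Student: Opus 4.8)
The plan is to compute each toroidal slope set by realizing the knot as a Dehn filling of a two-cusped link complement whose exceptional Dehn fillings are completely understood, and then transporting the toroidal fillings on the knot cusp into the $\Qinf$-parametrization of \autoref{subsec: knot}. Concretely, $\mathcal{K}_n$, $\mathcal{J}_n$ and $K(3,1,n,0)$ arise by $\frac{1}{n}$-, $\frac{1}{n}$- and $-\frac{1}{n}$-surgery on an unknotted component of $\mathcal{W}$, $\mathcal{L}$ and $\mathcal{WS}$ respectively, and these three links (together with the manifolds of \autoref{THM: link complement}) are all fillings of the magic manifold, whose complete census of exceptional fillings is supplied by \cite{MP02,GW}. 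The key mechanism is the twisting formula: performing $\frac{1}{m}$-surgery on an unknotted component $C$ that links the knot algebraically $\ell$ times inserts $m$ full twists and changes the preferred longitude by $m\ell^2$ meridians, so that a surgery slope $r\in\Qinf$ is carried to $r+m\ell^2$ (up to the sign fixed by the chosen orientation of $S^3$). This both explains the $n$-dependence of the answers and preserves the geometric intersection numbers $\Delta$, which I can then read off using the determinant formula of \autoref{ex}.

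For the twist knots I would argue as follows. Since the two components of $\mathcal{W}$ have linking number $0$, the shift $m\ell^2$ vanishes, so the toroidal slopes $\{0,4\}$ inherited from the exceptional fillings of $\mathcal{W}$ are present for every admissible $n$; that these are \emph{all} of them for $n\in\Z\setminus\{-1,0,1\}$ follows from the complete classification of exceptional surgeries on these (two-bridge) knots recorded in \cite{MP02}, after isolating the genuinely toroidal fillings from the reducible and small Seifert fibered ones. The case $n=-1$ is the figure-eight knot: being amphichiral, its toroidal slope set is invariant under $r\mapsto -r$, which forces the extra slope $-4$, and the classical computation confirms $\mathcal{E}_t(\mathcal{K}_{-1})=\{-4,0,4\}$ with $\Delta(-4,4)=8$.

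For $\mathcal{J}_n$ the two components of $\mathcal{L}$ have linking number $\pm 3$ (as already used in \autoref{LEM: distinguish 1,2,14}), so $\ell^2=9$ and the toroidal fillings of $\cpl{\mathcal{L}}$ form an affine family in $n$ which, after the $9n$ shift, is exactly $\{2-9n,\,-2-9n\}$, with $\Delta(2-9n,-2-9n)=4$ immediate from \autoref{ex}. The same scheme applies to $K(3,1,n,0)$ with $\mathcal{WS}$, whose relevant linking number is $5$, giving a shift of $25n$; reading the two toroidal fillings of $\cpl{\mathcal{WS}}$ off the census and applying this shift produces $25n-9$ and $25n-\frac{13}{2}$, and a direct determinant computation gives $\Delta(25n-9,\,25n-\frac{13}{2})=5$. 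The uniqueness of the half-integral toroidal slope is the one point not visible from the twisting picture alone, since it concerns \emph{all} toroidal slopes of $K(3,1,n,0)$; here I would invoke \EM's construction \cite{EM96}, in which these knots are produced precisely as the hyperbolic knots carrying a (necessarily unique) half-integral toroidal surgery, so that after the framing conversion his half-integral slope is forced to be $25n-\frac{13}{2}$ and no other half-integral toroidal slope can occur.

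The hard part will be the framing bookkeeping together with the completeness of the lists. Exhibiting toroidal slopes is easy once the parent links are identified, but certifying that the displayed sets are \emph{exactly} $\mathcal{E}_t$ for $\mathcal{K}_n$ and $\mathcal{J}_n$ requires controlling every exceptional filling and correctly separating the toroidal ones from the other exceptional types under a twisting-dependent change of framing, while for $K(3,1,n,0)$ the delicate input is \EM's half-integral uniqueness. I expect the careful matching of the $\Qinf$-meridian--longitude coordinates on the knot cusp with the coordinates used in \cite{MP02,GW,EM96}, including the signs forced by the chirality of each link (as already noted for $\mathcal{W}$ after \autoref{W-4}), to be where essentially all of the work lies.
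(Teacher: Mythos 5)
Your strategy for parts (1)--(3) --- realizing each knot as a filling of a two-cusped parent link, transporting toroidal slopes via the twisting formula $r\mapsto r+m\ell^2$, and certifying completeness from the census of exceptional fillings in \cite{MP02,GW} --- is a workable alternative to what the paper actually does, which is to quote the Brittenham--Wu classification of exceptional surgeries on two-bridge knots \cite[Theorem 1.1 (3) and (4)]{BW01} for (1) and (2), and \cite[Corollary 24.5]{GW} for (3). Your version carries exactly the bookkeeping burden you identify (framing conversions, chirality signs, and separating toroidal fillings from the other exceptional types of the parent link), but since this is essentially how those cited results are themselves obtained, the route is legitimate, if less economical than a direct citation.

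The genuine gap is in part (4), in the uniqueness of the half-integral toroidal slope. You attribute uniqueness to the Eudave-Mu\~noz construction \cite{EM96}, asserting that these knots carry a ``(necessarily unique)'' half-integral toroidal surgery. But \cite{EM96} only \emph{exhibits} a half-integral toroidal surgery on each $K(3,1,n,0)$; nothing in that construction excludes a second non-integral toroidal slope on the same knot, and the parenthetical ``necessarily unique'' is precisely the assertion that needs proof. The paper supplies the missing ingredient from elsewhere: by \cite[Theorem 1]{GWZ}, a hyperbolic knot in $S^3$ has at most one non-integral toroidal slope, so the half-integral element of $\mathcal{E}_t(K(3,1,n,0))$ is unique and must be the exhibited $25n-\tfrac{13}{2}$. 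The paper's own example $n=1$, where $\mathcal{E}_t(K(3,1,1,0))=\{16,\tfrac{37}{2},20\}$ strictly contains the two exhibited slopes $\{16,\tfrac{37}{2}\}$, shows that the twisting picture cannot determine all of $\mathcal{E}_t$ here, so an external uniqueness theorem of this kind is unavoidable. Note also that the paper derives the containment $\{25n-9,25n-\tfrac{13}{2}\}\subseteq\mathcal{E}_t(K(3,1,n,0))$ from \cite[Proposition 5.4 (1) and (11)]{EM02} together with the fact that $K(3,1,n,0)$ is the mirror image of $K(2,-1,1-n,0)$ \cite[Proposition 1.4]{EM96}, rather than from a linking-number computation on $\mathcal{WS}$; if you insist on your route you must actually verify the linking number $5$ and the sign conventions rather than assume them.
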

\begin{proof}
\ref{ek.1} and \ref{ek.2} are exactly (4) and (3) of \cite[Theorem 1.1]{BW01}, and the calculation of geometric intersection number follows from \autoref{ex}. \ref{ek.3} is proven in \cite[Corollary 24.5]{GW}, see also \cite{MP02}. 

Now we move on to \ref{ek.4}. 
The fact that $\{25n-9,25n-\frac{13}{2}\}\subseteq \mathcal{E}_t(K(3,1,n,0))$ follows from \cite[Proposition 5.4 (1) and (11)]{EM02}\footnote{In fact, there is a calculation error in the Seifert invariants in \cite[Proposition 5.4 (1)]{EM02}, but the surgery coefficient is still correct.}, together with the fact that $K(3,1,n,0)$ is the mirror-image of $K(2,-1,1-n,0)$ \cite[Proposition 1.4]{EM96}. In particular, when $n=1$, $K(3,1,n,0)$ is the $(-2,3,7)$-Pretzel knot, and $\mathcal{E}_t(K(3,1,n,0))=\{16,\frac{37}{2},20\}$ strictly contains $\{25n-9,25n-\frac{13}{2}\}=\{16,\frac{37}{2}\}$. Nevertheless, any hyperbolic knot in $S^3$ has at most one non-integral toroidal slope as shown by \cite[Theorem 1]{GWZ}. Thus, for any $n\in \Z\setminus\{0\}$, $\mathcal{E}_t(K(3,1,n,0))$ contains exactly one half-integral element, which is $25n-\frac{13}{2}$ stated above.
\end{proof}

\begin{lemma}\label{LEM: kj}
\begin{enumerate}[label=(\arabic*), leftmargin=*]
\item\label{kj.1} For any $m\in \Z\setminus\{0,1\}$ and $n\in \Z\setminus\{-1,0,1\}$, $\widehat{\pi_1 X_{\mathcal{K}_m}}\not\cong \widehat{\pi_1X_{\mathcal{J}_{n}}}$. 
\item\label{kj.3} For $m,n\in \Z\setminus\{-1,0,1\}$, $\widehat{\pi_1 X_{\mathcal{J}_m}}\cong \widehat{\pi_1 X_{\mathcal{J}_n}}$ if and only if $m=\pm n$; in this case, $\mathcal{J}_m$ and $\mathcal{J}_n$ are either isotopic or the mirror images of each other. 
\item\label{kj.4} For $m,n\in \Z\setminus\{0\}$, $\widehat{\pi_1X_{K(3,1,m,0)}}\cong \widehat{\pi_1X_{K(3,1,n,0)}}$ if and only if $m=n$.
\item\label{kj.2} For $m,n\in \Z\setminus\{0,1\}$, $\widehat{\pi_1 X_{\mathcal{K}_m}}\cong \widehat{\pi_1 X_{\mathcal{K}_n}}$ if and only if $m=n$.
\end{enumerate}
\end{lemma}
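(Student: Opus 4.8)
The plan is to separate the three knot families by reading off numerical features of their toroidal exceptional slopes, as computed in \autoref{PROP: coefficient}, and feeding them through the profinite rigidity of these slopes furnished by \autoref{PROP: Knot exceptional coefficient}. The single case this cannot reach, namely (\ref{kj.2}), I would instead settle by the Alexander polynomial. The common mechanism is this: whenever $\widehat{\pi_1 X_K}\cong\widehat{\pi_1 X_{K'}}$ for hyperbolic knots $K,K'$, \autoref{PROP: Knot exceptional coefficient} yields $\sigma\in\{\pm1\}$ with $\mathcal{E}_t(K)=\sigma\cdot\mathcal{E}_t(K')$, so I only need quantities attached to these finite subsets of $\Qinf$ that behave predictably under $c\mapsto\sigma c$. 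For (\ref{kj.1}), note $0\in\mathcal{E}_t(\mathcal{K}_m)$ for every $m\in\Z\setminus\{0,1\}$ (the set is $\{0,4\}$, or $\{-4,0,4\}$ when $m=-1$), whereas $\mathcal{E}_t(\mathcal{J}_n)=\{2-9n,-2-9n\}$ never contains $0$, since $9n=\pm2$ has no integer solution. Because $c\mapsto\sigma c$ fixes $0$, an isomorphism $\widehat{\pi_1 X_{\mathcal{K}_m}}\cong\widehat{\pi_1 X_{\mathcal{J}_n}}$ would force $0\in\mathcal{E}_t(\mathcal{J}_n)$, a contradiction.

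For (\ref{kj.3}) I would use the sum of the two toroidal slopes: the elements of $\mathcal{E}_t(\mathcal{J}_n)$ sum to $-18n$. Matching $\mathcal{E}_t(\mathcal{J}_m)=\sigma\cdot\mathcal{E}_t(\mathcal{J}_n)$ gives $-18m=\sigma(-18n)$, hence $m=\sigma n=\pm n$; one checks directly that the full sets agree for $(\sigma,m)=(1,n)$ and for $(\sigma,m)=(-1,-n)$. For (\ref{kj.4}) the relevant feature is that $25n-\frac{13}{2}$ is the \emph{unique} half-integral slope in $\mathcal{E}_t(K(3,1,n,0))$. Since $c\mapsto\sigma c$ maps half-integral slopes bijectively to half-integral slopes, it must carry the unique half-integral element of one set to that of the other, giving $25m-\frac{13}{2}=\sigma\bigl(25n-\frac{13}{2}\bigr)$. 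With $\sigma=1$ this gives $m=n$; with $\sigma=-1$ it gives $25(m+n)=13$, impossible for integers, so $\sigma=-1$ is excluded and $m=n$.

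The main obstacle is part (\ref{kj.2}): the toroidal slopes cannot distinguish these knots, because $\mathcal{E}_t(\mathcal{K}_m)=\{0,4\}$ for every $m\neq-1$, so \autoref{PROP: Knot exceptional coefficient} is powerless and a genuinely different invariant is needed. Here I would invoke Ueki \cite{Ueki}: a profinite isomorphism $\widehat{\pi_1 X_{\mathcal{K}_m}}\cong\widehat{\pi_1 X_{\mathcal{K}_n}}$ forces equality of Alexander polynomials up to units. Since $\Delta_{\mathcal{K}_n}(t)=n(t+t^{-1})-(2n-1)$, two such symmetric Laurent polynomials can agree up to multiplication by $\pm t^{k}$ only when their parameters coincide: the equal span forces $k=0$, and multiplication by $-1$ is obstructed by comparing the (constant versus leading) coefficients. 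Thus $n\mapsto\Delta_{\mathcal{K}_n}$ is injective, yielding $m=n$; this uniformly covers $m=-1$ as well, so no separate treatment of the figure-eight is required.

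Finally I would dispatch the reverse implications. In each part $m=n$ gives literally the same knot, hence isomorphic (profinite) completions; and in (\ref{kj.4}), (\ref{kj.2}) this is all that is asserted. The only nontrivial reverse statement is the mirror-image clause of (\ref{kj.3}), that $\mathcal{J}_{-n}$ is the mirror of $\mathcal{J}_n$. I would verify this by a short braid computation in $B_3$: mirroring the closure of $\sigma_2^{-1}\sigma_1(\sigma_1\sigma_2)^{3n}$ switches all crossings to give the closure of $\sigma_2\sigma_1^{-1}(\sigma_1\sigma_2)^{-3n}$ (using $(\sigma_1^{-1}\sigma_2^{-1})^{3n}=(\sigma_1\sigma_2)^{-3n}=\Delta^{-2n}$, central); conjugating by the half-twist $\Delta$, which fixes $\Delta^{-2n}$ and swaps $\sigma_1\leftrightarrow\sigma_2$, and then using $ab\sim ba$, produces the closure of $\sigma_2^{-1}\sigma_1(\sigma_1\sigma_2)^{-3n}$, which is $\mathcal{J}_{-n}$. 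As mirror images have homeomorphic complements, $\widehat{\pi_1 X_{\mathcal{J}_{-n}}}\cong\widehat{\pi_1 X_{\mathcal{J}_n}}$, completing the equivalence.
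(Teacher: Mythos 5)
Your proof is correct and follows essentially the same route as the paper's: the separation of $\mathcal{K}_m$ from $\mathcal{J}_n$ via $0\in\mathcal{E}_t(\mathcal{K}_m)$, the identification $m=\pm n$ for the $\mathcal{J}$-family and $m=n$ for the Eudave-Mu\~noz family from the $\pm$-equivariance of $\mathcal{E}_t$ in \autoref{PROP: Knot exceptional coefficient} (your sum-of-slopes and unique-half-integral-slope arguments are the same computations, just packaged slightly differently), and the twist-knot case via Ueki's Alexander polynomial rigidity, exactly as in the paper. The one genuine divergence is the mirror-image claim $\overline{\mathcal{J}_n}=\mathcal{J}_{-n}$: you verify it by a direct (and correct) computation with the full twist $\Delta^2=(\sigma_1\sigma_2)^3$ in $B_3$, whereas the paper deduces it from the achirality of the two-bridge link $\mathcal{L}$ together with the description of $\mathcal{J}_n$ as $\tfrac{1}{n}$-surgery on one component of $\mathcal{L}$; your version is self-contained at the level of braid words, the paper's reuses structure already set up for \autoref{Mthm: knot complement}, and both are valid.
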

\begin{proof}
Recall by \autoref{PROP: Knot exceptional coefficient} that if two hyperbolic knot complements $X_K$ and $X_{K'}$ are profinitely isomorphic, then either $\mathcal{E}_t(K)=\mathcal{E}_t(K')$ or $\mathcal{E}_t(K)=-\mathcal{E}_t(K')$. 

According to \autoref{PROP: coefficient}, $0\in \mathcal{E}_t(\mathcal{K}_m)$ while $0\notin \mathcal{E}_t(\mathcal{J}_n)$. Thus, this implies that $\widehat{X_{\mathcal{K}_m}}\not \cong \widehat{X_{\mathcal{J}_n}}$, which proves \ref{kj.1}.

Similarly, if $\widehat{\pi_1 X_{\mathcal{J}_m}}\cong \widehat{\pi_1 X_{\mathcal{J}_n}}$, then $\{-2-9m,2-9m\}=\mathcal{E}_t(\mathcal{J}_m)=\pm \mathcal{E}_t(\mathcal{J}_n)=\pm \{-2-9n,2-9n\}$ by \autoref{PROP: coefficient}, which implies $m=\pm n$. In addition, $\mathcal{J}_n$ can be viewed as a $\frac{1}{n}$-surgery on the   link $\mathcal{L}$, while $\mathcal{L}$ is achiral, ie isotopic to its mirror image. Thus, the $\frac{1}{n}$-surgery on $\mathcal{L}$ yields the mirror-image of the $-\frac{1}{n}$-surgery on $\mathcal{L}$. This implies that $\mathcal{J}_n$ and $\mathcal{J}_{-n}$ are the mirror images of each other. In particular, $X_{\mathcal{J}_n}$ is homeomorphic to $X_{\mathcal{J}_{-n}}$, so $\widehat{\pi_1 X_{\mathcal{J}_n}}\cong \widehat{\pi_1 X_{\mathcal{J}_{-n}}}$. This finishes the proof of \ref{kj.3}.

For \ref{kj.4}, note that $25n-\frac{13}{2}$ is the unique half-integral element in $\mathcal{E}_t(K(3,1,n,0))$. Thus, if $\widehat{\pi_1X_{K(3,1,m,0)}}\cong \widehat{\pi_1X_{K(3,1,n,0)}}$, the same reasoning implies $25m-\frac{13}{2}=\pm(25n-\frac{13}{2})$. Since $m$ and $n$ are integers, the only possibility is $m=n$.

\ref{kj.2} is difficult to detect from the toroidal slopes; instead, we can detect these knots through the profinite rigidity of Alexander polynomials. Ueki \cite[Theorem 1.1]{Ueki} shows that if two knot complements $X_{K}$ and $X_{K'}$ are profinitely isomorphic, then their Alexander polynomials $\Delta_{K}(t)$ and $\Delta_{K'}(t)$ coincide up to a multiplicative unit in $\Z[t^\pm]$. According to \cite[Chapter 7.B, Exercise 7]{Rolfsen}, the Alexander polynomial of the twist knot $\mathcal{K}_m$ is $\Delta_{\mathcal{K}_m}(t)\doteq mt^2+(1-2m)t+m$. It is easy to verify that for any $m\neq n\in \Z$, $\Delta_{\mathcal{K}_m}(t)$ and $\Delta_{\mathcal{K}_n}(t)$ are not equivalent up to multiplicative unit, and therefore, $\widehat{\pi_1 X_{\mathcal{K}_m}}\not\cong \widehat{\pi_1 X_{\mathcal{K}_n}}$.
\end{proof}

\begin{theorem}\label{Mthm: knot complement}
The knot complements $X_{\mathcal{K}_n}$ ($n\in \Z\setminus \{0,1\}$), $X_{\mathcal{J}_n}$ ($n\in \Z\setminus \{-1,0,1\}$), and $X_{K(3,1,-n,0)}$ ($n\in \Z\setminus\{0\}$) are profinitely rigid among all compact orientable 3-manifolds. 
\end{theorem}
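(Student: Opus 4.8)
The plan is to fix one of the listed knots $K$ and an arbitrary compact orientable 3-manifold $N$ with $\widehat{\pi_1 X_K}\cong\widehat{\pi_1 N}$, and to identify $N$ by forcing it to be a hyperbolic knot complement whose toroidal slopes pin it to a known family. First I would apply \autoref{PROP: Detect knot complement}: since each $\mathcal{K}_n$ ($n\in\Z\setminus\{0,1\}$), $\mathcal{J}_n$ ($n\in\Z\setminus\{-1,0,1\}$), and $K(3,1,-n,0)$ ($n\in\Z\setminus\{0\}$) is a hyperbolic knot in $S^3$ as recalled at the start of the subsection, the complement $X_K$ is a hyperbolic knot complement, so $N\cong X_{K'}$ for some hyperbolic knot $K'\subseteq S^3$. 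It therefore suffices to identify $K'$ up to equivalence-or-mirror, since a knot and its mirror image have homeomorphic complements, whence $X_{K'}\cong X_K$ and $N\cong X_K$.

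Next I would transport the profinite data to the cusp. By \autoref{PROP: Knot exceptional coefficient} there is $\sigma\in\{1,-1\}$ with $\mathcal{E}_t(K')=\sigma\cdot\mathcal{E}_t(K)$ in the $\Qinf$-parametrization. Because multiplication by $\sigma$ is the slope bijection induced by a group isomorphism of the peripheral tori, \autoref{LEM: preserve} (equivalently the determinant formula of \autoref{ex}, since $|\sigma|=1$) gives $\Delta(\sigma r_1,\sigma r_2)=\Delta(r_1,r_2)$. Hence any geometric intersection number realized by a pair of toroidal slopes of $K$ is also realized by a pair of toroidal slopes of $K'$. Reading off \autoref{PROP: coefficient}, I obtain toroidal slopes $c_1,c_2\in\mathcal{E}_t(K')$ with $\Delta(c_1,c_2)=8$ when $K=\mathcal{K}_{-1}$, with $\Delta=4$ when $K=\mathcal{K}_n$ ($n\neq-1$) or $K=\mathcal{J}_n$, and with $\Delta=5$ when $K=K(3,1,-n,0)$.

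Then I would feed this into the Gordon--Wu classification \autoref{THM: Knot maximal}. For $\Delta=8$, part (1) identifies $K'$ outright as the figure-eight knot $\mathcal{K}_{-1}$, settling that sub-case with no further work. For $\Delta=4$, part (2) says $K'$ is equivalent or mirror to some $\mathcal{K}_m$ ($m\in\Z\setminus\{0,1\}$) or $\mathcal{J}_m$ ($m\in\Z\setminus\{-1,0,1\}$); for $\Delta=5$, part (3) says $K'$ is $K(3,1,m',0)$ or its mirror $K(2,-1,1-m',0)$ for some $m'\in\Z\setminus\{0\}$. In each case $X_{K'}$ is homeomorphic to the complement of one of the model knots, so its profinite completion agrees with that model's. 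I then pin the parameter down with \autoref{LEM: kj}: part (1) excludes the cross-family alternative (a $\mathcal{K}$ is never profinitely isomorphic to a $\mathcal{J}$), while parts (2)--(4) force the index to match that of $K$ (for $\mathcal{J}$ up to sign, which yields the same complement since $\mathcal{J}_m$ and $\mathcal{J}_{-m}$ are mirror images). This gives $X_{K'}\cong X_K$, hence $N\cong X_K$.

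Since the substantive inputs---the explicit toroidal-slope tables \autoref{PROP: coefficient} and the profinite separation \autoref{LEM: kj}---are already established, the remaining obstacle is purely organizational: ensuring the case split is exhaustive and that every family output by Gordon--Wu is matched by a distinguishing clause of \autoref{LEM: kj}. The one spot needing care is the figure-eight knot $\mathcal{K}_{-1}$, whose toroidal slopes $\{-4,0,4\}$ realize both $\Delta=8$ and $\Delta=4$; I would route it through part (1) of \autoref{THM: Knot maximal} for a clean identification, and note that \autoref{LEM: kj}(4) prevents any twist knot with $n\neq-1$ from being confused with it.
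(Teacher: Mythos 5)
Your proposal is correct and follows essentially the same route as the paper: detect that $N$ is a hyperbolic knot complement $X_{K'}$ via \autoref{PROP: Detect knot complement}, transfer the toroidal slopes with \autoref{PROP: Knot exceptional coefficient} and \autoref{LEM: preserve}, invoke the Gordon--Wu classification \autoref{THM: Knot maximal} according to whether the realized intersection number is $8$, $5$, or $4$, and then separate the resulting candidates with \autoref{LEM: kj}. Your special handling of $\mathcal{K}_{-1}$ through the $\Delta=8$ clause matches the paper's treatment as well.
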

\begin{proof}
For brevity of notation, let $K$ denote one of the above listed knots, ie $K$ belongs to $\mathcal{K}_n$ ($n\in \Z\setminus \{0,1\}$), ${\mathcal{J}_n}$ ($n\in \Z\setminus \{-1,0,1\}$), and ${K(3,1,-n,0)}$ ($n\in \Z\setminus\{0\}$), and we denote $M=X_K$. 

Suppose $N$ is a compact orientable 3-manifold such that $\widehat{\pi_1M}\cong \widehat{\pi_1N}$. Then according to \autoref{PROP: Detect knot complement}, $N\cong X_{K'}$ is also a hyperbolic knot complement. In addition, $\mathcal{E}_t(K')=\pm \mathcal{E}_t(K)$ according to \autoref{PROP: Knot exceptional coefficient}. According to \autoref{ex}, the $\pm$-sign does not affect the geometric intersection number $\Delta(\cdot,\cdot)$, so the possible geometric intersection numbers of toroidal slopes of $K$ and $K'$ coincide.

When $K=\mathcal{K}_{-1}$, \autoref{PROP: coefficient} \ref{ek.1} implies that there exist $c_1,c_2\in \mathcal{E}_t(K)$ such that $\Delta(c_1,c_2)=8$. By the above reasoning, there exist $c_1',c_2'\in \mathcal{E}_t(K')$ such that $\Delta(c_1',c_2')=8$, and $K'$ is equivalent to the figure-eight knot $K_{-1}$ according to \autoref{THM: Knot maximal} \ref{km.3}. Thus, $N\cong M$ is the figure-eight knot complement.

When $K=K(3,1,n,0)$, \autoref{PROP: coefficient} \ref{ek.4} implies that there exist $c_1,c_2\in \mathcal{E}_t(K)$ such that $\Delta(c_1,c_2)=5$. The above reasoning shows that there exist $c_1',c_2'\in \mathcal{E}_t(K')$ such that $\Delta(c_1',c_2')=5$. According to \autoref{THM: Knot maximal} \ref{km.2}, $K'$ is either equivalent or mirror image to a \EM knot $K(3,1,n',0)$ ($n'\in \Z\setminus \{0\}$). Thus, $\widehat{\pi_1M}\cong \widehat{\pi_1N}$ implies $\widehat{\pi_1X_{K(3,1,n,0)}}\cong \widehat{\pi_1X_{K(3,1,n',0)}}$. \autoref{LEM: kj} then guarantees $n=n'$, ie $N\cong X_{K'}\cong X_{K(3,1,n,0)}\cong M$.

When $K=\mathcal{K}_n$  or $K=\mathcal{J}_n$ for some $n\in \Z\setminus\{-1,0,1\}$, \autoref{PROP: coefficient} \ref{ek.2} and \ref{ek.3} imply that there exist $c_1,c_2\in \mathcal{E}_t(K)$ such that $\Delta(c_1,c_2)=4$. As a result, there exist $c_1',c_2'\in \mathcal{E}_t(K')$ such that $\Delta(c_1',c_2')=4$, and \autoref{THM: Knot maximal} \ref{km.1} implies that $K'$ is either  equivalent or mirror image to $\mathcal{K}_{n'}$ ($n'\in \Z\setminus \{0,1\}$) or $\mathcal{J}_{n'}$  ($n'\in \Z\setminus\{-1,0,1\}$). According to \ref{kj.1}, \ref{kj.3}, \ref{kj.2} of \autoref{LEM: kj}, these candidates can be profinitely distinguished from each other, so $\widehat{\pi_1X_K}\cong \widehat{\pi_1X_{K'}}$ ensures that $K'$ can only be equivalent or mirror image to $K$. Thus, $N\cong X_{K'}\cong X_{K}= M$, finishing the proof.
\end{proof}

\subsection{The Berge manifold \iffalse$W_3^1W_7^{-3}$ knot complement in $D^2\times S^1$\fi}

The complement of the link $L_A$ shown in \autoref{Fig: LA} can also be viewed as the complement of the $W_3^1W_7^{-3}$ braid knot in $D^2\times S^1$, which is conventionally called the Berge manifold. Indeed, this manifold is hyperbolic according to \cite[Theorem 3.2]{Ber91}. In addition, we have the following characterization.

\begin{proposition}[{\cite[Corollary 2.9]{Ber91}}]\label{PROP: d2s1}
Suppose $N$ is a finite-volume hyperbolic 3-manifold with two cusps. There exist three different slopes $c_1,c_2,c_3$ on one boundary component of $N$ such that $N_{c_1}\cong N_{c_2}\cong N_{c_3}\cong D^2\times S^1$ if and only if $N$ is homeomorphic to $\cpl{L_A}$.
\end{proposition}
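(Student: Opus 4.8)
The plan is to recognize this as Berge's classification result and to reduce the filling condition to a statement about surgeries on a knot in a solid torus, which is the content of \cite{Ber91}. The key reformulation is the following. A filling $N_{c_1}\cong D^2\times S^1$ lets me regard $N$ as a knot exterior: writing $k\subseteq D^2\times S^1$ for the core of the solid torus glued in along $c_1$, one has $N\cong (D^2\times S^1)\setminus \mathring{N}(k)$, with the distinguished boundary component $T=\partial N(k)$ and with $c_1$ the meridian of $k$ (the meridian disk of the filling torus is a meridian disk of $N(k)$). Thus solid-torus fillings of $N$ along slopes on $T$ correspond exactly to surgeries on $k$ that again yield $D^2\times S^1$, with $c_1$ the trivial (meridional) one.

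Under this dictionary the two implications read as follows. For the ``only if'' direction, the remaining slopes $c_2,c_3$ become two \emph{distinct nonmeridional} surgery slopes on $k$, each producing $D^2\times S^1$; hence $k$ is a knot in the solid torus (hyperbolic, since $N$ is) admitting two nontrivial solid-torus surgeries. Berge's analysis bounds the number of such nontrivial surgeries and shows that the extremal case of two is realized, up to homeomorphism of the exterior, uniquely by the braid knot $W_3^1W_7^{-3}$, whose exterior is $\cpl{L_A}$; therefore $N\cong \cpl{L_A}$. For the ``if'' direction, I would simply exhibit the three slopes on $\cpl{L_A}$: presenting it as the exterior of $k=W_3^1W_7^{-3}$, the meridian of $k$ gives the trivial solid-torus filling, and \cite{Ber91} supplies two further nonmeridional surgery slopes returning $D^2\times S^1$, yielding the required pairwise-distinct $c_1,c_2,c_3$. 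Hyperbolicity, which the hypothesis asks for, is recorded in \cite[Theorem 3.2]{Ber91}.

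The main obstacle is the uniqueness hidden in the ``only if'' direction, and it is the reason I would cite rather than reprove. Establishing it from scratch requires the combinatorial Dehn-filling machinery: one first controls the pairwise geometric intersection numbers $\Delta(c_i,c_j)$, using that a solid-torus filling is boundary-reducible so that distance estimates for boundary-reducing fillings apply, and then runs an intersection-graph (Scharlemann-cycle) argument on the meridian disks of the filling tori to eliminate every configuration except the one producing $\cpl{L_A}$. This is exactly Berge's theorem, so the write-up would invoke \cite[Corollary 2.9]{Ber91} directly.
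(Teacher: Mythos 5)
Your proposal is correct and matches the paper's treatment: the paper states this proposition purely as a citation of \cite[Corollary 2.9]{Ber91} (with hyperbolicity of $\cpl{L_A}$ from \cite[Theorem 3.2]{Ber91}), which is exactly what you ultimately invoke. Your dictionary identifying $N$ with the exterior of a knot $k$ in $D^2\times S^1$ (the chosen slope $c_1$ becoming the meridian, the other two becoming nontrivial solid-torus surgeries, and irreducibility of the hyperbolic $N$ ruling out $k$ lying in a ball) is the right way to see that Berge's corollary is equivalent to the statement as phrased.
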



\begin{theorem}\label{THM: Berge}
The Berge manifold $\cpl{L_A}$ is profinitely rigid among all compact orientable 3-manifolds.
\end{theorem}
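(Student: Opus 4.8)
The plan is to run the same profinite-detection scheme used in \autoref{THM: W(-5/2)} and \autoref{THM: link complement}, now keyed to the solid-torus filling characterisation of \autoref{PROP: d2s1}. Write $M=\cpl{L_A}$ and suppose $N$ is a compact, orientable 3-manifold with $\widehat{\pi_1M}\cong\widehat{\pi_1N}$. First I would invoke \autoref{PROP: Detect hyperbolic} to conclude that $N$ is itself a finite-volume hyperbolic 3-manifold with exactly two cusps, so that the converse direction of \autoref{PROP: d2s1} is applicable to $N$. Then \hyperref[PROP: Dehn filling]{Theorem A} supplies a cusp correspondence $\partial_iM\leftrightarrow\partial_iN$ together with boundary isomorphisms $\psi_i:\pi_1\partial_iM\ttt\pi_1\partial_iN$ such that $\widehat{\pi_1M_c}\cong\widehat{\pi_1N_{\pisi_i(c)}}$ for every slope $c$ on $\partial_iM$.

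The core step is to show that the property ``$M_c\cong D^2\times S^1$'' is detected by the profinite completion, i.e.\ that $\widehat{\pi_1M_c}\cong\widehat{\pi_1N_{\pisi(c)}}$ together with $M_c\cong D^2\times S^1$ forces $N_{\pisi(c)}\cong D^2\times S^1$. Since $\pi_1(D^2\times S^1)\cong\Z$, we have $\widehat{\pi_1N_{\pisi(c)}}\cong\widehat{\Z}$, so every finite quotient of $\pi_1N_{\pisi(c)}$ is cyclic, hence abelian. Consequently the commutator subgroup lies in the kernel of every homomorphism to a finite group; by the residual finiteness of $\pi_1N_{\pisi(c)}$ (\autoref{PROP: RF}) the intersection of all such kernels is trivial, whence $\pi_1N_{\pisi(c)}$ is abelian and therefore $\pi_1N_{\pisi(c)}\cong\Z$. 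As $N_{\pisi(c)}$ is obtained by filling one of the two cusps of the hyperbolic manifold $N$, it is a compact orientable 3-manifold with a single torus boundary component; the recognition of the solid torus then applies (the boundary torus must be compressible, since $\Z^2$ cannot inject into $\pi_1\cong\Z$, and an irreducible compact orientable 3-manifold with compressible torus boundary is a solid torus, irreducibility being forced because $\Z$ is freely indecomposable). This gives $N_{\pisi(c)}\cong D^2\times S^1$. This is exactly the solid-torus analogue of the $S^3$-recognition used in \autoref{PROP: Detect knot complement}.

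With this detection in hand I would finish as follows. By \autoref{PROP: d2s1} there exist three distinct slopes $c_1,c_2,c_3$ on one boundary component $\partial_1M$ with $M_{c_1}\cong M_{c_2}\cong M_{c_3}\cong D^2\times S^1$. The images $\pisi_1(c_1),\pisi_1(c_2),\pisi_1(c_3)$ all lie on the single boundary component $\partial_1N$, and they remain pairwise distinct since $\psi_1$ is an isomorphism, so $\pisi_1$ is a bijection of slopes preserving the geometric intersection number $\Delta(\cdot,\cdot)$ by \autoref{LEM: preserve}. By the core step each filling satisfies $N_{\pisi_1(c_j)}\cong D^2\times S^1$, so $N$ possesses three distinct slopes on one boundary component whose fillings are all solid tori. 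Applying \autoref{PROP: d2s1} to $N$ then yields $N\cong\cpl{L_A}=M$, proving profinite rigidity of the Berge manifold.

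The step I expect to be the main obstacle is the core detection step, and within it the topological passage from $\widehat{\pi_1N_{\pisi(c)}}\cong\widehat{\Z}$ to the genuine homeomorphism $N_{\pisi(c)}\cong D^2\times S^1$: the group-theoretic reduction to $\pi_1\cong\Z$ is the clean commutator argument above, but one must argue the solid-torus recognition carefully rather than merely identifying fundamental groups.
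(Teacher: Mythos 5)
Your proposal is correct and follows essentially the same route as the paper: detect hyperbolicity and the cusp count via \autoref{PROP: Detect hyperbolic}, transfer the three solid-torus fillings of \autoref{PROP: d2s1} through \hyperref[PROP: Dehn filling]{Theorem A}, recognise $D^2\times S^1$ from $\widehat{\pi_1}\cong\widehat{\Z}$ using residual finiteness, and apply Berge's characterisation to $N$. The only difference is that you spell out the profinite rigidity of $\Z$ and the solid-torus recognition in detail where the paper simply cites or asserts them; both of your expanded arguments are sound.
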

\begin{proof}
Suppose $N$ is a compact orientable 3-manifold such that $\widehat{\pi_1N}\cong \widehat{\pi_1 (S^3\setminus L_A)}$. Since $\cpl{L_A}$ is a finite-volume hyperbolic 3-manifold according to \cite[Theorem 3.2]{Ber91}, \autoref{PROP: Detect hyperbolic}  implies that $N$ is also a finite-volume hyperbolic 3-manifold with two cusps. 

\autoref{PROP: d2s1} shows that there are three different slopes on one boundary component of $\cpl{L_A}$ such that the corresponding Dehn filling is homeomorphic to $D^2\times S^1$. According to \hyperref[PROP: Dehn filling]{Theorem A}, there exist three different slopes $c_1,c_2,c_3$ on one boundary component $\partial_0N$ of $N$ such that $\widehat{\pi_1N_{c_1}}\cong \widehat{\pi_1N_{c_2}}\cong \widehat{\pi_1N_{c_3}}\cong \widehat{\pi_1(D^2\times S^1)}\cong \widehat{\Z}$. Note that the group $\Z$ is profinitely rigid among all finitely generated, residually finite groups. So according to \autoref{PROP: RF}, we derive that $\pi_1N_{c_1}\cong \pi_1N_{c_2}\cong \pi_1N_{c_3}\cong \Z$. For each $i=1,2,3$, $N_{c_i}$ is a compact orientable 3-manifold whose boundary is a torus, and the only such manifold with infinite cyclic fundamental group is $D^2\times S^1$. Thus, $N$ also admits three different Dehn fillings on $\partial_0N$ yielding $D^2\times S^1$, and \autoref{PROP: d2s1} implies that $N\cong \cpl{L_A}$.
\end{proof}

\subsection{Detecting Klein bottles and Dehn surgeries on $\mathcal{L}$ and $L_B$}
We first clarify the notations. Recall that $L_B$ is the link shown in \autoref{Fig: LB}. For $u,r,s\in \Qinf$, let $L_B(u)$ denote $u$-surgery on the $L_B^{(1)}$ component of $L_B$, while the other two components of $L_B$ yield two boundary components of the 3-manifold $L_B(u)$; let $L_B(u,r)$ denote $u$-surgery on the $L_B^{(1)}$ component and $r$-surgery on the $L_B^{(2)}$ component, which is a 3-manifold with one torus boundary; let $L_B(u,r,s)$ denote $u$-surgery on the $L_B^{(1)}$ component, $r$-surgery on the $L_B^{(2)}$ component, and $s$-surgery on the $L_B^{(3)}$ component, which is a closed 3-manifold. 

Note that there is an orientation-preserving homeomorphism of $(S^3,L_B)$ which interchanges the $L_B^{(2)}$ and $L_B^{(3)}$ components. Thus,
\begin{equation}\label{EQU: LB1}
L_B(u,r,s)\cong L_B(u,s,r).
\end{equation}

Recall that $\mathcal{L}$ is the link shown in \autoref{Fig: L}. Similarly, let $\mathcal{L}(r)$ denote $r$-surgery on one component of $\mathcal{L}$, and $\mathcal{L}(r,s)$ denote $r$-surgery on one component and $s$-surgery on the other component of $\mathcal{L}$. Note that $\mathcal{L}$ is achiral, ie equivalent to its mirror image. Thus,
\begin{equation}\label{EQU: L1}
\mathcal{L}(r)\cong \mathcal{L}(-r)\quad\text{and}\quad\mathcal{L}(r,s)\cong \mathcal{L}(-r,-s).
\end{equation}

Some of the Dehn surgeries on $\mathcal{L}$ and $L_B$ can be characterized by their exceptional Dehn fillings, as shown in  the following proposition.

\begin{proposition}[{\cite[Theorem 1.1]{Lee06}}]\label{PROP: Klein Detects}
Let $N$ be a cusped finite-volume hyperbolic 3-manifold, and  let $\partial_0N$ be a boundary component of $N$. 
Suppose there exist $c_1,c_2\in \slope(\partial_0N)$ with $\Delta(c_1,c_2)=4$, such that both $N_{c_1}$ and $N_{c_2}$ contain an embedded Klein bottle.
\begin{enumerate}[label=(\arabic*), leftmargin=*]
\item If $N$ has exactly one cusp, then $N$ is either homeomorphic to $\mathcal{L}(n-\frac{1}{2})$ for some $n\in \Z$, or homeomorphic to $L_B(0,r)$ for some $r\in \mathbb{Q}\setminus\{0,4\}$. 
\item If $N$ has more than one cusp, then $N$ is homeomorphic to $L_B(0)$.
\end{enumerate}
\end{proposition}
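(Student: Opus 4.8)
The plan is to reconstruct the classification by the combinatorial intersection-graph method of Gordon--Litherland--Wu, adapted to Klein-bottle fillings as in Lee's work. Since $N_{c_1}$ and $N_{c_2}$ each contain an embedded Klein bottle, I would first replace each Klein bottle by its twisted $I$-bundle neighborhood $\SK$ together with the torus $\partial\SK$. As in the proof of \autoref{LEM: Klein}, when the filling is aspherical this boundary torus is incompressible, so $N_{c_i}$ is either Seifert fibered with a base orbifold carrying a non-orientable piece or two index-$2$ cone points, or else $\partial\SK$ is an essential torus. In every case the Klein bottle is carried by a once- or twice-punctured Klein bottle $P_i$ properly embedded in $N$, with $\partial P_i$ a family of curves of slope $c_i$ on $\partial_0N$ (after minimizing the intersection with the filling solid torus $V_i$). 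The Euler characteristic of a punctured Klein bottle is $-(\text{number of punctures})$, so the complexity of each $P_i$ is low.

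Next I would isotope $P_1$ and $P_2$ into minimal position and form the two labelled fat-vertex graphs $G_i\subset \hat{K}_i$ recording the arcs of $P_1\cap P_2$: the vertices of $G_i$ are the meridian disks $P_i\cap V_i$, and the edges are the intersection arcs. The hypothesis $\Delta(c_1,c_2)=4$ is precisely what bounds the number of edge-endpoint labels around each vertex, so each vertex of $G_i$ meets at most $4$ families of edges (counted with multiplicity). The combinatorial core is then to run the usual machinery---the parity rule, the classification of Scharlemann cycles and extended Scharlemann cycles on a punctured Klein bottle, and Euler-characteristic counting on $\hat{K}_i$---to show that $P_1$ and $P_2$ each have very few punctures and that $G_1,G_2$ realize one of a short list of combinatorial types. \emph{This is the main obstacle}: as in every maximal-distance result for exceptional fillings, the bookkeeping over Scharlemann cycles and the simultaneous constraints on the two graphs is long and delicate, and the non-orientability of the Klein bottle (so that labels need not pair up as on a torus) introduces extra cases beyond the purely toroidal theory.

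Finally, from the admissible graph types I would reconstruct $N$ together with the unordered pair of slopes $\{c_1,c_2\}$ up to homeomorphism, obtaining a finite list of candidate manifolds, and then separate by the number of cusps. When $N$ has more than one cusp the reconstruction should force $N\cong L_B(0)$, and when $N$ has a single cusp it should force $N$ into the one-parameter families $\mathcal{L}(n-\tfrac12)$ and $L_B(0,r)$. I would confirm that each candidate genuinely realizes the hypothesis by exhibiting its two Klein-bottle fillings explicitly---computing the Seifert or graph-manifold structure of the relevant fillings of $\mathcal{L}$ and $L_B$ as in \cite{MP02}---and verify that the excluded values $r\in\{0,4\}$ are exactly those for which the corresponding filling fails to be hyperbolic or fails the Klein-bottle condition, which pins down the parameter ranges stated in the proposition.
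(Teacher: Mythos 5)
The paper does not prove this proposition at all: it is imported verbatim as \cite[Theorem 1.1]{Lee06}, and the only supplementary remarks the author adds afterwards concern the hyperbolicity of the manifolds appearing in the conclusion (via \cite{MP02} and \cite[Propositions 2.6, 2.7]{Lee06}). So what you have written is an attempt to reprove Lee's classification theorem from scratch. Your general strategy --- pass to punctured Klein bottles $P_1,P_2$ properly embedded in $N$ with boundary slopes $c_1,c_2$, form the pair of labelled intersection graphs, and run the Gordon--Wu machinery of parity rules and Scharlemann cycles --- is indeed the method Lee uses, so the roadmap is pointed in the right direction.

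However, as a proof the proposal has a genuine gap, and you have located it yourself: the entire content of the theorem lies in the combinatorial analysis that you defer with ``this is the main obstacle.'' The step that converts the hypothesis $\Delta(c_1,c_2)=4$ into the specific finite list $\{\mathcal{L}(n-\tfrac12)\}\cup\{L_B(0,r)\}\cup\{L_B(0)\}$ is precisely the Scharlemann-cycle bookkeeping you skip, and without it nothing is established. In addition, one of your preliminary claims is unjustified and in fact backwards: you assert that each $P_i$ is a once- or twice-punctured Klein bottle because ``the Euler characteristic of a punctured Klein bottle is $-(\text{number of punctures})$, so the complexity of each $P_i$ is low.'' Minimizing the geometric intersection of the Klein bottle with the filling solid torus gives no a priori bound on the number of boundary components of $P_i$; bounding that number is one of the \emph{outputs} of the graph-theoretic argument (via Euler-characteristic counting on the two graphs simultaneously), not an input. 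Similarly, the statement that each vertex of $G_i$ ``meets at most $4$ families of edges'' conflates $\Delta$ with the vertex valence, which is $\Delta$ times the number of boundary components of the other surface. Since the paper treats this result as a black box, the honest options are either to cite \cite[Theorem 1.1]{Lee06} as the author does, or to actually carry out the case analysis you have outlined --- which is a full paper's worth of work, not a step in this one.
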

Since $\infty, -2,-1,0,1,2$ are all the exceptional slopes on one component of $\mathcal{L}$ according to \cite{MP02}, $\mathcal{L}(n-\frac{1}{2})$ is hyperbolic for all $n\in \Z$. Furthermore, \cite[Proposition 2.6]{Lee06} showed that $L_B(0)$ is hyperbolic, and \cite[Proposition 2.7]{Lee06} showed that $L_B(0,r)$ is hyperbolic for any $r\in \mathbb{Q}\setminus\{0,4\}$.

In the following lemma, we list in details the boundary slopes of the manifolds in  \autoref{PROP: Klein Detects} yielding Klein bottles that achieves the geometric intersection number $4$, and specify the corresponding Dehn filled 3-manifolds. 

 Recall the notations that $\mathcal{N}$ is the orientable $I$-bundle over Klein bottle, and $M_1$ is the complement of the Whitehead link $\mathcal{W}$ in $S^3$. In the following context, we denote $M_{3_1}$ as the complement of the trefoil in $S^3$, ie $M_{3_1}$ is a Seifert fibered space $(D^2;(2,1),(3,1))$.

\begin{lemma}\label{LEM: Aspherical slope}
\begin{enumerate}[label=(\arabic*), leftmargin=*]
\item\label{5.17-1} $L_B(0,0)\cong L_B(0,4)$ can be decomposed as $\mathcal{N}\cup_{T^2} M_1$, ie gluing $\mathcal{N}$ and $M_1$ along a torus boundary.
\item\label{5.17-2} For each $r\in \mathbb{Q}$, $L_B(0,r,0)$ and $L_B(0,r,4)$ contain embedded Klein bottles. They can be decomposed as $\mathcal{N}\cup_{T^2} M'$, where $\partial M'=T^2$ and $M'$ is either a hyperbolic manifold, a Seifert fibered space with hyperbolic base orbifold, or a graph manifold with non-trivial JSJ-decomposition.
\item\label{5.17-3} $\mathcal{L}(\frac{3}{2},-2)=(\mathbb{RP}^2;(2,1),(3,1),-1)$ and $\mathcal{L}(n-\frac{1}{2},-2)=\mathcal{N}\cup_{T^2}M_{3_1}$ for $n\in \Z\setminus\{2\}$.
\item\label{5.17-4} $\mathcal{L}(-\frac{3}{2},2)=(\mathbb{RP}^2;(2,1),(3,1),-1)$ and $\mathcal{L}(n-\frac{1}{2},2)=\mathcal{N}\cup_{T^2}M_{3_1}$ for $n\in \Z\setminus\{-1\}$.
\end{enumerate}

In particular, all the above listed Dehn-filled manifolds are aspherical, and contain an embedded Klein bottle.
\end{lemma}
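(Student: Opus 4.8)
The plan is to treat all four statements as explicit Dehn-surgery computations, using Rolfsen twists and handle slides to convert the surgery presentations of $L_B$ and $\mathcal{L}$ into recognizable pieces, and then to locate the essential torus that splits off a copy of $\mathcal{N}$ in each case. The guiding principle is that the slopes appearing here are exactly the Klein-bottle slopes of \autoref{PROP: Klein Detects}: on $L_B^{(2)}$ (equivalently $L_B^{(3)}$) the pair $\{0,4\}$ has $\Delta(0,4)=4$, and on $\mathcal{L}^{(2)}$ the pair $\{-2,2\}$ has $\Delta(-2,2)=4$, so each such filling is expected to create an embedded Klein bottle sitting inside an $\mathcal{N}$-summand, with the rest of the manifold glued along $\partial\mathcal{N}$.

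For (1) I would work directly from the diagram of $L_B$: after the $0$-surgery on $L_B^{(1)}$, simplifying exhibits $L_B^{(2)},L_B^{(3)}$ in an annular configuration in which both the $0$- and $4$-fillings of $L_B^{(2)}$ cap off the same annulus into the twisted $I$-bundle $\mathcal{N}$ (the two slopes corresponding to the two Seifert fibrations of $\mathcal{N}$), leaving the Whitehead link complement $M_1$ glued along $\partial\mathcal{N}$ with $L_B^{(3)}$ as its free cusp; this simultaneously yields $L_B(0,0)\cong L_B(0,4)\cong\mathcal{N}\cup_{T^2}M_1$. Statement (2) then reduces to (1): by the symmetry $L_B(u,r,s)\cong L_B(u,s,r)$ of \eqref{EQU: LB1}, we have $L_B(0,r,0)\cong L_B(0,0,r)$ and $L_B(0,r,4)\cong L_B(0,4,r)$, and filling the free cusp of $\mathcal{N}\cup_{T^2}M_1$ by $r$ produces $\mathcal{N}\cup_{T^2}M_1(r)$, so that $M'\cong M_1(r)=\mathcal{W}(r)$. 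The trichotomy for $M'$ is then precisely the classification of the one-cusped Dehn fillings of the Whitehead link complement: $\mathcal{W}(r)$ is hyperbolic for $r\notin\{\infty,0,1,2,3,4\}$ and, for the remaining integer slopes, is a Seifert fibered space with hyperbolic base orbifold or a graph manifold with nontrivial JSJ-decomposition (cf. \cite{MP02}); in all cases $\partial M'$ is incompressible, so $\partial\mathcal{N}$ is essential.

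For (3) and (4) I would carry out the analogous computation on $\mathcal{L}$. Filling $\mathcal{L}^{(2)}$ along $-2$ (resp. $2$) and simplifying should present the complement of $\mathcal{L}^{(1)}$ as a graph manifold with an $\mathcal{N}$-piece glued to the trefoil complement $M_{3_1}=(D^2;(2,1),(3,1))$, the half-integral slopes $n-\tfrac12$ on $\mathcal{L}^{(1)}$ being exactly those compatible with this Klein-bottle structure, giving $\mathcal{N}\cup_{T^2}M_{3_1}$. The single distinguished slope ($\tfrac32$ when $s=-2$, and $-\tfrac32$ when $s=2$) is the value at which the regular fiber of $M_{3_1}$ becomes parallel on $\partial\mathcal{N}$ to one of the two fibers of $\mathcal{N}$; there the two Seifert pieces amalgamate into a single Seifert fibration, and a direct computation of the resulting Seifert invariants identifies it as $(\mathbb{RP}^2;(2,1),(3,1),-1)$. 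This fiber-parallel versus non-fiber-parallel dichotomy is verified exactly as in the proof of \hyperref[pfCORC]{Theorem E}, via \cite[Proposition 1.9]{AFW15}.

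Finally, the closing assertions follow uniformly. Each listed manifold is either $(\mathbb{RP}^2;(2,1),(3,1),-1)$, whose base orbifold has negative Euler characteristic (hence $\mathbb{H}^2\times\mathbb{R}$ or $\widetilde{\mathrm{SL}(2,\mathbb{R})}$ geometry, so aspherical), or a union $\mathcal{N}\cup_{T^2}X$ along an incompressible torus with $X$ aspherical and boundary-incompressible; such a union is irreducible with infinite fundamental group and therefore aspherical. An embedded Klein bottle is always present, since $\mathcal{N}$ is a regular neighborhood of one, and in the single-Seifert case because the base orbifold $\mathbb{RP}^2$ is non-orientable (\autoref{LEM: Klein}(\ref{4.6-2})). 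I expect the main obstacle to be the explicit Kirby-calculus identification of the pieces — in particular confirming $M'\cong\mathcal{W}(r)$ in (2) and pinning down precisely which half-integral slope triggers the Seifert amalgamation in (3)–(4) through the regular-fiber-slope computation.
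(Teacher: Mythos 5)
Your overall architecture matches the paper's: part (2) is reduced to part (1) via the symmetry (\ref{EQU: LB1}), the splitting piece $M'$ is recognized as a one-cusp Dehn filling of the Whitehead link complement $M_1$ and handled by the Martelli--Petronio classification, and asphericity plus the embedded Klein bottle are read off from the resulting JSJ structure exactly as in \autoref{LEM: Klein}~(\ref{4.6-2}). Where you diverge is in how the base identifications are obtained: for (1) the paper simply cites \cite[Lemma 2.3 (2)]{Lee06} rather than re-deriving the $\mathcal{N}\cup_{T^2}M_1$ splitting by Kirby calculus, and for (3)--(4) it avoids your fiber-parallel analysis entirely by observing that $\mathcal{L}$ is the $-\frac{1}{2}$-surgery on one component of the three-chain link, so that $\mathcal{L}(r,s)$ is a $(-\frac12,r-2,s-2)$-surgery on the magic manifold and everything can be looked up in \cite[Table 3]{MP02}; (4) then follows from (3) by achirality (\ref{EQU: L1}). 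Your route is more self-contained but leaves the hardest steps (the explicit diagrammatic identifications) unexecuted, and you acknowledge as much.

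The one substantive gap is the assertion $M'\cong M_1(r)=\mathcal{W}(r)$ in (2) with the \emph{same} coefficient $r$. The homeomorphism $L_B(0,0)\cong \mathcal{N}\cup_{T^2}M_1$ gives no a priori matching between the meridian-longitude parametrization of $\partial N(L_B^{(3)})$ in $S^3$ and the standard $\Qinf$-parametrization of the free cusp of $M_1$; these could differ by an element of $\mathrm{GL}(2,\Z)$, in which case excluding $r=\infty$ would not obviously exclude the unique filling of $M_1$ yielding $D^2\times S^1$. The paper sidesteps this: it only needs to know that exactly one slope $r_0$ satisfies $L_B(0,0,r_0)\cong\mathcal{N}\cup_{T^2}(D^2\times S^1)$ (since by \cite[Table A.1]{MP02} exactly one filling of $M_1$ gives a solid torus), and it pins down $r_0=\infty$ by noting that $L_B(0,0,\infty)\cong L_B(0,\infty,0)$ is a filling of $L_B(0,\infty)\cong\mathcal{N}$ (by \cite[Lemma 2.3 (1)]{Lee06}), hence is of the form $\mathcal{N}\cup_{T^2}(D^2\times S^1)$. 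You should either supply this argument or verify the slope correspondence explicitly; without one of the two, the claim that $L_B(0,0,r)$ is aspherical for all $r\in\mathbb{Q}$ is not justified.
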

\begin{proof}
\ref{5.17-1} is proved in \cite[Lemma 2.3 (2)]{Lee06}.

\ref{5.17-2} According to (\ref{EQU: LB1}), $L_B(0,r,0)\cong L_B(0,0,r)$ and $L_B(0,r,4)\cong L_B(0,4,r)$. Thus, they can be viewed as Dehn fillings on $L_B(0,0)$ and $L_B(0,4)$. According to \ref{5.17-1}, $L_B(0,0,r)$ and $L_B(0,4,r)$ can be decomposed by an embedded torus into $\mathcal{N}\cup M'$, where $M'$ is a Dehn filling of $M_1$ on one of its boundary components $\partial_1M_1$. The Dehn fillings of $M_1$ on  $\partial_1M_1$ have been classified by \cite[Table A.1]{MP02}. In fact, there is exactly one slope yielding the solid torus $D^2\times S^1$, while the other slopes yield either a hyperbolic manifold, a Seifert fibered space with hyperbolic base orbifold, or a graph manifold with non-trivial JSJ-decomposition. In other words, there is only one element $r_0\in \Qinf$ such that $L_B(0,0,r_0)\cong \mathcal{N}\cup_{T^2} (D^2\times S^1)$, which is indeed non-aspherical; while for any $r\neq r_0\in \Qinf$, $L_B(0,0,r)\cong \mathcal{N}\cup_{T^2}M'$ is aspherical. We claim that $r_0=\infty$. In fact, by (\ref{EQU: LB1}), $L_B(0,0,\infty)\cong L_B(0,\infty,0)$ is a Dehn filling on $L_B(0,\infty)$. \cite[Lemma 2.3 (1)]{Lee06} showed that $L_B(0,\infty)\cong \mathcal{N}$, and thus $L_B(0,\infty,0)\cong \mathcal{N}\cup_{T^2} (D^2\times S^1)$ is non-aspherical. Similarly, there is only one element $r_4\in \Qinf$ such that $L_B(0,4,r_4)\cong \mathcal{N}\cup_{T^2} (D^2\times S^1)$ is non-aspherical, while for any $r\neq r_4\in \Qinf$, $L_B(0,4,r_4)\cong \mathcal{N}\cup_{T^2}M'$ is aspherical. One can similarly show that $r_4=\infty$ by \cite[Lemma 2.3 (1)]{Lee06}. Thus, this proves \ref{5.17-2}. 

\ref{5.17-3} follows from \cite[Table 3]{MP02}. In fact, the link $\mathcal{L}$ is obtained by a $-\frac{1}{2}$-surgery on one component of the three-chain link (\autoref{Fig: 3chain}) in $S^3$. Thus, for any $r,s\in \Qinf$, $\mathcal{L}(r,s)$ can be viewed as $(-\frac{1}{2},r-2,s-2)$-surgery on three components of the three-chain link, for which \cite{MP02} presents a complete classification of exceptional Dehn surgeries.

\begin{figure}[h]
\centering
\includegraphics[width=3.5cm]{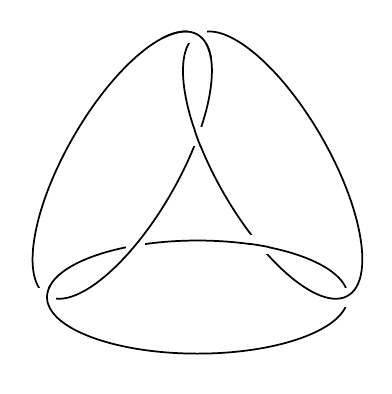}
\caption{The three-chain}
\label{Fig: 3chain}
\end{figure}

\ref{5.17-4} follows directly from \ref{5.17-3} together with the achirality of $\mathcal{L}$ (\ref{EQU: L1}).

Note that the above listed Dehn-filled manifolds are either irreducible 3-manifolds admitting non-trivial JSJ-decomposition, or Seifert fibered spaces with hyperbolic base orbifolds. Thus, these  Dehn-filled manifolds are all aspherical. They obviously contain embedded Klein bottles, as they satisfy \autoref{LEM: Klein} \ref{4.6-2}.
\end{proof}

Recall that in the definition of the subset $\mathcal{E}_k(M,\partial_{i_0}M)$ of exceptional slopes, we require the Dehn-filled manifold which contains Klein bottles to be aspherical. Therefore, by \autoref{LEM: Aspherical slope}, we can obtain the following corollary.
\begin{corollary}\label{COR: Klein slopes}
We follow the $\Qinf$-parametrization of boundary slopes of the link complements $\cpl{\mathcal{L}}$ and $\cpl{L_B}$ as  the parametrization for the boundary slopes of $L_B(0)$, $L_B(0,r)$ and $\mathcal{L}(n-\frac{1}{2})$. Then,
\begin{enumerate}[label=(\arabic*), leftmargin=*]
\item\label{5.18-1} $\{0,4\}\subseteq \mathcal{E}_k\left(L_B(0),\partial N(L_B^{(2)})\right)$, and $\Delta(0,4)=4$;
\item\label{5.18-2}  $\{0,4\}\subseteq \mathcal{E}_k\left(L_B(0,r),\partial N(L_B^{(3)})\right)$ for each $r\in \mathbb{Q}\setminus\{0,4\}$, and $\Delta(0,4)=4$;
\item\label{5.18-3} $\{-2,2\}\subseteq \mathcal{E}_k\left(\mathcal{L}(n-\frac{1}{2}), \partial (\mathcal{L}(n-\frac{1}{2}))\right)$ for each $n\in \Z$, and $\Delta(-2,2)=4$.
\end{enumerate}
\end{corollary}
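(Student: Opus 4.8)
The plan is to obtain all three statements as immediate consequences of \autoref{LEM: Aspherical slope}, exploiting the fact that, by the definition of $\mathcal{E}_k$, a slope $c$ lies in $\mathcal{E}_k(M,\partial_{i_0}M)$ precisely when the filling $M_c$ is aspherical and contains an embedded Klein bottle --- and the concluding sentence of that lemma already certifies \emph{both} properties for every Dehn-filled manifold it produces. The only genuine task is to align the slope labels $0,4$ (respectively $-2,2$) used in this corollary with the second (respectively first) surgery coefficients appearing in \autoref{LEM: Aspherical slope}. This alignment is exactly what the $\Qinf$-parametrization convention in the corollary statement supplies, so I would open the argument by recording that the framing on the unfilled cusp of $L_B(0)$, $L_B(0,r)$, and $\mathcal{L}(n-\frac{1}{2})$ is the one inherited from the ambient link complements $\cpl{L_B}$ and $\cpl{\mathcal{L}}$.

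With the parametrization fixed, I would dispose of the three items in turn. For (\ref{5.18-1}), $L_B(0)$ has two cusps, from $L_B^{(2)}$ and $L_B^{(3)}$, and filling $\partial N(L_B^{(2)})$ along the slopes $0$ and $4$ yields $L_B(0,0)$ and $L_B(0,4)$; by \autoref{LEM: Aspherical slope}~(\ref{5.17-1}) these are homeomorphic to $\mathcal{N}\cup_{T^2}M_1$, hence aspherical and containing an embedded Klein bottle, so $0,4\in\mathcal{E}_k(L_B(0),\partial N(L_B^{(2)}))$. Item (\ref{5.18-2}) is identical in form: filling the single remaining cusp $\partial N(L_B^{(3)})$ of $L_B(0,r)$ along $0$ and $4$ gives $L_B(0,r,0)$ and $L_B(0,r,4)$, which \autoref{LEM: Aspherical slope}~(\ref{5.17-2}) shows are aspherical and contain embedded Klein bottles. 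For (\ref{5.18-3}), filling $\partial(\mathcal{L}(n-\frac{1}{2}))$ along $-2$ and $2$ produces $\mathcal{L}(n-\frac{1}{2},-2)$ and $\mathcal{L}(n-\frac{1}{2},2)$, and \autoref{LEM: Aspherical slope}~(\ref{5.17-3}) and~(\ref{5.17-4}) then apply uniformly, handling both the generic decomposition $\mathcal{N}\cup_{T^2}M_{3_1}$ and the two exceptional Seifert-fibered values $n=2$ and $n=-1$, all of which are aspherical and contain Klein bottles.

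It remains to read off the geometric intersection numbers from \autoref{ex}: writing each slope as a reduced fraction, $\Delta(0,4)=\abs{\det\left(\begin{smallmatrix}0&1\\4&1\end{smallmatrix}\right)}=4$ and $\Delta(-2,2)=\abs{\det\left(\begin{smallmatrix}-2&1\\2&1\end{smallmatrix}\right)}=4$. I do not expect a substantive obstacle here: the asphericity and the embedded Klein bottles are handed to us wholesale by \autoref{LEM: Aspherical slope}, and the intersection numbers are one-line determinant computations. The single point demanding care is the bookkeeping of which cusp is being filled and the inheritance of its $\Qinf$-framing from the surrounding link complement, which is why I would make that convention explicit at the very start so that the slope labels in this corollary provably coincide with the surgery coefficients used in the lemma.
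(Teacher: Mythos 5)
Your proposal is correct and follows essentially the same route as the paper: the paper's own proof is a one-sentence appeal to \autoref{LEM: Aspherical slope}, noting that the definition of $\mathcal{E}_k$ requires asphericity and that the lemma certifies both asphericity and the embedded Klein bottles for every filling in question. Your additional bookkeeping of the cusp framings and the determinant computations $\Delta(0,4)=\Delta(-2,2)=4$ is exactly the implicit content the paper leaves to the reader.
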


With the asphericity of the Dehn-filled manifolds verified, we can deduce profinite rigidity of these manifolds from \autoref{PROP: Exceptional} \ref{4.3-4}.

\begin{theorem}\label{THM: LB0}
$L_B(0)$ is profinitely rigid among all compact, orientable 3-manifolds.
\end{theorem}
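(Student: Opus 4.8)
The statement is essentially an application of the profinite detection of Klein-bottle fillings (\autoref{PROP: Exceptional}, item~\ref{4.3-4}) combined with Lee's geometric characterisation (\autoref{PROP: Klein Detects}), exactly in the style of \autoref{THM: W(-5/2)} and \autoref{THM: link complement}. All the heavy lifting has already been done in the preliminary lemmas; what remains is to verify the hypotheses and land in the correct branch of \autoref{PROP: Klein Detects}. First I would let $N$ be a compact orientable 3-manifold with $\widehat{\pi_1N}\cong \widehat{\pi_1L_B(0)}$. Since $L_B(0)$ is finite-volume hyperbolic with exactly two cusps by \cite[Proposition 2.6]{Lee06}, \autoref{PROP: Detect hyperbolic} forces $N$ to be finite-volume hyperbolic with exactly two cusps as well.

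Next I would extract the special Klein-bottle pattern on $L_B(0)$. By \autoref{COR: Klein slopes}~(\ref{5.18-1}), the boundary component $\partial N(L_B^{(2)})$ carries slopes $0,4$ with $0,4\in \mathcal{E}_k(L_B(0),\partial N(L_B^{(2)}))$ and $\Delta(0,4)=4$. Applying \autoref{PROP: Exceptional}~(\ref{4.3-4}), there is a boundary component $\partial_{j_0}N$ of $N$ and an isomorphism $\psi:\pi_1\partial N(L_B^{(2)})\ttt \pi_1\partial_{j_0}N$ of the peripheral subgroups so that $\pisi\bigl(\mathcal{E}_k(L_B(0),\partial N(L_B^{(2)}))\bigr)=\mathcal{E}_k(N,\partial_{j_0}N)$. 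Because $\pisi$ is induced by a group isomorphism between the peripheral tori, \autoref{LEM: preserve} guarantees that it preserves the geometric intersection number. Hence, setting $c_1:=\pisi(0)$ and $c_2:=\pisi(4)$, we obtain $c_1,c_2\in \mathcal{E}_k(N,\partial_{j_0}N)$ with $\Delta(c_1,c_2)=\Delta(0,4)=4$.

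Finally I would unwind the definition of $\mathcal{E}_k$: membership of $c_1,c_2$ in $\mathcal{E}_k(N,\partial_{j_0}N)$ means precisely that $N_{c_1}$ and $N_{c_2}$ are aspherical and each contain an embedded Klein bottle. Thus $N$ is a cusped finite-volume hyperbolic 3-manifold admitting two slopes $c_1,c_2$ on a single boundary component with $\Delta(c_1,c_2)=4$ whose fillings both contain an embedded Klein bottle, so the hypotheses of \autoref{PROP: Klein Detects} are met. Since $N$ has more than one cusp (it has exactly two), we fall into case~(2), which forces $N\cong L_B(0)$. This proves that $L_B(0)$ is profinitely rigid among all compact, orientable 3-manifolds.

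\textbf{Main obstacle.} The only genuine point requiring care is confirming that $N$ has two cusps, so that \autoref{PROP: Klein Detects} places us in the multi-cusp branch yielding $L_B(0)$, rather than in the one-cusp branch, which would admit the alternatives $\mathcal{L}(n-\tfrac12)$ and $L_B(0,r)$. This is handled cleanly by \autoref{PROP: Detect hyperbolic}, since the number of cusps is a profinite invariant. The deeper difficulty—that the profinite completion detects aspherical fillings containing embedded Klein bottles—is already encapsulated in \autoref{LEM: Profinite detect Klein} and hence in \autoref{PROP: Exceptional}~(\ref{4.3-4}), so no further work is needed here.
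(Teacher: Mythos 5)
Your proposal is correct and follows essentially the same route as the paper's own proof: detect that $N$ is hyperbolic with two cusps via \autoref{PROP: Detect hyperbolic}, transport the pair of Klein-bottle slopes with $\Delta=4$ from \autoref{COR: Klein slopes} using \autoref{PROP: Exceptional} and \autoref{LEM: preserve}, and conclude via the two-cusp branch of \autoref{PROP: Klein Detects}. No gaps.
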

\begin{proof}
Suppose $N$ is a compact orientable 3-manifold such that $\widehat{\pi_1L_B(0)}\cong \widehat{\pi_1N}$. Recall that $L_B(0)$ is finite-volume hyperbolic by \cite[Proposition 2.6]{Lee06}. Thus, $N$ is also finite-volume hyperbolic with two cusps according to \autoref{PROP: Detect hyperbolic}.

By \autoref{COR: Klein slopes} \ref{5.18-1}, there exist two slopes $c_1,c_2\in \mathcal{E}_k(L_B(0),\partial(L_B(0)))$ such that $\Delta(c_1,c_2)=4$. Then,   \autoref{PROP: Exceptional} together with \autoref{LEM: preserve} implies that there exist two slopes $c_1',c_2'\in \mathcal{E}_k(N,\partial_{j}N)$ such that $\Delta(c_1',c_2')=4$. Since $N$ has two cusps, \autoref{PROP: Klein Detects} implies that $N$ is homeomorphic to $L_B(0)$.
\end{proof}

To prove the profinite rigidity of $\mathcal{L}(n-\frac{1}{2})$ and $L_B(0,r)$, we have to firstly distinguish these manifolds through the profinite completion of their fundamental groups. Let us first make some preparations.

\begin{lemma}\label{LEM: Max}
Let $M$ be one of $\mathcal{L}(n-\frac{1}{2})$ ($n\in \Z$) or $L_B(0,r)$ ($r\in \mathbb{Q}\setminus\{0,4\}$). Then $\max\{\Delta(c_1,c_2)\mid c_1,c_2\in \mathcal{E}_k(M,\partial M)\}=4$.
\end{lemma}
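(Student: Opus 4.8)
The plan is to prove the two inequalities separately, with essentially all the work lying in the upper bound. The lower bound $\max\{\Delta(c_1,c_2)\mid c_1,c_2\in \mathcal{E}_k(M,\partial M)\}\ge 4$ is immediate from \autoref{COR: Klein slopes}: it exhibits a pair of slopes lying in $\mathcal{E}_k(M,\partial M)$ with geometric intersection number $4$, namely $\{-2,2\}$ when $M=\mathcal{L}(n-\frac{1}{2})$ and $\{0,4\}$ when $M=L_B(0,r)$, these fillings being aspherical and containing embedded Klein bottles by \autoref{LEM: Aspherical slope}.

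For the reverse inequality I would argue by contradiction. Note first that $M$ is a one-cusped finite-volume hyperbolic 3-manifold, as recorded in the discussion following \autoref{PROP: Klein Detects}. Suppose there were $c_1,c_2\in \mathcal{E}_k(M,\partial M)$ with $\Delta(c_1,c_2)\ge 5$. The distance between two slopes yielding aspherical fillings with embedded Klein bottles on a one-cusped hyperbolic 3-manifold is at most $5$ by Lee's classification \cite{Lee07}, of which \autoref{W-4} is the extremal case; hence $\Delta(c_1,c_2)=5$, and \autoref{W-4} then forces $M\cong \mathcal{W}(-4)$.

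It then suffices to show $M\not\cong \mathcal{W}(-4)$, which I would detect through first homology. The two components of the Whitehead link have linking number $0$, so filling one cusp along the slope $-4$ gives $H_1(\mathcal{W}(-4))\cong \Z\oplus \Z/4\Z$. In contrast, the two components of $\mathcal{L}$ have linking number $\pm 3$, whence killing the slope $n-\frac{1}{2}$ yields $H_1(\mathcal{L}(n-\frac{1}{2}))\cong \Z\oplus \Z/\gcd(2n-1,6)\Z$, a group whose torsion is trivial or $\Z/3\Z$; and a computation from the linking matrix of $L_B$ shows the torsion of $H_1(L_B(0,r))$ is never $\Z/4\Z$. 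Either way $M\not\cong \mathcal{W}(-4)$, contradicting the assumption, so $\max\{\Delta(c_1,c_2)\mid c_1,c_2\in \mathcal{E}_k(M,\partial M)\}\le 4$ and the lemma follows.

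The hard part is the upper bound, and it hinges on two points: that the uniform distance bound of $5$ for aspherical-Klein-bottle slopes is genuinely available from \cite{Lee07}, and not merely its extremal corollary \autoref{W-4}; and that $H_1(L_B(0,r))$ indeed avoids $\Z\oplus \Z/4\Z$ for every admissible $r\in\Q\setminus\{0,4\}$ — the latter being the one place where an explicit computation with the linking matrix of $L_B$ is unavoidable. A more self-contained alternative would bypass $\mathcal{W}(-4)$ altogether by enumerating $\mathcal{E}_k(M,\partial M)$ directly from the complete exceptional-filling classifications of the three-chain link and the Whitehead link complement in \cite{MP02}, verifying that the two slopes of \autoref{COR: Klein slopes} exhaust $\mathcal{E}_k(M,\partial M)$; this is conceptually cleaner but requires grinding through the surgery tables.
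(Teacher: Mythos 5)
Your lower bound is exactly the paper's, but the upper bound contains a genuine error. You assert that the distance between two slopes in $\mathcal{E}_k$ on a one-cusped hyperbolic $3$-manifold is at most $5$, with \autoref{W-4} as the extremal case. This is false: \autoref{W-4} says the maximum equals $5$ \emph{iff} $N\cong\mathcal{W}(-4)$, not that $5$ is a universal bound. As the paper records (quoting \cite[Table A.2, A.4]{MP02}), the manifolds $\mathcal{W}(5)$, $\mathcal{W}(-1)$, $\mathcal{W}(-2)$, $\mathcal{W}(-4)$ realize Klein-bottle slope distances $8$, $8$, $6$, $5$ respectively — the figure-eight knot complement $\mathcal{W}(-1)$ already has two Klein-bottle slopes at distance $8$. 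The correct content of \cite[Corollary 1.5]{Lee07}, as the paper uses it, is that distance $>4$ forces $M$ to be one of these \emph{four} manifolds. Your argument therefore leaves $\mathcal{W}(5)$, $\mathcal{W}(-1)$ and $\mathcal{W}(-2)$ unexcluded.

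Moreover, your homology strategy cannot be patched to cover the missing cases: $H_1(\mathcal{W}(-1);\Z)\cong\Z$, which coincides with $H_1(\mathcal{L}(n-\frac{1}{2});\Z)$ whenever $3\nmid 2n-1$, and $H_1(\mathcal{W}(-2);\Z)\cong\Z\oplus\Z/2\Z$, which coincides with $H_1(L_B(0,r);\Z)$ when $r=\frac{p}{q}$ with $p$ odd. The paper's exclusion argument is different and works uniformly: for each of the four candidates, $\mathcal{E}_k$ consists of exactly two slopes whose mutual distance is $5$, $6$ or $8$, so none of them contains a pair of Klein-bottle slopes at distance exactly $4$; since \autoref{COR: Klein slopes} exhibits such a pair for $M$, none of the four can be homeomorphic to $M$. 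You should replace your bound-of-$5$ step and the homology computation with this observation (or carry out in full the enumeration from the surgery tables of \cite{MP02} that you sketch as an alternative).
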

\begin{proof}
We have shown by \autoref{COR: Klein slopes} that $\max\{\Delta(c_1,c_2)\mid c_1,c_2\in \mathcal{E}_k(M,\partial M)\}\ge 4$. If $\max\{\Delta(c_1,c_2)\mid c_1,c_2\in \mathcal{E}_k(M,\partial M)\}> 4$, then \cite[Corollary 1.5]{Lee07} shows that $M$ is homeomorphic to one of $\mathcal{W}(5),\mathcal{W}(-1),\mathcal{W}(-2),\mathcal{W}(-4)$. We verify that $M$ is not homeomorphic to $\mathcal{W}(5),\mathcal{W}(-1),\mathcal{W}(-2),\mathcal{W}(-4)$. 

In fact, according to \cite[Table A.2]{MP02} for $\mathcal{W}(5),\mathcal{W}(-1),\mathcal{W}(-2)$ and \cite[Table A.4]{MP02} for $\mathcal{W}(-4)$:
\begin{enumerate}[label=(\roman*), leftmargin=*]
\item $\mathcal{E}_k(\mathcal{W}(5),\partial\mathcal{W}(5))=\{c_5,c_5'\}$, and $\Delta(c_5,c_5')=8$;
\item $\mathcal{E}_k(\mathcal{W}(-1),\partial\mathcal{W}(-1))=\{c_{-1},c_{-1}'\}$, and $\Delta(c_{-1},c_{-1}')=8$;
\item $\mathcal{E}_k(\mathcal{W}(-2),\partial\mathcal{W}(-2))=\{c_{-2},c_{-2}'\}$, and $\Delta(c_{-2},c_{-2}')=6$;
\item $\mathcal{E}_k(\mathcal{W}(-4),\partial\mathcal{W}(-4))=\{c_{-4},c_{-4}'\}$, and $\Delta(c_{-4},c_{-4}')=5$.
\end{enumerate}
None of them contain two slopes with geometric intersection number $4$. Therefore, by \autoref{COR: Klein slopes}, $M$ is not homeomorphic to $\mathcal{W}(5),\mathcal{W}(-1),\mathcal{W}(-2),\mathcal{W}(-4)$, finishing the proof.
\end{proof}

\begin{lemma}\label{LEM: slopes distance 4}
Let $T$ and $T'$ be two tori.
\begin{enumerate}[label=(\arabic*), leftmargin=*]
\item\label{5.21-1} Suppose $E\subseteq \slope(T)$ is a subset such that $\max\{\Delta(c_1,c_2)\mid c_1,c_2\in E\}=4$. Then the pair $c_1,c_2\in E$ realizing the maximal geometric intersection number $\Delta(c_1,c_2)=4$ is unique.
\item\label{5.21-2} Let $c_1,c_2\in\slope(T)$ and $c_1',c_2'\in \slope(T')$ such that $\Delta(c_1,c_2)=\Delta(c_1',c_2')=4$. 
Then there exist exactly two (equivalence classes of) isomorphisms $\psi_1,\psi_2\in \mathrm{Isom}_{\Z}(\pi_1(T),\pi_1(T'))/(\psi\sim-\psi) \approx  \mathrm{PGL}(2,\Z)$ such that $\pisi_i(\{c_1,c_2\})=\{c_1',c_2'\}$.
\end{enumerate}
\end{lemma}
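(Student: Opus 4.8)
The plan is to reduce both assertions to an explicit computation after normalising a distance‑$4$ pair to a standard position. Fix an identification $\pi_1(T)\cong\Z^2$, so that a slope is a primitive vector $(p,q)$ taken up to sign and $\Delta\bigl((p_1,q_1),(p_2,q_2)\bigr)=\abs{p_1q_2-p_2q_1}$. The basic input is that $\mathrm{GL}(2,\Z)$ acts transitively on pairs of slopes at distance $4$: given $\Delta(c_1,c_2)=4$, first move $c_1$ to $(1,0)$; then $c_2$ becomes $(p,4)$ with $p$ odd; finally, using the stabiliser of $(1,0)$ (the matrices $\left(\begin{smallmatrix}\pm1 & b\\ 0 & \pm1\end{smallmatrix}\right)$, which realise all shears $p\mapsto p+4b$ and the sign $p\mapsto -p$) together with the slope identification $p\sim -p$, reduce the first coordinate modulo $4$; since the residues coprime to $4$ are exactly $\pm1$, every such pair is carried to the standard pair $\{(1,0),(1,4)\}$. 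All these transformations preserve $\Delta$, as recorded in \autoref{LEM: preserve}.

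For (\ref{5.21-1}) I would argue as follows. Suppose $\{a,b\}$ and $\{a',b'\}$ are two pairs in $E$ both realising $\Delta=4$; since every element of $E$ lies within distance $4$ of every other, all six mutual distances are $\le 4$. Normalise $\{a,b\}=\{(1,0),(1,4)\}$. Then each of $a',b'$ is a slope $(p,q)$ with $\Delta\le 4$ from both $(1,0)$ and $(1,4)$, i.e. $\abs q\le 4$ and $\abs{4p-q}\le 4$; these inequalities force $\abs{4p}\le\abs{4p-q}+\abs q\le 8$, hence $\abs p\le 2$, and together with primitivity they leave only the finite list $L=\{(1,0),(1,1),(1,2),(1,3),(1,4),(0,1)\}$. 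A direct check—$\Delta\bigl((1,i),(1,j)\bigr)=\abs{i-j}$ and $\Delta\bigl((1,i),(0,1)\bigr)=1$—shows that the unique pair in $L$ with $\Delta=4$ is $\{(1,0),(1,4)\}$. Hence $\{a',b'\}=\{a,b\}$, proving uniqueness.

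For (\ref{5.21-2}) I would translate the problem into counting a stabiliser. Using transitivity, choose $\alpha\in\mathrm{GL}(2,\Z)$ with $\pisi_\alpha(\{c_1,c_2\})=\{(1,0),(1,4)\}$ and $\beta$ with $\pisi_\beta(\{c_1',c_2'\})=\{(1,0),(1,4)\}$; then $\psi\mapsto\beta\psi\alpha^{-1}$ identifies the isomorphisms carrying $\{c_1,c_2\}$ onto $\{c_1',c_2'\}$ with the elements of $\mathrm{PGL}(2,\Z)$ stabilising the unordered standard pair, so it suffices to compute this stabiliser. Splitting into elements fixing each of $(1,0),(1,4)$ and elements swapping them, a short matrix calculation leaves only $\pm I$ in the first case (the identity of $\mathrm{PGL}(2,\Z)$) and the single coset represented by the involution $S=\left(\begin{smallmatrix}1&0\\4&-1\end{smallmatrix}\right)$ in the second; thus the stabiliser has order $2$, and the existence part follows from transitivity, giving exactly two isomorphisms $\psi_1,\psi_2$ as claimed.

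The main obstacle—indeed essentially the only non‑formal point—is the transitivity statement together with the finiteness of the list $L$ in (\ref{5.21-1}); both hinge on arithmetic special to $\Delta=4$, namely that the residues coprime to $4$ are precisely $\pm1$, so that a distance‑$4$ pair forms a single $\mathrm{GL}(2,\Z)$‑orbit and the simultaneous bounds $\abs q\le 4$, $\abs{4p-q}\le 4$ pin $p$ down to $\abs p\le 2$. Once the normalisation is secured, everything reduces to a bounded enumeration and the explicit solution of the stabiliser equations.
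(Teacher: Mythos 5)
Your proposal is correct and follows essentially the same route as the paper: normalise a distance-$4$ pair to $\{[(1,0)],[(1,4)]\}$ via a change of basis (the parity of the first coordinate being the key arithmetic point), enumerate the finitely many slopes within distance $4$ of both to get uniqueness in (1), and for (2) exploit the same divisibility-by-$4$ obstruction that forces the off-diagonal entry to vanish. The only cosmetic difference is that you package part (2) as a computation of the $\mathrm{PGL}(2,\Z)$-stabiliser of the unordered standard pair, whereas the paper argues directly on $\psi(\mu)$ and $\psi(\mu+4\lambda)$; the two calculations are identical in substance.
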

\begin{proof}
\ref{5.21-1}: Suppose $c_1,c_2\in E$ such that $\Delta(c_1,c_2)=4$. We can choose a basis $(\mu_0,\lambda_0)$ for $\pi_1(T)\cong \Z^2$, so that $c_1=[\mu_0]$. Then, $c_2=[a\mu_0+4\lambda_0]$ for some odd integer $a$. We can assume $a=4k+t$, where $k\in \Z$ and $t\in \{\pm1\}$. Let $\mu=t\mu_0$ and $\lambda=k\mu_0+\lambda_0$. Then $(\mu,\lambda)$ is also a basis for $\pi_1(T)$, and $c_1=[\mu]$, $c_2=[\mu+4\lambda]$. 

 We can now list all possible elements in $E$. Suppose $c=[p\mu+q\lambda]\in E$, where $(p,q)$ is a pair of coprime integers. Without loss of generality, we may assume $p\ge 0$. Then $\Delta(c_1,c)=\abs{q}\le 4$, and $\Delta(c_2,c)=\abs{4p-q}\le 4$. 
It is easy to verify that $(p,q)\in\{ (0,\pm1),(1,0),(1,1),(1,2),(1,3),(1,4)\}$. Thus, $E\subseteq \{[\mu],[\mu+\lambda],[\mu+2\lambda],[\mu+3\lambda],[\mu+4\lambda],[\lambda]\}$, and it is easy to verify that $[\mu]$ and $[\mu+4\lambda]$ is the unique pair realizing geometric intersection number $4$.

\ref{5.21-2}: According to \ref{5.21-1}, we can choose basis $(\mu,\lambda)$ for $\pi_1(T)$, and $(\mu',\lambda')$ for $\pi_1(T')$, such that $c_1=[\mu]$, $c_2=[\mu+4\lambda]$, $c_1'=[\mu']$, and $c_2'=[\mu'+4\lambda']$. Suppose $\psi:\pi_1(T)\to \pi_1(T')$ is an isomorphism such that  $\pisi(\{c_1,c_2\})=\{c_1',c_2'\}$. There are two cases to be considered.

In the first case, $\pisi(c_1)=c_1'$ and $\pisi(c_2)=c_2'$. The only possibilities are $\psi(\mu,\mu+4\lambda)=\pm (\mu',\mu'+4\lambda')$, since otherwise $4\psi(\lambda)=\pm(2\mu'+4\lambda')$, which is impossible. Thus, $\psi(\mu,\lambda)=\pm(\mu',\lambda')$.

In the second case, $\pisi(c_1)=c_2'$ and $\pisi(c_2)=c_1'$. The only possibilities are $\psi(\mu,\mu+4\lambda)=\pm(\mu'+4\lambda',\mu')$, since otherwise we again obtain $4\psi(\lambda)=\pm(2\mu'+4\lambda')$, which is impossible. Thus, $\psi(\mu,\lambda)=\pm(\mu'+4\lambda',-\lambda')$.

Therefore, there are altogether two isomorphisms $\psi_1,\psi_2\in \mathrm{Isom}_{\Z}(\pi_1(T),\pi_1(T'))/(\psi\sim-\psi)$ such that $\pisi_i(\{c_1,c_2\})=\{c_1',c_2'\}$.
\end{proof}

\begin{lemma}\label{Lem: Distinguish K}
Suppose $M$ and $N$ belong to $\mathcal{L}(n-\frac{1}{2})$ ($n\in \Z$) or $L_B(0,r)$ ($r\in \mathbb{Q}\setminus\{0,4\}$). Then $\widehat{\pi_1M}\cong \widehat{\pi_1N}$ if and only if $M\cong N$.
\end{lemma}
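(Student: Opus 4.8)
The plan is as follows. The ``if'' direction is immediate, so assume $\widehat{\pi_1M}\cong\widehat{\pi_1N}$. First I would record that both manifolds are one-cusped finite-volume hyperbolic: each of $\mathcal{L}(n-\frac{1}{2})$ and $L_B(0,r)$ has a single cusp, so by \autoref{PROP: Detect hyperbolic} the same holds for $N$. Then I would invoke \hyperref[PROP: Dehn filling]{Theorem A} to obtain a boundary isomorphism $\psi$ and the induced slope bijection $\pisi$ with $\widehat{\pi_1M_c}\cong\widehat{\pi_1N_{\pisi(c)}}$ (respecting peripheral structure) for every slope $c$. By the Klein-bottle part of \autoref{PROP: Exceptional} together with \autoref{LEM: preserve}, $\pisi$ restricts to a $\Delta$-preserving bijection $\mathcal{E}_k(M,\partial M)\to\mathcal{E}_k(N,\partial N)$.

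Next, by \autoref{LEM: Max} both sets realize maximal geometric intersection number $4$, and by the uniqueness in \autoref{LEM: slopes distance 4} the pair realizing $\Delta=4$ is unique in each; call them $\{a_1,a_2\}\subseteq\mathcal{E}_k(M,\partial M)$ and $\{b_1,b_2\}\subseteq\mathcal{E}_k(N,\partial N)$. Hence $\pisi(\{a_1,a_2\})=\{b_1,b_2\}$, and therefore the unordered multiset $\{\widehat{\pi_1M_{a_1}},\widehat{\pi_1M_{a_2}}\}$ coincides with $\{\widehat{\pi_1N_{b_1}},\widehat{\pi_1N_{b_2}}\}$ as profinite groups. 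These two distinguished fillings are exactly the closed aspherical manifolds computed in \autoref{LEM: Aspherical slope} and \autoref{COR: Klein slopes}: for the $\mathcal{L}$-family they are $\mathcal{L}(n-\frac{1}{2},-2)$ and $\mathcal{L}(n-\frac{1}{2},2)$, and for the $L_B$-family they are $L_B(0,r,0)$ and $L_B(0,r,4)$, each of the form $\mathcal{N}\cup_{T^2}M'$ (or a closed small Seifert space). Thus the problem reduces to showing that this profinite multiset of closed fillings determines both the family and the parameter.

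The invariants I would extract are all ones the profinite completion of a closed graph or mixed $3$-manifold detects: first homology (so $|H_1|$ and the Betti number), the JSJ decomposition and the presence of any hyperbolic piece (Wilton--Zalesskii), and the Seifert data and base orbifolds of the graph pieces (Wilkes). For the $\mathcal{L}$-family a direct homology computation, using that the two components of $\mathcal{L}$ have linking number $\pm 3$, gives $|H_1(\mathcal{L}(n-\frac{1}{2},-2))|=4|n+4|$ and $|H_1(\mathcal{L}(n-\frac{1}{2},2))|=4|n-5|$; the unordered pair determines $\{n,1-n\}$, and by the achirality $\mathcal{L}(r)\cong\mathcal{L}(-r)$ recorded in \autoref{EQU: L1} this already pins down the homeomorphism type. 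For the $L_B$-family the analogous homology computation, combined with the fact that the Seifert or hyperbolic piece $M'$ in $\mathcal{N}\cup M'$ is a Dehn filling of the Whitehead-link complement whose type is governed by $r$ via the tables of \cite{MP02}, recovers $r$; and the cross-family comparison uses that every $\mathcal{L}$-filling has the single graph piece $M'=M_{3_1}$ with base orbifold $D^2(2,3)$, whereas a generic $L_B$-filling either contains a hyperbolic JSJ piece or a graph piece with different Seifert data.

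I expect the main obstacle to be this final bookkeeping rather than the slope-matching, which is routine given the cited lemmas. Because closed graph manifolds are not profinitely rigid in general, I cannot declare the matched fillings homeomorphic outright; instead I must check that the restricted data the profinite completion actually sees --- the homology of the fillings, the JSJ graph, and the base orbifolds and Seifert invariants of the pieces --- already separate all parameters and both families, and that no accidental profinite coincidence occurs (for instance a Whitehead filling that happens to equal the trefoil piece $M_{3_1}$, or two parameters yielding the same homology pair). Controlling this relies on the explicit Dehn-filling classification of \cite{MP02} and \cite{Lee06}, and on handling the two admissible boundary isomorphisms from \autoref{LEM: slopes distance 4}, whose ambiguity I expect to be absorbed exactly by the achirality of $\mathcal{L}$ and by the $L_B^{(2)}\leftrightarrow L_B^{(3)}$ symmetry in \autoref{EQU: LB1}.
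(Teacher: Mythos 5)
Your setup agrees with the paper up to the point where $\pisi$ is shown to carry the unique pair of slopes in $\mathcal{E}_k(M,\partial M)$ realizing $\Delta=4$ to the corresponding pair for $N$, and your cross-family separation and your treatment of the $\mathcal{L}$-family are workable (the unordered pair $\{4|n+4|,4|n-5|\}$ does determine $\{n,1-n\}$, which achirality absorbs). The divergence, and the gap, is in how you try to recover the parameter afterwards: you propose to read it off from profinite invariants of the two distinguished \emph{closed} fillings. For the $L_B$-family this fails. The homology pair is $\{|H_1(L_B(0,r,0))|,|H_1(L_B(0,r,4))|\}=\{4|4q-p|,4|p|\}$ for $r=\frac{p}{q}$, and this does not determine $r$ up to the symmetry $r\leftrightarrow 4-r$: for instance $r=8$ and $r=\frac{8}{3}$ both give $\{16,32\}$ yet are not related by $r\mapsto 4-r$. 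Your fallback --- distinguishing the complementary piece $M'$ in $\mathcal{N}\cup_{T^2}M'$ via the tables of \cite{MP02} --- only covers the finitely many exceptional fillings of the Whitehead link complement; for generic $r$ the piece $M'$ is a hyperbolic filling of $M_1$, and profinitely distinguishing two distinct hyperbolic fillings of $M_1$ from one another is exactly the kind of open problem this paper does not resolve (\hyperref[PROP: Dehn filling]{Theorem A} runs from the unfilled manifold to its fillings, not backwards). You flag this risk yourself at the end, but flagging it is not closing it.

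The paper's proof avoids the issue entirely by never comparing the closed fillings beyond the slope-matching. Once $\pisi(\{a_1,a_2\})=\{b_1,b_2\}$ with $\Delta(a_1,a_2)=4$, \autoref{LEM: slopes distance 4}~(\ref{5.21-2}) pins $\pisi$ down to exactly two candidates (in the $\Qinf$-parametrization, $u\mapsto u$ or $u\mapsto 4-u$ for $L_B(0,r)$, and $s\mapsto \pm s$ for $\mathcal{L}(n-\frac12)$). One then evaluates $\pisi$ on a second canonically determined slope of the one-cusped manifolds themselves, namely the rational null-homologous boundary slope, which \autoref{LEM: null-homologous slope}~(\ref{5.22-2}) shows is preserved; this yields $\pisi(4-r)=4-s$, hence $s\in\{r,4-r\}$, and the homeomorphism $L_B(0,r)\cong L_B(0,4-r)$ from \cite[Lemma 2.2~(2)]{Lee06} finishes the argument (and similarly $2m-1=\pm(2n-1)$ for the $\mathcal{L}$-family). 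That second use of $\pisi$ on the null-homologous slope is the idea missing from your proposal; without it, the $L_B$ case does not close.
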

\begin{proof}
We first prove that $\mathcal{L}(n-\frac{1}{2})$ is not profinitely isomorphic to $L_B(0,r)$, by showing that their first homologies are not isomorphic.  
In fact, up to an appropriate choice of orientations, the two components of $\mathcal{L}$ have linking number $3$. Let $\alpha,\beta$ be two generators of $H_1(\cpl{\mathcal{L}};\Z)\cong \Z^2$ corresponding to the meridians of the two components. Then, $$H_1( \mathcal{L}(n-\frac{1}{2});\Z)\cong \frac{H_1(\cpl{\mathcal{L}};\Z)}{\left\langle (2n-1)\alpha+6\beta\right\rangle}\cong \left\{ \begin{aligned}&\Z\oplus \Z/3\Z , & 3\mid 2n-1\\& \Z, & 3\nmid 2n-1 \end{aligned}\right. .$$
Similarly,  up to an appropriate choice of orientations, the three components $L_B^{(1)},L_B^{(2)},L_B^{(3)}$ have pairwise linking number $2$. Let $\alpha,\beta,\gamma$ be three generators of $H_1(\cpl{L_B})\cong \Z^3$  corresponding to the meridians of $L_B^{(1)},L_B^{(2)},L_B^{(3)}$ respectively. For $r=\frac{p}{q}\in \mathbb{Q}$ a reduced fraction, 
\begin{equation}\label{EQU: homology}
H_1(L_B(0,r);\Z)\cong \frac{H_1(\cpl{L_B};\Z)}{\left\langle 2\beta+2\gamma, 2q\alpha+p\beta+2q\gamma\right\rangle} \cong \left\{ \begin{aligned}&\Z\oplus \Z/2\Z\oplus \Z/2\Z , & 2\mid p\\& \Z\oplus \Z/2\Z, & 2\nmid p \end{aligned}\right. .
\end{equation}
Therefore, $H_1( \mathcal{L}(n-\frac{1}{2});\Z)\not\cong H_1(L_B(0,r);\Z)$, which implies $\widehat{\pi_1\mathcal{L}(n-\frac{1}{2})}\not \cong \widehat{\pi_1L_B(0,r)}$ by \autoref{lem: abelianize}.

Now we consider the family $\mathcal{L}(n-\frac{1}{2})$ ($n\in \Z$). Let $M=\mathcal{L}(m-\frac{1}{2})$ and $N=\mathcal{L}(n-\frac{1}{2})$, and suppose $\widehat{\pi_1M}\cong \widehat{\pi_1N}$. Then according to \hyperref[PROP: Dehn filling]{Theorem A}, there exists an isomorphism $\psi:\pi_1\partial M\to \pi_1\partial N$ such that $\widehat{\pi_1M_c}\cong \widehat{\pi_1N_{\pisi(c)}}$ for all $c\in \slope(\partial M)$.  In particular, $\pisi(\mathcal{E}_k(M,\partial M))=\mathcal{E}_k(N,\partial N)$ according to \autoref{PROP: Exceptional}. We follow the $\Qinf$-parametrization for $\slope(\partial M)$ and $\slope(\partial N)$ as above. 
Recall that $\max\{\Delta(c_1,c_2)\mid c_1,c_2\in \mathcal{E}_k(M,\partial M)\}=\max\{\Delta(c_1,c_2)\mid c_1,c_2\in \mathcal{E}_k(N,\partial N)\}=4$ by \autoref{LEM: Max}. Thus, according to \autoref{COR: Klein slopes} \ref{5.18-3} and \autoref{LEM: slopes distance 4} \ref{5.21-1}, $\{-2 ,2\}$ is the unique pair of elements in $\mathcal{E}_k(M,\partial M)$, as well as in $\mathcal{E}_k(N,\partial N)$, that realizes the maximal geometric intersection number $4$. Since $\pisi: \mathcal{E}_k(M,\partial M)\xrightarrow{1:1}\mathcal{E}_k(N,\partial N)$ preserves the geometric intersection number by \autoref{LEM: preserve}, we derive that $\pisi(\{-2,2\})=\{-2,2\}$. According to \autoref{LEM: slopes distance 4} \ref{5.21-2}, there are only two possible candidates for $\psi$ in $\mathrm{Isom}_{\Z}(\pi_1\partial M,\pi_1\partial N)/(\psi\sim -\psi)$. Indeed, under the $\Qinf$-parametrization, either $\pisi(s)=s$, or $\pisi (s)=-s$ for all $s\in  \Qinf$. 

In addition, $\pisi$ sends the rational null-homologous boundary slope of $M$ to the rational null-homologous boundary slope of $N$ by \autoref{LEM: null-homologous slope} \ref{5.22-2}. Since the linking number of the two components of $\mathcal{L}$ is $3$,  the rational null-homologous boundary slope of $M$ corresponds to $\frac{3^2}{m-\frac{1}{2}}=\frac{18}{2m-1}$ in the $\Qinf$-parametrization. Similarly, the rational null-homologous boundary slope of $N$ corresponds to $\frac{3^2}{m-\frac{1}{2}}=\frac{18}{2n-1}$. Thus, $\pisi(\frac{18}{2m-1})=\frac{18}{2n-1}$. Recall that $\pisi$ is a multiplication by $\pm1$ under the $\Qinf$-parametrization. Thus, $2m-1=\pm(2n-1)$. Note that $\mathcal{L}(r)\cong \mathcal{L}(-r)$ according to (\ref{EQU: L1}). Thus, regardless of the $\pm$-sign, $M=\mathcal{L}(\frac{2m-1}{2})\cong N=\mathcal{L}(\frac{2n-1}{2})$.

Next, we consider the family $L_B(0,r)$ ($r\in \mathbb{Q}\setminus\{0,4\}$). Let $M=L_B(0,r)$ and $N=L_B(0,r')$ and suppose $\widehat{\pi_1M}\cong \widehat{\pi_1N}$. Similarly, by \hyperref[PROP: Dehn filling]{Theorem A}, there exists an isomorphism $\psi:\pi_1\partial M\to \pi_1\partial N$ such that $\widehat{\pi_1M_c}\cong \widehat{\pi_1N_{\pisi(c)}}$, and $\pisi(\mathcal{E}_k(M,\partial M))=\mathcal{E}_k(N,\partial N)$ according to \autoref{PROP: Exceptional}. 
Again by \autoref{LEM: Max}, \autoref{COR: Klein slopes} \ref{5.18-2} and \autoref{LEM: slopes distance 4} \ref{5.21-1}, $\{0 ,4\}$ is the only pair of elements in $\mathcal{E}_k(M,\partial M)$, as well as in $\mathcal{E}_k(N,\partial N)$, that realizes the maximal geometric intersection number $4$, and \autoref{LEM: preserve} implies that $\pisi(\{0,4\})=\{0,4\}$. According to \autoref{LEM: slopes distance 4} \ref{5.21-2}, there are only two possible candidates for $\psi$ up to $\pm$-sign; under the $\Qinf$-parametrization, either $\pisi(u)=u$, or $\pisi (u)=4-u$ for all $u\in  \Qinf$. 

We still consider the rational null-homologous boundary slopes. Let $r=\frac{p}{q}$ be a reduced fraction. Then,  (\ref{EQU: homology}) implies that $2q\alpha+2q\beta+(4q-p)\gamma$ is torsion in $H_1(L_B(0,r);\Z)$. Thus, the slope represented by $\frac{4q-p}{q}=4-r$ on $\partial N(L_B^{(3)})$ is the rational null-homologous boundary slope on $\partial M$. Similarly, $4-s$ is the rational null-homologous boundary slope on $\partial N$. By \autoref{LEM: null-homologous slope}~\ref{5.22-2}, $\pisi(4-r)=4-s$. Recall that either $\pisi(u)=u$, or $\pisi (u)=4-u$. Thus, either $s=r$ or $s=4-r$. Fortunately, \cite[Lemma 2.2 (2)]{Lee06} proved that $L_B(0,r)\cong L_B(0,4-r)$. Thus, in both cases, $M\cong N$. 
\end{proof}

Finally, we are ready to prove the profinite rigidity of these two families.
\begin{theorem}\label{THM: LBr}
Let $M$ be one of $\mathcal{L}(n-\frac{1}{2})$ ($n\in \Z$) or $L_B(0,r)$ ($r\in \mathbb{Q}\setminus\{0,4\}$). Then $M$ is profinitely rigid among all compact, orientable 3-manifolds.
\end{theorem}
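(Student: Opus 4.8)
The plan is to show that any compact orientable 3-manifold $N$ with $\widehat{\pi_1 N}\cong \widehat{\pi_1 M}$ must itself belong to one of the two families $\mathcal{L}(n-\frac{1}{2})$ or $L_B(0,r)$, and then to invoke \autoref{Lem: Distinguish K} to conclude $N\cong M$. First I would apply \autoref{PROP: Detect hyperbolic}: each manifold in the list is a one-cusped finite-volume hyperbolic 3-manifold (by the hyperbolicity remarks following \autoref{PROP: Klein Detects}), so the profinite isomorphism forces $N$ to be finite-volume hyperbolic with exactly one cusp as well. Recording that $N$ has a \emph{single} cusp is the point that must be kept in view, since it selects the one-cusp branch of \autoref{PROP: Klein Detects} rather than the $L_B(0)$ branch.

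The central step is to transport the Klein-bottle exceptional slopes from $M$ to $N$. By \autoref{COR: Klein slopes} (item (\ref{5.18-3}) when $M=\mathcal{L}(n-\frac{1}{2})$, item (\ref{5.18-2}) when $M=L_B(0,r)$) there are two slopes $c_1,c_2\in\mathcal{E}_k(M,\partial M)$ with $\Delta(c_1,c_2)=4$. Applying \autoref{PROP: Exceptional}(\ref{4.3-4}) to the unique boundary component, I obtain a boundary-slope identification $\pisi$ carrying $\mathcal{E}_k(M,\partial M)$ bijectively onto $\mathcal{E}_k(N,\partial_0 N)$; since $\pisi$ is induced by a group isomorphism it preserves geometric intersection number by \autoref{LEM: preserve}, so $c_1'=\pisi(c_1)$ and $c_2'=\pisi(c_2)$ lie in $\mathcal{E}_k(N,\partial_0 N)$ with $\Delta(c_1',c_2')=4$. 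By definition of $\mathcal{E}_k$, the fillings $N_{c_1'}$ and $N_{c_2'}$ are aspherical and each contains an embedded Klein bottle; in particular they contain embedded Klein bottles, so the hypotheses of \autoref{PROP: Klein Detects} are met with intersection number $4$. As $N$ has exactly one cusp, the first alternative of that proposition applies and $N$ is homeomorphic to some $\mathcal{L}(n'-\frac{1}{2})$ or some $L_B(0,r')$.

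Finally, both $M$ and $N$ now lie within these two families and $\widehat{\pi_1 M}\cong\widehat{\pi_1 N}$, so \autoref{Lem: Distinguish K} immediately yields $M\cong N$, completing the proof. The genuine difficulty has already been absorbed into the preparatory lemmas, so within the present argument there is no real obstacle beyond careful bookkeeping: \autoref{LEM: Max} establishes that $4$ is the \emph{maximal} Klein-bottle intersection number, which is what rules out the competing Whitehead surgeries $\mathcal{W}(5),\mathcal{W}(-1),\mathcal{W}(-2),\mathcal{W}(-4)$ and thereby makes \autoref{Lem: Distinguish K} usable; and \autoref{Lem: Distinguish K} itself separates the two families by first homology and distinguishes members within each family via the rational null-homologous slope together with the symmetry relations $\mathcal{L}(r)\cong\mathcal{L}(-r)$ and $L_B(0,r)\cong L_B(0,4-r)$. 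The only step requiring genuine attention in assembling the theorem is verifying that the transported slopes $c_1',c_2'$ honestly satisfy the embedded-Klein-bottle hypothesis of \autoref{PROP: Klein Detects}, which is exactly what membership in $\mathcal{E}_k(N,\partial_0 N)$ guarantees.
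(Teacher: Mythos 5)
Your proposal is correct and follows essentially the same route as the paper's proof: detect hyperbolicity and the single cusp via \autoref{PROP: Detect hyperbolic}, transport the distance-$4$ pair of Klein-bottle slopes using \autoref{COR: Klein slopes}, \autoref{PROP: Exceptional} and \autoref{LEM: preserve}, invoke the one-cusp branch of \autoref{PROP: Klein Detects}, and finish with \autoref{Lem: Distinguish K}. Your accompanying remarks on the roles of \autoref{LEM: Max} and the symmetries $\mathcal{L}(r)\cong\mathcal{L}(-r)$, $L_B(0,r)\cong L_B(0,4-r)$ accurately reflect how the preparatory lemmas are used.
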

\begin{proof}
Suppose $N$ is a compact, orientable 3-manifold such that $\widehat{\pi_1M}\cong \widehat{\pi_1N}$. Recall that $M$ is hyperbolic by \cite{MP02} and \cite[Proposition 2.7]{Lee06}. Then according to \autoref{PROP: Detect hyperbolic}, $N$ is also finite-volume hyperbolic with exactly one cusp. 

By \autoref{COR: Klein slopes}, there exist two slopes $c_1,c_2\in \mathcal{E}_k(M,\partial M)$ such that $\Delta(c_1,c_2)=4$. Then,   \autoref{PROP: Exceptional} together with \autoref{LEM: preserve} implies that there exist two slopes $c_1',c_2'\in \mathcal{E}_k(N,\partial_{j}N)$ such that $\Delta(c_1',c_2')=4$. Since $N$ has exactly one cusp, \autoref{PROP: Klein Detects} implies that $N$ is either homeomorphic to $\mathcal{L}(n-\frac{1}{2})$ for some $n\in \Z$, or to $L_B(0,r)$ for some $r\in \mathbb{Q}\setminus\{0,4\}$. It then follows from \autoref{Lem: Distinguish K} that $N\cong M$, finishing the proof.
\end{proof}

\appendix
\section{Proof of \autoref{THM: Mixed Peripheral regular}}\label{APP}

\begin{lemma}\label{LEM: irred}
Let $M$ be a compact, orientable, irreducible and boundary-incompressible 3-manifold. Suppose $N$ is a compact, orientable 3-manifold such that $\widehat{\pi_1M}\cong \widehat{\pi_1N}$. Then, 
\begin{enumerate}[label=(\arabic*), leftmargin=*]
\item\label{A11} $N$ is also irreducible and boundary-incompressible;
\item\label{A12} if $M$ is closed, then $N$ is also closed;
\item\label{A13} if $\partial M$ consists of tori, then so does $\partial N$.
\end{enumerate}
\end{lemma}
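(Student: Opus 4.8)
The plan is to read all three conclusions off the profinite detection of the prime-and-boundary structure of a $3$-manifold. For \eqref{A11} I would invoke the profinite detection of the prime decomposition, namely \cite[Theorem 6.22]{Wil19} (the engine behind \autoref{LEM: Prime}; only the matching of prime factors is needed, not the peripheral refinement, so the toral hypothesis there is irrelevant). Since $M$ is irreducible, boundary-incompressible, and — by the standing convention — has no sphere boundary, its prime decomposition in the sense of \autoref{LEM: Prime} is trivial: no $S^2\times S^1$ summands, no $D^2\times S^1$ summands, and a single irreducible, boundary-incompressible factor, namely $M$. Filling the compressible boundary components of $N$ along their null-homotopic slopes exactly as in the proof of \autoref{LEM: Prime} yields a boundary-incompressible $N^+$ with $\pi_1 N^+\cong\pi_1 N$; matching the two decompositions forces $r_1'+r_2'=0$ and $s'=1$, so $N^+$ (hence $N$) carries no reducible or solid-torus summand. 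Thus $N$ is irreducible and boundary-incompressible.

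For \eqref{A12} I would argue with mod-$p$ cohomology. If $\pi_1 M$ is finite then $M$ is a closed spherical space form, $\widehat{\pi_1 N}\cong\widehat{\pi_1 M}$ is finite, and residual finiteness (\autoref{PROP: RF}) makes $\pi_1 N$ finite; a compact orientable $3$-manifold with no sphere boundary and trivial first Betti number must be closed, so $N$ is closed. If $\pi_1 M$ is infinite then $M$, and by \eqref{A11} also $N$, is aspherical. Using that fundamental groups of compact $3$-manifolds are good in the sense of Serre, so that $H^\ast_{\mathrm{cont}}(\widehat{\pi_1(\cdot)};\mathbb{F}_p)\cong H^\ast(\pi_1(\cdot);\mathbb{F}_p)\cong H^\ast(\cdot;\mathbb{F}_p)$, the isomorphism $\widehat{\pi_1 M}\cong\widehat{\pi_1 N}$ identifies $H^3(M;\mathbb{F}_p)$ with $H^3(N;\mathbb{F}_p)$. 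A closed orientable aspherical $3$-manifold has $H^3(\cdot;\mathbb{F}_p)\cong\mathbb{F}_p$ by Poincaré duality, whereas one with non-empty boundary deformation-retracts onto a $2$-complex and has $H^3=0$; hence $M$ closed forces $N$ closed. This is precisely the content of \cite[Corollary 4.12]{Reid:2018}, which may be cited in place of the cohomological computation.

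For \eqref{A13}, assume $\partial M$ consists of tori, so $M$ is not closed and $H^3(M;\mathbb{F}_p)=0$; by the previous step $H^3(N;\mathbb{F}_p)=0$, so $N$ is not closed and therefore, being irreducible and boundary-incompressible, has non-empty incompressible boundary with no sphere components, i.e.\ every boundary genus satisfies $g_i\ge 1$. Goodness makes each mod-$p$ Betti number, and hence the Euler characteristic $\chi(\cdot)=\sum_i(-1)^i\dim_{\mathbb{F}_p}H^i(\cdot;\mathbb{F}_p)$, a profinite invariant, so $\chi(N)=\chi(M)$. For a compact orientable $3$-manifold $\chi=\tfrac12\chi(\partial)$, and $\partial M$ toral gives $\chi(M)=0$, whence $\chi(N)=\tfrac12\sum_i(2-2g_i)=\sum_i(1-g_i)=0$. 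Since each summand $1-g_i\le 0$, every $g_i=1$, so $\partial N$ consists of tori. The main obstacle is exactly this last distinction: a closed manifold and one with toral boundary share $\chi=0$ and indistinguishable low-degree Betti numbers, so the argument genuinely requires detecting the top-dimensional cohomology class (the Poincaré-duality obstruction), which rests on goodness of $3$-manifold groups, while the final genus-pinning step uses the no-sphere convention in an essential way.
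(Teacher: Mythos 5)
Your proof of part (\ref{A11}) is essentially the paper's: compress the boundary of $N$ to obtain a boundary-incompressible $N^+$ with $\pi_1N\cong\pi_1N^+$, invoke \cite[Theorem 6.22]{Wil19} to transfer the triviality of the prime decomposition from $M$ to $N^+$, and observe that any boundary compression would have introduced a reducing sphere in $N^+$, so $N=N^+$ is irreducible and boundary-incompressible. (One small point of care: since $\partial N$ is not yet known to be toral, the bookkeeping ``$r_1'+r_2'=0$, $s'=1$'' from \autoref{LEM: Prime} is not literally available for higher-genus compressible boundary; the correct statement, which the paper makes explicitly and which you clearly have in mind, is that each $2$-handle attached along an essential compressing curve creates an essential sphere in $N^+$.) For parts (\ref{A12}) and (\ref{A13}) the paper simply cites \cite[Corollary 4.1]{Xu} and gives no argument, whereas you supply a self-contained proof via goodness of $3$-manifold groups: detecting closedness from $H^3_{\mathrm{cont}}(\widehat{\pi_1};\mathbb{F}_p)$ (which is exactly \cite[Corollary 4.12]{Reid:2018}, cited elsewhere in the paper for the same purpose), and then pinning the boundary to tori by the profinite invariance of the Euler characteristic together with $\chi=\tfrac12\chi(\partial)$ and the no-sphere-boundary convention. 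Your case analysis (finite versus infinite $\pi_1M$, with asphericity supplied by irreducibility and the sphere theorem in the infinite case) is sound, and the genus-pinning step $\sum_i(1-g_i)=0$ with $g_i\ge 1$ is correct. The net effect is that your write-up proves what the paper outsources to a citation; both routes are valid, and yours has the advantage of making the dependence on goodness and Poincar\'e duality explicit.
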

\begin{proof}
As a standard application of the loop theorem, through attaching 2-handles to the compressible boundary components of $N$ and then capping off the emerging boundary spheres by 3-handles, we can obtain a boundary-incompressible 3-manifold $N^+$ such that $\pi_1N\cong \pi_1N^+$. Then, $\widehat{\pi_1M}\cong \widehat{\pi_1N^+}$.

\cite[Theorem 6.22]{Wil19} showed that the profinite completion of fundamental group determines the prime decomposition of a compact, orientable, boundary-incompressible 3-manifold. In particular, when $M$ is irreducible, $N^+$ is also irreducible.

Note that when $N$ is boundary-compressible,  attaching a 2-handle along a compressible boundary curve will create an essential sphere, which makes $N^+$ reducible. Thus, $N$ is boundary-incompressible and $N= N^+$. Therefore, $N$ is also irreducible, which proves  \ref{A11}. 
\ref{A12} and \ref{A13} then follow from \cite[Corollary 4.2]{Xu}.
\end{proof}

Before moving on, let us briefly recall some useful concepts related to  JSJ-decomposition.

Let $M$ be a compact, orientable, irreducible 3-manifold with empty or incompressible toroidal boundary. Then there exists a minimal collection $\mathcal{T}$ of disjoint essential tori, such that each connected component of $M\setminus \mathcal{T}$ is either a Seifert fibered manifold or a finite-volume hyperbolic manifold. In addition, $\mathcal{T}$ is unique up to isotopy, and is called the {\em JSJ-tori} of $M$.

The JSJ-decomposition endows $M$ with a structure of a graph of spaces, so that it splits $\pi_1M$ as the fundamental group of a graph of groups: $\pi_1M=\pi_1(\mathcal{G},\Gamma_M)$. Here, $\Gamma_M$ denotes the {\em JSJ-graph} of $M$, consisting of vertices $V(\Gamma_M)$ and edges $E(\Gamma_M)$; for each $v\in V(\Gamma_M)$, $\mathcal{G}_v$ is the fundamental group of a JSJ-piece (either hyperbolic or Seifert), and for each $e\in E(\Gamma_M)$, $\mathcal{G}_e=\pi_1(T^2)\cong \Z\oplus\Z$. 
\autoref{FIG: JSJ-graph of groups} shows an example for this construction and we refer the readers to \cite{SW79} for details.

\begin{figure}[h]
\subfigure[The irreducible 3-manifold $M$]{
\includegraphics[width=4.3cm]{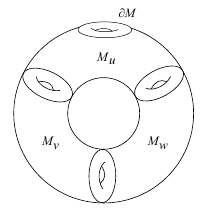}
}
\hspace{2mm}
\subfigure[The JSJ-graph $\Gamma_M$]{
\begin{tikzpicture}[x=0.75pt,y=0.75pt,yscale=-1,xscale=1]
\path (200,220);
\draw    (239.94,106.45) -- (197.03,174.46) ;
\draw [shift={(197.03,174.46)}, rotate = 122.25] [color={rgb, 255:red, 0; green, 0; blue, 0 }  ][fill={rgb, 255:red, 0; green, 0; blue, 0 }  ][line width=0.75]      (0, 0) circle [x radius= 3.35, y radius= 3.35]   ;
\draw [shift={(239.94,106.45)}, rotate = 122.25] [color={rgb, 255:red, 0; green, 0; blue, 0 }  ][fill={rgb, 255:red, 0; green, 0; blue, 0 }  ][line width=0.75]      (0, 0) circle [x radius= 3.35, y radius= 3.35]   ;
\draw    (239.94,106.45) -- (282.53,174.21) ;
\draw [shift={(282.53,174.21)}, rotate = 57.85] [color={rgb, 255:red, 0; green, 0; blue, 0 }  ][fill={rgb, 255:red, 0; green, 0; blue, 0 }  ][line width=0.75]      (0, 0) circle [x radius= 3.35, y radius= 3.35]   ;
\draw [shift={(239.94,106.45)}, rotate = 57.85] [color={rgb, 255:red, 0; green, 0; blue, 0 }  ][fill={rgb, 255:red, 0; green, 0; blue, 0 }  ][line width=0.75]      (0, 0) circle [x radius= 3.35, y radius= 3.35]   ;
\draw    (197.03,174.46) -- (282.53,174.21) ;
\draw [shift={(282.53,174.21)}, rotate = 359.83] [color={rgb, 255:red, 0; green, 0; blue, 0 }  ][fill={rgb, 255:red, 0; green, 0; blue, 0 }  ][line width=0.75]      (0, 0) circle [x radius= 3.35, y radius= 3.35]   ;
\draw [shift={(197.03,174.46)}, rotate = 359.83] [color={rgb, 255:red, 0; green, 0; blue, 0 }  ][fill={rgb, 255:red, 0; green, 0; blue, 0 }  ][line width=0.75]      (0, 0) circle [x radius= 3.35, y radius= 3.35]   ;

\draw (235.09,90) node [anchor=north west][inner sep=0.75pt]   [align=left] {$\displaystyle u$};
\draw (184.87,178) node [anchor=north west][inner sep=0.75pt]   [align=left] {$\displaystyle v$};
\draw (284.53,177.21) node [anchor=north west][inner sep=0.75pt]   [align=left] {$\displaystyle w$};

\end{tikzpicture}
}
\hspace{2mm}
\subfigure[The graph of groups $(\mathcal{G}_M,\Gamma_M)$]{
\begin{tikzpicture}[x=0.75pt,y=0.75pt,yscale=-0.7,xscale=0.7]
\path(200,290);
\draw    (230.16,159.79) -- (264.3,125.66) ;
\draw [shift={(265.71,124.25)}, rotate = 135] [color={rgb, 255:red, 0; green, 0; blue, 0 }  ][line width=0.75]    (10.93,-3.29) .. controls (6.95,-1.4) and (3.31,-0.3) .. (0,0) .. controls (3.31,0.3) and (6.95,1.4) .. (10.93,3.29)   ;
\draw    (197.35,196.25) -- (160.48,233.12) ;
\draw [shift={(159.07,234.54)}, rotate = 315] [color={rgb, 255:red, 0; green, 0; blue, 0 }  ][line width=0.75]    (10.93,-3.29) .. controls (6.95,-1.4) and (3.31,-0.3) .. (0,0) .. controls (3.31,0.3) and (6.95,1.4) .. (10.93,3.29)   ;
\draw    (323.23,159.79) -- (289,125.57) ;
\draw [shift={(287.59,124.15)}, rotate = 45] [color={rgb, 255:red, 0; green, 0; blue, 0 }  ][line width=0.75]    (10.93,-3.29) .. controls (6.95,-1.4) and (3.31,-0.3) .. (0,0) .. controls (3.31,0.3) and (6.95,1.4) .. (10.93,3.29)   ;
\draw    (348.2,192.97) -- (390.17,234.95) ;
\draw [shift={(391.59,236.36)}, rotate = 225] [color={rgb, 255:red, 0; green, 0; blue, 0 }  ][line width=0.75]    (10.93,-3.29) .. controls (6.95,-1.4) and (3.31,-0.3) .. (0,0) .. controls (3.31,0.3) and (6.95,1.4) .. (10.93,3.29)   ;
\draw    (256.14,250.4) -- (188.41,250.4) ;
\draw [shift={(186.41,250.4)}, rotate = 360] [color={rgb, 255:red, 0; green, 0; blue, 0 }  ][line width=0.75]    (10.93,-3.29) .. controls (6.95,-1.4) and (3.31,-0.3) .. (0,0) .. controls (3.31,0.3) and (6.95,1.4) .. (10.93,3.29)   ;
\draw    (304.45,249.49) -- (370.5,249.49) ;
\draw [shift={(372.5,249.49)}, rotate = 180] [color={rgb, 255:red, 0; green, 0; blue, 0 }  ][line width=0.75]    (10.93,-3.29) .. controls (6.95,-1.4) and (3.31,-0.3) .. (0,0) .. controls (3.31,0.3) and (6.95,1.4) .. (10.93,3.29)   ;

\draw (253.97,101.03) node [anchor=north west][inner sep=0.75pt]   [align=left] { $\displaystyle \pi _{1} M_{u}$};
\draw (191.34,167.48) node [anchor=north west][inner sep=0.75pt]   [align=left] { $\displaystyle \pi _{1} T^{2}$};
\draw (132.79,239.57) node [anchor=north west][inner sep=0.75pt]   [align=left] { $\displaystyle \pi _{1} M_{v}$};
\draw (309.84,167.48) node [anchor=north west][inner sep=0.75pt]   [align=left] { $\displaystyle \pi _{1} T^{2}$};
\draw (374.2,237.75) node [anchor=north west][inner sep=0.75pt]   [align=left] { $\displaystyle \pi _{1} M_{w}$};
\draw (257.88,237.66) node [anchor=north west][inner sep=0.75pt]   [align=left] { $\displaystyle \pi _{1} T^{2}$};

\end{tikzpicture}
}
\caption{JSJ-graph of groups}
\label{FIG: JSJ-graph of groups}
\end{figure}

\begin{proposition}[{\cite[Theorem A]{WZ10}}]
Let $M$ be compact, orientable, irreducible 3-manifold with empty or incompressible toral boundary. For each $x\in \Gamma_M=V(\Gamma_M)\cup E(\Gamma_M)$, $\pi_1M$ induces the full profinite topology on $\mathcal{G}_x$. Thus, the homomorphism $\widehat{\mathcal{G}_x}\to \widehat{\pi_1M}$ is injective.
\end{proposition}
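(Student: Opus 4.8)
The plan is to read this as the assertion that the JSJ graph-of-groups splitting $\pi_1M=\pi_1(\mathcal{G},\Gamma_M)$ is \emph{efficient}, and to observe that the ``Thus'' clause is then purely formal. Recall that $\pi_1M$ inducing the full profinite topology on $\mathcal{G}_x$ means precisely that for every finite-index subgroup $U\le \mathcal{G}_x$ there is a finite-index subgroup $K\le \pi_1M$ with $K\cap \mathcal{G}_x\subseteq U$. Granting this, the injectivity of $\widehat{\mathcal{G}_x}\to\widehat{\pi_1M}$ follows at once from \cite[Lemma 3.2.6]{RZ10}, which also identifies the image with the closure $\overline{\mathcal{G}_x}$. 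Thus the whole task reduces to the full-topology (efficiency) statement for each vertex and edge group.

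First I would assemble the local inputs at a single JSJ piece. Each vertex group $\mathcal{G}_v$ is the fundamental group of a Seifert fibered or finite-volume hyperbolic piece; by Scott's subgroup-separability theorem in the Seifert case and by the virtual specialness of hyperbolic pieces \cite{Ago13}, every such $\mathcal{G}_v$ is LERF. For each piece I would verify that every boundary-torus (edge) subgroup $\mathcal{G}_e\cong\Z^2$ is separable in the adjacent vertex group and that $\mathcal{G}_v$ induces the full profinite topology on $\mathcal{G}_e$; the latter is the peripheral statement already used in \autoref{COR: Peripheral subgroup} (via \cite{Ham01}), now read from each side and strengthened to full topology by LERF. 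Together with the residual finiteness of $\pi_1M$ (\autoref{PROP: RF}), these are the ingredients that the gluing argument consumes.

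The substantive step is a gluing criterion for graphs of groups. I would invoke an efficiency criterion of Ribes--Zalesskii / Wilton--Zalesskii type, which reduces global efficiency to a \emph{local} condition at each vertex, namely that for edge groups $C,C'$ incident to a vertex group $V$ and any $g\in V$ the double coset $CgC'$ is separable in $V$; this double-coset separability is a standard consequence of LERF of $V$. The construction is then local-to-global: starting from a prescribed $U\le\mathcal{G}_x$, one chooses finite quotients of the vertex groups that both separate the required coset data and \emph{agree along each shared edge torus}---the agreement being achievable exactly because each $\mathcal{G}_v$ induces the full topology on $\mathcal{G}_e$ from either incident side, so the two induced finite quotients of $\mathcal{G}_e\cong\Z^2$ can be forced to coincide. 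These matched local quotients assemble, through the Bass--Serre action of $\pi_1M$ on its JSJ tree, into a single finite quotient of $\pi_1M$ whose restriction to $\mathcal{G}_x$ factors through $U$. This is the same mechanism underlying the identification of $\widehat{\pi_1M}$ with the profinite fundamental group $\Pi_1(\widehat{\mathcal{G}},\Gamma_M)$ of the profinite graph of groups.

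The hard part will be precisely this compatibility across edges: matching the finite quotients of two adjacent vertex groups along their common $\Z^2$ edge group. It is not enough that $\pi_1M$ be residually finite or that the tori be merely separable; one genuinely needs that each vertex group induces the full profinite topology on each of its boundary tori together with the double-coset separability afforded by LERF, and hence the full force of Agol's and Wise's work in the hyperbolic case. Once this local-to-global gluing is in place, the remaining points---residual finiteness, closedness of each $\mathcal{G}_x$, and the final injectivity---are routine from the results cited above.
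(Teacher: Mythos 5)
The paper itself does not prove this proposition: it is quoted from \cite[Theorem A]{WZ10}, with only the remark that the proof given there for closed manifolds applies to the bounded case with no alteration. So the relevant comparison is with the argument of \cite{WZ10}, and your sketch does follow its overall architecture: establish efficiency of the JSJ graph of groups by assembling finite covers of the geometric pieces into a finite cover of $M$, after which the ``Thus'' clause is formal via \cite[Lemma 3.2.6]{RZ10}.

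There is, however, a genuine gap at exactly the step you flag as the hard part. Knowing that a vertex group $\mathcal{G}_v$ induces the full profinite topology on an incident edge group $\mathcal{G}_e\cong\Z^2$ only provides, for a prescribed finite-index $Q\le\mathcal{G}_e$, some finite-index $W\le\mathcal{G}_v$ with $W\cap\mathcal{G}_e\subseteq Q$; it gives neither equality, nor control over which sublattice $W\cap\mathcal{G}_e$ actually is, nor simultaneous control over all boundary tori of a single piece. The two vertex groups adjacent to $e$ will therefore generally cut out different sublattices of $\mathcal{G}_e$, and since the identification of the two copies of the edge torus is an arbitrary element of $\mathrm{GL}_2(\Z)$, refining further does not converge to agreement. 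The missing ingredient --- which is the actual content of \cite{WZ10} --- is the lemma that every Seifert fibered or finite-volume hyperbolic piece admits, for suitable $n$, a finite cover inducing on \emph{every} boundary torus simultaneously the characteristic cover corresponding to $n\mathcal{G}_e\cong n\Z\times n\Z$; characteristic sublattices are preserved by the gluing matrices, so these covers patch together. This lemma does not follow formally from LERF of the vertex groups (and \cite{WZ10} predates \cite{Ago13}, obtaining the hyperbolic case by other means, e.g. reducing the holonomy representation modulo primes). Your appeal to double-coset separability is likewise not what efficiency requires; it belongs to the subsequent profinite Bass--Serre analysis. With the characteristic-cover lemma supplied, the remainder of your outline is sound, and it does apply verbatim in the bounded case as the paper asserts.
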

The proof of \cite[Theorem A]{WZ10} was stated for closed manifolds, while it applies to manifolds with toral boundary with no alteration. 

Recall that for any group $G$ and any element $g\in G$, we use $C_g(x)=gxg^{-1}$ to denote the conjugation. 

\begin{proposition}[{\cite[Theorem 4.3]{WZ19}}]\label{PROP: JSJ}
Let $M$ and $N$ be compact, orientable, irreducible 3-manifolds with empty or incompressible toral boundary, which are not closed $Sol$-manifolds. Let $(\mathcal{G},\Gamma_M)$ and $(\mathcal{H},\Gamma_N)$ denote their JSJ-graphs of groups respectively. Suppose $f:\widehat{\pi_1M}\ttt \widehat{\pi_1N}$ is an isomorphism.
\begin{enumerate}[label=(\arabic*), leftmargin=*]
\item\label{JSJ1} There is a graph isomorphism $\F: \Gamma_M\to \Gamma_N$, which preserves the hyperbolic/Seifert type of the vertices.
\item For each $x\in \Gamma_M$, $f$ induces an isomorphism $f_x: \widehat{\mathcal{G}_x}\ttt\widehat{\mathcal{H}_{\F(x)}}$.
\item\label{JSJ3} For each $x\in \Gamma_M$, there is an element $h_x\in \widehat{\pi_1N}$ such that the following diagram commutes, where $C_{h_x}$ denotes the conjugation by $h_x$.
\begin{equation*}
\begin{tikzcd}
\widehat{\mathcal{G}_x} \arrow[rr, "f_x"] \arrow[d, hook] &                                       & \widehat{\mathcal{H}_{\F(x)}} \arrow[d, hook] \\
\widehat{\pi_1M} \arrow[r, "f"]                           & \widehat{\pi_1N} \arrow[r, "C_{h_x}"] & \widehat{\pi_1N}                             
\end{tikzcd}
\end{equation*}
\item({\cite[Theorem 5.5]{Xu}})\label{JSJ4} For any $e\in E(\Gamma_M)$ whose endpoints are denoted by $u$ and $v$, there exist $g\in \widehat{\mathcal H_{\F(u)}}$ and $h\in \widehat{\mathcal H_{\F(v)}}$ such that the following diagram commutes.
\begin{equation*}
\begin{tikzcd}
\widehat{\mathcal{G}_u} \arrow[d,"C_g\circ f_u"'] & \widehat{\mathcal{G}_e}\cong \widehat{\mathbb{Z}}^2 \arrow[l] \arrow[r] \arrow[d,"f_e"] & \widehat{\mathcal{G}_v} \arrow[d,"C_h \circ f_v"]\\
\widehat{\mathcal{H}_{\F(u)}} & \widehat{\mathcal{H}_{\F(e)}}\cong \widehat{\mathbb{Z}}^2 \arrow[l] \arrow[r] & \widehat{\mathcal H_{\F(v)}}
\end{tikzcd}
\end{equation*}
\end{enumerate}
\end{proposition}
In fact, \cite{WZ19} was originally proven for closed manifolds, but the proof can be easily generalized to the bounded case, see also \cite[Theorem 5.3]{Wil18JSJ}.

We are now ready to prove \autoref{THM: Mixed Peripheral regular}.

\newtheorem*{PP}{\autoref{THM: Mixed Peripheral regular}}

\begin{PP}
Let $M$ be a mixed 3-manifold, and let $\partial_1^{hyp}M,\cdots, \partial_n^{hyp}M$ be the boundary components of $M$ belonging to a hyperbolic JSJ-piece. Suppose $N$ is a compact, orientable 3-manifold, and $f:\widehat{\pi_1M}\ttt \widehat{\pi_1N}$ is an isomorphism.
\begin{enumerate}[label=(\arabic*), leftmargin=*]
\item\label{A3.3-1} $N$ is also a mixed 3-manifold.
\item\label{A3.3-2} There are exactly $n$ boundary components of $N$ which belong to  hyperbolic JSJ-pieces, and we denote these boundary components as $\partial_1^{hyp}N,\cdots,\partial _n^{hyp} N$.
\item\label{A3.3-3} Up to a reordering, $f(\overline{\pi_1\partial_i^{hyp}M})$ is a conjugate of $\overline{\pi_1\partial_i^{hyp}N}$ in $\widehat{\pi_1N}$.
\item\label{A3.3-4} $f$ is peripheral $\Zx$-regular at each $\partial_i^{hyp}M$ ($1\le i\le n$).
\end{enumerate}
\end{PP}
\begin{proof}
 \ref{A3.3-1} By \autoref{LEM: irred}, $N$ is  irreducible and has empty or incompressible toral boundary. In addition, by \cite[Theorem 8.4]{WZ17}, $N$ is not a closed $Sol$-manifold. Therefore, $N$ satisfy the requirements for \autoref{PROP: JSJ}. In particular, \autoref{PROP: JSJ} \ref{JSJ1} implies that $N$ is also a mixed 3-manifold.

\ref{A3.3-2} We follow the notation in \autoref{PROP: JSJ}: $(\mathcal{G},\Gamma_M)$ and $(\mathcal{H},\Gamma_N)$ denote the JSJ-graphs of groups,  $\F: \Gamma_M\to \Gamma_N$ denotes the graph isomorphism, and
 $f_x: \widehat{\mathcal{G}_x}\ttt\widehat{\mathcal{H}_{\F(x)}}$ denotes the induced isomorphism for each $x\in \Gamma_M$. For each vertex $v\in V(\Gamma_M)$ corresponding to a hyperbolic piece, according to \autoref{PROP: Detect hyperbolic}, $\widehat{\pi_1M_v}\cong \widehat{\pi_1N_{\F(v)}}$ implies that $M_v$ and $N_{\F(v)}$ have the same number of boundary components. Let $\partial^{hyp}M$ and $\partial^{hyp}N$ denote the boundary components belonging to hyperbolic pieces. Together with the graph isomorphism  $\F:\Gamma_M\to \Gamma_N$, 
\begin{equation*}
n=\sharp \partial^{hyp}M=\sum_{{v\in V(\Gamma_M)}\atop{\text{hyperbolic}}}\left(\sharp \partial M_v-deg_{\Gamma_M}(v)\right)=\sum_{{w\in V(\Gamma_N)}\atop{\text{hyperbolic}}}\left(\sharp \partial N_w-deg_{\Gamma_N}(w)\right)=\sharp \partial^{hyp}N.
\end{equation*}

\ref{A3.3-3} For each hyperbolic vertex $v\in V(\Gamma_M)$, $f_v:\widehat{\pi_1M_v}\ttt \widehat{\pi_1N_{\F(v)}} $ respects the peripheral structure according to \cite[Lemma 6.3]{Xu}. In fact, according to    \autoref{PROP: JSJ}~\ref{JSJ4}, $f_v$ matches up the boundary components of $M_v$ coming from the JSJ-tori (ie corresponding to the edges  $e\in E(\Gamma_M)$ adjoining $v$) to the boundary components of $N_{\F(v)}$ coming from the JSJ-tori (ie corresponding to the edges $\F(e)$). Thus, $f_v$ matches up the remaining boundary components, ie matches up $M_v\cap\partial M $ with $N_{\F(v)}\cap \partial N$. Therefore, up to reordering, $f_v$ sends $\overline{\pi_1\partial_i^{hyp}M}$ to a conjugate of $\overline{\pi_1\partial_i^{hyp}N}$ in $\widehat{\pi_1N_{\F(v)}}$. The commutative diagram in \autoref{PROP: JSJ} \ref{JSJ3} then implies that $f$ in fact sends $\overline{\pi_1\partial_i^{hyp}M}$ to a conjugate of $\overline{\pi_1\partial_i^{hyp}N}$ in $\widehat{\pi_1N}$.

\ref{A3.3-4} Based on our proof for \ref{A3.3-3}, item \ref{A3.3-4} now  follows from the fact that $f_v$ is peripheral $\Zx$-regular at $\partial_i^{hyp}M$ (\autoref{hypMfd}), together with the commutative diagram in  \autoref{PROP: JSJ}~\ref{JSJ3}.
\end{proof}

\bibliographystyle{amsplain}
\providecommand{\bysame}{\leavevmode\hbox to3em{\hrulefill}\thinspace}
\providecommand{\MR}{\relax\ifhmode\unskip\space\fi MR }
\providecommand{\MRhref}[2]{%
  \href{http://www.ams.org/mathscinet-getitem?mr=#1}{#2}
}
\providecommand{\href}[2]{#2}

\end{sloppypar}
\end{document}